\newif\ifshortversion
  \pretocmd{\section}{\addtocontents{toc}{\protect\addvspace{5\p@}}}{}{}
\theoremstyle{plain}
\newenvironment{customthm}[1]
  {\innercustomthm}
  {\endinnercustomthm}
\date{October 6, 2019} 
\title{ADE surfaces and their moduli}
\author{Valery Alexeev}
\email{valery@uga.edu}
\address{Department of Mathematics, University of Georgia, Athens GA
  30602, USA}
\author{Alan Thompson}
\email{a.m.thompson@lboro.ac.uk}
\address{Department of Mathematical Sciences, Loughborough University, 
Loughborough, Leicestershire, LE11 3TU, UK}
\begin{document}
\numberwithin{equation}{section}

\begin{abstract}
  We define a class of surfaces corresponding to the 
  \ade root lattices and construct compactifications of their moduli
  spaces as quotients of projective varieties for Coxeter fans,
  generalizing Losev-Manin spaces of curves. We exhibit
  modular families over these moduli spaces, which extend to
  families of stable pairs over the compactifications.
  One simple application is a geometric compactification of the moduli of
  rational elliptic surfaces that is a finite quotient of a projective
  toric variety.
\end{abstract}

\maketitle 

\tableofcontents

\section{Introduction} 

There are two sources of motivation for this work: Losev-Manin spaces
\cite{losev2000new-moduli-spaces} and degenerations of K3
surfaces with a nonsymplectic involution \cite{alexeev2019stable-pair}. 

Let $L_{n+3}$ be the moduli space parameterizing weighted stable
curves $(Z, Q_0+Q_\infty + \epsilon\sum_{i=1}^{n+1} P_i)$ of genus 0
with $n+3$ points, where $0<\epsilon\ll1$.  Equivalently, the
singularity condition is that the $n+1$ points $P_i$ are allowed to
collide while the remaining two may not collide with any others. One
has $\dim L_{n+3}=n$.  Quite remarkably, $L_{n+3}$ is a projective
toric variety for the Coxeter fan (also called the Weyl chamber fan)
for the root lattice $A_n$, formed by the mirrors to the roots. Of
course it comes with an action of the Weyl group $W(A_n)=S_{n+1}$
permuting the points $P_i$. The moduli space of the pairs
$(Z,Q_0+Q_\infty + \epsilon R)$ for the divisor
$R=\sum_{i=1}^{n+1} P_i$ with unordered points is then
$L_{n+3}/S_{n+1}$.

There are other ways in which $L_{n+3}$ corresponds
to the root lattice $A_n$. For example, its interior, over which the
fibers are $Z\simeq\bP^1$, is the torus $\Hom(A_n,\bC^*)$, and the
discriminant locus, where some of the points $P_i,P_j$ coincide, is a
union of root hypertori $\cup_\alpha \{e^\alpha=1\}$ with $\alpha=e_i-e_j$
going over the roots of~$A_n$. Additionally, the worst singularity that the
divisor $\sum P_i$ can have is $(x-1)^{n+1}=0$, which is an
$A_n$-singularity.

Losev and Manin asked in \cite{losev2000new-moduli-spaces} if similar
moduli spaces existed for other root lattices. This was partially
answered by Batyrev and Blume in \cite{batyrev2011on-generalizations}
where they constructed compact moduli spaces for the $B_n$ and $C_n$
lattices as moduli of certain pointed rational curves with an
involution. Batyrev-Blume's method works only for infinite series of
root lattices, such as $ABCD$, and it breaks down for $D_n$ where it
leads to non-flat families (most fibers have dimension 1 but some have 2).

In this paper, we generalize Losev-Manin spaces to the $D_n$ and
$E_n$ lattices by replacing stable curve pairs
$(Z, Q_0+Q_\infty + \epsilon R)$ by (KSBA) stable slc
\emph{surface} pairs \xdr and constructing their compact moduli.

\smallskip

Namely, we define a class of surface pairs \xdr naturally associated
with the root lattices $A_n$, $D_n$, and $E_n$. We call these pairs
\emph{\ade double covers}, as all of them are double covers
$\pi\colon X\to Y$ of surface pairs \ycb.  Here, $C$ and $D$ are
reduced boundaries (downstairs and upstairs), $R$ is the ramification
divisor, and $B$ is the branch divisor of $\pi$.  We call the pairs
\ycb the \emph{\ade pairs}, and the underlying pairs $(Y,C)$ the
\emph{\ade surfaces} (with reduced boundary $C$).

We prove that the moduli space $M$ of \ade pairs (equivalently of \ade
double covers) of a fixed type is a torus for the associated \ade lattice
$\Lambda$ modulo a Weyl group $W$, and that the
normalization of the moduli compactification $\oM\slc$ is the
$W$-quotient of a projective toric variety for a generalized
Coxeter fan corresponding to~$\Lambda$. Moreover, for each type
we construct an explicit modular family of \ade pairs 
over $M$ and show that, after a suitable coordinate change,
the discriminant locus in $M$,
where $B$ is singular, is a union of root hypertori
$\cup_\alpha\{e^\alpha=1\}$ with $\alpha$ going over the roots of
$\Lambda$. Additionally, the worst singularity appearing in the double cover $X$ is the surface
Du Val singularity of type $\Lambda$.

For $\Lambda=A_n$ we get the standard Coxeter fan and
$\oM\slc = L_{n+3}/S_{n+1}$.  The ramification curve $R=B$ in this
case is hyperelliptic, a double cover $f\colon B\to Z$ of a rational
genus 0 curve. The boundary $C$ has two irreducible components
defining the boundary $Q_0+Q_\infty$ of $Z$, and the ramification
points of $f$ provide the remaining $n+1$ points in the data for a
stable Losev-Manin curve $(Z, Q_0+Q_\infty + \epsilon\sum_{i=1}^{n+1} P_i)$.

For $\Lambda=D_n$ and $E_n$ the fan is a generalized Coxeter fan, a
coarsening of the standard Coxeter fan. It is the normal fan of a
permutahedron given by a classical Wythoff construction.

\smallskip

We found these \ade surfaces and pairs by studying degenerations of K3
surfaces of degree 2.  A polarized K3 surface $(X,L)$ of degree
$L^2=2$ comes with a canonical double cover $\pi\colon X\to Y$. The
ramification divisor $R$ of $\pi$ is intrinsic to $(X,L)$, and the
pair $(X,\epsilon R)$ is a stable slc pair. Thus, the moduli of (KSBA)
stable slc pairs provides a canonical moduli compactification
$\oF_2\slc$ of the moduli space $F_2$ of K3 surfaces of degree 2.

On the other hand, there exists a nice toroidal compactification
$\oF_2\utor$ defined by the Coxeter fan for the reflection group of
the root lattice associated to $F_2$. The type~III strata of
$\oF_2\utor$ are products of $W$-quotients of projective toric
varieties for the Coxeter fans of certain \ade root lattices.  These
strata look like the moduli spaces of degenerate stable slc pairs \ycb
whose irreducible components $(Y_i, C_i + \freps B_i)$ are some of the
\ade surface pairs discussed above. Indeed, we confirmed this 
in many examples.
We determine the precise connection between $\oF_2\slc$ and $\oF_2\utor$ in
\cite{alexeev2019stable-pair}, which is a continuation of this paper.

\medskip

We work over the field $\bC$ of complex numbers.
Throughout, $\epsilon$ will denote a
sufficiently small real number: $0<\epsilon\ll1$. This means that for
fixed numerical invariants there exists an $\epsilon_0>0$ such
that the stated conditions hold for any $0<\epsilon\le\epsilon_0$.
Now let us explain the main results and the structure of the present paper
in more detail.

\smallskip

In Section~\ref{sec:double-covers} we define $(K+D)$-trivial polarized
involution pairs $(X,D,\iota)$ and study their basic properties. Roughly speaking,
such pairs consist of a normal surface $X$ with an anticanonical divisor $D$ and
 an involution $\iota\colon X \to X$ that preserves $D$.
They naturally appear when studying stable degenerations of K3
surfaces with a nonsymplectic involution. We prove that the quotient
$(Y,C)=(X,D)/\iota$ of an involution pair
is a log del Pezzo surface of index 2, i.e. the divisor $-2(K_Y+C)$ is
Cartier and ample.

Denoting by $\pi\colon X\to Y$ the double cover, $B \in |-2(K_Y+C)|$ 
the branch divisor and $R\subset X$ the
ramification divisor, one has
$K_X + D + \epsilon R = \pi^*(K_Y+ C+ \freps B)$. Then the pair
$(X, D + \epsilon R)$ is a (KSBA) stable slc pair iff the pair
$(Y,C + \freps B)$ is such.

By analogy with Kulikov degenerations of K3 surfaces, we divide the
pairs $(X,D,\iota)$ and their quotients $(Y,C)$ into types I, II, III. 
For type I, one has
$C=D=0$, the surface $X$ is an ordinary K3 surface with Du Val
singularities, and the pair $(Y,C + \freps B)$ is klt. For types II
and III, the pairs $(X, D + \epsilon R)$ and $(Y,C + \freps B)$ are
both not klt; these types are distinguished by the properties of the 
boundary $D$, which is a disjoint union of smooth elliptic curves in type II
and a cycle of rational curves in type III.

With this motivation, we set out to investigate log canonical non-klt
del Pezzo surfaces with boundary $(Y,C)$ of index 2, and the moduli
spaces of log canonical pairs \ycbend, with $B\in |-2(K_Y+C)|$.

\smallskip

In Section~\ref{sec:adepairs} we explicitly define many examples of 
such surfaces $(Y,C)$ in an \emph{ad
hoc} way. Since the word \emph{type} is already used for ``types I,
II, III'', we call the
combinatorial classes of such surfaces \emph{shapes}. Those of type III
we call \ade shapes, and of type II we call \wade shapes. We call the
corresponding surfaces $(Y,C)$ \ade resp. \wade \emph{surfaces},
the stable pairs $(Y,C + \freps B)$  \ade resp. \wade \emph{pairs},
and their covers $(X, D + \epsilon R)$  \ade resp. \wade \emph{double covers}.
To each shape we associate a decorated \adeend, resp. \wade 
Dynkin diagram, which we use to label the shape, and a
corresponding \adeend, resp. \wade lattice. The main reason for this association
comes later, when considering the moduli spaces and their
compactifications.

In the simplest cases, the surfaces $Y$ are toric and $C$ is a part of the toric
boundary, with two components $C_1,C_2$ in type III and one component
in type II.  These shapes are labeled by diagrams of types $A_n$, $D_n$, $E_n$, 
$\wD_{2n}$, $\wE_7$ and $\wE_8$. At this point there is a clear motivation behind 
this labeling scheme, as the defining lattice polytopes of the toric surfaces $Y$
contain the corresponding Dynkin diagrams in an obvious way.
In type II we also introduce several nontoric shapes,
which we call $\wA_{2n-1}$,
$\wA_1^*$, and $\wA_0^-$. Interestingly there is no $\wE_6$ shape;
Remark~\ref{rem:no-wE6} discusses some reasons for that. 

Next we define a procedure, which we call \emph{priming}, for
producing a new lc nonklt del Pezzo pair $(\oY',\oC')$ of index 2 from an
old such pair $(Y,C)$. The procedure consists of making weighted
blowups $Y'\to Y$ at a collection of up to 4 points on the boundary
$C$, and then performing a contraction $Y'\to \oY'$ defined by the
divisor $-2(K_{Y'}+C')$ (where $C'$ is the strict transform of $C$), 
provided that it is big and nef. 

We list all the \ade and \wade shapes,
together with their basic numerical invariants and singularities in
Tables~\ref{tab:primed-III} and \ref{tab:primed-II}. In all, there are
43 \ade shapes and 17 \wade shapes, some of which define infinite families.
Whilst this list seems rather large, most are obtained by applying the priming
operation to a very short list of fundamental shapes. We call these fundamental
shapes \emph{pure shapes}, and call the ones obtained from them by priming
\emph{primed shapes}.

\smallskip

In Section~\ref{sec:nakayama} we prove our first main result, which 
justifies our interest in the \ade and \wade surfaces.

\begin{customthm}{A}\label{thm:logdP=ade}
  The log canonical non-klt del Pezzo surfaces $(Y,C)$ with $2(K_X + C)$ Cartier 
  and $C$ reduced (or possibly empty) are exactly the same as the \ade and \wade
  surfaces $(Y,C)$, pure and primed.
\end{customthm}

 Most of the proof can be extracted from
the work of Nakayama~\cite{nakayama2007classification-of-log}, with
additional arguments necessary in genus~1. Nakayama's
classification of log del Pezzo pairs of index 2 was done in very
different terms and the connection with root lattices did not appear in it.

\smallskip

In Section~\ref{sec:moduli}, for each shape we describe the moduli
spaces of \ade (i.e. type III) pairs and their double covers.
For each shape we have a root lattice $\Lambda$ of \ade type.
It has an associated torus $T_{\Lambda} := \Hom(\Lambda,\bC^*)$ and
Weyl group $W_{\Lambda}$. 
Then our second main result is as follows.

\begin{customthm}{B}\label{thm:moduli-summary}
  The moduli stack of \ade pairs of a fixed \ade shape is 
  \begin{eqnarray*}
    &[\Hom(\Lambda^*, \bC) : \mu_\Lambda\times\mu_2] =
      [T_\Lambda : W_\Lambda \times \mu_2]    
    &\text{for pure $A$ shapes,} \\
    &[\Hom(\Lambda^*, \bC) : \mu_\Lambda] = [T_\Lambda : W_\Lambda]    
    &\text{for pure $D$ and $E$ shapes,} \\
    &    [\Hom(\Lambda^*, \bC) : \mu_{\Lambda'} \times W_0] = [T_{\Lambda'} :
    W_\Lambda \rtimes W_0] &\text{for primed shapes.}
  \end{eqnarray*}
  Here, $\Lambda$ is an \ade root lattice, $\Lambda^*$ is its dual
  weight lattice, 
  $\Lambda'$ is a lattice
  satisfying $\Lambda\subset\Lambda'\subset\Lambda^*$ given explicitly in
  Theorem~\ref{thm:moduli-ade-primed},  $T_{\Lambda'} := \Hom(\Lambda',\bC^*)$,
   $\mu_{\Lambda'} := \Hom(\Lambda^*/\Lambda',\bC^*)$,
  and the additional Weyl group $W_0$ is given in
  Theorem~\ref{thm:reconstruction}, with action described in
  Theorem~\ref{thm:W0-action}.
\end{customthm}
 
This result is proved as Thms.~\ref{thm:moduli-ade-pure} (for pure
shapes) and \ref{thm:moduli-ade-primed} (for primed shapes).
To conclude Section~\ref{sec:moduli}, for each pure \ade shape
we construct a Weyl group invariant modular family of \ade pairs, 
which we call the \emph{naive
  family}, over the torus~$T_{\Lambda^*}$. 
\smallskip

In Section~\ref{sec:compactifications} for each \ade (i.e. type III)
shape we construct a modular compactification of the moduli space of \ade
pairs of this shape.  In ~\ref{sec:stable-pairs-general} we begin with
a general discussion of moduli compactifications using stable pairs,
and we define stable \ade pairs. Next, for each \ade shape we
construct a Weyl group invariant family of stable slc pairs \ycb over a
projective toric variety $V_M\cox$ for the Coxeter fan of an
appropriate over-lattice $M\supset\Lambda^*$ of index $2^k$
(Thms.~\ref{thm:compactified-family-A},
\ref{thm:compactified-family-A'DE}, 
\ref{thm:compactified-family-primed}). 
These theorems also describe the
combinatorial types of the stable pairs over each point of $V_M\cox$.
For the \ade surfaces where $C$ has two components, the irreducible
components of these pairs are again \ade pairs for Dynkin
subdiagrams. For some of the primed \ade shapes where $C$ has one 
or zero components, new ``folded'' shapes appear.

Next, we define a generalized Coxeter fan as a coarsening of the
Coxeter fan, corresponding to a decorated Dynkin diagram, and the
corresponding projective toric variety $V_M\semi$. We prove that our
family is constant on the fibers of $V_M\cox\to V_M\semi$ and the
types of degenerations are in a bijection with the strata of
$V_M\semi$, with the moduli of the same dimension. As a consequence,
we obtain our third main theorem. This theorem follows from 
Thm.~\ref{thm:compact-ade-moduli}, which is a slightly stronger result.

\begin{customthm}{C}\label{thm:compact-ade-moduli-custom}
  For each \ade shape the moduli space $M\slc_{ADE}$ is proper and the
  stable limit of \ade pairs are stable \ade pairs. 
  \begin{enumerate}
  \item For the pure \ade shapes, the normalization of $M\slc_{ADE}$
    is $V\semi_\Lambda/W_\Lambda$, a $W_\Lambda$-quotient of the
    projective toric variety for the generalized Coxeter fan.
  \item For the primed shapes, the normalization
    $(M\slc_{ADE})^\nu$ is $V\semi_{\Lambda'}/W_\Lambda\rtimes W_0$,
    for a lattice extension $\Lambda'\supset\Lambda$. The lattice
    $\Lambda'$ and the Weyl group $W_0$ are as in Theorem~\ref{thm:moduli-summary}.
  \end{enumerate}
\end{customthm}

\smallskip 

The moduli spaces described in Theorem~\ref{thm:moduli-summary}
have many automorphisms, some of which extend to
automorphisms of our compactification. In
Section~\ref{sec:canonical-families} we prove that there exists an
essentially unique deformation of the naive family such that its
pullback to the torus $T_{\Lambda^*}$ has the following wonderful
property: the discriminant locus becomes the union of the root
hypertori $\{e^\alpha=1\}$, with $\alpha$ going over the roots of the
corresponding \ade root lattice.  We also prove that this
deformation extends to the compactification. This is our
fourth main theorem.

\begin{customthm}{D}\label{thm:canonical-families-summary}
  For each \ade shape
  there exists a unique deformation of the equation $f$ of the naive
  family such that $\Discr(f) = \Discr(\Lambda)$.  The resulting
  \emph{canonical family} of \ade pairs extends to a family of stable
  pairs on the compactification for the generalized Coxeter fan.  The
  restriction of this compactified canonical family to a boundary
  stratum is the canonical family for a smaller Dynkin diagram.
\end{customthm}

This theorem is proved in two parts, as Theorems~\ref{thm:two-discrs}
and \ref{thm:canonical-family-compactified}. In the final subsection~\ref{sec:sings-ade-pairs} 
we use these canonical families to explicitly
determine all the possible singularities of the branch divisor $B$ that
can appear in our \ade pairs.

In Section~\ref{sec:connections} we discuss an application of our results
and its connections with other work. 
In Section~\ref{sec:elliptic}, as an application  we
construct a compactification $\oM\dell$ of the moduli space of
rational elliptic surfaces with section and a distinguished $I_1$ fiber
(i.e. irreducible rational with one node). The compactification
is by the stable slc pairs $(X, D + \epsilon R)$ where $D$ is the $I_1$
fiber and $R$ is the fixed locus of the elliptic involution. We prove
that the normalization of $\oM\dell$ is a $W_{E_8}$-quotient of a
projective toric variety for the generalized Coxeter fan for the $E_8$
lattice. In Section~\ref{sec:GHK} we discuss the relationship
of our work to that of Gross-Hacking-Keel on moduli of anticanonical pairs 
\cite{gross2015moduli-of-surfaces}, 
and in Section~\ref{sec:cremona} we discuss its relationship with the
classification of birational involutions in the Cremona group
$\Bir(\bP^2)$ \cite{bayle2000birational-involutions}.

\section{Log del Pezzo index 2 pairs and their double covers}
\label{sec:double-covers}

\begin{definition}\label{def:invo-pair}
  A \emph{$(K+D)$-trivial polarized involution pair} $(X, D, \iota)$
  consists of a normal surface $X$ with an effective reduced divisor
  $D$, and an involution $\iota\colon X\to X$, $\iota(D)=D$ such that
  \begin{enumerate}
  \item $K_X + D \sim 0$ is a Cartier divisor linearly equivalent to 0,
  \item the fixed locus of $\iota$ consists of an ample Cartier
    divisor $R$, henceforth called the \emph{ramification divisor},
    possibly along with some isolated points, and
  \item the pair $(X, D+\epsilon R)$ has log canonical (lc)
    singularities for $0<\epsilon\ll1$.
  \end{enumerate}
\end{definition}

\begin{remark}
  Such pairs naturally appear when studying degenerations of K3
  surfaces with an involution. In
  \cite{alexeev2019stable-pair} we show that for any one parameter
  degeneration of K3 surfaces $\cS\to (Z,0)$ with a nonsymplectic
  involution $\iota_\cS$ and a ramification divisor $\cR_\cS$, if
  $(\cS_0, \epsilon \cR_0)$ is the stable slc limit of the pairs
  $(\cS_t,\epsilon \cR_t)$ for $0<\epsilon\ll1$, 
  then each irreducible component $X$ of the
  normalization of $(\cS_0, \cR_0)$ comes with an involution $\iota$
  and, denoting by $D$ its double locus, the pair $(X,D,\iota)$ is a
  $(K+D)$-trivial polarized involution pair as in
  \eqref{def:invo-pair}.
\end{remark}

Let $\omega$ be a global generator of the 1-dimensional space
$H^0\big(\cO_X(K_X+D)\big)$. The ramification divisor $R$ is nonempty
by ampleness and has no components in common with $D$ by the lc
condition. For a generic point $x\in R$ there are local parameters
$(u,v)$ such that $\iota(u,v)=(u,-v)$. Then
$\iota^*(du\wedge dv)=-du\wedge dv$. Thus, the involution $\iota$ is
\emph{non-symplectic}, meaning $\iota(\omega)=-\omega$.

Let $\pi\colon X\to Y=X/\iota$ be the quotient map, $C=\pi(D)$ the
boundary and $B=\pi(R)$ the branch divisors. By Hurwitz formula,
$K_X + D \equiv \pi^* (K_Y + C + \frac12 B)$. 

\begin{lemma}\label{lem:pairs-and-covers}
  There is a one-to-one correspondence between $(K+D)$-trivial
  polarized involution pairs 
  $(X,D,\iota)$ and pairs $(Y,C+\frac{1+\epsilon}2 B)$ such that
  \begin{enumerate}
  \item $Y$ is a normal surface and $C,B$ are reduced effective Weil
    divisors on it.
  \item $(Y,C)$ is a (possibly singular) del Pezzo surface with
    boundary of index $\le2$, i.e. $-2(K_Y+C)$ is an ample Cartier
    divisor.
  \item $B\in |-2(K_Y+C)|$; in particular $B$ is Cartier.
  \item The pair $(Y, C + \frac{1+\epsilon}2 B)$ has lc singularities
    for $0<\epsilon\ll1$. 
  \end{enumerate}
  Moreover, if (1)--(4) hold then one also has
  \begin{enumerate}
    \setcounter{enumi}{4}
  \item For any singular point $y\in Y$: if $y\in B$ then $y$ is Du
    Val and $y\not\in C$.
  \end{enumerate}
\end{lemma}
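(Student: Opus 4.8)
The correspondence itself is essentially the Hurwitz/cyclic-cover dictionary, so the bulk of the work is matching the four numbered conditions on each side; I would organize the proof around constructing the two maps between the sets and then verifying they are mutually inverse.

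\textit{From an involution pair to a pair downstairs.} Given $(X,D,\iota)$, I set $Y = X/\iota$, $\pi\colon X\to Y$ the quotient, $C = \pi(D)$, $B = \pi(R)$. Since $X$ is normal and $\iota$ acts with fixed locus of codimension $1$ (the divisor $R$) plus isolated points, $Y$ is normal and $\pi$ is a double cover branched exactly along $B$ (the isolated fixed points do not contribute to the branch divisor; they sit over singular points of $Y$, and this is where condition (5) will come from). The eigenvalue computation already in the excerpt shows $\iota$ is non-symplectic, so $\omega$ descends to a section of $\mathcal{O}_Y\big(2(K_Y+C)+B\big)$ that is nowhere zero — equivalently $2(K_Y + C) + B \sim 0$, which simultaneously gives $B \in |-2(K_Y+C)|$ and the Cartier-ness of $-2(K_Y+C)$ once one knows $B$ is Cartier; but $B$ is Cartier precisely because it is the branch divisor of a cyclic cover of a normal surface, i.e.\ it is cut out by the square of the section defining the cover. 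For ampleness of $-2(K_Y+C)$: $\pi^*(-2(K_Y+C)) = 2(K_X+D) - B$; hmm, more cleanly, $R$ is ample on $X$ by hypothesis and $\pi_* \mathcal{O}_X(R)$ contains $\mathcal{O}_Y(-( K_Y+C)) \oplus \mathcal{O}_Y$ roughly speaking, so ampleness descends; alternatively use that $\pi$ is finite so $L$ on $Y$ is ample iff $\pi^*L$ is, and $\pi^*(-2(K_Y+C)) \equiv 2R$ which is ample. The log canonical statement (4) for $(Y,C+\tfrac{1+\epsilon}{2}B)$ is equivalent to (3) of the involution-pair definition for $(X,D+\epsilon R)$ by the Hurwitz pullback formula $K_X + D + \epsilon R = \pi^*(K_Y + C + \tfrac{1+\epsilon}{2}B)$ together with the standard fact that lc-ness is preserved and reflected under finite covers (crepant for the boundary arranged exactly so). That $C$ is reduced effective follows from $D$ being so.

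\textit{From a pair downstairs to an involution pair.} Given $(Y, C + \tfrac{1+\epsilon}{2}B)$ satisfying (1)--(4), I form the double cover $\pi\colon X\to Y$ associated to the Cartier divisor class $-(K_Y+C)$ and the section of its square cutting out $B\in|-2(K_Y+C)|$ (condition (3) makes this possible and condition (2) makes $-(K_Y+C)$ an honest Cartier divisor class, not merely $\mathbb{Q}$-Cartier). This $X$ is normal because $B$ is reduced; it carries the covering involution $\iota$; its ramification divisor $R = \pi^{-1}(B)_{\mathrm{red}}$ is Cartier and ample since $2R \sim \pi^*(-2(K_Y+C))$ is the pullback of an ample Cartier divisor under a finite map. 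Setting $D = \pi^{-1}(C)_{\mathrm{red}}$ (note $C$ and $B$ share no components because $(Y, C+\tfrac{1+\epsilon}{2}B)$ is lc with $C$ reduced, forcing the coefficient-$(\tfrac{3}{2}+\cdots)$ collision to be excluded), the Hurwitz formula gives $K_X + D \sim \pi^*(K_Y + C) + R \sim \pi^*\big(K_Y + C + \tfrac12 B\big) \cdot$(as $\mathbb{Q}$-divisors, but in fact $K_X+D$ is Cartier and linearly trivial because $\pi^*(2(K_Y+C)) + 2R \sim 0$ and the cover is built exactly to split this square root). Condition (3) of Definition~\ref{def:invo-pair} transfers back from (4) by the same finite-cover lc criterion. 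Then one checks the two constructions invert one another: starting from $(X,D,\iota)$, re-forming the cover of $X/\iota$ from $-(K_Y+C)$ recovers $X$ with its involution because the line bundle and section are intrinsic to the geometric quotient; and starting from $(Y,C+\tfrac{1+\epsilon}{2}B)$, quotienting the associated cover recovers $(Y,C,B)$ tautologically.

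\textit{Condition (5).} Finally, assuming (1)--(4), let $y\in Y$ be a singular point lying on $B$. Locally at $y$, $(Y, \tfrac{1+\epsilon}{2}B)$ (ignoring $C$ for the moment) is lc with the coefficient of $B$ strictly greater than $\tfrac12$; the standard classification of lc surface pairs of the form (singular point, half-integral boundary) — equivalently, pulling back to $X$ where $K_X + \epsilon R = \pi^*(K_Y + \tfrac{1+\epsilon}{2}B)$ is lc near $\pi^{-1}(y)$ with $R$ reduced and $\epsilon\ll1$, so $X$ itself is lc hence (being a normal surface singularity through which the reduced Cartier divisor $R$ passes, with $K_X$ Cartier) $X$ is canonical at $\pi^{-1}(y)$, i.e.\ Du Val — forces $Y$ to be Du Val at $y$ as well, since the quotient of a Du Val singularity by the covering involution in this set-up is again Du Val (or one argues directly that a non-Du-Val lc surface singularity on $B$ would push the discrepancy of $(Y, \tfrac{1+\epsilon}{2}B)$ below $-1$). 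And $y\notin C$: if $y$ lay on both $C$ (reduced, coefficient $1$) and $B$ (coefficient $>\tfrac12$) at a singular point, the pair $(Y, C + \tfrac{1+\epsilon}{2}B)$ would fail to be lc, because already $(\mathbb{A}^2, \{xy=0\} + \tfrac{1+\epsilon}{2}\{x=0\})$-type configurations at a singular point violate lc; more precisely one computes a discrepancy $\le -1-\delta$ at the blow-up of $y$.

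\textit{Main obstacle.} The genuinely delicate point is condition (5): controlling exactly which singular points $Y$ may have along $B$ and ruling out $C\cap B$ at singularities. The lc pullback/pushforward formalism handles the qualitative transfer cleanly, but pinning down ``Du Val, and disjoint from $C$'' requires either invoking the classification of lc surface singularities with a half-integral boundary divisor or doing explicit discrepancy computations on a log resolution; I expect this to be where the proof in the paper spends its effort, whereas parts (1)--(4) and the bijection are the formal cyclic-cover dictionary.
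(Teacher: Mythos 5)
Your overall strategy (the Hurwitz/cyclic-cover dictionary plus a discrepancy computation for (5)) is the same as the paper's, but there are two genuine gaps, both stemming from the fact that the index can be exactly $2$. First, your construction of the cover from a pair downstairs asserts that condition (2) makes $-(K_Y+C)$ ``an honest Cartier divisor class.'' It does not: (2) only says $-2(K_Y+C)$ is Cartier, and for most of the surfaces in this paper $K_Y+C$ is genuinely non-Cartier at some points. So the double cover cannot be built from a line bundle; it must be built from the divisorial (reflexive rank-one) sheaf $\omega_Y(C)$, i.e.\ as the index-one cover $X=\operatorname{Spec}_Y\bigl(\mathcal{O}_Y\oplus\omega_Y(C)\bigr)$ of the pair $(Y,C+\frac12 B)$ — this is exactly what the paper does, invoking \cite[Prop.\ 2.50(4)]{kollar2013singularities-of-the-minimal} in the forward direction to identify the quotient map \'etale-locally as that index-one cover (which is also what legitimately yields that $2(K_Y+C)$ is Cartier and that $B$ is cut out by a section of $\mathcal{O}_Y(-2(K_Y+C))$; ``branch divisors of cyclic covers of normal surfaces are Cartier'' is not a theorem you can quote). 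Second, you deduce that $R$ is Cartier from $2R\sim\pi^*(-2(K_Y+C))$; this gives ampleness but not Cartier-ness, and Definition~\ref{def:invo-pair} requires $R$ itself to be Cartier. The paper closes this with a separate argument that \emph{uses (5)}: at any $y\in B$ the point is smooth or Du Val and off $C$, so $\mathcal{O}_Y(K_Y+C)$ is locally free there, the cover is locally $u^2=s$, and $R=\{u=0\}$ is Cartier.

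This also explains the logical ordering you inverted: (5) is not an afterthought but is proved first, directly from (1)--(4), precisely because the construction of the correspondence needs it. Your primary argument for (5) passes through $X$, which is circular at that stage (and your claim that the quotient of a Du Val point by the covering involution is again Du Val is not needed and not true in general). Your parenthetical direct argument is the right one and is essentially the paper's: if $y\in B$ is non--Du Val, or Du Val with $y\in C$, some exceptional divisor $E$ of the minimal resolution has $a_E(K_Y+C)<0$, hence $\le-\frac12$ since $2(K_Y+C)$ is Cartier; since $B$ is Cartier through $y$ its pullback contains $E$ with multiplicity $\ge1$, so $a_E\bigl(K_Y+C+\frac{1+\epsilon}{2}B\bigr)\le-\frac12-\frac{1+\epsilon}{2}<-1$, contradicting (4). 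You should promote that computation to the main argument and place it first.
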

\begin{proof}
  Suppose (1)--(4) hold and $y\in B$ is a non Du Val singularity of
  $Y$ or a Du Val singularity with $y\in C$.  Then on a minimal
  resolution $g\colon\wY\to Y$ there exists an exceptional divisor $E$
  whose discrepancy with respect to $K_Y+C$ is $<0$. Since $2(K_Y+C)$
  is Cartier, one has $a_E(K_Y+C)\le -\frac12$. But $B$ is Cartier, so
  \begin{displaymath}
    a_E\left( K_Y+C + \frac{1+\epsilon}2 B \right)
    \le -\frac12 - \frac{1+\epsilon}2 < -1,
  \end{displaymath}
  and the pair $(Y, C + \frac{1+\epsilon}2 B)$ is not
  lc, a contradiction. This proves (5). 

  Now let $(X,D,\iota)$ be a $(K+D)$-trivial polarized involution pair. Using
  $\iota^*(\omega)=-\omega$, it follows 
  by \cite[Prop.2.50(4)]{kollar2013singularities-of-the-minimal} 
  that for any $x\in X$ \'etale-locally $(X,x)\to (Y,\pi(x))$
  is the index-1 cover for the pair $(Y, C + \frac12 B)$. Thus,
  $\pi_* \cO_X = \cO_Y \oplus \omega_Y(C)$, the divisor $2(K_X+C)$ is
  Cartier, and $B=(s)$, $s\in H^0\big( \cO_Y(-2(K_Y+C)) \big)$. From
  the identity
  $K_X + D + \epsilon R \equiv \pi^* (K_Y + C + \frac{1+\epsilon}2 B)$
  it follows that the divisor $K_Y + C + \frac{1+\epsilon}2 B$ is
  ample and the pair $(Y, C + \frac{1+\epsilon}2 B)$ has lc
  singularities.

  Vice versa, let $(Y,C+\frac{1+\epsilon}2 B)$ be a pair as above, and let
  $X:= \Spec_Y \cO_Y \oplus \omega_Y(C)$ be the double cover
  corresponding to a section $s\in H^0\big( \cO_Y(-2(K_Y+C)) \big)$,
  $B = (s)$. Thus, \'etale-locally it is the index-1 cover for the pair
  $(Y, C + \frac12 B)$.  Then $K_X+D \sim 0$, $K_X + D + \epsilon R$
  is ample and lc, and $2R=\pi^*(B)$ is an ample Cartier divisor.

  We claim that $R$ itself is Cartier. Pick a point $x\in R$ and let
  $y=\pi(x)\in B$.  The cover $\pi$ corresponds to
  the divisorial sheaf $\cO_Y(K_Y+C)$, which is locally free at~$y$ by
  (5). Then the double cover is given by a local equation $u^2=s$, and
  $R$ is given by one local equation $u=0$, so it is Cartier.
\end{proof}

Thus, the classification of $(K+D)$-trivial polarized involution pairs is reduced to that of del
Pezzo surfaces $(Y,C)$ with reduced boundary of index $\le2$ plus a divisor $B \in |-2(K_Y+C)|$
satisfying the lc singularity condition. In the case when $C=0$, del
Pezzo surfaces of index $\le2$ with log terminal singularities
were classified by Alexeev-Nikulin in
\cite{alexeev1988classification-of-del-pezzo,
  alekseev1989classification-of-del-pezzo, alexeev2006del-pezzo}.
There are 50 main cases which are further subdivided into 73 cases
according to the singularities of $Y$. However, all these surfaces are
smoothable, which follows either by using the theory of K3 surfaces or by
\cite[Prop. 3.1]{hacking2010smoothable-del-pezzo}.
Thus, there are only
10 overall families, with a generic element a smooth del Pezzo surface
of degree $1\le K_Y^2\le 9$ (for $K_Y^2=8$ there are two families, for
$\bF_0$ and $\bF_1$). The dimension of the family of pairs $(Y,B)$,
equivalently of the double covers $(X,\iota)$, is $10+K_Y^2$.  

Del Pezzo surfaces with a half-integral boundary $C$ of index $\le2$
were classified by Nakayama in
\cite{nakayama2007classification-of-log}. An important result of
Nakayama is the Smooth Divisor Theorem
\cite[Cor.3.20]{nakayama2007classification-of-log} generalizing that
of \cite[Thm.1.4.1]{alexeev2006del-pezzo}. It says that for any del
Pezzo surface $(Y,C)$ with boundary of index $\le2$ a general divisor
$B\in |-2(K_Y+C)|$ is smooth and in particular does not pass through
the singularities of~$Y$. Thus, every such surface $(Y,C)$ produces a
family of $(K+D)$-trivial polarized involution pairs $(X,D,\iota)$.

\begin{remark}
  The divisors $C$ and $B$ play a very different role: $C$ is fixed,
  and $B$ varies in a linear system. For this reason, we will refer to
  them differently. We will call $C$ \emph{the boundary} and
  say that $(Y,C)$ is a \emph{surface with boundary} (and sometimes we will
   drop the words ``with boundary''). We will call
  $(Y,C+\frac{1+\epsilon}2 B)$ a \emph{pair}, consisting of a surface
with boundary $(Y,C)$ plus an additional choice of divisor $B$ on it. 
In many cases, surfaces
  with boundary are rigid, but pairs have moduli.
\end{remark}

Let $f\colon \wX\to X$ be the minimal resolution of singularities, and
let $\wD$ be the effective $\bZ$-divisor on $\wX$ defined by the formula
$K_\wX + \wD = f^*(K_X+D) \sim 0$. It follows from the lc condition
that $\wD$ is reduced. 

\begin{lemma}\label{lem:types-of-pairs}
  For the minimal resolution of a $(K+D)$-trivial polarized involution
  pair, 
  one of the following holds:
  \begin{enumerate}
  \item[($\I$)] $D=0$, $\wD=0$, and $X$ is canonical. Then
    $X$ is a K3 surface with \ade  singularities and $\iota$ is an
    non-symplectic involution.
  \item[($\II$)] $(X,D)$ is strictly log canonical and $\wD$ is one or
    two isomorphic smooth elliptic curve(s), 
  \item[($\III$)] $(X,D)$ is strictly log canonical and $\wD$ is a
    cycle of $\bP^1$s.
  \end{enumerate}
\end{lemma}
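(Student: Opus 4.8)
The plan is to analyze the minimal resolution $f\colon\wX\to X$ together with the divisor $\wD$ defined by $K_\wX+\wD=f^*(K_X+D)\sim0$, exploiting the fact that $\wD$ is a reduced effective divisor with $K_\wX+\wD\sim0$ and that $(X,D+\epsilon R)$ is lc. First I would dispose of the case $\wD=0$: then $K_\wX\sim0$ on a smooth projective surface birational to $\wX$, and since $\wX$ carries no $(-1)$-curves (it is minimal over $X$) but may fail to be a true minimal model only if $X$ already contributed $(-1)$-curves — here I use that $X$ is klt (indeed canonical, since $\wD=0$ forces all discrepancies of $f$ to be $0$, i.e.\ $X$ has Du Val singularities) and that $K_X\sim -D = 0$, so $X$ is a surface with trivial canonical class and rational double points; being also simply connected-ish is not needed, but one checks $X$ is not abelian/bielliptic using the existence of the non-symplectic $\iota$ with one-dimensional $H^0(K_X)$, leaving $X$ a K3 with Du Val singularities, which is case ($\I$). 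The involution $\iota$ acts on the holomorphic $2$-form by $-1$ as shown before the lemma, hence is non-symplectic.

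Next, when $\wD\neq0$: since $(X,D+\epsilon R)$ is lc, $X$ is lc, hence $(X,D)$ is lc with $K_X+D\sim0$; the divisor $\wD$ being reduced and effective with $K_\wX+\wD\sim0$ means $(\wX,\wD)$ is a smooth lc pair with $-(K_\wX+\wD)\sim0$ — i.e.\ a log Calabi--Yau surface pair with reduced boundary. I would then invoke the standard structure theory for such pairs (essentially the two-dimensional log minimal model / the classification underlying Kulikov-type degenerations, e.g.\ as in \cite{kollar2013singularities-of-the-minimal}): running a $(K_\wX+\wD)$-MMP, which is an isomorphism since $K_\wX+\wD\sim0$, and using adjunction, one sees that the connected components of $\wD$ and their dual graph are severely constrained. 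Concretely, by the classification of anticanonical divisors on rational (or appropriate) surfaces, $\wD$ is either a disjoint union of smooth elliptic curves or a cycle of rational curves (an $I_k$ configuration, $k\ge1$, including the nodal cubic $k=1$ and the case of two components meeting at two points); a disjoint union of two elliptic curves occurs, and they must be isomorphic because they are both fibers pulled back along the elliptic fibration that $|{-}(K+\wD)|$-adjunction produces, or equivalently because $\wD$ has two connected components each a $(-2)$-curve's... — more simply, in the two-component elliptic case $\wX$ admits an elliptic fibration with the two curves as fibers $E_1,E_2$ and there is a natural identification $E_1\cong E_2$ (both isomorphic to a fixed $j$-invariant), which I will make precise. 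The bound "one or two" on the number of elliptic components comes from $\rho$ and the fact that $D$, hence $\wD$, is the double locus of a component of a degenerate K3 and so carries at most two such disjoint curves by the adjunction/connectedness of lc centers argument. This trichotomy — $\wD=0$; $\wD$ a nonzero disjoint union of elliptic curves; $\wD$ a cycle of $\bP^1$'s — is exactly the list ($\I$), ($\II$), ($\III$).

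Finally I would verify the "strictly log canonical" claims in cases ($\II$) and ($\III$): $\wD\neq0$ and reduced with $K_\wX+\wD\sim0$ forces a log canonical (non-klt) center, so $(\wX,\wD)$ and hence $(X,D)$ is strictly lc. The main obstacle I anticipate is not the dichotomy elliptic-versus-cycle-of-rational-curves per se — that is the classical description of connected reduced anticanonical divisors — but rather pinning down (a) that no other connected configurations occur once one descends from $\wX$ to $X$ and re-contracts (one must check contracting $f$-exceptional $(-2)$-curves cannot break a cycle or merge elliptic components into something exotic), and (b) the isomorphism $E_1\cong E_2$ in the two-elliptic-curve subcase of ($\II$), which I expect to handle via the elliptic fibration structure on $\wX$ furnished by $|{-}(K_\wX+E_1)|$ having $E_2$ as another fiber. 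A secondary subtlety is ruling out that $X$ in case ($\I$) could be an Enriques, abelian, or bielliptic surface (with Du Val singularities): Enriques is excluded because then $2K_X\sim0$ but $K_X\not\sim0$ contradicting $K_X+D\sim K_X\sim0$; abelian and bielliptic are excluded because they carry a nowhere-vanishing global vector field / finite fundamental group constraints incompatible with an ample fixed divisor $R$ of a non-symplectic involution, or simply because $h^0(\mathcal O_X)=1$ with $\chi(\mathcal O_X)=2$ pins down K3.
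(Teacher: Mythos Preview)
Your proposal has the right overall shape but misses the key structural observation that drives cases ($\II$) and ($\III$), and this leads to an actual error.

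The paper's argument for $\wD\ne 0$ is: since $K_\wX=-\wD$ with $\wD$ effective nonzero, $\kappa(\wX)=-\infty$, so $\wX$ is birationally ruled over a curve $E$; on a general fiber $F$ one has $\wD\cdot F=-K_\wX\cdot F=2$, so $\wD$ is a \emph{bisection} of the ruling. This single fact does all the work: it bounds the number of horizontal components by two; if there are two, each is a \emph{section} of $\wX\to E$, hence isomorphic to $E$; and $E$ is dominated by the components of $\wD$, so $g(E)\le 1$. If some component of $\wD$ is rational then $g(E)=0$, $\wX$ is rational, and $H^1(\cO_\wX)=H^1(K_\wX)=0$ forces $h^0(\cO_\wD)=1$, i.e.\ $\wD$ is connected (a single cycle).

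Your approach instead proposes an elliptic fibration on $\wX$ with $E_1,E_2$ as \emph{fibers}, and claims this makes them isomorphic. That is wrong: distinct smooth fibers of an elliptic fibration can have different $j$-invariants, so this does not give $E_1\cong E_2$. You also justify the bound ``one or two'' by appealing to $\wD$ being the double locus in a K3 degeneration, which is external to the hypotheses of the lemma (the pair is defined intrinsically). Finally, you do not explain why a rational cycle component forces $\wD$ to be connected; the paper's $h^0(\cO_\wD)=1$ argument via rationality of $\wX$ is what rules out, e.g., an elliptic curve plus a cycle, or two cycles.

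For case ($\I$), your sketch is serviceable but vague on excluding the abelian case. The paper's clean argument is: if $\wX$ were abelian, then $\iota\ne(-1)$ (since $R\ne 0$), so $\iota^*$ on $H^0(\Omega^1_\wX)$ is not $-\mathrm{id}$ and some $1$-form descends to the resolution $\wY$ of $Y$; but $Y$ is a log del Pezzo with rational singularities, so $h^1(\cO_\wY)=0$, a contradiction. Your ``$\chi(\cO_X)=2$'' suggestion would need to be established independently, which you do not do.
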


Accordingly, we will say that the $(K+D)$-trivial polarized involution pair
$(X,D,\iota)$ and the corresponding del Pezzo surface $(Y,C)$ with
boundary have type
$\I$, $\II$, or~$\III$. In type
  $\I$ $(Y,C)$ is klt, and in types $\II$, $\III$ it is not klt.

\begin{proof}
  ($\I$) (Compare \cite[Sec. 2.1]{alexeev2006del-pezzo}) $\wX$ is
  either a K3 surface or an Abelian surface. If $\wX=X$ is an Abelian
  surface then the involution is different from $(-1)$ since
  $R\ne0$. Thus, the induced involution $\iota^*$ on $H^0(\Omega^1_X)$
  is different from $(-1)$ and there exists a nontrivial
  1-differential on $X$ which descends to a minimal resolution $\wY$
  of $Y$. But $Y$ is a del Pezzo surface with log terminal
  singularities, so basic vanishing gives
  $h^0(\Omega^1_\wY)=h^1(\cO_\wY)=h^1(\cO_Y)=0$.  Thus, $\wX$ is a K3
  surface, and we already noted that the involution is non-symplectic.

  ($\II,\III$) Since $\omega_\wD\simeq\cO_\wD$ by adjunction, every
  connected component of $\wD$ is either a smooth elliptic curve or a
  cycle of $\bP^1$s. 
  Since $K_\wX=-\wD$ is not effective, $\wX$ is birationally ruled
  over a curve $E$ and $\wD$ is a bisection. The curve $E$
  has genus 1 or 0 since it is dominated by
  $\wD$.  If one of the connected components of $\wD$ is a cycle of
  $\bP^1$s then $g(E)=0$ and $X$ is rational. In that case from
  $H^1(-\wD) = H^1(K_\wX)=0$ we get $h^0(\cO_\wD)=h^0(\cO_\wX)=1$, so
  $\wD$ is connected. If $g(E)=1$ and $\wD$ has more than one
  connected component then then they all must be horizontal. Thus, there
  must be two of them, each a section of $\wX\to E$, so they are both
  isomorphic to~$E$.
\end{proof}

\section{Definitions of \adeend,  \wade surfaces, pairs, and double covers}
\label{sec:adepairs}

\begin{definition}\label{def:ade-surface}
  The \emph{\ade and \wade surfaces} are certain normal surfaces
  $(Y,C)$ with reduced boundary defined by the explicit constructions
  of this section. They are examples of log del Pezzo surfaces of
  index 2, i.e. each pair $(Y,C)$ has log canonical singularities, and
  the divisor $-2(K_Y+C)$ is Cartier and ample.

  In the sense of Lemma~\ref{lem:types-of-pairs}, the \ade surfaces
  are of type III, and \wade surfaces are of type II.
\end{definition}

\begin{definition}\label{def:ade-pairs}
  Given an \adeend, resp. \wade surface $(Y,C)$, let $L=-2(K_Y+C)$ be
  its polarization, an ample line bundle. If $B\in |L|$ is an
  effective divisor such that \ycb is log canonical 
  for $0<\epsilon\ll1$ 
  then \ycb is
  called an \emph{\adeend, resp. \wade pair.}  The double cover
  $\pi\colon X\to Y$ as in Lemma~\ref{lem:pairs-and-covers} is then
  called \emph{an \adeend, resp. \wade double cover.}
\end{definition}

\begin{remark}\label{rem:BC-transversal}
  By \eqref{lem:pairs-and-covers}(5) the points of intersection
  $B\cap C$ are nonsingular points of $Y$, and the log canonicity
  of \ycb implies that $B$ intersects $C$ transversally. Consequently,
  $R$ intersects $D$ transversally at smooth points of $X$.
\end{remark}

By construction, the \ade and \wade surfaces will admit a
combinatorial classification.  Since the word \emph{type} is overused,
we call the classes \emph{shapes}. To each shape we associate:
\begin{enumerate}
\item a decorated \ade or affine, extended \wade Dynkin diagram,
\item a decorated Dynkin symbol, e.g $\mA^-_5$ or $\wE_8^-$,
\item an ordinary \adeend, resp. affine \wade root lattice, e.g. $A_5$
  or $\wE_8$.
\end{enumerate}
Parts (1) and (2) are equivalent, and (3) may be obtained from them by
deleting the decorations.
The main reason for this association will become apparent later, in
the description of the moduli spaces and their compactifications.  But
in the cases where $Y$ is toric and $C$ is part of its toric boundary, they
also encode some data about the defining polytope.

\smallskip

We divide the shapes into two classes, which we call
\emph{pure} and \emph{primed}. \ade and \wade surfaces of pure
shape are fundamental, we 
define them all explicitly in subsections~\ref{sec:toric-shapes}
and \ref{sec:wA-shapes}.
In type III the pure shapes form 5 infinite families along with 3
exceptional shapes. In type II there are 2 infinite families 
and 4 exceptional shapes. 

The \ade and \wade surfaces of primed shape are secondary and there are many more of them; 
they can all be obtained from surfaces of
pure shape by an operation which we call ``priming'', explained in
subsection~\ref{sec:priming}.

\subsection{Toric pure shapes}
\label{sec:toric-shapes}

The \ade surfaces (type III) of pure shape are all toric, 
as are 3 of the \wade surfaces (type II) of pure shape. 
To construct them we
begin with polarized toric surfaces $(Y,L)$, where $L =
-2(K_Y+C)$. Such toric surfaces correspond in a 
standard way with  lattice polytopes $P$ with vertices in
$M\simeq\bZ^2$. 

\begin{lemma}\label{lem:toric}
  Let $P$ be an integral polytope with a distinguished vertex~$p_*$
  and $(Y,L)$ be the corresponding polarized projective toric
  variety.  Let $C$ be the torus-invariant divisor corresponding to
  the sides passing through~$p_*$.  Suppose that all the other sides
  of $P$ are at lattice distance 2 from $p_*$. Then
  $-2(K_Y+C) \sim L$ is ample and Cartier, and the pair $(Y,C)$ has log
  canonical singularities.
\end{lemma}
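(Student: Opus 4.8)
\emph{Setup.} The plan is a short toric computation; the one delicate point is log canonicity, which I would establish by pulling the right divisor back to a toric log resolution. After translating $P$ by a lattice vector I may assume that the distinguished vertex is $p_*=0$; this changes neither $Y$ nor $L$. I then invoke the standard correspondence between lattice polytopes and polarized toric varieties: writing
\[
  P=\bigl\{\,m\in M_{\mathbb R}:\ \langle m,u_F\rangle\ge -a_F\ \text{ for every side }F\,\bigr\},
\]
with $u_F$ the primitive inner normal of the side $F$, one has $Y=Y_P$ (the toric variety of the normal fan of $P$) and $L=\cO_Y(D_P)$ with $D_P=\sum_F a_F D_F$, where $D_F$ is the torus-invariant divisor attached to $F$. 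Because $P$ is a lattice polytope the support function of $D_P$ is integral and strictly convex on the normal fan, so $D_P$ is an ample Cartier divisor.

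\emph{First assertion.} Next I read off the $a_F$. If a side $F$ passes through $p_*=0$ then $0\in F$ forces $a_F=0$. For any other side $F$ the line supporting it is $\{\langle m,u_F\rangle=-a_F\}$ with $u_F$ primitive, so the lattice distance from $p_*=0$ to that line is $a_F$; by hypothesis it equals $2$, hence $a_F=2$. Since $K_Y=-\sum_F D_F$ and $C=\sum_{F\ni p_*}D_F$, this yields
\[
  -2(K_Y+C)=2\sum_F D_F-2\sum_{F\ni p_*}D_F=2\sum_{F\not\ni p_*}D_F=\sum_F a_F D_F=D_P ,
\]
which by the previous paragraph is ample and Cartier and represents $L$; in particular $2(K_Y+C)$ is Cartier.

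\emph{Log canonicity.} Let $\phi\colon\widetilde Y\to Y$ be a toric log resolution, so $\widetilde Y$ is smooth, every $\phi$-exceptional prime divisor is a component of the simple normal crossing toric boundary $\partial\widetilde Y$, and, writing $\Gamma:=\sum_{F\not\ni p_*}D_F$ and $\widetilde{\,\cdot\,}$ for strict transform, $\partial\widetilde Y=\sum_E E+\widetilde C+\widetilde\Gamma$ (sum over the $\phi$-exceptional divisors $E$). The toric identities $K_Y=-\partial Y$ and $K_{\widetilde Y}=-\partial\widetilde Y$ give $K_Y+C=-\Gamma$, which is $\mathbb Q$-Cartier of index dividing $2$ by the step above; hence $\phi^*(K_Y+C)=-\phi^*\Gamma$, and since $\Gamma$ is effective and $\mathbb Q$-Cartier its pullback is effective, say $\phi^*\Gamma=\widetilde\Gamma+\sum_E c_E E$ with all $c_E\ge 0$. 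Combining this with $K_{\widetilde Y}+\widetilde C=-\sum_E E-\widetilde\Gamma$ (immediate from $\partial\widetilde Y=\sum_E E+\widetilde C+\widetilde\Gamma$ and $K_{\widetilde Y}=-\partial\widetilde Y$) gives
\[
  K_{\widetilde Y}+\widetilde C=\phi^*(K_Y+C)+\sum_E(c_E-1)\,E .
\]
So every $\phi$-exceptional divisor has discrepancy $c_E-1\ge -1$ over $(Y,C)$; as $\widetilde C+\sum_E E\subseteq\partial\widetilde Y$ is simple normal crossing with all coefficients $\le 1$, the pair $(Y,C)$ is log canonical.

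\emph{Main difficulty.} None of the steps is genuinely hard; the log canonicity computation is the only nontrivial one, and the subtlety there is to argue through the \emph{effective} $\mathbb Q$-Cartier divisor $\Gamma=\partial Y-C$ — so that its pullback is visibly effective — rather than manipulating discrepancies abstractly. The other thing to watch is keeping the sign conventions consistent in the polytope–divisor dictionary.
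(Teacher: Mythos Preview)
Your proof is correct. The first assertion is essentially the paper's argument, just phrased via the polytope inequalities rather than via the zero divisor of the section $e^{p_*}\in H^0(Y,L)$; these are the same computation.

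For log canonicity you take a genuinely different route. The paper observes that $(Y,C+C')$, where $C'=\sum_{F\not\ni p_*}D_F$ is the rest of the toric boundary, is log canonical (this is the standard fact that a toric variety with its full reduced boundary is lc), and then monotonicity in the boundary gives that the smaller pair $(Y,C)$ is lc as well. Your approach instead computes the discrepancies explicitly on a toric log resolution, using that $\Gamma=C'$ is effective and $\mathbb{Q}$-Cartier so that $\phi^*\Gamma$ is effective. Both work; the paper's argument is a one-liner, while yours has the virtue of being self-contained and making the discrepancy bound $c_E-1\ge -1$ visible.
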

\begin{proof}
  Let $C'= \sum C'_i$ be the divisor corresponding to the sides not
  passing through the vertex $p_*$.  The zero divisor of the section
  $e^{p_*} \in H^0(Y,L)$ is $\sum d_i C'_i$ where $d_i$ are the
  lattice distances from $p_*$ to the corresponding sides.  This gives
  $L\sim 2C'$.  Combining it with the identity $K_Y + C+C'\sim 0$
  gives the first statement. It is well known that the pair $(Y,C+C')$
  has log canonical singularities. Thus, the smaller pair $(Y,C)$ also
  has log canonical singularities.
\end{proof}

\begin{definition}
  We now apply this Lemma to define some of our \ade and \wade surfaces 
  $(Y,C)$ of pure shape.  For each
  shape we list its decorated Dynkin symbol and the vertices of its
  defining polytope in Table~\ref{tab:toric-shapes}, and illustrate
  them with pictures in Figures~\ref{fig:A},
  \ref{fig:D},~\ref{fig:E},~\ref{fig:wDwE}.  In these Figures the
  sides of the polytope through $p_*$ are drawn in bold blue; they
  correspond to irreducible components of the divisor $C$.  Within 
  the polytopes we draw 
  the decorated Dynkin diagrams, the rules for doing
  this are explained in Notation~\ref{not:dynkin-diagrams}. Finally,
  we also label some of the lattice points $p_i$, for later use
  in Section~\ref{sec:moduli}. 
  
  The surface $Y$ of shape $\wD_{2n}$ is toric with a torus-invariant
  boundary $C$ only for $2n\ge6$. In the $\wD_4$ shape we formally
  define $(Y,C)$ to be either $\bP^1\times\bP^1$ with a smooth
  diagonal $C\sim s+f$ or, as a degenerate subcase, a quadratic cone
  $\bP(1,1,2)$ with a conic section.
\end{definition}

\begin{table}[htp!]
  \centering
  \renewcommand{\arraystretch}{1.1}
  \begin{displaymath}
    \begin{array}{lc|c|lll}
      \text{shape} & \min(n) & p_*& \text{polytope vertices} \\
      \hline
      A_{2n-1}&1& (0,2)& (0,2), (0,0), (2n,0) \\
      A_{2n-2}^-&1& (0,2)& (0,2), (0,0), (2n-1,0) \\
      \mA_{2n-3}^-&2& (0,2)& (0,2), (1,0), (2n-1,0) \\
      \hline
      D_{2n}& 2& (2,2)& (2,2), (0,2), (0,0), (2n-2,0) \\
      D_{2n-1}^-&3& (2,2) & (2,2), (0,2), (0,0), (2n-3,0)\\
      \hline
      \mE_{6}^-&& (2,2)& (2,2), (0,3), (0,0), (3,0)\\
      \mE_{7}&& (2,2)& (2,2), (0,3), (0,0), (4,0)\\
      \mE_{8}^-&& (2,2)& (2,2), (0,3), (0,0), (5,0)\\
      \hline
      \rule{0pt}{2.8ex}
      \wD_{2n} & 2& (2,2) & (0,2), (0,0), (2n-4,0), (4,2) \\
      \wE_7 && (2,2)& (0,4), (0,0), (4,0)\\
      \wE_8^- && (2,2)& (0,3), (0,0), (6,0)\\
      \hline
    \end{array}
\end{displaymath}
  \caption{Polytopes for the pure shapes}
  \label{tab:toric-shapes}
\end{table}

\begin{figure}[htp!]
  \centering
  \includegraphics{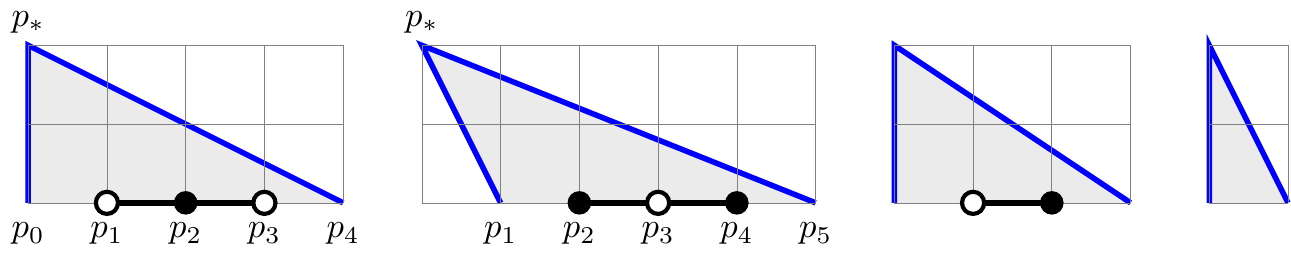}
  \caption{$A$ shapes: $A_3$, $\mA_3^-$, $A_2^-$, $A_0^-$}
  \label{fig:A}
\end{figure}

\begin{figure}[htp!]
  \centering
  \includegraphics{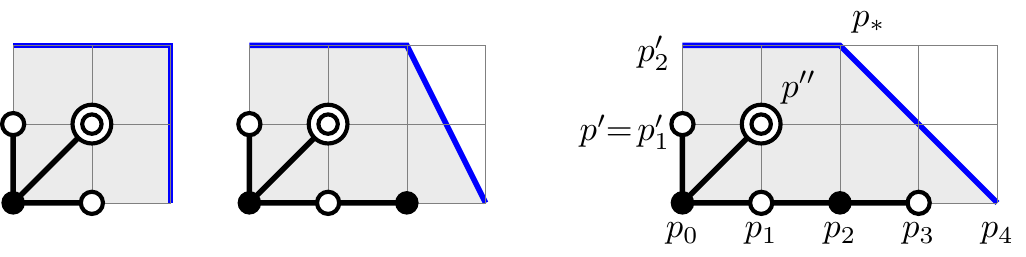}
  \caption{$D$ shapes: $D_4$, $D_5^-$, $D_6$}
  \label{fig:D}
\end{figure}

\begin{figure}[htp!]
  \centering
  \includegraphics{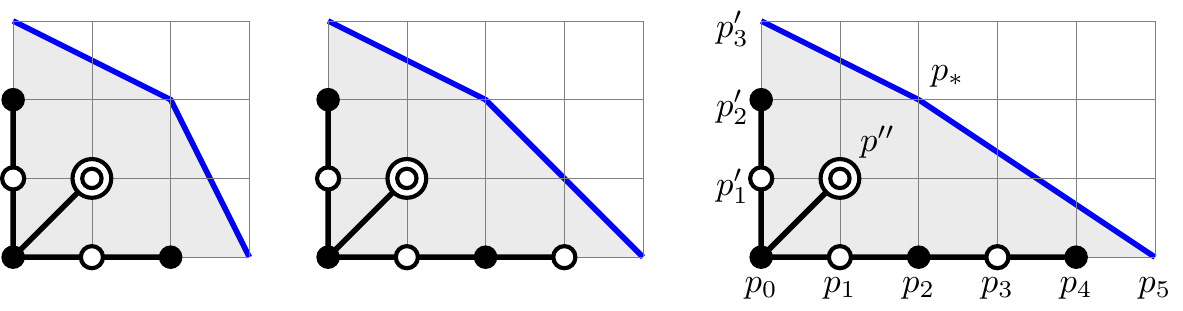}
  \caption{$E$ shapes: $\mE_6^-$, $\mE_7$, $\mE_8^-$}
  \label{fig:E}
\end{figure}

\begin{figure}[htp!]
  \centering
  \includegraphics{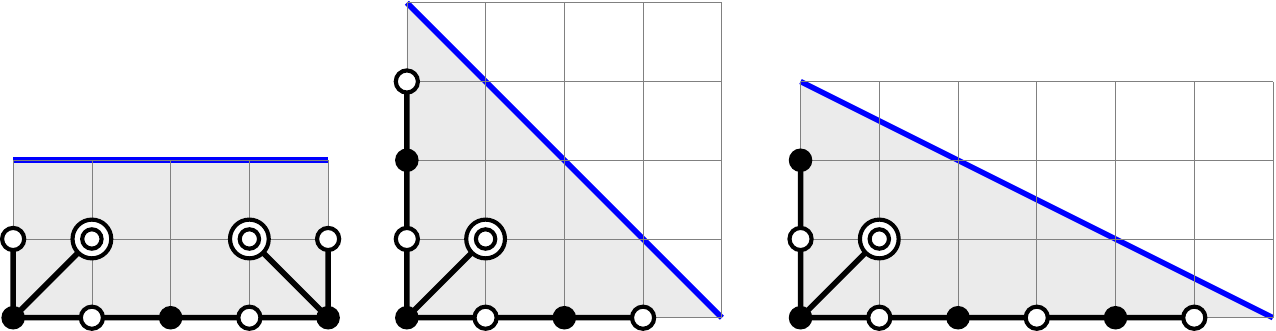}
  \caption{Type II shapes $\wD_8$, $\wE_7$ and $\wE_8^-$}
  \label{fig:wDwE}
\end{figure}

\begin{definition}
Given a surface $(Y,C)$ of pure shape, 
we call the irreducible
components of $C$ \emph{sides}. If $(Y,C)$ is of type III
there are two sides, we call them 
\emph{left} and \emph{right} and decompose 
$C = C_1 + C_2$ correspondingly. If $(Y,C)$ is of type II
there may be one side or no sides.

Let $L = -2(K_Y+C)$. We call a side $C'$ \emph{long} if 
$L.C' = 2$ or $4$, and \emph{short} if $L.C' = 1$ or $3$.
\end{definition}

In the type III cases illustrated in Figures~\ref{fig:A},
\ref{fig:D},~\ref{fig:E}, long sides have lattice length 2 and short
sides have lattice length 1.  In the type II cases illustrated in
Figure~\ref{fig:wDwE}, long sides have lattice length 4 and short
sides have lattice length 3.

Within each polytope in Figures~\ref{fig:A},
\ref{fig:D},~\ref{fig:E},~\ref{fig:wDwE} we draw the corresponding
decorated Dynkin diagram, using the following rule.

\begin{notation}\label{not:dynkin-diagrams}
  Given a surface of pure shape $(Y,C)$ defined
  torically by a polytope $P$ as above, mark a node
  for each lattice point on the boundary of
  of $P$ which is not contained in $C$, and join them
  with edges along the boundary. For any node that 
  lies at a corner of $P$, add an additional \emph{internal}
  node to the diagram and connect it to the corner node. 
  We distinguish such internal nodes by circling them in our diagrams.
  
  This process associates an \ade (resp. \wadeend) diagram to each of
  our torically-defined pure shapes of type III (resp. type II), but
  it does not give a bijective correspondence between diagrams and
  shapes. To fix this we also need to keep track of the parity.  We
  color the nodes of a diagram lying at lattice length 2 from $p_*$
  black, and the nodes lying at lattice length 1 from $p_*$
  white. Internal nodes are always colored white.
\end{notation}  
  
In the type III cases, note that each diagram has a leftmost and rightmost node, 
which sit next  to the left and right sides respectively. The length of the sides may be read off
from the colors of these nodes: white nodes correspond to long sides and
black nodes to short sides. 
  
\begin{notation} For ease of reference, to each decorated Dynkin diagram we
also  associate a decorated Dynkin symbol, in a unique way.
For the pure shapes, this is given by the name of the (undecorated) Dynkin diagram,
with superscript minus signs on the left/right to denote the locations of short sides;
as noted above, this can be read off from the colors of  the nodes at the ends of the diagram.
For instance, as illustrated in Figure~\ref{fig:A}, $A_3$ has two long sides, $\mA_3^-$ has two
short sides, and $A_2^-$ has a long side on the left and a short side on the right.
In type II cases, which have only one side, we place all decorations on the right 
by convention.
\end{notation} 

\begin{remark} \label{rem:left-right}
With this notation, the two shapes $\mA_{2n-2}$ and $A_{2n-2}^-$
are identical up to labeling of the components of $C$. Where this labeling is
unimportant, we will refer to these surfaces by the symbol $A_{2n-2}^-$, with the 
short side on the right. There are, however, some settings in which it will be important
to keep track of the labels, such as when we come to study degenerations.
\end{remark}

\begin{remark}\label{rem:no-wE6}
  Curiously, there is no $\wE_6$ shape. In our ad hoc definition above, 
  the process of adding internal nodes can only produce branches 
   of length 2. This rules out
  Dynkin diagram $\wE_6$, which has three branches of length 3. 
  A deeper reason is that in Arnold's classification of singularities
  \cite{arnold1972normal-forms} the $\wE_7$ and $\wE_8$ singularities
  exist in all dimensions $\ge2$, but $\wE_6$ starts in dimension 3 and so
  cannot appear on a surface.
\end{remark} 
  
\subsection{Nontoric $\wA$ shapes}
\label{sec:wA-shapes}

In addition to the toric surfaces described above, there are also
three nontoric \wade surfaces (type II) of pure shape. These are the
$\wA$ shapes, their decorated Dynkin diagrams and symbols are chosen
to be compatible with moduli and degenerations, although they do not
admit the same nice description in terms of polytopes as the toric
shapes. They are illustrated in Figure~\ref{fig:wA}.

 {\bf (1) $\bm{\wA_{2n-1}}$.} The surface $Y$ is a cone over an elliptic
curve and $C=0$, so there is no boundary.  More precisely, let $\cF$
be a line bundle of degree $n>0$ on an elliptic curve $E$, and let
$\wY$ be the surface $\Proj_E(\cO \oplus \cF)$. Let $s,s_\infty$ be
the zero, resp. infinity sections, and let $f\colon\wY\to Y$ be
the contraction of the zero section. Then $f^*K_Y=K_\wY+s= -s_\infty$,
so $-K_Y$ is ample with $K_Y^2=n$. If $B\in |-2K_Y|$ is a generic
section then $p_a(B)=n+1$ and the map $B\to E$ has $2n$ points of
ramification.  Of course, the surface $Y$ is not toric. The double
cover $X\to Y$ branched in $B$ is unramified at the singular point,
and $X$ has two elliptic singularities. One has $R^2 = 2K_Y^2 = 2n$.

{\bf (2) $\bm{\wA_{1}^*}$.} The surface is the projective plane
$Y=\bP^2$, the boundary $C$ is a smooth conic, and the branch curve
$B$ is a possibly singular conic. If $B$ is smooth then the double cover
$X=\bP^1\times\bP^1$; if $B$ is two lines then $X=\bF_2^0=\bP(1,1,2)$
with $R$ passing through the singular point of $X$. 
We also include here as a
degenerate subcase when $\bP^2$ degenerates to $Y=\bF_4^0$. Then
$X=\bF_2^0$ with $R$ not passing through the singular point.

{\bf (3) $\bm{\wA_{0}^-}$.} The surface is the quadratic cone
$Y=\bP(1,1,2)$ with minimal resolution $\wY=\bF_2$. The strict
preimage of $C$ on $\wY$ is a divisor in the linear system
$|s+3f|$, where $s$ is the $(-2)$-section and $f$ is a fiber.
The curve $C$ passes through the vertex of the cone and is
smooth at that point. The branch curve $B$ is a hyperplane section
disjoint from the vertex.  The double cover is $X=\bP^2$ with an
involution $(x,y,z)\mapsto (x,-y,z)$, and the boundary divisor is a
smooth elliptic curve $y^2z=f_3(x,z)$.

\begin{figure}[htp!]
  \centering
  \includegraphics{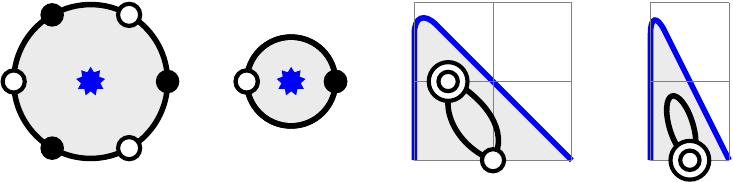}
  \caption{Nontoric type II $\wA$ shapes: $\wA_5$, $\wA_1$,
    $\wA_1^*$, $\wA_0^-$}
  \label{fig:wA}
\end{figure}
The surface $Y$ of shape $\wA_1^*$ is obtained by a ``corner
smoothing'' of a surface of toric shape $A_1$: the union of two lines
$C_1+C_2$ in $\bP^2$ is smoothed to a conic $C$.  Similarly, $\wA_0^-$
is obtained by a ``corner smoothing'' of $A_0^-$. We add the star in
$\wA_1^*$ to distinguish it from the ordinary $\wA_1$ shape, which has
no boundary.

\begin{remark}
  One observes that the $\wA$ shapes cannot be toric because the Dynkin
  diagram is not a tree.
\end{remark} 
  
 \begin{remark}
  With the single exception of $\wA_1^*$, all of our decorated Dynkin graphs are
  bipartite: black and white nodes appear in alternating order.
 \end{remark}
  
 \subsection{Primed shapes}
\label{sec:priming}

Priming is a natural operation producing a new del Pezzo surface $(\oY',\oC')$ of
index 2 from an old one $(Y,C)$.  Let $I_i\simeq (y,x^2)$ be an ideal with
support at a smooth point $P_i\in C$ whose direction is transversal to
$C$. A weighted blowup at $I_i$ is a composition of two ordinary
blowups: at $P_i$ and at the point $P_i'$ corresponding to the direction
of $I_i$, followed by a contraction of an $(-2)$-curve, making an $A_1$ surface
singularity at a point contained in the strict preimage $C'$ of $C$. Weighted blowups 
of this form are the basis of the priming operation.

\begin{definition}\label{def:priming}
  Let $(Y,C)$ be an \ade or \wade surface and let
  $P_1,\dotsc, P_k \in C$ be distinct nonsingular points of $Y$ and
  $C$. Choose ideals $I_i\simeq (y,x^2)$ with supports at $P_i$ and
  directions transversal to $C$ (the closed subschemes
  $\Spec\cO_Y/I_i$ can be thought of as vectors).
  Let $k_s$ denote the number of points
  on side~$C_s$, so  $k=\sum k_s$.
  Define $f\colon Y' \to Y$ to be the weighted blowup at 
  $I=\prod_{i=1}^kI_i$ and let $C'$ be the strict preimage of $C$.
  Let $F=\sum F_i$ be the sum of the exceptional
  divisors and $L'=-2(K_{Y'}+C')$; note that
  $L'$ is a line bundle since an $A_1$ singularity has index~2.
  
  \emph{Assume that $L'$ is big, nef, and semiample.} Then the
  \emph{priming of $(Y,C)$} is defined to be the pair $(\oY',\oC')$
  obtained by composing $f$ with the
   contraction $g\colon Y'\to \oY'$ given by $|NL'|$, $N\gg0$.
   The divisor $\oC'$ is defined to be the strict transform of $C$.
   The resulting pair $(\oY',\oC')$ is an  \ade or \wade surface of primed shape.
\end{definition}

\begin{remark}
  Priming has a very simple geometric meaning for the pairs
  $(Y,C+\frac{1+\epsilon}2 B)$.
  Let $B\in |L|$ be a curve such that \ycb is log
  canonical. By \eqref{rem:BC-transversal} the curve $B$ is
  transversal to $C$. In this case we take the ideals $I_i$ to be
  supported at some of the points $P_i\in B\cap C$, with the
  directions equal to the tangent directions of $B$ at $P_i$. 
  Priming then produces a new pair $(\oY', \oC' + \freps \oB')$ which
  disconnects $B$ from $C$ at the points $P_i$. 
  If a component of $C'$ is completely disconnected from $B'$ then it
  is contracted on $\oY'$.

  But it is on the double cover $\pi\colon (X,D+\epsilon R)\to
  (Y,C+\freps B)$
  where the priming operation becomes
  the most natural and easiest to understand.  The double cover $X'$
  of $Y'$ branched in $B'$ is an ordinary smooth blowup of $X$ at the
  points $Q_i=\pi\inv(P_i)$. So on the cover we simply make
  $k$ ordinary blowups at some points $Q_i\in D\cap R$ in the boundary
  $D$ which are fixed by the involution, then apply the linear system
  $|NR'|$, $N\gg0$, provided that $R'$ is big, nef and semiample, to
  obtain the primed pair $(\oX',\oD'+\epsilon \oR')$.  
  This disconnects $R$ from $D$ at the points $Q_i$.
  If a component of $D'$ is completely disconnected from $R'$ then it
  is contracted on $\oX'$.
\end{remark}

\begin{definition}
  In terms of the pairs, we will call the above operation
  \emph{priming of an \adeend (resp. \wadeend) pair
    \ycbend}, resp.  \emph{priming of an
    \adeend (resp. \wadeend) double cover \xdrend.} The
    result is an \adeend (resp. \wadeend) pair/double cover of
    primed shape.
\end{definition}

We reiterate that a priming only exists if  $L'$ is big, nef, and semiample. 
Below we will give a necessary and sufficient condition for existence
of a priming that is easier to check; before that, however, we need to introduce 
some basic invariants.

\begin{definition}
  The basic numerical invariants of an \ade or \wade surface $(Y,C)$,
  with polarization $L=-2(K_{Y}+C)$, are
  \begin{enumerate}
  \item the volume $v = L^2/2 >0$,
  \item the genus $g = \frac12 (K_Y+L)L -1 \ge0$,
  \item the lengths $LC_s>0$ of the sides.
  \end{enumerate}
\end{definition}
The Hilbert polynomial of $(Y,L)$ is
$\chi(Y, xL) = v x^2 + (v+1-g)x + 1$. The Hilbert polynomials of
$(C_s,L)$ are $2x+1$ for a long side and $x+1$ for a short side.
It is immediate to compute these invariants for the pure shapes. 
We list them in the highlighted rows of Tables~\ref{tab:primed-III} and
\ref{tab:primed-II}.

\begin{lemma}\label{lem:priming-changes} With notation as in Definition \ref{def:priming}:
  \begin{enumerate}
  \item For the main divisors, one has
    \begin{displaymath}
      C'=f^*C-F, \quad K_{Y'}=f^*K_Y+2F, \quad
      L'=f^*L-2F, \quad K_{Y'}+L' = f^*(K_Y+L).
    \end{displaymath}
  \item The basic invariants change as follows:
    \begin{displaymath}
      L'{}^2/2 = L^2/2 - k, \quad g(L')=g(L), \quad L'C_s' = LC_s - k_s.
    \end{displaymath}
  \end{enumerate}
\end{lemma}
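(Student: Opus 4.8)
The plan is to compute everything via the weighted blowup $f\colon Y'\to Y$ described in Definition~\ref{def:priming}, since the final contraction $g\colon Y'\to\oY'$ is crepant for $K+C$ (it contracts curves on which $-2(K_{Y'}+C')$ is zero) and hence preserves all the invariants of part (2). So it suffices to work out how $C$, $K$, and $L=-2(K_Y+C)$ transform under a single weighted blowup at an ideal $I_i\simeq(y,x^2)$, and then sum over the $k$ points.

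First I would recall the local picture: a weighted blowup at $I_i$ factors as two ordinary blowups $Y'\to Y_1\to Y$ — first at $P_i$, then at the point $P_i'$ on the first exceptional curve in the direction of $I_i$ — followed by contracting the strict transform of the first exceptional $(-1)$-curve, which has become a $(-2)$-curve. If $E_1, E_2$ denote the total transforms of the two exceptional divisors on the double blowup, the $(-2)$-curve is $E_1-E_2$ and it gets contracted; the image of $E_2$ is the exceptional $A_1$-curve $F_i$ on $Y'$, and a short computation (e.g. in the $A_1$ cyclic quotient chart, or by pushing forward) gives the pullback relations. On the double blowup one has $K=\phi^*K_Y+E_1+2E_2$ and, since $C$ is smooth at $P_i$ and transversal to the direction of $I_i$, the strict transform of $C$ is $\phi^*C-E_1$. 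Pushing these relations forward through the contraction of $E_1-E_2$ — or, equivalently, using that on $Y'$ the curve $F_i=\bar E_2$ satisfies $F_i^2=-\tfrac12$ and $K_{Y'}\cdot F_i=-\tfrac12$ — yields $K_{Y'}=f^*K_Y+2F$, $C'=f^*C-F$, and therefore $L'=-2(K_{Y'}+C')=f^*(-2(K_Y+C))-2F=f^*L-2F$, plus the immediate consequence $K_{Y'}+L'=f^*(K_Y+L)$. This is part (1).

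For part (2), I would just intersect the formulas in (1). Using $f^*D_1\cdot f^*D_2=D_1\cdot D_2$, $f^*D\cdot F=0$, and $F^2=F_1^2+\dots+F_k^2=-k/2$ (the $F_i$ are disjoint $A_1$-curves, one per point), one gets
\[
L'^2 = (f^*L-2F)^2 = L^2 - 4F^2 = L^2 - 4\cdot(-k/2)\cdot(-1)\dots
\]
— here I need to be careful with the sign of the self-intersection: each $F_i$ is a $(-\tfrac12)$-curve, so $F^2=-k/2$ and $(2F)^2=4F^2=-2k$, giving $L'^2=L^2-2k$, i.e. $L'^2/2=L^2/2-k$ as claimed. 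The genus is $g(L')=\tfrac12(K_{Y'}+L')L' - 1$; since $K_{Y'}+L'=f^*(K_Y+L)$ and $L'=f^*L-2F$, the cross term $f^*(K_Y+L)\cdot F$ vanishes, so $(K_{Y'}+L')L'=(K_Y+L)L$ and $g(L')=g(L)$. Finally $L'\cdot C_s'$: the strict transform $C_s'$ of the side containing $P_{i}$ satisfies $C_s'=f^*C_s - \sum_{P_i\in C_s} F_i$ (each such $P_i$ lies on $C_s$, and $C'=f^*C-F$ decomposes side by side), so $L'\cdot C_s' = (f^*L-2F)\cdot(f^*C_s-\sum_{P_i\in C_s}F_i) = L\cdot C_s - 2\sum_{P_i\in C_s}F_i^2 = L\cdot C_s - 2\cdot(-\tfrac12)k_s = L\cdot C_s - k_s$. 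Summing/bookkeeping over sides gives exactly the stated $L'C_s' = LC_s - k_s$.

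The one genuinely delicate point — the main obstacle — is getting the intersection-theoretic data of the exceptional $A_1$-curve $F_i$ on $Y'$ right: $Y'$ is singular at the $A_1$ point on $F_i$, so intersection numbers are $\tfrac12\mathbb{Z}$-valued and one must justify $F_i^2=-\tfrac12$, $K_{Y'}\cdot F_i=-\tfrac12$, and $f^*D\cdot F_i=0$ rather than guessing them. I would pin these down either by the minimal-resolution computation sketched above (contracting the $(-2)$-curve $E_1-E_2$ from the smooth double blowup) together with the projection formula, or by the standard formulas for weighted blowups with weights $(1,2)$. Once those local numbers are fixed, everything else is a mechanical expansion of squares, and the invariance of all quantities under the further contraction $g$ is automatic because $g$ contracts precisely the $L'$-trivial curves and is crepant for $K_{Y'}+C'$ by construction, so $(\oY',\oC')$ has the same $v$, $g$, and side-lengths as $(Y',C')$.
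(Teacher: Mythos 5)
Your computation is correct in substance and is exactly the verification the paper leaves to the reader: Lemma~\ref{lem:priming-changes} is stated without proof, and factoring the weighted blowup through the smooth double blowup and then pushing classes through the crepant contraction of the $(-2)$-curve is the intended route. The numbers you need are right where it matters: in total-transform classes $c^*F_i=\tfrac12(E_1+E_2)$, whence $F_i^2=-\tfrac12$, pullbacks from $Y$ meet $F_i$ trivially, and the four formulas of part (1) follow; and the passage to $(\oY',\oC')$ is indeed harmless for part (2), since $g$ is $(K+C)$-crepant and contracts only $L'$-trivial curves.

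That said, three of your intermediate numbers are off even though every final formula is right, and they should be fixed before this is written up. (i) With $E_1,E_2$ denoting \emph{total} transforms one has $K=\phi^*K_Y+E_1+E_2$; the coefficient $2$ on $E_2$ appears only when the first class is the strict transform $E_1-E_2$. (ii) $K_{Y'}\cdot F_i=(f^*K_Y+2F)\cdot F_i=2F_i^2=-1$, not $-\tfrac12$ (adjunction with the $\tfrac12$ different at the $A_1$ point confirms this); had you actually used $-\tfrac12$ to pin down the discrepancy you would have obtained $K_{Y'}=f^*K_Y+F$ rather than $f^*K_Y+2F$. (iii) In the expansions of $L'{}^2$ and $L'\cdot C_s'$ the cross terms enter with a plus sign, $+4F^2=-2k$ and $+2\sum_{P_i\in C_s}F_i^2=-k_s$; the signs displayed in your proposal ($-4F^2$ and $-2\sum F_i^2$) are flipped relative to the answers you then correctly write down.
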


\begin{theorem}[Allowed primings] \label{thm:priming}
  Let $(Y,C)$ be an \ade or \wade surface of pure shape, as defined in
  sections~\ref{sec:toric-shapes} and \ref{sec:wA-shapes}, and
  $I_1,\dotsc, I_k$ a collection of ideals as in
  Definition~\ref{def:priming}. 
  Then a necessary and sufficient condition for a priming to exist is: $L'{}^2>0$ and
  $L'C'_s\ge0$ for the sides $C_s$. Under these conditions, $L'$ is
  big, nef, and semiample, and contracts $Y'$ to a normal surface
  $\oY'$ with ample Cartier divisor $-2(K_{\oY'}+\oC')$.
\end{theorem}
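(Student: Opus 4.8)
The plan is to separate the two directions. For necessity: if a priming exists then by definition $L' = -2(K_{Y'}+C')$ is big and nef after the weighted blowup, so $L'^2 > 0$ is immediate, and $L'.C'_s = (L'|_{\oY'}).(\text{image of }C'_s) \ge 0$ follows from nefness (here I use Lemma~\ref{lem:priming-changes}(2), which records $L'^2/2 = L^2/2 - k$ and $L'.C'_s = L.C_s - k_s$, so these are genuinely numerical constraints on how many points one may blow up on each side). For sufficiency, the real content, I would argue that $L'$ is nef by checking it against every irreducible curve on $Y'$, then that it is big from $L'^2>0$, and finally invoke the basepoint-free theorem to get semiampleness; the contraction $g\colon Y' \to \oY'$ then exists, and one checks $-2(K_{\oY'}+\oC')$ is the ample Cartier divisor it defines.

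The key steps, in order. First, describe explicitly the curves on $Y'$ that could be $L'$-negative: these are the exceptional $(-2)$-curves of the weighted blowups (which $L'$ meets in $0$ by Lemma~\ref{lem:priming-changes}(1), since $L' = f^*L - 2F$ and $F$ is $f$-trivial on the contracted $(-2)$-curve), the strict transforms $C'_s$ of the sides (handled by the hypothesis $L'.C'_s \ge 0$ together with Lemma~\ref{lem:priming-changes}(2)), and the strict transforms of the other torus-invariant curves in the toric pure-shape cases (or the analogous short list of negative curves on the minimal resolution in the nontoric $\wA$ cases). Since $L = -2(K_Y+C)$ is ample on $Y$, any curve on $Y'$ not contracted by $f$ and not one of these finitely many boundary/exceptional curves has $L'.\gamma = f^*L.\gamma - 2F.\gamma = L.(f_*\gamma) - 2(\text{mult}) > 0$ once $f_*\gamma$ is not a side and $\gamma$ does not pass through the blown-up points with high multiplicity — so the verification reduces to a finite check that can be done shape by shape (or uniformly, by noting the only curves through the $P_i$ with the transversal direction $I_i$ are $C$ itself and curves in $|L|$). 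Second, $L'^2 > 0$ with $L'$ nef gives bigness. Third, apply Kawamata–Shokurov basepoint-freeness to $L' = -2(K_{Y'}+C') = K_{Y'} + (-3K_{Y'} - 2C')$ — or more simply note $L'$ is a nef and big Cartier divisor on a surface with $K_{Y'}+C'+\text{(fractional piece)}$ log canonical, so $mL'$ is basepoint free for $m \gg 0$ — yielding the morphism $g\colon Y' \to \oY'$ to a normal projective surface. Fourth, push down: $\oC' = g_*C'$, and $g^*(-2(K_{\oY'}+\oC')) = L'$ shows $-2(K_{\oY'}+\oC')$ is Cartier and ample, so $(\oY',\oC')$ is a log del Pezzo of index $\le 2$; that the only new singularities are $A_1$ points on $\oC'$ (as asserted around Definition~\ref{def:priming}) comes from analyzing which curves $g$ contracts, namely $(-2)$-curves of the weighted blowups whose image lies on $\oC'$, plus possibly a whole side $C'_s$ when $L'.C'_s = 0$, which just removes that side.

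The main obstacle I expect is the \emph{nefness verification} — specifically, ruling out the existence of some unexpected $L'$-negative curve on $Y'$ once one or more of the $P_i$ are chosen. The weighted blowup introduces an $A_1$ singularity, so $Y'$ is singular and one must be careful computing intersection numbers with $\bQ$-Cartier divisors; and a priori a curve that is, say, tangent to $C$ to high order at a $P_i$, or passes through several of the $P_i$, could drop in $L'$-degree. The resolution is that the condition $I_i \simeq (y, x^2)$ pins down exactly one infinitely-near point, and the only curves in $|L|$ (or lower-degree effective curves) sharing that $2$-jet with $C$ at $P_i$ are controlled because $L = 2C'$ (the complementary part of the toric boundary) in the toric cases, and by the explicit geometry in the $\wA$ cases; so the finitely many negative curves on $Y'$ are precisely the ones listed, and their $L'$-degrees are exactly the expressions in Lemma~\ref{lem:priming-changes}(2). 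Once that enumeration is pinned down the rest is formal, so I would organize the write-up to isolate this enumeration as the one place where the explicit classification of pure shapes is used.
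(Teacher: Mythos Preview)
Your necessity argument and the final step (semiampleness via basepoint-freeness, then pushforward to get an ample Cartier $-2(K_{\oY'}+\oC')$) are fine and match the paper. The gap is exactly where you flag it: your nefness verification for general curves on $Y'$ is not actually carried out, and the claim that it ``reduces to a finite check that can be done shape by shape'' is not justified. For an arbitrary curve $\gamma$ on $Y'$ you need $L\cdot f_*\gamma \ge 2F\cdot\gamma$, and you give no mechanism to bound the multiplicity term $F\cdot\gamma$ in terms of $L\cdot f_*\gamma$; curves with high contact to the blown-up directions are not a priori finite in number, and your remark about ``curves in $|L|$ sharing that $2$-jet'' does not control this. (A small related slip: on $Y'$ the weighted blowup has already contracted the $(-2)$-curve to an $A_1$ point, so the exceptional locus is the $\bP^1$ with $F^2=-\tfrac12$ and $L'\cdot F = 1$, not a $(-2)$-curve with $L'$-degree $0$.)

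The paper bypasses all of this with one identity you do not use. From Lemma~\ref{lem:priming-changes}(1) one has $K_{Y'}+L' = f^*(K_Y+L)$, hence
\[
\tfrac12 L' \;=\; -(K_{Y'}+C') \;=\; (K_{Y'}+L') + C' \;=\; f^*(K_Y+L) + C'.
\]
So for any irreducible curve $\gamma$ on $Y'$, $\tfrac12 L'\cdot\gamma = (K_Y+L)\cdot f_*\gamma + C'\cdot\gamma$. The first term is $\ge 0$ whenever $K_Y+L$ is nef on the \emph{original} surface $Y$; the second is $\ge 0$ unless $\gamma$ is a component $C'_s$, which is exactly the case handled by the hypothesis $L'\cdot C'_s\ge 0$. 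Thus nefness of $L'$ reduces to nefness of $K_Y+L$ on $Y$, a question independent of the points $P_i$ and the ideals $I_i$. The paper then checks $K_Y+L$ is nef for every pure shape except $A_1$ and $\wA_1^*$ (Picard-rank-$1$ cases use $g\ge 0$; the $D/\wD$ cases give a $\bP^1$-fibration or $0$; the $E$ cases are big and nef), and disposes of $A_1$, $\wA_1^*$ by hand. This is the missing idea; once you have it, your ``main obstacle'' evaporates and no curve-by-curve enumeration on $Y'$ is needed.
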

\begin{proof}
  The conditions $L'{}^2>0$ and $L'C'_s\ge0$ are necessary since $L'$
  is big and nef. Now assume that they are satisfied. We exclude
  $\wA_{2n-1}$ since its boundary is empty and no primings are
  possible. We can also exclude the shapes of volume 1, which are
  $A_0^-$ and~$\wA_0^-$.  By \eqref{lem:priming-changes} one has
  \begin{displaymath}
    \frac12 L'= (K_{Y'}+L') + C' = f^*(K_Y+L) + C'.    
  \end{displaymath}
  Thus, if $K_Y+L$ is nef then $L'$ is nef. One checks that for all
  the pure shapes except for $A_1$ and $\wA_1^*$ the divisor $K_Y+L$
  is nef. Indeed, the surfaces of $A_{2n-1}$, $A^-_{2n-2}$,
  $\mA^-_{2n-3}$ and $\wE_7$, $\wE_8^-$ shapes have Picard rank 1, so
  $K_Y+L$ is nef iff the genus $g\ge0$, i.e. all except $A_1$ and the
  excluded $A_0^-$. For the $D_4, \wD_4$ shapes one has $K_Y+L=0$. For
  the other $D_{2n}$, $D^-_{2n-1}$, $\wD_{2n}$ shapes $K_Y+L$ gives a
  $\bP^1$-fibration.  Finally, for the $E$ shapes the divisor $K_Y+L$
  is big and nef: for $\mE^-_6$ it is ample, for $\mE_7$ it contracts
  the left side $C_1$ to an $\wE_7$ surface, and for $\mE_8^-$ it
  contracts the right side $C_2$ to an $\wE_8^-$ surface.

  The remaining shapes $A_1$ and $\wA_1^*$ are easy to check
  directly. In both cases $Y=\bP^2$ and $C$ is a conic: two lines for
  $A_1$ and a smooth conic for $\wA_1^*$. The divisor $L'$ is big and
  nef and contracts a $(-1)$-curve $E'$, the strict preimage of a line
  $E$ with the direction of the ideal $I$, to a surface of shape
  $A_0^-$, resp. $\wA_0^-$.

  Since $\frac12 L'$ is of the form $-(K_Y'+C')$, if it is big and nef
  then it is automatically semiample, see
  e.g. \cite[Thm.6.1]{fujino2012minimal-model}. This concludes the
  proof.
\end{proof}

\begin{corollary}
  The shapes $A_{2n-1}$, $D_{2n}$, $\wD_{2n}$, $\wE_7$ can be primed a
  maximum of 4 times, shapes $\wA_{2n-2}^-$, $\wD_{2n-1}^-$, $\wE_8^-$
  3 times, and $\mA^-_{2n-2}$, $\mE_6^-$, $\mE_8^-$ 2 times each.
\end{corollary}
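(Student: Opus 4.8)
The plan is to reduce the statement to a short numerical count using Theorem~\ref{thm:priming} together with the invariant formulas of Lemma~\ref{lem:priming-changes}. Writing $v=L^2/2$ for the volume, a priming of $(Y,C)$ at $k=\sum_s k_s$ points, $k_s$ of them on the side $C_s$, changes the polarization by $L'{}^2/2 = v-k$ and $L'C'_s = LC_s-k_s$, and by Theorem~\ref{thm:priming} such a priming exists precisely when $L'{}^2>0$ and $L'C'_s\ge0$, i.e.\ when $k<v$ and $k_s\le LC_s$ for every side. Since $v$, $k$ and the $LC_s$ are non-negative integers, the largest $k$ for which a priming of a given pure shape exists is exactly $\min\bigl(\sum_s LC_s,\ v-1\bigr)$, and any admissible distribution of that many centers among the sides can be realized: on each side $C_s\cong\bP^1$ the centers need only avoid the finitely many points which are singular on $Y$, lie on another component of $C$, or were introduced by earlier blow-ups.

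Next I would dispose of the question of \emph{iterating} primings. Composing a priming of $(Y,C)$ with a priming of the resulting surface is again a priming of $(Y,C)$ at the union of the two sets of centers. This is clearest on the double cover, where, by the discussion following Definition~\ref{def:priming}, a priming is a blow-up at some points of $D\cap R$ (which are fixed by the involution) followed by the contraction given by $|NR'|$; two such operations therefore compose to a blow-up at the combined set of points followed by a single contraction, the transversality of the chosen directions to the boundary ensuring that the strict transform of the boundary, and hence the set of available later centers, is not disturbed. Consequently ``primed $m$ times'' entails a priming of the original pure shape at some $k\ge m$ points, so by the previous paragraph $m\le\min\bigl(\sum_s LC_s,\ v-1\bigr)$, with equality achieved by priming one point at a time.

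It then remains to read off the two numbers for each pure shape. The side lengths are recorded together with the shapes: in type~III a long side has $LC_s=2$ and a short side $LC_s=1$, while in type~II a long side has $LC_s=4$ and a short side $LC_s=3$; hence $\sum_s LC_s=4$ for the shapes with two long sides ($A_{2n-1}$, $D_{2n}$) and for the type~II shapes with one long side ($\wD_{2n}$, $\wE_7$), $\sum_s LC_s=3$ for the shapes with one long and one short side ($A^-_{2n-2}$, $D^-_{2n-1}$, $\mE_7$) and for $\wE_8^-$, and $\sum_s LC_s=2$ for the shapes with two short sides ($\mA^-_{2n-3}$, $\mE^-_6$, $\mE^-_8$). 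The volume $v=L^2/2$ is the normalized area of the defining polytope (and equals $8$, $9$ for the exceptional type~II shapes $\wE_7$, $\wE_8^-$), recorded in the highlighted rows of Tables~\ref{tab:primed-III} and \ref{tab:primed-II}: one finds $v=2n$ for $A_{2n-1}$, $D_{2n}$, $\wD_{2n}$, $v=2n-1$ for $A^-_{2n-2}$, $D^-_{2n-1}$, $v=2n-2$ for $\mA^-_{2n-3}$, and $v=6,7,8$ for $\mE^-_6,\mE_7,\mE^-_8$. In each case $v-1\ge\sum_s LC_s$ as soon as the index in the family is large enough, and always for the sporadic shapes, so the maximal number of primings equals $\sum_s LC_s$, giving the stated values $4$, $3$, $2$; for small members of the families the bound $v-1$ takes over (for instance the volume-one shapes $A_0^-$, $\wA_0^-$ and the boundary-free $\wA_{2n-1}$ admit no priming at all, as already observed). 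The one genuinely delicate step is the composition argument of the second paragraph; everything else is the bookkeeping of the two inequalities of Theorem~\ref{thm:priming}.
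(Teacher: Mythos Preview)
Your proof is correct and follows exactly the approach the paper intends: the corollary is stated immediately after Theorem~\ref{thm:priming} with no separate proof, so it is meant as the direct numerical consequence you spell out, namely that the maximal $k$ is $\min\bigl(\sum_s LC_s,\ v-1\bigr)$, read off from the side lengths and volumes in Tables~\ref{tab:primed-III} and~\ref{tab:primed-II}. Your second paragraph on composing primings is not actually needed here, since in Definition~\ref{def:priming} a priming is already a single operation at $k$ points and ``primed $k$ times'' in the paper's usage simply means $k$ priming centers; but the observation is harmless and consistent with the remark on double covers.
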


\begin{remark}\label{rem:A1p}
  As we can see from the above proof, the cases $A_1' = A_0^-$ and
  $(\wA_1^*)'= \wA_0^-$ are special. Also, as we will see below, the
  dimension of the moduli space of pairs in these cases drops after priming, 
  but in all other cases it is preserved. For these reasons, and to avoid
  redundancy in our naming scheme, \emph{we do not allow primings of 
  $A_1$ and $\wA_1^*$.}
\end{remark}

We associate decorated Dynkin diagrams and symbols to primed shapes 
by modifying those of the corresponding pure shapes, as follows. Recall
that, in the pure Type III cases, each diagram has a leftmost and rightmost
node, which sit next to the left and right sides, and these nodes are 
colored white/black if and only if the corresponding side is long/short.

\begin{notation}
  For an \ade shape, when priming on a long side once we circle the
  corresponding white node, and when priming a second time we also circle
  the neighboring black node. In the Dynkin symbol we add a prime, resp. double prime on
  the left or right, depending on whether we are priming at a point of
  the left side $C_1$ or the right side $C_2$. When priming on a short
  side, we circle the corresponding black node once and turn the $-$
  superscript into a $+$ superscript (visually $-$ and $'$ gives $+$).

  For an \wade shape, we add up to 4 primes to the Dynkin symbol for a
  long side in $\wD_{2n}$ and $\wE_7$. We also turn $\wE_8^-$ into
  $\wE_8^+$ before adding up to two more primes. In the corresponding 
  decorated diagrams, we circle one node for each prime using the following rule: 
  first circle black nodes at the ends of the diagram,
  then white nodes at the ends of the diagram, then finally black
  nodes connected to circled white ones.
\end{notation}

\begin{remark} \label{rem:ambiguity} We note two pieces of mild
  ambiguity in this notation. The first is that the decorated diagrams
  for the two shapes $\ppA_{3}',\pA_{3}''$ and also for $\ppD'_4, \pD''_4$ are
  the same, so the diagram in these cases does not distinguish left and
  right sides. In practice this won't cause a problem: if we need to
  distinguish sides in these cases we will use the Dynkin symbols
  $\ppA_{3}',\pA_{3}''$, resp. $\ppD'_4,\pD''_4$.

The decorated diagrams for the shapes 
$\pA_2^+$ and $\ppA_2^-$ are also identical. 
In fact, in this case we find that the \ade surfaces
$\pA_2^+$ and $\mA_2''$ are isomorphic, so this is just another instance 
of the diagram not distinguishing left and right sides. These surfaces are 
obtained by priming  $\pA_2^-$ and $\mA_2'$, respectively,  once on the right 
and a surface of $\pA_2^-$ shape is left/right symmetric (in fact it has a
 toric description which makes this symmetry apparent, see 
Lemma~\ref{lemma:primed-toric} and Figure~\ref{fig:pA}). One way to
think of this symmetry is to consider  $\pA_2^- = \mA_2'$ as a
symmetric $\mD_2^-$ shape. 
\end{remark}

\begin{remark}
  If we wish to refer to an \ade or \wade surface with an unspecified
  decoration (i.e. either undecorated or one of $ -, \p, \pp, +$), we
  will use a question mark decoration $\mbox{}^?$.  For example,
  $A_{2n-1}^?$ refers to one of the surfaces $A_{2n-1}$, $A_{2n-1}'$,
  or $A_{2n-1}''$, while $\php A_{2n-2}^?$ refers to one of the
  surfaces $\php A_{2n-2}^-$ or $\php A_{2n-2}^+$.
\end{remark}

Note that circled white nodes can denote either internal nodes or long
sides on which a single priming has taken place. This apparent notational 
ambiguity will be explained in the following subsection.

  \begin{figure}[htp!]
    \centering
    \includegraphics{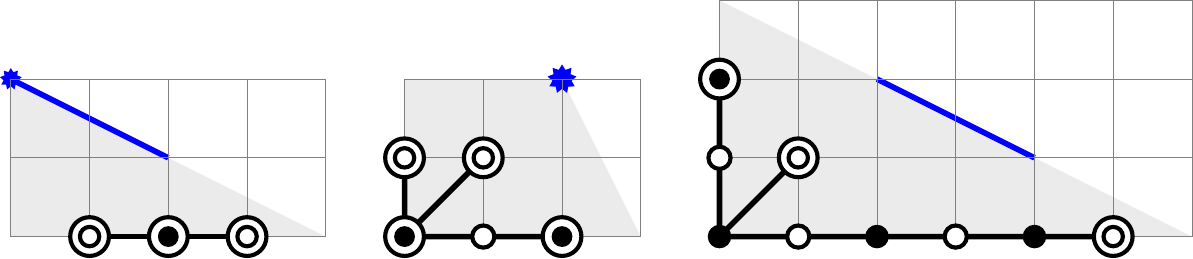}
    \caption{Decorated Dynkin diagrams for shapes $\ppA_3'$,
      $\ppD_5^+$, $\wE_8^+{}'$} 
    \label{fig:circled-diags}
  \end{figure}

\begin{example}
  In Fig.~\ref{fig:circled-diags} we give several examples of such
  diagrams. 
  The surfaces in these cases are not toric. However, we can still
  use pseudo-toric pictures to indicate the lengths $\oL\,\oC'_s$ of
  the sides and the sides $C'_s$ which are contracted by
  $Y'\to \oY'$. The volume of the surface is the volume of the
  polytope minus the number of primes, i.e. additional circles in the
  diagram as compared to a pure shape.
\end{example}

We list all the resulting 43 \ade and 17 \wade shapes and their basic
invariants in Tables~\ref{tab:primed-III} and \ref{tab:primed-II}.
The pure shapes are highlighted. Note that this
 table does not distinguish between left and right sides of $A$-shapes 
(see Remark \ref{rem:left-right}), so e.g.
$A_{2n-2}^-$ and $\mA_{2n-2}$ are listed as the same surface.
The column for the singularities is
explained in section~\ref{sec:singularities}.

\begin{table}[htp!]
  \centering
  \renewcommand{\arraystretch}{1.1}
  \begin{displaymath}
    \begin{array}{ll|llll}
      \text{shape} & \min(n) & \text{volume} & \text{genus} & LC_s 
      &\text{sings in } \nonklt(Y,C)\\
      \hline
      \rowcolor{\graycol}
      A_{2n-1}&1& 2n & n-1 & 2,2 & (n)\\ 
      A'_{2n-1}&2& 2n-1 & n-1 & 2,1 & (n), A_1 \\
      \pA_{2n-1}' &2& 2n-2 & n-1& 1,1 &  A_1, (n), A_1\\
      A_{2n-1}''& 2 & 2n-2 & n-1 & 2 & (n,2;2^2)\\
      \pA_{2n-1}'' & 2 & 2n-3 & n-1 & 1 & A_1, (n,2;2^2)\\
      \ppA_{2n-1}'' & 3 & 2n-4 & n-1 & & (2^2;2,n,2;2^2)\\
      \hline

      \rowcolor{\graycol}
      A_{2n-2}^-&1& 2n-1 & n-1 & 2,1 & (n,2), A_1\\
      \pA_{2n-2}^- & 2 & 2n-2 & n-1 & 1,1 & A_1, (n,2),  A_1\\      
      A^+_{2n-2}& 2 & 2n-2 & n-1 & 2 & (n,2,2;2^2)\\
      \pA_{2n-2}^+ & 2 & 2n-3 & n-1 &  1& A_1, (n,2,2;2^2)\\
      \ppA_{2n-2}^- & 2 & 2n-3 & n-1 & 1 & (2^2;2,n,2),  A_1\\
      \ppA_{2n-2}^+& 3 & 2n-4 & n-1 & & (2^2;2,n,2,2;2^2)\\      
      \hline

      \rowcolor{\graycol}
      \mA_{2n-3}^-&2& 2n-2 & n-1 & 1,1 & A_1, (2,n,2), A_1\\
      \mA_{2n-3}^+& 2 & 2n-3 & n-1 & 1 & A_1, (2,n,2,2;2^2)\\
      \plA_{2n-3}^+& 3 & 2n-4 & n-1 & & (2^2;2,2,n,2,2;2^2)\\
      \hline

      \rowcolor{\graycol}
      D_{2n}& 2& 2n & n-1& 2,2 &  \\
      D_{2n}' &2 & 2n-1 & n-1 & 2,1& A_1\\
      \pD_{2n} &2 & 2n-1 & n-1 & 1,2 & A_1\\
      \pD_{2n}' & 2 & 2n-2 & n-1 & 1,1 & 2 A_1\\
      D_{2n}'' & 2 & 2n-2 & n-1 & 2 & (2;2^2)\\
      \ppD_{2n} & 2 & 2n-2 & n-1 & 2 & (2^2;n)\\
      \pD_{2n}'' & 2 & 2n-3 & n-1 & 1 & A_1,(2;2^2)\\
      \ppD_{2n}' & 2 & 2n-3 & n-1 & 1 & (2^2;n), A_1\\      
      \ppD_{2n}'' & 3 & 2n-4 & n-1 & & (2^2;n,2;2^2)\\
      \hline

      \rowcolor{\graycol}
      D_{2n-1}^-&3 & 2n-1 & n-1 & 2,1 & (2),  A_1\\
      \pD_{2n-1}^- & 3 & 2n-2 & n-1 & 1,1 & A_1, (2), A_1\\
      D_{2n-1}^+ & 3 & 2n-2 & n-1 & 2 & (2,2;2^2)\\
      \pD_{2n-1}^+ & 3 & 2n-3 & n-1 & 1 & A_1, (2,2;2^2)\\
      \ppD_{2n-1}^- & 3 & 2n-3 & n-1 & 1 & (2^2;n,2), A_1\\
      \ppD_{2n-1}^+ & 3 & 2n-4 & n-1 & & (2^2;n,2,2;2^2)\\
      \hline

      \rowcolor{\graycol}
      \mE_{6}^-&& 6 & 3 & 1,1 & A_1, (3), A_1\\
      \mE_{6}^+&& 5 & 3 & 1 & A_1, (3,2;2^2)  \\
      \plE_{6}^+&& 4 & 3 & & (2^2;2,3,2;2^2) \\
      \hline
      \rowcolor{\graycol}
      \mE_{7}&& 7 & 3 & 1,2 & A_1\\
      \mE_{7}' && 6 & 3 & 1,1 & 2  A_1 \\
      \plE_{7}&& 6 & 3 & 2 & (2^2;2) \\
      \plE_{7}' && 5 & 3 & 1 & (2^2;2) , A_1\\
      \mE_{7}''&& 5 & 3 & 1 &  A_1, (2,3,2) \\
      \plE_{7}''&& 4 & 3 &  & (2^2;2,3;2^2) \\
      \hline
      \rowcolor{\graycol}
      \mE_{8}^-&& 8 & 4 & 1,1 & 2  A_1\\
      \mE_{8}^+&& 7 & 4 &  1 & A_1, (3;2^2)\\
      \plE_{8}^-&& 7 & 4 & 1 & (2^2;2) , A_1 \\
      \plE_{8}^+&& 6 & 4 &  & (2^2;2,3;2^2) \\
      \hline
\end{array}
\end{displaymath}
  \caption{All \ade shapes}
  \label{tab:primed-III}
\end{table}

\begin{table}[htp!]
  \centering
  \renewcommand{\arraystretch}{1.1}
  \begin{displaymath}
    \begin{array}{ll|llll}
      \text{shape} & \min(n) & \text{volume} & \text{genus} & LC 
      &\text{sings in } \nonklt(Y,C)\\
      \hline
      \rowcolor{\graycol}
      \rule{0pt}{2.8ex}
      \wA_{2n-1} & 1 & 2n & n+1 && \text{elliptic} \\
      \hline
      \rowcolor{\graycol}
      \rule{0pt}{2.8ex}
      \wA_1^* && 2 & 0 & 4 &  \\
      \hline
      \rowcolor{\graycol}
      \rule{0pt}{2.8ex}
      \wA_0^- && 1& 0 & 3 & A_1\\
      \hline
      \rowcolor{\graycol}
      \rule{0pt}{2.8ex}          
      \wD_{2n} & 2 & 2n & n-1 & 4 &  \\
      \wD_{2n}' &2 & 2n-1 & n-1 & 3 & A_1 \\
      \wD_{2n}'' & 2 & 2n-2 & n-1 & 2 & 2A_1 \\
      \wD_{2n}''' & 2 & 2n-3 & n-1 & 1 &  3A_1\\
      \wD_{2n}'''' & 3 & 2n-4 & n-1 & & (n;2^4)\\
      \hline
      \rowcolor{\graycol}
      \rule{0pt}{2.8ex}
      \wE_7 && 8 & 3 & 4 & \\
      \wE_7' && 7 & 3 & 3 & A_1\\
      \wE_7'' && 6 & 3 & 2 & 2A_1\\
      \wE_7''' && 5 & 3 & 1 & 3A_1\\
      \wE_7'''' && 4 & 3 & & (3;2^4)\\
      \hline
      \rowcolor{\graycol} 
      \rule{0pt}{2.8ex}
      \wE_8^- && 9 & 4 & 3 & A_1 \\                  
      \wE_8^+ && 8 & 4 & 2 & 2A_1 \\                  
      \wE_8^{+\p} && 7 & 4 & 1  & 3A_1 \\                  
      \wE_8^{+\pp} && 6 & 4 & & (3;2^4) \\                  
      \hline           
\end{array}
\end{displaymath}
  \caption{All \wade shapes}
  \label{tab:primed-II}
\end{table}

\subsection{Primed shapes which are toric}
\label{sec:primed-toric-shapes}

We observe that some of the primed shapes also admit toric
descriptions. This provides an explanation for a piece of notational
ambiguity mentioned in the previous subsection: the priming operation
on a long edge (white node) may be interpreted as modifying the
diagram to make that node internal in the toric representation.

\begin{table}[htp!]
  \centering
  \renewcommand{\arraystretch}{1.1}
  \begin{displaymath}
    \begin{array}{lc|c|lll}
      \text{shape} & \min(n) & p_*& \text{polytope vertices} \\
      \hline
      \pA_{2n-1}&2& (2,2) & (2,2), (0,1), (0,0), (2n-2,0) \\
      \pA_{2n-2}^- &2& (2,2) & (2,2), (0,1), (0,0), (2n-3,0) \\
      \pA_{2n-1}' &3& (2,2)& (2,2), (0,1), (0,0), (2n-4,0), (n,1) \\
      D_{2n}' &3 & (2,2) & (2,2), (0,2), (0,0), (2n-4,0), (n,1) \\
      \hline
    \end{array}
\end{displaymath}
  \caption{Polytopes for the toric primed shapes}
  \label{tab:primed-toric-shapes}
\end{table}

\begin{lemma}\label{lemma:primed-toric}
  The shapes $\pA_{2n-1}$, $\pA_{2n-2}^-$, $D'_{2n}$, $\pA_{2n-1}'$
  are toric and can be represented by the polytopes listed in
  Table~\ref{tab:primed-toric-shapes} and illustrated in
  Figs.~\ref{fig:pA} and~\ref{fig:pAp}. 
\end{lemma}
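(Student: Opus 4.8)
The plan is to realise each of these four primings as a genuinely toric operation and then read off the polytope by a corner-truncation computation. Recall from Definition~\ref{def:priming} that the priming of a pure shape $(Y,C)$ is a weighted blowup $f\colon Y'\to Y$ at ideals $I_i\simeq(y,x^2)$ supported at distinct smooth points $P_i$ of $Y$ and $C$ with direction transversal to $C$, followed by the contraction $g\colon Y'\to\oY'$ given by $L'=-2(K_{Y'}+C')$; by Theorem~\ref{thm:priming}, this $L'$ is big, nef and semiample in each of our cases. For a pure shape with defining polytope $P$ (Table~\ref{tab:toric-shapes}) I would take each $P_i$ to be the non-$p_*$ endpoint of the side $C_s$ being primed, with $I_i$ pointing along the edge of $P$ through $P_i$ other than $C_s$. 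That edge is not a component of $C$, so its direction is transversal to $C$, as required; the corner of $P$ at each such $P_i$ is unimodular (one checks this directly in each case), so $P_i$ is a smooth point of both $Y$ and $C$; and in the $\pA_{2n-1}'$ case the two points are the two distinct non-$p_*$ vertices of the triangle $P$. With these choices, the weighted blowup at $I_i$ is exactly the toric star subdivision of the cone $\langle\rho_{C_s},\rho_i\rangle$ that inserts the ray $\rho_{C_s}+2\rho_i$, where $\rho_{C_s}$ and $\rho_i$ are the primitive inner normals of $C_s$ and of the adjacent edge. I would check that this agrees with the two ordinary blowups followed by the contraction of a $(-2)$-curve of Definition~\ref{def:priming}, and that it puts an $A_1$ point on the cone $\langle\rho_{C_s},\rho_{C_s}+2\rho_i\rangle$, which lies on the strict transform $C'$. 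Hence $f$ is toric; and since $L'=-2(K_{Y'}+C')$ is a nef, big, semiample, torus-invariant divisor, the contraction $g$ it defines is toric as well, so $\oY'$ is toric.

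Next I would identify the polytope of the polarised toric surface $(\oY',\oL')$, where $\oL'=-2(K_{\oY'}+\oC')$. Since $g^*\oL'=L'$, this polytope is the one of $(Y',L')$, and by Lemma~\ref{lem:priming-changes} one has $L'=f^*L-2\sum_iF_i$; concretely, one takes the polytope $P$ of $(Y,L)$ and truncates the corner at each $P_i$ by the affine hyperplane with primitive inner normal $\rho_{C_s}+2\rho_i$ placed at lattice distance $2$ from $P_i$. The new boundary $\oC'$ is the union of the edges through the image of $p_*$, which is untouched since the modifications occur at other corners. Carrying out this truncation explicitly --- for $A_{2n-1}$ primed once, resp.\ twice, at its non-$p_*$ vertices; for $A_{2n-2}^-$ primed at the endpoint of its long side; and for $D_{2n}$ primed at the endpoint of its right-hand side --- and then applying the affine lattice automorphism $(x,y)\mapsto(x+2y-2,y)$ in the three $A$-cases (nothing is needed for $D_{2n}'$), I expect to land exactly on the polytopes of Table~\ref{tab:primed-toric-shapes}, drawn in Figures~\ref{fig:pA} and~\ref{fig:pAp}. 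As a consistency check I would verify that the basic invariants $v$, $g$, $LC_s$ of the resulting polytopes match the corresponding rows of Table~\ref{tab:primed-III}; for instance, priming a long side of $A_{2n-1}$ once should drop $LC_s$ from $(2,2)$ to $(2,1)$ and the volume from $2n$ to $2n-1$, exactly as Lemma~\ref{lem:priming-changes}(2) predicts.

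The conceptual content --- that priming, performed at a torus-fixed point with the transversal direction taken along the adjacent toric divisor, is a toric star subdivision --- is immediate once the points and directions are chosen as above, so I do not expect a serious obstacle there. The only real work, and the part most prone to slips, is the per-shape bookkeeping: tracking which corner to truncate so that one obtains the intended shape (and not, say, $\pD_{2n}$ in place of $D_{2n}'$); checking the ranges $\min(n)$ for which $L'{}^2>0$ and $L'C'_s\ge 0$, so that Theorem~\ref{thm:priming} applies, and noting that the pentagon degenerates to a quadrilateral at the smallest admissible $n$, in step with the degeneration of the polytope in Table~\ref{tab:primed-toric-shapes}; and carrying out the four truncations and lattice changes without error. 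In the $\pA_{2n-1}'$ case I would also note explicitly that, since $I_1$ and $I_2$ are supported at distinct torus-fixed points, $f$ is a single star subdivision inserting two rays into two different maximal cones, so the two primings do not interact.
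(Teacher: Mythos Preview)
Your approach is correct and coincides with the paper's own proof: choose the priming ideals to be torus-invariant (supported at the non-$p_*$ vertices, directed along the adjacent non-$C$ edge), observe that the weighted blowup is then a toric star subdivision so $Y'$ is toric, and read off the polytope by corner truncation. The paper's argument is terser---it simply asserts that with these torus-invariant choices $Y'=\oY'$ is toric for the corner-cut polytope---whereas you spell out the inserted ray $\rho_{C_s}+2\rho_i$, the $A_1$ singularity on $C'$, the affine change of coordinates needed to match Table~\ref{tab:primed-toric-shapes}, and the degeneration at the minimal $n$; but the underlying idea is identical.
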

\begin{proof}
  In these cases we can choose the ideals $I_i$ to be torus invariant,
  with $\Supp I_i$ corresponding to vertices of the polytopes of the
  pure shapes $A_{2n-1}$, $A_{2n-2}^-$, $D_{2n}$, and $I_i$ pointing
  in the directions of the respective sides. Then the blown up surface
  $Y'=\oY'$ is also toric, for the polytope obtained from the old
  polytope by cutting corners, as in
  Table~\ref{tab:primed-toric-shapes} and Fig.~\ref{fig:pAp}.
\end{proof}

\begin{figure}[htp!]
  \centering
  \includegraphics{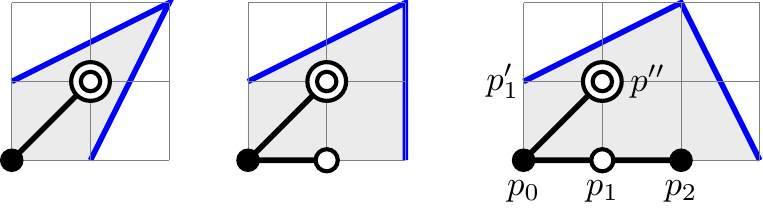} 
  \caption{Toric $\pA$ shapes: $\pA_2^- = \mA_2' = \mD_2^-$, $\pA_3$, $\pA_4^-$}
  \label{fig:pA}
\end{figure}

\begin{figure}[htp!]
  \centering
  \includegraphics{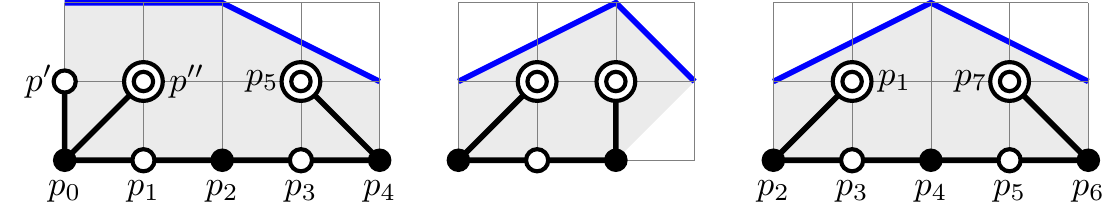}
  \caption{Toric $D'$ and $\pA'$ shapes: $D'_8$, $\pA'_5$, $\pA'_7$}
  \label{fig:pAp}
\end{figure}

\begin{remark}
  For other primed shapes, the surfaces are generally not toric but
  toric surfaces do appear for certain special directions of the ideals being
  blown up.
  \ifshortversion
  \else
  Some of them are shown in Fig.~\ref{fig:toric-degns}.
  \fi
\end{remark}

\ifshortversion
\else
\begin{figure}[htp!]
  \centering
  \includegraphics{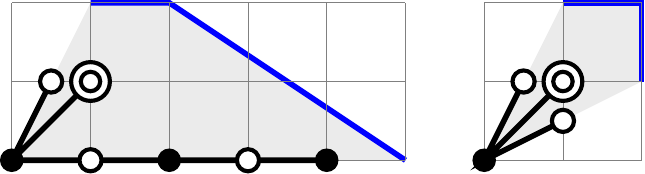}
  \caption{Some special toric surfaces in shapes $\php
    D_{7}^{-}$, $\php D_4^{\p}$ }
  \label{fig:toric-degns}
\end{figure}
\fi

\subsection{Singularities of \ade and \wade surfaces}
\label{sec:singularities}

\begin{theorem}[Singularities] \label{thm:singularities}
  Let $(Y,C)$ be a surface of pure shape $\ne A_1, \wA_1^*$. 
  With notation as in Definition \ref{def:priming},
  when priming $(Y,C)$ to $(\oY',\oC')$ the
  only curves contracted by $g\colon Y'\to \oY'$
 are:
  \begin{enumerate}
  \item The sides $C'_s$ with $L'C'_s=0$. These contract to
    $\nonklt(\oY',\oC')$.
  \item A collection of $(-2)$ curves disjoint from $C'$. These
    contract to Du Val singularities disjoint from $\nonklt(Y,C)$.
  \end{enumerate}
\end{theorem}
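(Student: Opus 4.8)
The plan is to identify the exceptional curves of $g$ from the relation $\tfrac12 L'=f^*(K_Y+L)+C'$ (from the proof of Theorem~\ref{thm:priming}), together with the fact proved there that $K_Y+L$ is nef for every pure shape other than $A_1,\wA_1^*$. Since $L'$ is big, nef and semiample, $g$ is the morphism defined by $|NL'|$ and $L'=g^*H$ for an ample Cartier $H$ on $\oY'$; so an irreducible curve $Z\subset Y'$ is contracted by $g$ exactly when $L'.Z=0$. Writing $L'=2f^*(K_Y+L)+2C'$ and using that $f^*(K_Y+L).Z\ge0$, the equation $L'.Z=0$ gives $C'.Z=-f^*(K_Y+L).Z\le0$. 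If $Z$ is not one of the mutually disjoint sides $C'_s$, then $C'.Z\ge0$, hence $C'.Z=0$ and $f^*(K_Y+L).Z=0$; if $C'.Z<0$, then necessarily $Z=C'_s$ for some $s$. So the contracted curves split into the two advertised families, and it remains to describe each.

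For the sides: $C'_s$ is contracted iff $L'.C'_s=L'C'_s=0$, which by Lemma~\ref{lem:priming-changes} means $k_s=LC_s$ (the maximal number of primings on $C_s$). To see the image lands in the non-klt locus, note that $g$ is crepant for $K+C$: from $K_{Y'}+C'=-\tfrac12 L'=g^*(-\tfrac12 H)$ we get $K_{\oY'}+\oC'=-\tfrac12 H$, hence $K_{Y'}+C'=g^*(K_{\oY'}+\oC')$, so $g(C'_s)\subseteq g(\Supp C')\subseteq\nonklt(\oY',\oC')$. This gives~(1).

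For a contracted curve $Z$ that is not a side: $C'.Z=0$ together with effectivity of $C'$ forces $Z\cap C'=\emptyset$, so $Z$ misses the $A_1$ singularities created by priming, which all lie on $C'$. I would then check from the explicit constructions of Sections~\ref{sec:toric-shapes}--\ref{sec:wA-shapes} that every non-Du-Val singularity of $Y$ lies on $\Supp C$ — for the toric shapes $Y$ is singular only at certain polytope vertices, and one sees directly that these lie on $C$ — so $Z$ can only run through Du Val points of $Y'$; passing to the minimal resolution (which absorbs such points into the contracted configuration) we may assume $Y'$ is smooth along $Z$. Then $(K_{Y'}+L').Z=f^*(K_Y+L).Z=0$ and $L'.Z=0$ give $K_{Y'}.Z=0$, adjunction gives $Z^2=2p_a(Z)-2$, and $Z^2<0$ (the intersection form on a $g$-fibre is negative definite) forces $p_a(Z)=0$ and $Z^2=-2$. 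Finally, over a point $q\in\oY'$ with $q\notin\oC'$ the whole fibre $g^{-1}(q)$ consists of such $(-2)$-curves disjoint from $C'$; negative definiteness forces its dual graph to be a union of \ade Dynkin diagrams, so $q$ is Du Val, hence canonical, hence $q\notin\nonklt(\oY',\oC')$ — and, since priming modifies only the strict transform of $\nonklt(Y,C)$, these new Du Val points are disjoint from the non-klt locus. Since sides meet $C'$ and the $(-2)$-curves do not, each $g$-fibre is exactly one side or one \ade configuration of $(-2)$-curves, which is the assertion.

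The step I expect to be the real obstacle is the one ruling out that a contracted non-side curve passes through a worse-than-Du-Val singularity of $Y$: this is what makes the contracted configuration \ade and hence the image Du Val, and it is not forced by the numerical identities — it is exactly where one must use the explicit geometry of the pure $A$, $D$, $E$ (and type~II) surfaces, beyond the mere nef-ness of $K_Y+L$.
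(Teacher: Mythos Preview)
Your argument is essentially the paper's: both use $\tfrac12 L'=f^*(K_Y+L)+C'$ together with the nefness of $K_Y+L$ (established in the proof of Theorem~\ref{thm:priming}) to force $C'.E'=0$ and $(K_{Y'}+L')E'=0$ for any contracted $E'\ne C'_s$, and then apply adjunction to get $E'\simeq\bP^1$, $E'^2=-2$.

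The one place you make more work for yourself than necessary is the step you flag as the obstacle. The paper simply observes that ``since $E'$ is disjoint from the boundary, it lies in the smooth part of $Y'$'': for every pure shape that can be primed (so excluding $\wA_{2n-1}$, whose boundary is empty), the explicit toric and non-toric descriptions in \S\ref{sec:toric-shapes}--\ref{sec:wA-shapes} show that \emph{all} singularities of $Y$ lie on $C$, not merely the non-Du-Val ones. Since $f$ blows up only smooth points of $Y$ and the new $A_1$ points lie on $C'$, any curve disjoint from $C'$ is already in the smooth locus of $Y'$. So there is no need to weaken to ``non-Du-Val'' and then pass to a minimal resolution; the genus formula applies directly on $Y'$.
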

\begin{proof}
  Let $E'$ be a curve with $L'E'=0$. 
  As in the proof of Theorem~\ref{thm:priming}, if $K_Y+L$ is nef and
  $E'\ne C'_s$ then $(K_{Y'}+L')E'=0$, so $K_{Y'}E'=0$. Since $E'$ is
  disjoint from the boundary, it lies in the smooth part of $Y'$. We
  have $E'{}^2<0$, and by the genus formula the only possibility is
  $E'\simeq\bP^1$ with $E'{}^2=-2$. 
\end{proof}

\begin{corollary}\label{cor:generic-singularities}
  The singularities of the \ade and \wade surfaces $(Y,C)$ lying in the
  nonklt locus of $(Y,C)$ depend only on the shape and are those
  listed in the last column of Tables~\ref{tab:primed-III} and
  \ref{tab:primed-II}.
\end{corollary}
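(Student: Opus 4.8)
The plan is to leverage Theorem~\ref{thm:singularities}, which already identifies the curves contracted by the priming morphism $g\colon Y' \to \oY'$, together with the explicit description of the pure shapes in sections~\ref{sec:toric-shapes} and~\ref{sec:wA-shapes}, and reduce the corollary to a finite (modulo the infinite families, a uniformly parametrized) bookkeeping computation. The key point is that the singularities of $(\oY',\oC')$ inside $\nonklt(\oY',\oC')$ come from two sources, both controlled: first, the singularities already present on the pure shape $(Y,C)$ along $\nonklt(Y,C)$, which are unaffected by the weighted blowup $f\colon Y'\to Y$ since the centers $P_i$ are smooth points of $Y$ lying on the \emph{smooth} part of $C$ (and by Theorem~\ref{thm:singularities}(2) the extra Du Val singularities created and then present on $Y'$ are disjoint from $\nonklt(Y',C')$, hence irrelevant for this column); and second, the new singularities produced by contracting the sides $C'_s$ with $L'C'_s = 0$, as in Theorem~\ref{thm:singularities}(1).

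First I would record, for each pure shape, the singularities of $Y$ lying on $\nonklt(Y,C)$. For the toric pure shapes this is immediate: the singularities of the toric surface $Y$ along the torus-invariant boundary component $C_s$ are read off from the polytope $P$ of Table~\ref{tab:toric-shapes}, and the vertices of $P$ not on $C$, together with the lattice structure along the non-$C$ sides, give the cyclic quotient singularities; in the notation of the tables, a chain of sides meeting at angles encoded by the Dynkin diagram gives a singularity recorded by the bracket notation $(n)$, $(n,2)$, etc. For the nontoric $\wA$ shapes $\wA_{2n-1}$, $\wA_1^*$, $\wA_0^-$ the singularities are already spelled out in subsection~\ref{sec:wA-shapes} (the elliptic cone point, smooth, and the single $A_1$ respectively). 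This gives the ``pure'' rows of the two tables.

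Next I would run the priming operation and track how each prime alters the list. By Lemma~\ref{lem:priming-changes}(2) priming once on side $C_s$ decreases $LC_s$ by $1$; when $L'C'_s$ hits $0$ the component $C'_s$ is contracted by $g$ to a point of $\nonklt(\oY',\oC')$, and I must identify that singularity. This is where the local analysis is needed: the contracted curve $C'_s$ is a smooth rational curve of known self-intersection (computable from $L'C'_s=0$ and the blowup formulas of Lemma~\ref{lem:priming-changes}(1)), meeting the adjacent configuration in a prescribed way, and contracting it produces the cyclic quotient singularity recorded by the appropriate bracket; the ``$2^2$'' decorations in the tables reflect the two $(-2)$-curves that appear in the resolution graph when a long side is fully primed. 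For a short side, a single priming already brings $LC_s$ from $1$ to $0$, which is why the $-$ superscript turns into a $+$ and one new bracketed singularity appears. For the infinite families the computation is uniform in $n$, since the only $n$-dependence sits in the length of one chain in the polytope, so no special casing is required there.

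The main obstacle I expect is the local identification of the cyclic quotient singularity obtained by contracting a chain of the primed (and possibly the adjacent) curves: one must compute the determinant of the intersection matrix of the contracted configuration and express it in the bracket notation of the tables, being careful about the special low-volume and exceptional cases ($D_4$, $\wD_4$, the $E$-shapes where $K_Y+L$ is big and nef but not ample, and the forbidden primings of $A_1$, $\wA_1^*$ flagged in Remark~\ref{rem:A1p}). Once this local dictionary between ``chain of curves contracted'' and ``bracket symbol'' is set up once and for all, the corollary follows by inspection against Tables~\ref{tab:primed-III} and~\ref{tab:primed-II}: every row is obtained from its highlighted pure row by the recorded number of primes, and the singularity column transforms exactly as the local analysis predicts.
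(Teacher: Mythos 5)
Your proposal is correct and follows essentially the same route as the paper: the paper states this as an immediate consequence of Theorem~\ref{thm:singularities}, whose two parts give exactly your dichotomy — the contracted sides $C'_s$ with $L'C'_s=0$ produce the singularities in $\nonklt(\oY',\oC')$ (determined purely by the shape), while the extra $(-2)$-curves contract to Du Val points disjoint from the nonklt locus and so do not affect the tabulated column. The remaining content is the same case-by-case toric/resolution-graph bookkeeping you describe, which the paper leaves implicit in the tables.
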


\begin{notation}
In Tables~\ref{tab:primed-III} and \ref{tab:primed-II}  
we use the following notation for
singularities. We denote simple nodes by the usual
$A_1$. For cyclic quotient singularities, whose resolutions
are a chain of curves, we use the notation $(n_1,n_2,\ldots,n_k)$,
where $-n_i$ is the self-intersection number of the $i$th curve in the
chain; note that $(2,2,\ldots,2)$ corresponds to the Du Val singularity $A_n$.
For more complicated singularities, whose resolution is not
necessarily a chain of curves, we use the following notation:
$(n_1,n_2,\ldots,n_k;2^2)$ denotes a singularity obtained by
contracting a configuration of exceptional curves with the first dual
graph in Fig.~\ref{fig:sings}. Note that this includes Du Val singularities 
of type $D_n$, which are denoted by $(2,2,\ldots,2;2^2)$.

\begin{figure}[htp!]
  \centering
\begin{center}
  \includegraphics{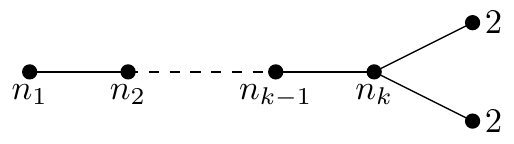}\kern 10pt
  \includegraphics{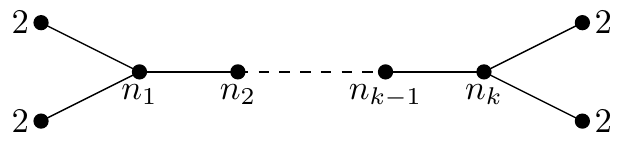}
\end{center}
  \caption{Singularities $(n_1,n_2,\ldots,n_k;2^2)$ and $(2^2;n_1,n_2,\ldots,n_k;2^2)$}
  \label{fig:sings}
\end{figure}
Finally, we will use the expression $(2^2;n_1,n_2,\ldots,n_k;2^2)$ to denote a
singularity obtained by contracting a configuration of exceptional
curves with the second dual graph in Fig.~\ref{fig:sings}.
Two apparently degenerate cases of this notation are $A_1 = (2)$ 
and $(n;2^2) = (2,n,2)$; we nonetheless use both notations,
as it is useful to make a distinction when we discuss double covers. We will also
often use $(n;2^4)$ in place of $(2^2;n;2^2)$.
Separately note that for $n=1$ the ``singularities'' $(n)$ and $(n,2)$
are in fact smooth points.
\smallskip

For completeness, we also note the corresponding singularities on the double covers.
The double cover of a simple node $A_1$ is always a smooth point, and the double
cover of a cyclic quotient singularity $(n_1,n_2,\ldots,n_k)$ is always a pair of 
cyclic quotient singularities with the same resolution; this explains why we 
draw a distinction between $A_1$, which has smooth double cover,
and $(2)$, which has double cover a pair of $(2)$ singularities.

The double cover of a singularity of type $(n_1,n_2,\ldots,n_k;2^2)$ is a
cyclic quotient singularity
$(n_1,n_2,\ldots,n_{k-1},2n_k-2,n_{k-1},\ldots,n_1)$; this explains
the second degenerate piece of notation, as $(2,n,2)$ has double cover
a pair of $(2,n,2)$ singularities, and $(n;2^2)$ has double cover a
single $(2n-2)$ singularity. Finally, the double cover of a
$(2^2;n_1,n_2,\ldots,n_k;2^2)$ singularity, for $k \geq 2$, is a cusp
singularity whose resolution is a cycle of rational curves with the
negatives of 
self-intersections
$(2n_1-2,n_2,\ldots,n_{k-1},2n_k-2,n_{k-1},\ldots,n_2)$
ordered cyclically, and the double cover of an $(n;2^4)$ singularity
is a simple elliptic singularity whose resolution is a smooth elliptic
curve with the minus self-intersection $2n-4$.
\end{notation}

\subsection{Recovering a precursor of pure shape} 
\label{sec:reconsruction}

The aim of this subsection is to explore to what extent the priming operation
is reversible. In other words, given an \ade or \wade surface of primed shape, 
can we uniquely recover the 
surface of pure shape from which it was obtained by priming?

\begin{lemma}[Non-redundancy]\label{lem:non-redundancy}
  When distinguishing the left and right sides, the only redundant
  case in the decorated Dynkin symbol notation for the shapes is 
  $\pA^-_2 = \mA'_2$, for which
  also a symmetric but degenerate notation $\mD^-_2$ may be used.
  (See Remark~\ref{rem:ambiguity}. Recall also that $A_1' = A_0^-$, $\pA_1=\mA_0$,
  and $(\wA_1^*)' = \wA_0^-$; for this reason
  we do not allow primings of $A_1$ and $\wA_1^*$.)

  When not distinguishing the left and right sides, there are also the
  cases coming from the $\bZ_2$ symmetry of $A_{2n-1}$, $\mA_{2n-3}^-$, $\mD_2^-$,
  $D_4$, and $\mE_6^-$: $\pA_5 = A_5'$, $D_4'=\pD_4$, $E_6^-=\mE_6$, etc., including
  $\ppA_2^- = \pA_2^+$. (See Remarks~\ref{rem:left-right} and \ref{rem:ambiguity}.)
\end{lemma}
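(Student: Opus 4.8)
The plan is to recognize this as a finite combinatorial verification, built on the fact that a decorated Dynkin symbol records the construction of the surface faithfully. First I would establish that a decorated symbol determines (i) the underlying root lattice $X_n$, and (ii) a decoration on each side of $C$, and that by the rules of Notation~\ref{not:dynkin-diagrams} and the notations following it this decoration is exactly the pair (length of that side in the pure precursor, number of primings performed on it). For this I would invoke Theorem~\ref{thm:priming} together with the side-length change $L'C'_s = LC_s - k_s$ of Lemma~\ref{lem:priming-changes}: a side of $L$-length $\ell$ can be primed $0 \le k_s \le \ell$ times (subject to $L'^2 > 0$), so in type III a long side ($\ell = 2$) carries one of {undecorated, $\p$, $\pp$} and a short side ($\ell = 1$) one of {$-$, $+$}, with the obvious analogues in type II, where all decorations are placed on the right and $L$-lengths go up to $4$. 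Hence the decorated symbol determines the pure precursor $(Y,C)$, a labeling of its sides, and the priming data $(k_1, k_2)$, and so by Definition~\ref{def:priming} the primed surface up to isomorphism; in particular the notation is single-valued, and a redundancy can only arise when two distinct triples (pure precursor, side-labeling, priming pattern) yield isomorphic surfaces.

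Next I would run through the $43$ type III and $17$ type II shapes to locate all such coincidences. The tabulated invariants — volume, genus, side $L$-lengths, and the singularity type of $\nonklt(Y,C)$ — depend only on the shape (Corollary~\ref{cor:generic-singularities}) and are computed from the precursor and priming data (Lemma~\ref{lem:priming-changes}), so they serve as a first filter. In the finitely many cases where these invariants agree — for instance $\pA_{2n-1}'$, $\pA_{2n-2}^-$ and $\mA_{2n-3}^-$ all have volume $2n-2$, genus $n-1$ and two short sides, distinguished only by the middle nonklt singularity being $(n)$, $(n,2)$ or $(2,n,2)$; or the pair $D_{2n}'$ versus $\pD_{2n}$ for $n > 2$ — I would look directly at the geometry, using the curve configuration contracted by $g\colon Y' \to \oY'$ from Theorem~\ref{thm:singularities} and the position of the $A_1$ singularities relative to the (generally asymmetric) structure of the pure shape. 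The outcome, which is the substance of the lemma, is that the only genuine coincidence with labeled sides is $\pA_2^- = \mA_2'$: the $A_2$ pure shape has one long and one short side, and priming its long side once gives a surface with two short sides, reached equally from either labeling of the precursor, so it is recorded both as $\pA_2^-$ and (reading the precursor with the long side on the right) as $\mA_2'$ in the sense of Remark~\ref{rem:ambiguity}; the same surface carries the symmetric degenerate symbol $\mD_2^-$, and its toric model in Lemma~\ref{lemma:primed-toric} and Figure~\ref{fig:pA} makes the symmetry visible. The apparent coincidences $A_1' = A_0^-$, $\pA_1 = \mA_0$, $(\wA_1^*)' = \wA_0^-$ each reproduce a shape already on the list and lower the dimension of the moduli of pairs, so by Remark~\ref{rem:A1p} we forbid priming $A_1$ and $\wA_1^*$ and no redundancy remains.

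For the unordered-side statement I would observe that dropping the left/right labeling identifies two labeled type III shapes exactly when they differ by interchanging left and right, and that this is a genuine identification of surfaces precisely when the pure precursor admits an automorphism swapping its two sides. Checking the pure shapes, such a $\bZ_2$ exists for $A_{2n-1}$ (the involution exchanging its two long sides), $\mA_{2n-3}^-$ and $\mD_2^- = \pA_2^-$ (two short sides, likewise), $D_4$ (the swap of the two rulings of the quadric, or the reflection of the defining square), and $\mE_6^-$ (the symmetry of the $E_6$ diagram and of the defining polytope), and for no other pure shape — every other has sides of different $L$-length or an asymmetric diagram. Applying priming to these and using the symmetry then produces exactly the further identifications listed in the statement, $\pA_5 = A_5'$, $D_4' = \pD_4$, $E_6^- = \mE_6$, and their primed consequences, including $\ppA_2^- = \pA_2^+$.

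The step I expect to be the main obstacle is the second one: the claim that, beyond $\pA_2^- = \mA_2'$ (and the disallowed degenerate primings), no two decorated symbols name isomorphic surfaces is an exhaustive check over all $60$ shapes. The numerical invariants in Tables~\ref{tab:primed-III} and \ref{tab:primed-II}, and especially the rather discriminating nonklt singularity types, resolve most pairs immediately, but the residual near-collisions must each be settled by inspecting the explicit surfaces; I do not expect surprises, but this verification is where the work lies.
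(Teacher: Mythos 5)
Your proposal is correct and follows essentially the same route as the paper: compare the invariants tabulated in Tables~\ref{tab:primed-III} and \ref{tab:primed-II} (volume, genus, side lengths, nonklt singularities), observe that they separate all shapes except the pair $D_{2n}'$ versus $\pD_{2n}$ for $2n\ge6$, and settle that case geometrically. The paper's concrete form of your ``asymmetric structure'' tiebreaker is that $K_Y+L$ gives a $\bP^1$-fibration for which $C_1$ is a bisection while $C_2$ lies in a fiber, so the two sides are intrinsically distinguishable and the two primings cannot be isomorphic.
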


\begin{proof}
  By Tables~\ref{tab:primed-III} and \ref{tab:primed-II}, most of the
  shapes are already distinguished by the main invariants and
  singularities. The only exception is $D_{2n}'$ and $\pD_{2n}$ for
  $2n\ge6$. However, in these cases the sheaf $K_Y+L$ gives a
  $\bP^1$-fibration. The left side $C_1$ is a bisection of this
  fibration and $C_2$ lies in a fiber, so the two primings are not
  isomorphic.  
\end{proof}

\begin{definition}\label{def:CBperp}
  Let $f\colon\wY\to Y$ be the minimal resolution of an \ade or \wade
  surface $(Y,C)$. Let $\wC_s$ be the strict transforms on $\wY$ 
  of the components $C_s$ of $C$, and let
  $F_i$ be the $f$-exceptional curves. Let
  $\cperp := \la C_s, F_i\ra^\perp \subset \Pic \wY$ and let 
  $\Lambda_0 := C^\perp \cap B^\perp$.  Denote by $\Delta^{(2)}_0$ the set of
  $(-2)$ vectors in $\Lambda_0$, by $\Lambda_0^{(2)}$ the root system
  generated by them, and by $W_0 = W(\Delta^{(2)}_0)$ the
  corresponding Weyl group.  Since $B^2>0$, the lattice $\Lambda_0$ is
  negative definite, and $\Lambda_0^{(2)}$ and $W_0$ are of \ade type.
\end{definition}

\begin{theorem}\label{thm:reconstruction}
  For a surface $(\oY',\oC')$ of a primed shape, its pure shape
  precursor $(Y,C)$, from which it comes by priming, is defined up
  to the action of $W_0$. The group $W_0$ is trivial
  except for the following shapes:
  \begin{enumerate}
  \item For $2n\ge6$, for $D_{2n}$ and $D_{2n-1}^-$ with
    $k$ primes on the left and any number of primes on the right, 
    and for $\wD_{2n}$ with $k$ primes one has 
    $W_0 = W(A_1^k) = S_2^k$. 
  \item the following exceptional shapes of genus 1:
  \begin{displaymath}
    \begin{array}[h]{l|llllllllll}
      \text{shape} &\pA'_3 &\pA_3'' &D'_4 &D''_4 &\pD'_4 &\pD''_4
      &\wD_4 &\wD_4' &\wD_4'' &\wD_4''' \\
      \hline
      \Lambda_0^{(2)} &A_1 &A_1^2 &A_1 &A_1^2 &A_2 &A_3 &A_1 &A_2 &A_3 &D_4
    \end{array}
  \end{displaymath}

  \end{enumerate}
  
  For the \ade shapes for a generic surface of the given shape the Weyl
  group $W_0$ acts freely on the choices of a precursor, and for the
  $\wD$ shapes it acts with a degree 2 stabilizer.
  For a generic surface of the given shape there are no singularities
  outside the set $\nonklt(\oY',\oC')$. For special surfaces there may exist
  additional Du Val singularities for all the \ade root sublattices of
  $\Lambda_0^{(2)}$, and all of these appear.

  In addition, for the exceptional case $\ppA_2^- = \plA'_2$ of
  Lemma~\ref{lem:non-redundancy} one has $W_0=0$, and there are two
  choices for the $\mA_2$ precursors, and only one choice for $A_2^-$.
\end{theorem}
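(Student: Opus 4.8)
The plan is to reverse the priming operation on the level of minimal resolutions, where by Definition~\ref{def:priming} it is a composite of $2k$ ordinary blow-ups followed by contractions, and then to prove that the only freedom in running this backwards is the action of $W_0$. So let $(\oY',\oC')$ be of primed shape, obtained from a pure shape $(Y,C)$ by priming at points $P_1,\dots,P_k\in C$, and pass to the minimal resolution $S$ of $\oY'$. First I would observe that $S$ dominates the minimal resolution $\wY$ of $(Y,C)$ by $2k$ ordinary blow-ups, and that the blow-down $S\to\wY$ is visible in the dual graph of curves on $S$: for each $i$ there is a $(-1)$-curve $G_i$ and an adjacent $(-2)$-curve $E_i$ (the proper transforms of the two exceptional curves of the $i$-th weighted blow-up), with $E_i\cdot G_i=1$, with $E_i$ meeting the proper transform of $C$ in one point and $G_i$ disjoint from $C$ but meeting the proper transform of a general $B\in|L|$ (at the point prescribed by the tangent direction of $B$, as in the discussion following Definition~\ref{def:priming}), and with the pairs $(E_i,G_i)$ mutually disjoint and disjoint from the curves $F_\ell$ resolving the singularities of $(Y,C)$ that are listed in Theorem~\ref{thm:singularities}. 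Contracting $G_i$, then $E_i$, for each $i$, and then the $F_\ell$, reconstructs $(Y,C)$ together with the labelling of its sides, since $E_i$ records which component of $C$ it meets; this accounts for the absence of left/right ambiguity in the genus-$0$ cases of Lemma~\ref{lem:non-redundancy}. Thus the only datum not determined by $(\oY',\oC')$ is the unordered collection of \emph{priming pairs} $\{(E_i,G_i)\}$ inside $S$.

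Next I would show that the set of valid collections $\{(E_i,G_i)\}$ is a single $W_0$-orbit. Two collections related by a relabelling give the same $(Y,C)$, so suppose two are not so related. Each priming pair is pinned down, up to elementary modification, by its intersection numbers with the components of $C$, with $B$, and with the $F_\ell$; hence any two valid collections differ by a product of reflections in $(-2)$-classes of $\Pic S$ orthogonal to $C$, to $B$ and to the $F_\ell$, i.e. by an element of $W_0=W(\Lambda_0^{(2)})$ with $\Lambda_0=C^\perp\cap B^\perp$ the lattice of $(\oY',\oC')$ from Definition~\ref{def:CBperp}. Conversely, reflecting a valid collection in such a root again yields a valid collection, because the root is orthogonal to every divisor whose bigness, nefness and semiampleness enter the definition of a priming via Theorem~\ref{thm:priming} and Lemma~\ref{lem:priming-changes}, so all the incidences and positivity conditions are preserved. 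That $W_0$ is trivial outside the listed shapes, equals $S_2^k$ for the indicated $D$- and $\wD$-shapes, and equals $A_1,A_1^2,A_2,A_3,D_4$ for the exceptional genus-$1$ shapes, is then a finite computation: using the toric and pseudo-toric models of Section~\ref{sec:adepairs} one writes down $\Pic\wY$ together with the classes of the $C_s$, of $B=L$, and of the $F_\ell$, and reads off the root system $\Lambda_0^{(2)}$ shape by shape.

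Freeness of the $W_0$-action for the \ade shapes follows from the rationality of the double cover $X$ in type $\III$ (Lemma~\ref{lem:types-of-pairs}): on the rational surface underlying $X$ the configurations of disjoint $(-1)$-curves meeting $D$ and $R$ that realize a priming have trivial $W_0$-stabilizer, as in the classical $(-1)$-curve calculus on rational surfaces. For the $\wD$ shapes $X$ is instead ruled over an elliptic base, and the resulting fibrewise involution gives the stated degree-$2$ stabilizer. The generic-versus-special assertion is then immediate from Theorem~\ref{thm:singularities}: for $P_i$ in general position on $C$ the $2k$ blown-up points impose independent conditions, so no $(-2)$-curve disjoint from $C'$ arises beyond the $F_\ell$ and $(\oY',\oC')$ has no singularities outside $\nonklt(\oY',\oC')$; letting the $P_i$ become infinitely near one another or lie on $B$ produces exactly those $(-2)$-curve configurations orthogonal to $C$ and to $B$, hence Du Val singularities for precisely the \ade root sublattices of $\Lambda_0^{(2)}$, and each such sublattice occurs for a suitable choice. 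Finally the exceptional case $\ppA_2^-=\plA'_2$ I would handle by hand: there $\Lambda_0^{(2)}=0$ so $W_0$ is trivial, and an explicit blow-down on the toric model of $\pA_2^-=\mA_2'$, using its $\bZ/2$ symmetry (Figure~\ref{fig:pA}, Remark~\ref{rem:ambiguity}), exhibits two priming pairs recovering an $\mA_2$ precursor and a single one recovering an $A_2^-$ precursor.

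I expect the main obstacle to be the transitivity-and-rigidity step of the second paragraph: proving that the ambiguity is \emph{exactly} $W_0$ and not a larger reflection group requires checking precisely which $(-2)$-curve reflections preserve the bigness and nefness conditions built into the definition of a priming, and the freeness claim for the $D$- and $\wD$-shapes requires a hands-on analysis of the $(-1)$-curve configurations on the non-minimal surfaces underlying the double covers of $D$-type \ade and \wade pairs. By contrast, the reconstruction recipe of the first paragraph and the shape-by-shape computation of $\Lambda_0^{(2)}$ are routine.
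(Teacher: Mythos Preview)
Your approach is genuinely different from the paper's, and the key step you flag as the obstacle is in fact a real gap.

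The paper does not work on the minimal resolution of $\oY'$ and does not argue via $W_0$-orbits of ``priming pairs''. Instead it works on $Y'$ (the weighted blowup of the pure precursor, before the contraction $g\colon Y'\to\oY'$) and asks: which curves on $Y'$ could play the role of an exceptional divisor $F_i$ of a priming? Any such curve $E'$ must satisfy $L'E'=0$, hence $(K_Y+L)\cdot E=0$ for its image $E$ on $Y$. This immediately reduces the question to two cases: the $D$ shapes with $2n\ge6$, where $K_Y+L$ defines a $\bP^1$-fibration and the only candidates $E$ are its fibres; and the genus-$1$ shapes, where $K_Y+L=0$. In the $D$ case the paper then checks by hand that for each prime $P_i$ on the left side, the fibre through $P_i$ and the exceptional divisor $F_i$ are interchangeable (each a $(-\tfrac12)$-curve meeting $C'$ once), giving exactly two precursors per left prime and hence $S_2^k$; and when the direction of $I_i$ coincides with the fibre direction one gets instead a $(-2)$-curve and an extra $A_1$ singularity on $\oY'$. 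The genus-$1$ cases are handled by computing the graph of $(-1)$- and $(-2)$-curves on the weak del Pezzo minimal resolution. Only afterwards does the paper verify that the answer matches the independently computed $\Lambda_0^{(2)}$.

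Your transitivity claim---that two valid collections of priming pairs differ by an element of $W_0$ because they have the same intersection numbers with $C$, $B$, and the $F_\ell$---is not justified: equal intersection numbers with a fixed set of divisors do not force two classes to be related by reflections in $(-2)$-classes orthogonal to that set. You would need to prove that $\Lambda_0^{(2)}$ generates the full orthogonal complement up to the relevant stabilizer, which is essentially the content of the theorem. The paper sidesteps this by finding the alternative precursors geometrically via the $(K_Y+L)E=0$ condition. Your description of how special singularities arise is also off: they do not come from the points $P_i$ becoming infinitely near or lying on $B$ (the $P_i$ are fixed smooth points of $C$), but from special choices of the \emph{directions} of the ideals $I_i$, namely when $I_i$ points along a fibre of the $\bP^1$-fibration.
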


\begin{example}
  For $\ppD_6$ one has $W(A_1^2)=S_2^2$, and generically there are 4
  choices for a precursor of shape $D_6$. For special choices of the
  directions of priming ideals $I_i$ the surfaces may have additional
  singularities of types $2A_1$ or $A_1$.

  For $\wD_4'''$ one has $|W(D_4)|=192$, and generically there are 96
  choices for a precursor of shape $\wD_4$. For special choices of
  the directions of priming ideals $I_i$ the surfaces may have
  additional singularities of types $D_4$, $A_3$, $3A_1$, $A_2$,
  $2A_1$, $A_1$.
\end{example}

\begin{proof}[Proof of Thm.~\ref{thm:reconstruction}]
  We computed the lattice $\Lambda_0$ for every shape in
  Tables~\ref{tab:primed-III}, \ref{tab:primed-II} by a lengthy but
  straightforward computation. The root systems $\Lambda_0^{(2)}$ are
  the ones stated in (1), (2). For example, for $\ppD''_{2n}$ one has
  $\Lambda_0 = A_1^2 \oplus\la-4\ra$, and the root system is
  $A_1^2$. We skip the details.

  We find the precursors and singularities separately but then confirm
  that the answer is the same as above. 
  Let $f\colon Y'\to Y$ be the first step in the priming, before the
  contraction $Y'\to\oY'$ (see Definition~\ref{def:priming}).
  Let $E'\ne C'_s$ be a curve with $L'E'=0$ and $E$ its image on $Y$.
  As in the proofs of Theorem~\ref{thm:priming},
  \ref{thm:singularities}, one must have $(K_Y+L)E=0$, and 
  such a curve may only exist in 
  \begin{enumerate}
  \item $D_{2n}$, $D_{2n-1}^-$ shapes for $2n\ge6$, where $K_Y+L$
    gives a $\bP^1$-fibration over $\bP^1$,
  \item the shapes of genus 1, where $K_Y+L=0$.
  \end{enumerate}
  Let us consider the case (1).  The only possibilities for $E$ are
  the fibers of the $\bP^1$ fibration. Let $P_i\in C_1$, and $I_i$
  with $\Supp I_i=P_i$ be an ideal appearing in the priming. Let $E$
  be a fiber of the $\bP^1$ fibration passing through $P_i$. If the
  direction of $I_i$ is generic, namely it is not the direction of $E$
  then on the blowup $Y'$ the preimage $f\inv(E)$ consists of two
  curves: the strict preimage $E' = f_*\inv(E)$ and the exceptional
  divisor $F$. Both of them are $\bP^1$, and one has
  $(E')^2=F^2 = -\frac12$ and $E'C = FC=1$. Contracting either $F$ or $E'$
  gives a pure shape precursor, so we get two choices.
  On the other hand, if $I_i$ has the direction $E$, i.e. $I_i\supset
  I(E)$ then $f\inv(E) = E'\cup F$, $E'$ lies in the smooth part of
  $Y'$, and one has $(E')^2=-2$ and $E'C=0$. The linear system $|L'|$
  contracts $E'$ to an $A_1$ singularity. Thus, in this case there is
  one precursor and $\oY'\setminus\oC'$ has an extra $A_1$
  singularity.

  In the case (2) for any curve $E\subset Y$ one has
  $E'\cdot f^*(K_Y+L)=0$.  The shapes of genus 1 are $A_3$, $A_2^-$,
  $\mA_1^-$, $D_4$, $\wD_4$ and those obtained from these by priming.
  For all of them the minimal resolution $\wY$ is a weak del Pezzo
  (i.e. with big and nef $-K_\wY$) of degree 2, 4, 6, or 8.
  To analyze both possible precursors and singularities
  we computed the graphs of $(-1)$ and $(-2)$ curves on the minimal
  resolution of singularities $\wY$. These graphs are classically known, see 
  e.g. \cite[Ch.8]{dolgachev2012classical-algebraic}. 
  The answers are the same as given in the statement of the Theorem.

  The exceptional case $\ppA_2^- = \plA'_2$ of genus~1 is treated in
  the same way. 
\end{proof}

\section{Classification of nonklt log del Pezzo surfaces of index 2}
\label{sec:nakayama}

\ifshortversion
The purpose of this Section is to prove Theorem~\ref{thm:logdP=ade}
from the Introduction.
\else
The purpose of this Section is to prove:
\begin{customthm}{A}
  The log canonical non-klt del Pezzo surfaces $(Y,C)$ with $2(K_X + C)$ Cartier 
  and $C$ reduced (or possibly empty) are exactly the same as the \ade and \wade
  surfaces $(Y,C)$, pure and primed.
\end{customthm}
\fi

Log del Pezzo surfaces with boundary $(Y,C)$ such that $-2(K_Y+C)$ is
ample and Cartier were classified by Nakayama in
\cite{nakayama2007classification-of-log}, over fields of arbitrary
characteristic. Some work is still required to extract
Theorem~\ref{thm:logdP=ade} from his classification.  First, in
\cite{nakayama2007classification-of-log} the divisor $C$ is
half-integral, and in our case it should be integral.

Secondly, the case of genus $g=1$ in
\cite{nakayama2007classification-of-log} is reduced to classifying log
canonical pairs $(Y,C)$ such that $Y$ is a Gorenstein del Pezzo
surface and $C$ is an effective Weil divisor with $-K_Y\sim 2C$.  The
classification of such pairs is not provided.  Rather than trying to
perform such a classification, we adapt the arguments from other parts of
\cite{nakayama2007classification-of-log} to deal with this case.

\smallskip
{For ease of the use of \cite{nakayama2007classification-of-log},
  for this section only, we adopt the notation of the latter paper.}
The basic setup is as follows.  The log del Pezzo surface with boundary is denoted
$(S,B)$, versus our $(Y,C)$.
At the outset, let us mention an important general result
\cite[Cor.3.20]{nakayama2007classification-of-log} generalizing that
of \cite[Thm.1.4.1]{alexeev2006del-pezzo}:
\begin{theorem}[Smooth Divisor Theorem]\label{thm:smooth-divisor}
  Let $(S,B)$ be a log del Pezzo surface with boundary of index $\le2$. Then a general
  element of the linear system $|-2(K_S+B)|$ is smooth.
\end{theorem}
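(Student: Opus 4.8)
The plan is to reduce the whole statement to base point freeness of the linear system $|L|$, where $L:=-2(K_S+B)$ is the ample Cartier divisor in question. Granting $\operatorname{Bs}|L|=\varnothing$, the theorem follows at once: $S$ has only finitely many singular points, so a general member $\Gamma\in|L|$ avoids $\operatorname{Sing}(S)$, and on the smooth quasi-projective variety $S\setminus\operatorname{Sing}(S)$ a general member of a base point free linear system is smooth by Bertini's theorem (in characteristic $0$). Hence a general $\Gamma$ is a smooth curve disjoint from $\operatorname{Sing}(S)$, as claimed. So from now on I would concentrate on showing $\operatorname{Bs}|L|=\varnothing$.

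Base point freeness I would deduce from vanishing. Since $L$ is Cartier, $\cO_S(L)$ is a line bundle, and for a point $x\in S$ the structure sequence $0\to\mathcal{I}_x\otimes\cO_S(L)\to\cO_S(L)\to\cO_S(L)\otimes k(x)\to0$ shows that $x\notin\operatorname{Bs}|L|$ once $H^1\!\big(S,\mathcal{I}_x\otimes\cO_S(L)\big)=0$; the same scheme, applied with the higher thickenings $\mathcal{I}_x^{m}$, controls jets at $x$, and $H^1\!\big(S,\cO_S(L)\big)=0$ is the ``point-free'' version. The crucial point is that $(S,B)$ is log canonical while
\[
L-(K_S+B)=-3(K_S+B)=\tfrac32\,L
\]
is ample. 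Thus $H^1\!\big(S,\cO_S(L)\big)=0$ by Kawamata--Viehweg vanishing for log canonical pairs (Ambro--Fujino vanishing), and $H^1\!\big(S,\mathcal{I}_x\otimes\cO_S(L)\big)=0$ is obtained by a tie-breaking argument: one chooses an effective divisor $\Delta$ with rational coefficients, $\Delta\ge B$, for which $L-(K_S+\Delta)$ is still ample and the multiplier ideal $\mathcal{J}(S,\Delta)$ is contained in $\mathcal{I}_x$ outside the non-klt locus of $(S,B)$, and then applies Nadel vanishing; the behaviour near the non-klt locus is analysed separately using the explicit structure of the boundary. Such a $\Delta$ exists once the volume $L^{2}$ exceeds an explicit bound, and the finitely many shapes or families of small volume are handled directly from the explicit models of the minimal resolution $\widetilde S\to S$ (a blow-up of a Hirzebruch surface or of $\bP^2$, or a $\bP^1$-bundle over an elliptic curve), or via a Reider-type computation on $\widetilde S$ using $\mu^{*}L\cdot E_i=0$ for the exceptional curves $E_i$ and the negative-definiteness of their span. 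In the many cases where $S$ is toric with $B$ part of the toric boundary there is in fact nothing to do, because an ample Cartier divisor on a complete toric variety is automatically globally generated and a general member of $|L|$ is then transverse to the boundary and misses the torus-fixed points, which carry the singularities.

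The step I expect to be the main obstacle is the passage to positive characteristic, the setting of \cite{nakayama2007classification-of-log}: there the vanishing theorems invoked above are no longer available and Bertini smoothness may fail, so the required $H^1$-vanishings and the smoothness of a general section must instead be verified by inspecting the explicit models produced by Nakayama's classification -- which is exactly where most of the effort in \cite{nakayama2007classification-of-log} goes. Over $\bC$, which is all that is needed in the present paper, the argument sketched above is short and conceptual; this is why we are content to quote \cite[Cor.~3.20]{nakayama2007classification-of-log}, generalizing \cite[Thm.~1.4.1]{alexeev2006del-pezzo}, rather than reprove it here.
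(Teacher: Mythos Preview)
The paper does not prove this theorem at all: it is stated without proof and attributed directly to Nakayama \cite[Cor.~3.20]{nakayama2007classification-of-log}, generalizing \cite[Thm.~1.4.1]{alexeev2006del-pezzo}. Your proposal ends at the same citation, so in that sense you match the paper exactly.

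Your additional sketch of a characteristic-zero argument via vanishing is a reasonable heuristic, but it is not a proof as written, and you should be aware of where the real difficulties lie. The reduction ``base-point-free $\Rightarrow$ general member smooth and off $\operatorname{Sing}(S)$'' is fine in characteristic~$0$, and $H^1(S,\cO_S(L))=0$ does follow from Ambro--Fujino vanishing since $L-(K_S+B)=\tfrac32L$ is ample. The gap is in the step from $H^1(\cO_S(L))=0$ to base-point-freeness: your tie-breaking/Nadel argument presupposes klt-type control of multiplier ideals, which is exactly what fails at the non-klt locus of $(S,B)$---and in this paper the interesting case is precisely the strictly log canonical one (types~II and~III). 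Handwaving that ``the behaviour near the non-klt locus is analysed separately using the explicit structure of the boundary'' and that ``the finitely many shapes of small volume are handled directly from the explicit models'' is circular here: the explicit models come from the classification, and the Smooth Divisor Theorem is one of the inputs Nakayama establishes \emph{before} and \emph{for} the classification (it is his Cor.~3.20, proved in his Section~3 by direct analysis on the minimal resolution, prior to the fundamental-triplet machinery of his Section~4). So your sketch, taken literally, would invert the logical order. Since the paper only needs the citation, that is the correct thing to record.
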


By \cite[3.16, 3.10]{nakayama2007classification-of-log}, the only pairs with
irrational $S$ and integral $B$ are cones over elliptic curves which
we call  $\wA_{2n-1}$.  So below we assume that $S$ is rational.
The minimal resolution of
singularities of $S$ is denoted by $\alpha\colon M\to S$. One defines:
\begin{enumerate}
\item An effective $\bZ$-divisor $E_M$ on $M$ by the formula
  $K_M = \alpha^*(K_S+B) -\frac12 E_M$. Since we assume the pair
  $(S,B)$ to be lc, $E_M$ has multiplicities 1 and 2. If $B=0$ and $S$
  is log terminal then $E_M$ is reduced. Otherwise, there is at least
  one component of multiplicity $2$. 
\item A big and nef line bundle $L_M= \alpha^*(-2(K_S+B))$. Thus, one
  has $L_M = -2K_M-E_M$.
\item The \emph{genus} $g(S,B) = \frac12 (K_M+L_M)L_M + 1$. This is the
  genus of a general element of $|-2(K_S+B)|$.
\end{enumerate}

This is the standard notation used in
\cite{nakayama2007classification-of-log}: 
\begin{itemize}
\item On $\bP^2$, a line is denoted by $\ell$.
\item On $\bF_n$, a zero section is $\sigma$, an infinite section
  $\sigma_\infty$, and a fiber $\ell$.
\item On $\bP(1,1,n)$, $\bar\ell$ is the image of a fiber from
  $\bF_n$, i.e. a line through the $\frac{1}{n}(1,1)$ singular point $(0,0,1)$. $\bar
\sigma_\infty$ is the image of $\sigma_{\infty}$ on
$\bP(1,1,n)$; note that $\bar \sigma_\infty \sim n\bar\ell$.
\end{itemize}

The classification of log del Pezzo surfaces with boundary is divided into three cases:
\begin{enumerate}
\item $K_M+L_M$ is not nef.
\item $K_M+L_M$ is nef and $g\ge2$.
\item $K_M+L_M$ is nef and $g=1$. 
\end{enumerate}

\subsection{ The case $K_M+L_M$ is not nef }
\label{sec:KMLM-notnef}

By \cite[3.11]{nakayama2007classification-of-log}, the only cases for
us are:
\begin{enumerate}
\item $S=\bP^2$, $\deg B=2$ $\implies$ $B$ is a smooth conic (our
   $\wA_1^\exo$) or two lines ($A_1$). 
\setcounter{enumi}{2}
\item $S=\bP(1,1,n)$, $n\ge2$ and $B\in |(\frac{n}2+2)\bar\ell|$, in
  particular $n$ is even.
\end{enumerate}
In the latter case, note that the smallest divisor not passing through the  singular point $(0,0,1)$ is $\bar
\sigma_\infty \sim n\bar\ell$. We consider the subcases:

\begin{enumerate}
\renewcommand{\theenumi}{\alph{enumi}}
\item $B\not\ni (0,0,1)$. We need $\frac{n}2+2\ge n \implies n=2,4$. If
  $n=2$ then $B\in |3\bar\ell|$ is not Cartier, a contradiction. If $n=4$
  then $B\in |4\bar\ell|=|\cO(1)|$, $L_M=\cO(1)$. This is a degenerate
  subcase of  $\wA_1^\exo$,  when $\bP^2$ degenerates to
  $\bF_4^0=\bP(1,1,4)$  (see Subsection~\ref{sec:wA-shapes}(2)). 
\item $B\ni (0,0,1)$  and is smooth there. The strict preimage of $B$ is
  then $\wB\sim \ell + k\sigma_\infty$ for some $k\ge0$. Then $B \sim
  (1+kn)\bar\ell$ $\implies \frac{n}2+2=1+kn$. It follows that $n=2$ and
  $k=1$. If $B$ is irreducible then this is our  $\wA_0^-$  case; if
  $B=\bar\ell+\bar\sigma_\infty$ then this is  $A_0^-$. 
\item $B\ni (0,0,1)$ and has two branches there. Then $\wB\sim
  2\ell+k\sigma_\infty$ and $B\sim (2+kn)\bar\ell \sim
  (\frac{n}2+2)\bar\ell$. This is impossible. 
\end{enumerate}

\subsection{ $K_M+L_M$ is nef and $g\ge2$ }
\label{sec:g-ge2}

Nakayama defines a \emph{basic pair} to be a projective surface $X$
and a \emph{nonzero} effective $\bZ$-divisor $E$ so that, for
$L=-2K_X-E$ one has:
\begin{enumerate}
\item[($\cC1$)] $K_X+L$ is nef, 
\item[($\cC2$)] $(K_X+L)L=2g-2>0$, 
\item[($\cC3$)] $LE_i\ge0$ for any irreducible component $E_i$ of $E$.
\end{enumerate}
So, the minimal resolution of a log del Pezzo surface with boundary of index $\le2$ is
a basic pair, unless $B=0$ and $S$ has Du Val singularities (because
then $E=0$). Vice versa, by
\cite[3.19]{nakayama2007classification-of-log}, any basic pair is the
minimal resolution of a log del Pezzo surface with boundary of index $\le2$, with the
semiample line bundle $NL$, $N\gg0$, providing the contraction.

The next step is to run MMP for the divisor $K_X+\frac12 L$. Namely,
if for some $(-1)$-curve $\gamma$ one has $(2K_X+L)\gamma=-E\gamma<0$
then $L\gamma=E\gamma=1$, the curve $\gamma$ can be contracted
$\tau\colon X\to Z$ to obtain a new basic pair $(Z, E_Z)$, and one has
$K_X+L=\tau^*(K_Z+L_Z)$, $K_X+E=\tau^*(K_Z+E_Z)$.  Here,
$E_Z = \tau_*(E)$ and it is again nonzero.

The minimal basic pairs, without the $(-1)$-curves as above are
$\bP^2$ and $\bF_n$, and it is easy to list the possibilities for $E$
on them. Nakayama proves that the morphism $\phi\colon M\to X$ to a
minimal basic pair is a sequence of blowups of the simplest type which
can be conveniently locally encoded by a zero-dimensional subscheme
$\Delta$ of a smooth curve, i.e. a subscheme given by an ideal
$I=(y, x^k)$ for some local parameters $x,y$ and $k>0$. If
$\mu\colon Y\to X$ is a simple blowup then
$I_Y=\mu^*I = (y,x^{k-1}) \otimes \cO_Y(-\Gamma)$, where $\Gamma$ is
the exceptional $(-1)$-curve of $\mu$. Then one continues to eliminate
$I_Y$ by induction, making $k$ blowups in total. Equivalently, one can
blow up the ideal $I$ and then take the minimal resolution.

In this way, we obtain a triple $(X,E,\Delta)$ satisfying
\begin{enumerate}
\item[($\cF1$)] $(X,E)$ is a minimal basic pair, $L=-2K_X-E$.
\item[($\cF2$)] $\Delta$ is empty or a zero-dimensional subscheme of
  $X$ which is locally a subscheme of a smooth curve,
\item[($\cF3$)] $\Delta$ is a subscheme of $E$ considered as a
  subscheme of $X$ (recall that $E$ is an effective Cartier divisor
  with multiplicities 1 or 2) such that for every reduced
  irreducible component $E_i$ of $E$ one has $LE_i\ge \deg(\Delta\cap
  E_i)$. 
\end{enumerate}
Nakayama calls these \emph{quasi fundamental triplets}. Vice versa, by
\cite[4.2]{nakayama2007classification-of-log} for any quasi
fundamental triplet $(X,E,\Delta)$ the pair $(M,E_M)$ obtained by
eliminating $\Delta$ is a basic pair, that is the minimal resolution
of singularities of a log del Pezzo surface with boundary. Thus, one is reduced to
enumerating quasi fundamental triplets.

For a given basic pair $(M,E_M)$, the sequence of blowdowns of
$(-1)$-curves and thus the resulting quasi fundamental triplet
$(X,E,\Delta)$ are not unique. To cure this, Nakayama defines a
\emph{fundamental triplet} that satisfies additional normalizing
conditions \cite[Def. 4.3]{nakayama2007classification-of-log}. He then
proves in \cite[4.9]{nakayama2007classification-of-log} that the
fundamental triplet exists and is unique in most cases, including all
the cases when $(S,B)$ is strictly log canonical -- the case that we
operate in.  For this case, the possible fundamental triplets are
listed in \cite[4.7(2)]{nakayama2007classification-of-log}.

It remains to consider these fundamental triplets and the resulting
minimal resolutions $M$. But first, we can narrow down the
possibilities for $\Delta$ since our situation is restricted by the
condition that $B$ is integral and not half-integral as in
\cite{nakayama2007classification-of-log}.

\begin{definition}\label{def:subschemes}
  We introduce the following simple subschemes $\Delta\subset E$.
  \begin{center}
    \begin{tabular}[htp!]{l|llcc}
      &$E$ &$\Delta$ &$\deg(\Delta)$ &$\mult_P(\Delta\cap E_i)$ \\
      \hline
      ($\ \cdot\ $)\ & $(y)$ &$(y,x)$ &1 &$1$ \\
      ($-$)$_1$ & $(y)$ & $(y,x^2)$  &2 &$2$ \\
      \hline
      ($-$)\  & $(y^2)$ & $(y,x^2)$  &2 &$2$ \\
      ($\ \p\ $)\  & $(y^2)$ & $(y^2,x)$ &2 &$1$ \\
      ($+$)\  & $(y^2)$ & $(y^2,y+\epsilon x^2)$, $\epsilon\ne0$ &4 &$2$ \\
    \end{tabular}
  \end{center}

  An alternative description for the last subscheme is
  $(y+\epsilon x^2, x^4)$.
\end{definition}

The subschemes appearing in this definition are given suggestive
names, which reflect the notation used for priming in
Section~\ref{sec:priming}. The reason for this will become clear in
the proof of Theorem~\ref{thm:pairs-large-genus}. 

\begin{lemma}
  The effect of eliminating the subschemes of \eqref{def:subschemes}
  is as follows. 
  \begin{enumerate}
  \item[($\ \cdot\ $)\ ] $E_M=1E_i + 0\Gamma_1$, $\Gamma_1^2=-1$,
    $E_i\Gamma_1=1$, $L_M\Gamma_1=1$.
  \item[($-$)$_1$] $E_M=1E_i + 0\Gamma_1 +0\Gamma_2$, $\Gamma_1^2=-1$,
    $\Gamma_2^2=-2$, $E_i\Gamma_1=\Gamma_1\Gamma_2=1$, $L_M\Gamma_1=1$.
  \item[($-$)\ ] $E_M=2E_i + 2\Gamma_1 +1\Gamma_2$, $\Gamma_1^2=-1$,
    $\Gamma_2^2=-2$, $E_i\Gamma_1=\Gamma_1\Gamma_2=1$, $L_M\Gamma_1=1$.
  \item[($\ \otherbar\ $)\ ] $E_M=2E_i + 1\Gamma_1 +0\Gamma_2$, $\Gamma_1^2=-2$,
    $\Gamma_2^2=-1$, $E_i\Gamma_1=\Gamma_1\Gamma_2=1$, $L_M\Gamma_2=1$.
  \item[($+$)\ ] $E_M=2E_i + 2\Gamma_1 +1\Gamma_2
    +1\Gamma_3 + 0\Gamma_4$, $\Gamma_1^2=\Gamma_2^2=\Gamma_3^2=-2$,
    $\Gamma_4^2=-1$, $E_i\Gamma_1= \Gamma_1\Gamma_2= \Gamma_1\Gamma_3=
    \Gamma_3\Gamma_4=1$, $L_M\Gamma_4=1$.
  \end{enumerate}
  It is pictured in Fig.~\ref{fig:subschemes}.
\begin{figure}[htp!]
  \centering
  \includegraphics[width=22pc]{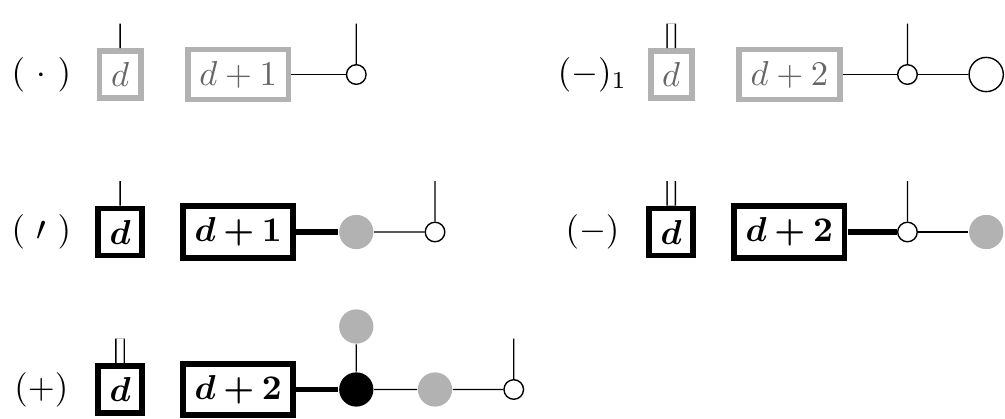}
  \caption{Effect of eliminating simple subschemes}
  \label{fig:subschemes}
\end{figure}
\end{lemma}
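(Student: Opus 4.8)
The plan is to dispatch all five assertions by the same local blow‑up bookkeeping. Recall from the text that to \emph{eliminate} a subscheme $\Delta=(y,x^k)$ one blows up the point it is supported at and replaces the residual ideal $(y,x^{k-1})\otimes\cO(-\Gamma)$ by $(y,x^{k-1})$, iterating for $k$ simple blow‑ups in all (equivalently: blow up the ideal $I_\Delta$ and minimally resolve). The successive centres are the infinitely near points carried by the strict transform of the curve that $\Delta$ lies along: $\{y=0\}=E_i$ itself in cases $(-)_1$ and $(-)$, the transversal line $\{x=0\}$ in case $(\p)$, and the smooth conic $\{y+\epsilon x^2=0\}$ in case $(+)$; case $(\ \cdot\ )$ is a single blow‑up. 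So, for each of the five subschemes of Definition~\ref{def:subschemes} in turn, I would: (i) list the tree of centres; (ii) read off the exceptional curves with their self‑intersections and incidences; (iii) track the multiplicities with which they enter $E_M$; (iv) deduce $L_M\cdot\Gamma_j$.

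For steps (i)--(ii): in $(\ \cdot\ )$, $(-)_1$, $(-)$, $(\p)$ the successive centres line up in a chain, so the exceptional locus is a chain $E_i-\Gamma-\Gamma'$ in which the last curve blown up stays a $(-1)$‑curve and the earlier one becomes a $(-2)$‑curve (a curve drops by $1$ every time a further point on it is blown up). Whether the final $(-1)$‑curve attaches to $E_i$ or only to the $(-2)$‑curve is dictated by whether $\Delta$'s curve has contact with $E_i$ (cases $(-)_1,(-)$) or is transversal to it (case $(\p)$). Case $(+)$ is the only one that is not a chain: the conic has contact $2$ with $E_i$, so after the first blow‑up the strict transforms of $E_i$ and of the conic pass through one common point of the first exceptional curve; blowing that point up produces a \emph{trivalent} exceptional curve $\Gamma_1$ meeting $E_i$, the first exceptional curve, and the tail $\Gamma_3-\Gamma_4$ grown by the last two blow‑ups along the conic --- the forked tree recorded in Figure~\ref{fig:subschemes}.

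For step (iii): under a blow‑up $\mu$ of a point $P$ one has $K\mapsto\mu^{*}K+\Gamma$, and since the elimination of $\Delta$ is set up exactly so that $K+L$ --- equivalently, as $L=-2K-E$, also $K+E$ --- remains a pullback (the reverse of the contraction formulas $K_X+L=\tau^*(K_Z+L_Z)$, $K_X+E=\tau^*(K_Z+E_Z)$ in the text), one is forced to take $E\mapsto\mu^{*}E-\Gamma$; that is, the fresh exceptional curve enters $E_M$ with multiplicity $\mult_P(E)-1$. Running this inductively yields the stated multiplicities, with the useful sanity check that they all land in $\{0,1,2\}$, so that only the multiplicity‑$1$ and $-2$ components survive on $E_M$ proper. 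For instance in $(+)$: the first blow‑up, of a multiplicity‑$2$ point of $2E_i$, produces multiplicity $1$; the triple point blown up next has $E$‑multiplicity $2+1=3$, so the fork curve gets multiplicity $2$; the remaining two blow‑ups run along the conic only and give $1$ and $0$.

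For step (iv): from $L_M=-2K_M-E_M$ and adjunction $K_M\cdot\Gamma_j=-2-\Gamma_j^{2}$ on each exceptional $\bP^1$ one gets $L_M\cdot\Gamma_j=4+2\Gamma_j^{2}-E_M\cdot\Gamma_j$, and the right‑hand side is read off from (ii)--(iii); one finds $L_M\cdot\Gamma_j=1$ on the unique $(-1)$‑curve of each configuration and $0$ on every $(-2)$‑curve, so that $M\to S$ contracts precisely the $(-2)$‑curves (together with the rest of $E_M$). I expect the only genuinely delicate point to be case $(+)$ --- recognising that the second blow‑up produces a fork rather than continuing a chain, and pinning down that curve's $E_M$‑multiplicity; the other four reduce to chains of at most two blow‑ups and can be checked directly against Figure~\ref{fig:subschemes}.
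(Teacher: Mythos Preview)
Your proposal is correct and is precisely the direct computation the paper has in mind; the paper's own proof is the single line ``This is direct computation, following \cite[Sec.~2]{nakayama2007classification-of-log}'', and what you have written is an explicit unpacking of that computation. Your key bookkeeping device---that $K+E$ pulls back under each blow-up, forcing the new exceptional to enter $E_M$ with coefficient $\mult_P(E)-1$---together with the formula $L_M\cdot\Gamma_j = 4+2\Gamma_j^2 - E_M\cdot\Gamma_j$ coming from adjunction, recovers all the stated data, including the forked configuration in case $(+)$.
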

\begin{proof}
  This is direct computation, following
  \cite[Sec.2]{nakayama2007classification-of-log}. 
\end{proof}

\begin{notation}
  In Fig.~\ref{fig:subschemes}, the rectangle with label ``$d$''
  denotes an irreducible component $E_i$ of $E$ with $E_i^2=-d$. The
  small nodes are $\bP^1$'s of square $(-1)$, the large ones of
  square $(-2)$. Rectangles and nodes are shown in bold black, resp. gray
  or white, if they appear in $E_M$ with multiplicity $2$, resp. $1$
  or $0$. The half-edges denote $\mult_P(\Delta\cap E_i)$,
  which are 2 (double line) or 1 (single line). When
  we are working with a geometric triple
  $(X, B + \frac{1+\epsilon}2 D)$, where $D\in |-2K_S - E|$ is a
  section, these half edges are the local intersection numbers $DE_i$ at a
  point $P\in D\cap E_i$. The double edge means that $D$ is tangent
  to $E_i$ at $P$.
\end{notation}

The following lemma is a direct consequence of a proof from \cite{nakayama2007classification-of-log}.

\begin{lemma}
  The pair $(S,B)$ is log canonical iff for every irreducible component $E_i$ of $E$ in the fundamental triplet  $(X,E,\Delta)$,  one has
  $\mult_E(E_i)\le2$, $\Delta$ is disjoint from the nodes of the
  double part $\llcorner E\lrcorner$ of $E$, and $\mult_P(\Delta\cap E_i)\le2 $
  for every irreducible component $E_i$ with $\mult_E(E_i)=~2$ and
  all $P\in\Delta$.
\end{lemma}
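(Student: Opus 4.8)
The statement is essentially an unpacking of the lc condition in terms of the discrepancies of the exceptional curves produced by eliminating $\Delta$, so the plan is to translate "lc" into numerical inequalities on $E_M$ and then match them against the local data of a fundamental triplet. First I would recall that $(S,B)$ lc is equivalent to the minimal resolution data $(M,E_M)$ satisfying $\mult(E_M) \le 2$ on every component and $E_M$ having normal crossings in an appropriate sense; more precisely, writing $K_M = \alpha^*(K_S+B) - \tfrac12 E_M$, the pair $(S,B)$ is lc if and only if every exceptional divisor over $S$ has discrepancy $\ge -1$ with respect to $K_S + B$, which (since $2(K_S+B)$ is Cartier) translates to the coefficients of $\tfrac12 E_M$ being $\le 1$, i.e. $\mult_{E_i}(E_M) \le 2$, together with the condition that the "double part" $\llcorner E \lrcorner$ (the sum of components of multiplicity exactly $2$) does not have a node, since a node of the reduced double divisor gives log discrepancy $0$ only if the pair is plt there and any further blowup drops it below $-1$. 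This is the content I would cite from \cite{nakayama2007classification-of-log}, Sections 2--3.

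Next, I would propagate these conditions through the elimination of $\Delta$. Given a quasi fundamental triplet $(X,E,\Delta)$, eliminating $\Delta$ means performing the sequence of simple blowups encoded by $\Delta$, and each such blowup either adds a new $(-1)$-curve $\Gamma$ to $E_M$ with multiplicity equal to the order of $\Delta$ at that stage, or modifies existing multiplicities in a controlled way; the precise effect is exactly what the preceding Lemma computes for the five basic subschemes of Definition~\ref{def:subschemes}. The key observation is that a component $E_i$ of $E$ with $\mult_E(E_i) = 1$ can absorb a subscheme $\Delta$ of arbitrarily high local degree without the resulting $E_M$ acquiring a forbidden coefficient or node (the exceptional curves over it all enter $E_M$ with coefficient $0$). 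By contrast, if $\mult_E(E_i) = 2$, then $\Delta \cap E_i$ of local multiplicity $m$ at $P$ produces, after elimination, a chain of curves with coefficients $2, 2, \dots$ near $E_i$ — one checks from the Lemma that $\mult_P(\Delta \cap E_i) = 2$ is the borderline case (yielding a $(-2)$-curve of coefficient $2$ adjacent to $E_i$, hence still lc since the total space is still a chain), whereas $\mult_P(\Delta \cap E_i) \ge 3$ forces a coefficient-$2$ curve meeting two other curves, creating a node in the double part and hence violating lc. Similarly, $\Delta$ meeting a node of $\llcorner E \lrcorner$ on $X$ itself means that $\Delta$ is not locally contained in a smooth curve through a point where $E$ already has two branches of multiplicity $2$, and blowing that point up manufactures a worse-than-lc configuration directly.

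So the structure of the proof is: (i) write down the equivalence "$(S,B)$ lc $\iff$ $E_M$ has all coefficients $\le 2$ and double part is a disjoint union of smooth curves (no nodes)", citing Nakayama; (ii) observe that the components of $E_M$ are the strict transforms of the components $E_i$ of $E$ together with the exceptional curves of the elimination of $\Delta$, and that the coefficients and incidences of the latter are governed locally by the Lemma's five cases; (iii) read off from that case analysis precisely which local configurations of $(E_i, \Delta)$ preserve the two conditions of (i), arriving at the three stated constraints ($\mult_E(E_i) \le 2$; $\Delta$ disjoint from the nodes of $\llcorner E \lrcorner$; and $\mult_P(\Delta \cap E_i) \le 2$ when $\mult_E(E_i) = 2$); (iv) note conversely that when these hold, the Lemma shows $E_M$ is a disjoint union in its double part with all coefficients $\le 2$, so $(S,B)$ is lc. The main obstacle I anticipate is step (iii): I need to be careful that it is genuinely enough to check the condition for each component $E_i$ and each point $P$ of $\Delta$ separately (i.e. that no "global" degeneration of $E_M$ can arise from the combination of several subschemes), but this follows because the subschemes in $\Delta$ have disjoint supports — $\Delta$ is zero-dimensional and each of its points is eliminated independently — and because the only interactions between distinct components of $E$ are at the nodes of $\llcorner E \lrcorner$, which the second condition already controls. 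The remaining work is the bookkeeping already done in the Lemma, so I would present step (iii) as a short case-by-case inspection of Figure~\ref{fig:subschemes} rather than fresh computation.
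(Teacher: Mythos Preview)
Your overall strategy is right --- translate lc into a numerical condition on $E_M$ and then trace it back through the elimination of $\Delta$ --- but your step (i) contains a genuine error that propagates through the rest.

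The characterization ``$(S,B)$ lc $\iff$ all multiplicities in $E_M$ are $\le 2$ \emph{and the double part is a disjoint union of smooth curves (no nodes)}'' is wrong: nodes in the coefficient-$1$ part of a boundary on a smooth surface are perfectly compatible with lc. Indeed, look at your own case $(-)$ in the preceding Lemma: there $E_M = 2E_i + 2\Gamma_1 + 1\Gamma_2$ with $E_i\Gamma_1 = 1$, so the double part $E_i + \Gamma_1$ has a node, yet this is one of the allowed (lc) configurations. Your claim that ``any further blowup drops it below $-1$'' is false --- blowing up a transverse node of two coefficient-$1$ curves gives discrepancy exactly $-1$, and stays lc forever after. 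The correct statement is: since the construction (iterated point blowups of an SNC divisor on $\mathbb{P}^2$ or $\mathbb{F}_n$) guarantees $(E_M)_{\mathrm{red}}$ is SNC, the pair $(M,\tfrac12 E_M)$ is lc iff every multiplicity in $E_M$ is $\le 2$, full stop.

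This also means your reason in step (iii) for why $\mult_P(\Delta\cap E_i)\ge 3$ fails is wrong. Using the transformation rule $E_{Y} = \mu^*E_X - \Gamma$ (equivalently $\mult_\Gamma(E_Y) = \mult_P(E_X) - 1$) for a single blowup $\mu$ at $P$, one computes directly: with $E_i$ of multiplicity $2$ and $\Delta$ supported to order $\ge 3$ along $E_i$, the first two blowups give exceptional multiplicities $1$ and $2$ (still lc), but the third blowup is at a point where $E$ now has local multiplicity $2+2=4$, producing an exceptional curve of multiplicity $3$. \emph{That} is the violation --- a coefficient exceeding $2$ --- not a ``forbidden node''. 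Similarly, $\Delta$ supported at a node of $\lfloor E\rfloor$ gives $\mult_P(E) = 4$, hence an exceptional curve of multiplicity $3$ immediately; and $\mult_E(E_i)\ge 3$ persists in the strict transform. With step (i) corrected to the bare multiplicity bound, the rest of your plan goes through cleanly. (The paper itself just cites Nakayama's proof of his Cor.~4.7 and gives no independent argument, so there is nothing further to compare against.)
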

\begin{proof} Follows immediately from the proof of \cite[Cor. 4.7]{nakayama2007classification-of-log}.\end{proof}

\begin{theorem}\label{thm:only-five-subschemes}
  Let $(M,E_M)$ be a basic pair with $M$ the minimal resolution of
  singularities of a strictly log canonical log del Pezzo surface with boundary $(S,B)$
  of index $\le2$ with integral $B$, and let $\phi\colon M\to X$ be a
  contraction to a minimal basic pair so that $(M,E_M)$ is obtained
  from a quasi fundamental triplet $(X,E,\Delta)$ by eliminating the
  0-dimensional scheme $\Delta$. Then 
  \begin{enumerate}
  \item If a component $E_i$ of $E$ has multiplicity 1 then its strict
    preimage on $M$ must be isomorphic to $\bP^1$ and have $E_i^2\le -2$. 
  \item Additionally, assume that $\Delta$ is disjoint from the singular
    part of $E_{\rm red}$ and that for every irreducible component
    $E_i$ of $E$ with $\mult_E(E_i)=1$, one has $\mult_P(\Delta\cap
    E_i)\le 2$. 
    Then the only connected components of
    $\Delta$ are the five subschemes of Def.~\ref{def:subschemes}.
  \end{enumerate}
\end{theorem}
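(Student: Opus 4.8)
The plan is to combine the local analysis of blowups of simple subschemes (the lemma computing the effect of eliminating each $\Delta$) with the constraints coming from integrality of $B$ and the log canonical condition. I would organize the argument around the local structure of a connected component of $\Delta$, which sits inside a component $E_i$ of $E$ viewed as a subscheme of a smooth curve, hence has the form $I = (y, x^k)$ in suitable local coordinates with $y$ a local equation of (a branch of) $E_{\rm red}$. The key numerical input is condition $(\cF3)$: for every reduced component $E_i$ one has $L E_i \ge \deg(\Delta \cap E_i)$, together with $\mult_E(E_i) \le 2$ from log canonicity.

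For part (1): if $\mult_E(E_i) = 1$, then on $M$ the divisor $E_M$ is reduced along the strict transform of $E_i$, so the adjunction/discrepancy computation $K_M = \alpha^*(K_S+B) - \tfrac12 E_M$ forces the strict transform of $E_i$ to contribute to the reduced part $E_M^{\rm red}$. The point is that $E_i$ must appear in the ramification-type divisor, and since $B$ is integral (not half-integral), the geometry downstairs on $S$ near $E_i$ is that of a boundary component of coefficient $1$ in a log del Pezzo; I would invoke the genus-formula argument exactly as in the proof of Theorem~\ref{thm:singularities} and Lemma~\ref{lem:types-of-pairs}: the strict transform $\widetilde{E_i}$ has $\omega_{\widetilde{E_i}} \hookrightarrow \omega_M(\ldots)$ forcing it to be rational, and since $L_M \widetilde{E_i} = 0$ would be needed for it to be contracted by $|NL_M|$ to a point on the boundary, one gets $\widetilde{E_i}^2 \le -2$ by the adjunction/genus formula on a surface with $-2K_M - E_M$ big and nef.

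For part (2): this is the heart of the matter and where I expect the main obstacle. Given the two extra hypotheses ($\Delta$ disjoint from $\mathrm{Sing}(E_{\rm red})$, and $\mult_P(\Delta \cap E_i) \le 2$ whenever $\mult_E(E_i) = 1$), I would argue as follows. A connected component of $\Delta$ is supported at one point $P$, lying on $E_{\rm red}$ but (by the first hypothesis) away from its singular locus, so locally $E_{\rm red}$ is one smooth branch $\{y = 0\}$, and $E$ itself is either $(y)$ or $(y^2)$ near $P$ according as $\mult_E(E_i) \in \{1, 2\}$. The subscheme is $\Delta \cap E_i = (y, x^k)$ up to the way $\Delta$ sits in $E$; the genuinely new subtlety is that $\Delta$ need only be a subscheme of $E$ as an effective divisor, so when $\mult_E(E_i) = 2$ one can have $\Delta$ of the forms $(y^2, x)$, $(y^2, y + \epsilon x^2)$, etc. I would enumerate: when $\mult_E(E_i) = 1$, the second hypothesis gives $\mult_P(\Delta \cap E_i) \le 2$, hence $k \le 2$, yielding exactly $(\cdot)$ and $(-)_1$. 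When $\mult_E(E_i) = 2$, log canonicity already bounds $\mult_P(\Delta \cap E_i) \le 2$; a short analysis of which ideals $\Delta$ with $\Delta \subseteq (y^2)$ realize $\mult_P \le 2$ and have the stated degree behaviour under $(\cF3)$ leaves precisely $(-)$, $(\p)$, and $(+)$ — the last because $(y^2, y + \epsilon x^2)$ is the unique way (up to coordinate change, and requiring $\epsilon \ne 0$ to avoid reducing to a smaller case) to get the degree-$4$, multiplicity-$2$ configuration with the fork of three $(-2)$-curves in Fig.~\ref{fig:subschemes}. The hard part will be verifying that no other ideal $(y^2, g)$ with $g$ of order $\ge 2$ survives: one must check that any such $g$ either is equivalent to one on the list after a change of the parameter $x$, or else produces a component $E_j'$ of $E_M$ with $\mult_E(E_j') \ge 3$ or a $\Delta$-point on a node of $\llcorner E \lrcorner$, contradicting log canonicity. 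I would carry this out by the same explicit elimination-of-$\Delta$ computation as in the preceding lemma, tracking multiplicities step by step, and appeal to the normalization in Nakayama's \cite[Def.~4.3, 4.7(2)]{nakayama2007classification-of-log} to rule out the remaining non-normalized configurations.
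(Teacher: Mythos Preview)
Your proposal has gaps in both parts, and misses the simple mechanism that the paper uses.

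\textbf{Part (1).} You conflate two different contractions. The map that matters here is the minimal resolution $\alpha\colon M\to S$, not the anticanonical contraction given by $|NL_M|$. The correct argument is one line: from $K_M=\alpha^*(K_S+B)-\tfrac12 E_M$, a component of $E_M$ with multiplicity~$1$ that is \emph{not} $\alpha$-exceptional would map to a component of $B$ with coefficient $\tfrac12$, contradicting integrality of $B$. Hence every multiplicity-$1$ component of $E_M$ is $\alpha$-exceptional, so is a $\bP^1$ with self-intersection $\le -2$. Your claim that such a component corresponds downstairs to ``a boundary component of coefficient~$1$'' is backwards (coefficient~$1$ corresponds to multiplicity~$2$ in $E_M$), and invoking $L_M\widetilde{E_i}=0$ is irrelevant to this step.

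\textbf{Part (2).} Your filtering criterion is wrong. You propose to discard a candidate $\Delta$ when it produces a component of $E_M$ with multiplicity $\ge 3$ or a point on a node of $\lfloor E\rfloor$; but both of these are already ruled out by the log canonical hypotheses you have assumed, so your filter removes nothing new. The actual filter is part~(1) itself: any subscheme whose elimination creates a $(-1)$-curve appearing in $E_M$ with multiplicity~$1$ is forbidden. For example, $\Delta=(x,y)\subset E=(y^2)$ yields $E_M=2\widetilde{E_i}+\Gamma$ with $\Gamma^2=-1$ and $\mult_{E_M}(\Gamma)=1$, which violates part~(1); your criteria do not exclude it. Similarly, $\Delta=(y+\epsilon x^2,x^3)\subset(y^2)$ (a length-$3$ subscheme satisfying all your lc bounds) produces a $(-1)$-curve $\Gamma_3$ with multiplicity~$1$ after three blowups, and must be discarded for the same reason. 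Once you enumerate the subschemes of $(y)$ and $(y^2)$ of the form $(y',x'^k)$ satisfying $\mult_P(\Delta\cap E_i)\le 2$, and then apply the ``no multiplicity-$1$ $(-1)$-curve'' filter coming from part~(1), exactly the five subschemes of Definition~\ref{def:subschemes} survive. The appeal to Nakayama's normalization conditions is not needed here.
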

\begin{remark}
  Concerning the additional assumptions of (2), we note that they are
  satisfied for the strictly log canonical fundamental triplets by
  \cite[4.6]{nakayama2007classification-of-log}.
  So we can ignore them in the case $g(S,B)\ge2$.
\end{remark}

\begin{proof}
  (1)   Our condition for the integrality of $B$ means that all components
  of $E_M$ of multiplicity 1 must be contracted by
  $\alpha\colon M\to S$. They are all $\bP^1$'s with $E_i^2\le -2$.

  (2) We then go through the short list of subschemes with
  $\mult_P(\Delta\cap E_i)\le2$, eliminating those that lead to
  $(-1)$-curves $\Gamma$ with $\mult_{E_M}(\Gamma)=1$. For example,
  the case $\Delta=(x,y) \subset E=(y^2)$ is eliminated.
\end{proof}

Nakayama defined fundamental triplets $(X,E,\Delta)$ (without
``quasi'') in order to obtain uniqueness for them, in most cases. 
We pick a different normalization: we pick $(X,E)$ to
correspond to one of the pure shapes and all connected components of
$\Delta$ to be of type $(\otherbar)$.

\begin{theorem}\label{thm:pairs-large-genus}
  Let $(S,B)$ be a log del Pezzo surface with boundary $(S,B)$ of index $\le2$ of genus
  $g(S,B)\ge2$. Then it is one of the following shapes or is obtained
  from them by any allowable primings as in
  Theorem~\ref{thm:priming}. 
  \begin{enumerate}
  \item  $\wD_{2n}$, $D_{2n}$, $D_{2n-1}^-$, $A_{2n-1}$,
    $A_{2n-2}^{-}$, $\phmi A_{2n-3}^-$ for $2n\ge 6$.  
  \item  $\wE_7$, $\phmi E_7$, $\phmi E_6^-$. 
  \item  $\wE_8^-$, $\phmi E_8^-$. 
  \end{enumerate}
\end{theorem}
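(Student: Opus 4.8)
The plan is to reduce everything to the enumeration of fundamental triplets $(X,E,\Delta)$ in the genus $\ge2$ case, which Nakayama carries out in \cite[4.7(2)]{nakayama2007classification-of-log}, and then to match each item on Nakayama's list with one of our pure shapes possibly followed by an allowable priming. Concretely, I would proceed as follows. First, by Theorem~\ref{thm:only-five-subschemes}(2) (whose hypotheses are satisfied for strictly log canonical fundamental triplets, as noted in the remark after it), every connected component of $\Delta$ is one of the five subschemes $(\cdot)$, $(-)_1$, $(-)$, $(\otherbar)$, $(+)$ of Definition~\ref{def:subschemes}. Next, rather than using Nakayama's normalization, I use ours: renormalize so that $(X,E)$ is the triple of one of the pure shapes and every connected component of $\Delta$ is of type $(\otherbar)$ — this is legitimate because the subschemes $(\cdot)$, $(-)_1$, $(-)$ can each be absorbed by changing which $(-1)$-curve on an intermediate surface is blown down, i.e.\ by choosing a different minimal basic pair $X$ in the sequence of contractions, so that after renormalization the only $\Delta$-components of type $(\otherbar)$ record honest ``corner-cutting'' blowups, and components of the types $(\cdot)$, $(-)_1$, $(+)$ correspond exactly to 0,1,2 primings on a side of the pure shape (as foreshadowed by the suggestive names).

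Then the enumeration is bookkeeping. The minimal basic pairs are $\bP^2$ and $\bF_n$, and the possible $E$ on them in the strictly log canonical case with integral $B$ are short: on $\bP^2$ and $\bF_n$ one lists the $E\in|-2K_X - L|$ with $L$ big and nef, $(K_X+L)L = 2g-2 > 0$, $L E_i \ge 0$, and $\mult_E(E_i)\le 2$ with at least one component of multiplicity $2$ (since $B\ne0$). One checks that after renormalization these are exactly the $(X,E)$ underlying the toric pure shapes $A_{2n-1}$, $A_{2n-2}^-$, $\mA_{2n-3}^-$, $D_{2n}$, $D_{2n-1}^-$, $\mE_6^-$, $\mE_7$, $\mE_8^-$ and the nontoric/type-II pure shapes $\wA_{2n-1}$ (handled separately above as the irrational case), $\wD_{2n}$, $\wE_7$, $\wE_8^-$; the genus condition $g\ge2$ then forces $2n\ge6$ in the $D$ and $A$ series exactly as stated, using $g = n-1$ from Tables~\ref{tab:primed-III}, \ref{tab:primed-II}, and excludes $\wA_1^*$, $\wA_0^-$ (genus $0$, already handled in Subsection~\ref{sec:KMLM-notnef}), $D_4$, $\wD_4$, $A_3$, $A_2^-$, $\mA_1^-$ (genus $\le1$, handled later). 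Finally, the attached subschemes $\Delta$ of type $(\otherbar)$, $(-)_1 = (\otherbar)+(\cdot)$-pattern, etc.\ translate into exactly the weighted blowups of Definition~\ref{def:priming}, and the constraint ($\cF3$), $LE_i \ge \deg(\Delta\cap E_i)$, is precisely the numerical condition $L'C'_s \ge 0$ of Theorem~\ref{thm:priming}; so every strictly log canonical $(S,B)$ of genus $\ge2$ is one of the listed pure shapes followed by an allowable priming, and conversely each such surface arises.

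The main obstacle, and the step requiring genuine care rather than routine checking, is the renormalization from Nakayama's fundamental triplets to ours: one must verify that changing the choice of $(-1)$-curves contracted in $\phi\colon M\to X$ does not change the isomorphism class of $(S,B)$ (only the intermediate combinatorial data), and that every fundamental triplet in Nakayama's list \cite[4.7(2)]{nakayama2007classification-of-log} can indeed be reshuffled so that $(X,E)$ sits among our pure-shape triples and all $\Delta$-components become type $(\otherbar)$ — i.e.\ that no ``exotic'' minimal basic pair or $E$-configuration survives. This is where one has to go back to the explicit tables in \cite[Sec.~4]{nakayama2007classification-of-log} and check case by case; the genus-$1$ cases are deliberately excluded here because the analogous reshuffling fails to be unique there (this is the source of the Weyl group $W_0$ in Theorem~\ref{thm:reconstruction}) and is treated separately in the next part of the section.
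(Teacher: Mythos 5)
Your proposal follows essentially the same route as the paper's proof: restrict the components of $\Delta$ to the five subschemes via Theorem~\ref{thm:only-five-subschemes}, run through Nakayama's list \cite[4.7(2)]{nakayama2007classification-of-log} of fundamental triplets in the genus $\ge2$ case, renormalize so that $(X,E)$ underlies a pure shape, and translate the remaining $\Delta$-components into decorations and allowable primings. The one thing to correct in the write-up is your dictionary: $(-)$ produces the short-side decoration ${}^-$ (hence a different \emph{pure} shape, e.g.\ $A_{2n-2}^-$ from the $A_{2n-1}$ triplet, rather than being absorbed), $(\p)$ and $(\pp)$ give one and two primes, $(+)$ gives the ${}^+$ decoration (a short side primed once), and it is a pair of $(\cdot)$'s --- or its degeneration $(-)_1$ --- on a multiplicity-one fiber that gets absorbed into a single $(\p)$ on a smaller minimal pair, not the ``$(\cdot),(-)_1,(+)\leftrightarrow 0,1,2$ primings'' correspondence you state.
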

\begin{proof}
  We go through the complete list
  \cite[4.7(2)]{nakayama2007classification-of-log} of fundamental
  triplets and see that they are as above. 

  \emph{Case $[n; 2, e]_2$ for $n\ge0$, $e\le \max(4, n+1)$ with
    $\mult_\ell F\le 2$ for any $\ell \le F$.} This means that
  $X=\bF_n$ and $E=2\sigma + F$, where $F\sim e\ell$ is a sum of
  several fibers, each with multiplicity $\le2$, and
  $\Delta\cap\sigma = \emptyset$.
  We have $L\sim 2\sigma_\infty + (4-e)\ell$ and $L\sigma=4-e$. 

  If $e=0$ then $\Delta=\emptyset$. This is $\wD_{2n+8}$, so we obtain
   $\wD_{2m}$ for $2m\ge8$. 

  If $e=1$ then we must have $\Delta=(\cdot\cdot)$, that is two disjoint copies
  of $(\cdot)$ contained in a fiber $F$, or $(-)_1$ which is a
  degeneration of it.  Let us use the extended notation
  $[n; 2,0; \cdot\cdot]$, resp. $[n; 2,0; -_1]$ by writing $\Delta$ at the end.
  Note that we must apply $(\cdot)$ twice, otherwise $\wF$ is a
  $(-1)$-curve in $E_M$ with multiplicity~1, which is not allowed
  by Theorem~\ref{thm:only-five-subschemes}.

  Contracting one of the $(-1)$-curves back and then $F_i$, we can
  view this as the quasi fundamental triplet $[n-1; 2,0; \p]$, which
  is $\wD_{2n+6}^\p$. Thus, we get  $\wD_{2m}^\p$ for
  $2m\ge6$.  

  In the degenerate case $\Delta=(-)_1$ of $(\cdot\cdot)$, the direction of
  the ``prime'' coincides with the direction of the fiber $\ell$ on
  $\bF_{n-1}$ for the triplet $[n-1; 2,0; \p]$.
  In that case the 
  strict preimage of this fiber gives an extra
  $(-2)$-curve, and the surface $Y$ acquires an extra $A_1$
  singularity outside of $B$.

  If $e=2$ and $F=\ell_1+\ell_2$ then we get $[n-2;2,0;\pp]$ this way,
  which is $\wD_{2n+4}^{\pp}$. Since $n\ge1$, we get 
  $\wD_{2m}^{\pp}$ for $2m\ge6$.  Similarly when $e=3,4$
  and $F$ is the sum of $e$ distinct fibers, we get
   $\wD_{2m}^{\p\pp}$ and $\wD_{2m}^{\pp\pp}$ for
  $2m\ge6$.  
  Similar to the above, for every priming the preimage of the
  corresponding fiber $\ell$ gives an $(-2)$-curve which gives an
  additional singularity of $Y$. 
  
  Now consider the case when $e=2$ and $F=2\ell$ is a double fiber. If
  $\Delta=\emptyset$ then this is $D_{2n+4}$, i.e.  $D_{2m}$ for
  $2m\ge6$.  
  For $\Delta = -, \p, \pp, +$ we get  $D_{2m-1}^{-}$,
  $D_{2m}^{\p}$, $D_{2m}^{\pp}$, $D_{2m-1}^{+}$ for $2m\ge6$. 
  Adding single fibers to $F$, i.e. $F=2\ell+\ell_1$ or
  $2\ell+\ell_1+\ell_2$, gives priming on the left side, which produces all the
  cases  $\php D^{?}$ and $\phpp D^{?}$ for $2m\ge6$. 

  Finally, $e=4$, $F=2\ell_1+2\ell_2$ and $\Delta=\emptyset$ gives
   $A_{2n-1}$.  Adding $\Delta = -, \p, \pp, +$ adds corresponding
  decorations in the $A$ case, with each $-,+$ decreasing
  the index by~1.

  \smallskip
  \emph{Case $[1;2,2]_{2\infty}$:} $Y=\bF_1$, $E=2\sigma_\infty$, and
  $\Delta=\emptyset$. This is  $\wD_6$. 

  \smallskip
  \emph{Case $[2]_2$ with $\mult_P(\Delta\cap\ell)\le2$ for any
    $P\in\ell$:} $Y=\bP^2$, $E=2\ell$ and $L=\cO(4)$. 
  For $\Delta=\emptyset$, this is  $\wE_7$.  For $\Delta=(-)$,
  resp. $(--)$, this is  $\phmi E_7$, $\phmi E_6^-$.  
  Considering various other possibilities for
  $\Delta$ leads to all the allowable primings of $\wE_7$, $\phmi E_7$,
  $\phmi E_6^-$. 

  \smallskip
  \emph{Case $[2;1,2]_{2+}$ with $\mult_P(\Delta\cap\ell)\le2$ for any
    $P\in\ell$:} $Y=\bF_2$, $E=\sigma+2\ell$, $\deg(\Delta\cap\ell)\le
  3$ and $\Delta\cap\sigma=\emptyset$.
  For $\Delta=\emptyset$ this is  $\wE_8^-$,  and for $\Delta=(-)$
  this is  $\phmi E_8^-$.  Considering various other possibilities for
  $\Delta$ leads to all the allowable primings of $\wE_8^-$ and
  $\phmi E_8^-$.

  \smallskip
  \emph{Case $[0;2,1]_0$.} This is a typo, this is a klt case so it does
  not appear.  
\end{proof}

\subsection{ $K_M+L_M$ is nef and $g=1$ }
\label{sec:g=1}

In this case the main result of
\cite{nakayama2007classification-of-log} is (3.12) which says that $S$
must be a Gorenstein log del Pezzo surface and $2B\sim -K_S$. To apply
it in our case, we would have to find all Gorenstein del Pezzo
surfaces with Du Val singularities and $K_S$ divisible by~2 as a Weil
divisor -- of which there are many -- and then consider all the
possibilities for $B$. 

Instead, we adopt a different strategy. Let us define a \emph{weak
  basic pair} with the same definition as a basic pair but dropping
the condition $(\cC2)$ that $2g-2>0$. Similarly, we define a
\emph{weak quasi fundamental triplet} $(X,E,\Delta)$ by asking that
$X$ in $(\cF1)$ is merely a \emph{weak} minimal basic pair. Then:

\begin{enumerate}
\item It is still true that $K_M+L_M$ is nef for any weak basic pair
  obtained by eliminating a 0-dimensional scheme of a weak fundamental
  triple $(X,E,\Delta)$: the corresponding proofs in \cite[4.2, 3.14
  nefness]{nakayama2007classification-of-log} go through.
\item We have additional conditions $K_M+L_M=K_M+E_M=0$ by
  \cite[3.12]{nakayama2007classification-of-log}.
\item Our Theorem~\ref{thm:only-five-subschemes} still holds.
\item We have to check separately that $L_M$ is big, this condition is
  no longer automatic. However, this is easy to do: $L^2/2$ drops by
  $\deg(\Delta)/2$, i.e. by 1 under the operations $(\p)$, $(-)_1$,
  $(-)$, and by 2 under $(\pp)$.
\end{enumerate}

\begin{lemma}\label{lem:weak-fund-triplets}
  The weak fundamental triplets for strictly lc pairs $(S,B)$ are:
  \begin{enumerate}
  \item $X=\bP^2$, $E=2\ell_1+\ell_2$.
  \item $X=\bF_0$, and 
    (a) $E=2\sigma+2\ell$, 
    (b) $E=2\sigma+\ell_1+\ell_2$, 
    (c) $E=2D$, $D\sim\sigma+\ell$.
  \item $X=\bF_1$, and 
    (a) $E=2\sigma+2\ell_1+\ell_2$, 
    (b) $E=2\sigma+\ell_1+\ell_2+\ell_3$, \newline
    (c) $E=\sigma+\sigma_\infty+2\ell$,
    (d) $E=2\sigma_\infty + \ell$.
  \item $X=\bF_2$, and 
    (a) $E=2\sigma+2\ell_1+2\ell_2$,
    (b) $E=2\sigma+2\ell_1+\ell_2+\ell_3$, \newline
    (c) $E=2\sigma+\ell_1+\ell_2+\ell_3+\ell_4$,
    (d) $E=\sigma+2\ell+\sigma_\infty$,
    (e) $E=2\sigma_\infty$. 
  \end{enumerate}
\end{lemma}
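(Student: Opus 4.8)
The plan is to carry out the weak version of the genus $\ge 2$ analysis of Subsection~\ref{sec:g-ge2}: the reduction to a minimal basic pair by MMP and Theorem~\ref{thm:only-five-subschemes} both survive once condition $(\cC2)$ is dropped, and the new feature of the genus $1$ case is that $E$ is forced to be anticanonical, which cuts the classification down to a short list.

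First I would extract the constraint coming from $g=1$. By \cite[3.12]{nakayama2007classification-of-log} the surface $S$ is a Gorenstein del Pezzo with $2B\sim -K_S$, hence $L_M=\alpha^*(-2(K_S+B))=\alpha^*(-K_S)$ and the genus formula reads $(K_M+L_M)L_M=2g-2=0$. Since $K_M+L_M$ is nef and $L_M$ is big and nef, the Hodge index theorem forces $K_M+L_M\equiv 0$, hence $K_M+L_M=0$ on the rational surface $M$, and therefore also $K_M+E_M=0$ because $L_M=-2K_M-E_M$. Pushing down along the contraction $\phi\colon M\to X$ to the minimal weak basic pair, for which $K_M+L_M=\phi^*(K_X+L)$, we get $K_X+L=0$; equivalently $L=-K_X$ and $E=-2K_X-L=-K_X$. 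So in genus $1$ the divisor $E$ is an effective anticanonical divisor of $X$.

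Second I would enumerate the pairs $(X,E)$. A minimal weak basic pair is $\bP^2$ or a Hirzebruch surface $\bF_n$. For $n\ge 3$ the negative section $\sigma$ is a fixed component of $|-K_{\bF_n}|$ (since $(-K_{\bF_n})\cdot\sigma=2-n<0$), so $\sigma$ is a component of $E$, and then $L\cdot\sigma=(-K_{\bF_n})\cdot\sigma<0$ contradicts $(\cC3)$; hence $X\in\{\bP^2,\bF_0,\bF_1,\bF_2\}$. On each of these I would go through the finitely many ways to write $-K_X$ as a sum of prime divisors with all multiplicities $\le 2$, discarding the reduced ones: a surviving $E$ must carry a component of multiplicity $2$ because $(S,B)$ is strictly log canonical, so $E_M$, hence $E$, is non-reduced. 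On $\bP^2$ this leaves only $2\ell_1+\ell_2$; on $\bF_n$ one separates $E$ into its section part and fiber part, the only member of $|-K_{\bF_n}|$ not of this shape being $E=2D$ with $D$ a smooth $(1,1)$-curve on $\bF_0$, and reads off exactly the lists (2)--(4). For the converse, the weak analogue of \cite[4.2]{nakayama2007classification-of-log} shows that each triplet in the list is the minimal resolution of a log del Pezzo surface with boundary — of genus $1$ because $K_X+L=0$, and strictly lc because $E$ is non-reduced — after eliminating a suitable $\Delta$; bigness of the resulting polarization is verified as in item~(4) of the weak set-up stated just before the lemma.

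The step I expect to need the most care is the matching with Nakayama's normalized (weak) fundamental triplets. The surface $\bF_1$ is not a relatively minimal rational surface — its $(-1)$-section $\sigma$ meets every effective anticanonical divisor in one point — so one has to understand why the $\bF_1$ cases are not absorbed into the $\bP^2$ case; this comes down to checking that blowing down $\sigma$ would move the accompanying subscheme $\Delta$ into a position excluded by the normalizing conditions, so that $\bF_1$ is genuinely the normalized representative. Once the normalization is pinned down, completeness and non-redundancy of the list across $\bP^2,\bF_0,\bF_1,\bF_2$ follow from the finite case analysis above.
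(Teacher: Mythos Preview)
Your proposal is correct and follows the same route as the paper, which dispatches the lemma in one line: ``Immediate: $X=\bP^2$ or $\bF_n$, $L=-K_X$ must be nef, and $E=-2K_X-L$ must have at least one component of multiplicity~2. We simply list the possibilities.'' Your derivation of $E\sim -K_X$ via $K_M+L_M\equiv 0$ (Hodge index) and your exclusion of $\bF_n$, $n\ge 3$, via $(\cC3)$ are exactly what underlies that sentence, only spelled out.

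Your last paragraph, however, is over-engineered. The lemma is not asserting that its list is irredundant or that each entry is a ``minimal'' pair in the MMP sense; it is simply an exhaustive list of the possible $(X,E)$, to be consumed case by case in Theorem~\ref{thm:pairs-genus-one}. Indeed, in that theorem the paper immediately observes that case~(1) reduces to case~(2), and that (3c), (4d) reduce to (3a), so redundancy is expected and harmless. Your worry that the $\bF_1$ cases might be ``absorbed into the $\bP^2$ case'' (since $E\cdot\sigma=1$ always on $\bF_1$ in genus~1) is a correct observation about the $(2K+L)$-MMP, but it is not something the lemma needs to resolve: the normalization to a unique \emph{fundamental} triplet is a device Nakayama uses in the $g\ge 2$ analysis, whereas here the paper bypasses it and works directly with the raw list. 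So you can drop the last paragraph entirely.
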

\begin{proof}
  Immediate: $X=\bP^2$ or $\bF_n$, $L=-K_X$ must be nef, and
  $E=-2K_X-L$ must have at least one component of multiplicity 2. We
  simply list the possibilities.
\end{proof}

\begin{theorem}\label{thm:pairs-genus-one}
  Let $(S,B)$ be a log del Pezzo surface with boundary $(S,B)$ of index $\le2$ of genus
  $g(S,B)=1$. Then it is one of one of the shapes  $\wD_4$,
  $D_4$, $A_3$, $A_2^-$, $\phmi A_1^{-}$,  or is obtained from
  one of them by any allowable primings as in
  Theorem~\ref{thm:priming}.
\end{theorem}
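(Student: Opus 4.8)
The plan is to run the same machinery as in the proof of Theorem~\ref{thm:pairs-large-genus}, but in the \emph{weak} setting set up above. Since $g(S,B)=1$ and (by \ref{sec:g=1}) $S$ is rational, the minimal resolution $(M,E_M)$ is a weak basic pair obtained from a weak quasi fundamental triplet $(X,E,\Delta)$ by eliminating the zero-dimensional scheme $\Delta$, and for any such triplet the nefness of $K_M+L_M$ together with the identity $K_M+L_M=K_M+E_M=0$ hold automatically; in particular $g=\frac12(K_M+L_M)L_M+1=1$ is forced for \emph{every} such triplet, so no genus condition has to be imposed separately. Theorem~\ref{thm:only-five-subschemes} continues to apply, so every connected component of $\Delta$ is one of the five simple subschemes of Definition~\ref{def:subschemes}, and Lemma~\ref{lem:weak-fund-triplets} provides the finite list of minimal weak basic pairs $(X,E)$ (on $\bP^2$ and $\bF_0,\bF_1,\bF_2$). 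It therefore suffices to go through this list, enumerate for each $(X,E)$ the admissible $\Delta$, discard those for which $L_M$ fails to be big, and identify the resulting surface.

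Concretely, I would first single out the pure shapes. The triplets $X=\bF_2$, $E=2\sigma_\infty$ and $X=\bF_0$, $E=2D$ with $D\sim\sigma+\ell$ (both with $\Delta=\emptyset$) have volume $4$, genus $1$, and a single boundary component; these are the quadratic cone and the $\bP^1\times\bP^1$ models of $\wD_4$ from Section~\ref{sec:toric-shapes}. The triplet $X=\bF_0$, $E=2\sigma+2\ell$, $\Delta=\emptyset$ gives $D_4$, and $X=\bF_2$, $E=2\sigma+2\ell_1+2\ell_2$, $\Delta=\emptyset$ gives $A_3$, exactly as the analogous triplets produced $D_{2n+4}$ and $A_{2n-1}$ in the proof of Theorem~\ref{thm:pairs-large-genus}; the remaining $\bP^2$- and $\bF_1$-triplets, whose multiplicity-$1$ components of $E$ force eliminations of $(\cdot)/(-)_1$ type by Theorem~\ref{thm:only-five-subschemes}(1) (and whose multiplicity-$2$ components may be split by a $(-)$), yield, with their smallest admissible $\Delta$, the two lower-volume shapes $A_2^-$ and $\mA_1^-$. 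Then, just as in the higher-genus case, attaching further components of types $(-),(\p),(\pp),(+)$ on the multiplicity-$2$ part of $E$ and further $(\cdot)/(-)_1$ pairs on the sides realizes precisely the primings of these five shapes allowed by Theorem~\ref{thm:priming} — with each $(-)$ or $(+)$ dropping the index — and Lemma~\ref{lem:priming-changes} shows the volume drops by $1$ per priming, which is exactly what the bigness requirement $L_M^2>0$ translates into; this cuts the list off after finitely many primings. Along the way one records, exactly as in the $\wD$ and $D$ cases of Theorem~\ref{thm:pairs-large-genus}, that choosing the direction of a $(\cdot)$- or $(\p)$-subscheme to coincide with a special curve through its support produces an extra Du Val singularity of $Y$ away from $\nonklt(Y,C)$, consistent with Theorem~\ref{thm:reconstruction}.

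The main obstacle I expect is not any individual computation but the bookkeeping. Unlike the case $g\ge2$, Nakayama supplies no ready-made list of fundamental triplets here, so one must check that Lemma~\ref{lem:weak-fund-triplets} is genuinely exhaustive — this is where the strict log canonicity of $(S,B)$ is essential, forcing $E\ne0$ with at least one multiplicity-$2$ component — and the bigness condition $L_M^2>0$, which was automatic for $g\ge2$, must now be verified branch by branch; it is precisely this condition that separates the genuine primings from the formally-allowed-but-degenerate ones. A secondary nuisance is reconciling the triplet output with the ad hoc definitions of Section~\ref{sec:adepairs}: one must verify by hand that the two $\wD_4$ triplets really reproduce the two surfaces of the $\wD_4$ shape, and keep track of the redundancies of Lemma~\ref{lem:non-redundancy} — notably $\mD_2^-=\pA_2^-=\mA_2'$ and the left/right symmetries of $A_3$, $D_4$, and $\mA_1^-$ — so as not to list the same surface twice.
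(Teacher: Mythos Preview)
Your overall framework is the same as the paper's --- weak quasi fundamental triplets, the list in Lemma~\ref{lem:weak-fund-triplets}, Theorem~\ref{thm:only-five-subschemes}, and a bigness check --- but you diverge at the crucial step. You propose to enumerate $\Delta$ directly on each of the 13 triplets in Lemma~\ref{lem:weak-fund-triplets}. The paper instead first \emph{reduces} every triplet to one of only three ``purely even'' cases (all components of $E$ of multiplicity~$2$): $D_4$ (case 2a), $\wD_4$ (cases 2c and 4e), and $A_3$ (case 4a). The reduction is the elementary but key move you are missing: if $E$ has a multiplicity-$1$ component, that component must be blown up at least once by Theorem~\ref{thm:only-five-subschemes}(1); after one such blowup the strict transform of the component is a $(-1)$-curve, and contracting it (rather than the exceptional divisor) yields a \emph{different} minimal model with one fewer multiplicity-$1$ component. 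Iterating collapses all 13 cases to the three even ones. This buys a lot: on the even cases $\Delta$ has no $(\cdot)$ or $(-)_1$ pieces, only $(-)$, $(\p)$, $(+)$, and the enumeration is short.

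Your direct route is in principle workable, but as written it has a gap: you single out the four purely even cases and then say ``the remaining $\bP^2$- and $\bF_1$-triplets'' produce $A_2^-$ and $\mA_1^-$, but you never address the mixed-multiplicity cases on $\bF_0$ and $\bF_2$, namely (2b), (4b), (4c), (4d). These all carry multiplicity-$1$ components and need the same treatment. Without the reduction trick you would have to run the $(\cdot)/(-)_1$ analysis on each of them separately and then match the output to the primed shapes --- doable, but you have not done it, and the bookkeeping you flag as the ``main obstacle'' is exactly where the paper's reduction pays off.
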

\begin{proof}
  The pairs of Lemma~\ref{lem:weak-fund-triplets} in which all
  components of $E$ have multiplicity 2 already appear in our
  classification: (2a)  $D_4$, (2c) $\wD_4$, (4a) $A_3$, (4e)
  degenerate case of $\wD_4$.  Our first step is to reduce
  all other cases to them.

  Let us begin with case (1). The line $\ell_2$ must be blown up at
  least once by Theorem~\ref{thm:only-five-subschemes}(1). Thus, we are
  reduced to case (2). 

  Now consider for example case (2b). The fiber $\ell_1$ must be blown
  up at least once, again by~\eqref{thm:only-five-subschemes}(1).  Let
  $\tau\colon X'\to X$ be the first blowup at a point $P\in E$ and let
  $E_0$ be the exceptional $(-1)$-curve. We have
  $K_{X'}+E' = \tau^*(K_X+E) = 0$. If $P=\ell_1\cap\sigma$ then $E_0$
  appears in $E'$ with coefficient 2, otherwise it appears with
  coefficient 0; either way it is even. Let $X'\to X''$ be the
  contraction of the strict preimage of $\ell_1$, which is a
  $(-1)$-curve on $X'$. We obtain another minimal model $M\to X''$
  for $M$ which has fewer components of multiplicity 1 in $E$.

  This way, we reduce all cases to the purely even cases above except
  cases (3c) and (4d). Consider now (3c). The curve
  $\sigma_\infty$ has to be blown up at least once. Blowing up and
  contracting the strict preimage of a fiber reduces to the case (3a)
  which was already considered. The case (4d) reduces to (3c) and then to
  (3a). 

  So now we are reduced to the pairs of shapes $D_4$, $A_3$, $\wD_4$
  and the pairs obtained from them by eliminating 0-dimensional
  subschemes $\Delta$. The conditions of
  Theorem~\ref{thm:only-five-subschemes}(2) hold, so the connected
  components of $\Delta$ have types $(\p)$, $(-)$, $(+)$. In the
  cases $D_4$, $A_3$ we also have $\deg(\Delta\cap E_i)\le 2$ for
  $i=1,2$. In all three cases, $\deg(\Delta) \le 6$ by the condition
  $L_M^2>0$.

  So let us now begin with $D_4$ and consider different possibilities
  for $\Delta$. If one or two components of $\Delta$ are $(-)$ then we
  get respectively  $D_3^-=A_3^{\p}$ and
  $\phmi D_2^{-}=\phmi A_2'$.  If the components are $(+)$ then
  we get respectively  $D_3^+=A_3^{\pp}$ and
  $\phmi D_2^{+}=\phpp A_3^{-}$.  When the components of $\Delta$
  are $(\p)$, we get the usual primings.

  For $\wD_4$, $\Delta=(-)$ gives  $D_4^{\p}$  and $\Delta=(--)$ gives
   $\php A_3^{\p}$,  with other combinations of $(-)$, $(+)$, $(\p)$
  giving primings of those. 
  For $A_3$, it is easier: $\Delta=(-)$, $(-,-)$, $(+)$ etc. gives the
  usual  $A_2^-$, $\phmi A_1^{-}$, $A_2^+$  and adding
  $(\p)$'s gives the usual primings.
\end{proof}

{This completes the proof of
  Theorem~\ref{thm:logdP=ade}. We now switch back from the
  notation of \cite{nakayama2007classification-of-log} to our
  notation $\pi\colon (X,D+\epsilon R) \to (Y, C+\frac{1+\epsilon}2 B)$.}

\section{Moduli of \ade pairs}
\label{sec:moduli}

\subsection{Two-dimensional projections of \ade lattices}
\label{sec:2dim-models}

Here, we fix the notations from representation theory and prove a
number of basic results that will be used in the remainder of the
paper.

\begin{notation}
  $\Lambda$ will denote one of the root lattices $A_n$, $D_n$, $E_n$,
  and $\Lambda^*\supset\Lambda$ its dual, the weight lattice.  One has
  $\Lambda^* = \oplus_{i=1}^n \bZ \alpha_i$ and
  $\Lambda^* = \oplus_{i=1}^n \bZ \vpi_i$, where $\alpha_i$ are the
  simple roots and $\vpi_i$ the fundamental weights (same as
  fundamental coweights). One has
  $\la\alpha_i, \vpi_j\ra = \sigma_{ij}$. 
\end{notation}

\begin{notation}\label{not:roots-weights}
  We label the nodes of the Dynkin diagrams as in Figs.~\ref{fig:A},
  \ref{fig:D}, \ref{fig:E}. For example, for the $E_8$ diagram we
  denote the nodes by $p''$, $p'_1$, $p'_2$, $p_0, \dotsc, p_4$.  For
  the $D_n$ diagram they are $p''$, $p'_1=p'$, $p_0, \dotsc, p_{n-3}$.
  We use the same notation to denote the roots and fundamental
  weights, i.e. we call them $\alpha''$, $\alpha'_1=\alpha'$, etc.
    
  In addition, for each of the polytopes $P$ in Figs.~\ref{fig:A},
  \ref{fig:D}, \ref{fig:E} we have the special vertex $p_*$ and two
  vertices $p_\ell, p_r$ which are the end points of the left and right
  sides. For example, for the $E_8$ diagram one has $p_\ell=p'_3$ and
  $p_r=p_5$, and for $D_n$ they are $p_\ell=p'_2$ and $p_r=p_{n-2}$. 
\end{notation}

\begin{definition}
  We define the extended weight lattice as $\Lambda^*\oplus\bZ^2$, and
  we denote the basis of $\bZ^2$ by $\{\vpi_l, \vpi_r\}$. 
\end{definition}

\begin{lemma}\label{lem:2dim-models}
  For pure \ade shapes,
  the rule $\vpi_i \mapsto p_i-p_*$ defines a homomorphism
  \begin{displaymath}
    \phi\colon \Lambda^*\oplus \bZ\vpi_\ell\oplus\bZ\vpi_r
    \xrightarrow{\phi} \bZ^2.
  \end{displaymath}
  The projection
  $\pi_1\colon\Lambda^*\oplus \bZ\vpi_\ell\oplus\bZ\vpi_r
  \to\Lambda^*$ identifies $\ker\phi$ with $\Lambda$.  The
  homomorphism $\phi$ is surjective for $D,E$ shapes, and one has
  $\coker\phi=\bZ_2$ for $A$ shapes.
\end{lemma}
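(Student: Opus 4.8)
The homomorphism part of the lemma is automatic: $\{\vpi_i\}\cup\{\vpi_\ell,\vpi_r\}$ is a $\bZ$-basis of $\Lambda^*\oplus\bZ\vpi_\ell\oplus\bZ\vpi_r$, so prescribing $\vpi_i\mapsto p_i-p_*$ (for $i$ a node, and for $i=\ell,r$) defines a group homomorphism $\phi$ with nothing to check. The real content is the computation of $\ker\phi$ and $\coker\phi$, and the plan is as follows. Write $N_0:=\bZ\,\phi(\vpi_\ell)+\bZ\,\phi(\vpi_r)\subseteq\bZ^2$. First I would observe that $\phi(\vpi_\ell)=p_\ell-p_*$ and $\phi(\vpi_r)=p_r-p_*$ are $\bZ$-independent, since the segments $[p_*,p_\ell]$ and $[p_*,p_r]$ are the two distinct edges of the defining polytope $P$ through the vertex $p_*$; hence $\phi$ is injective on the summand $\bZ\vpi_\ell\oplus\bZ\vpi_r$, so $\ker\phi$ meets it in $0$ and $\pi_1$ is injective on $\ker\phi$. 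Since an element $\sum a_i\vpi_i+a_\ell\vpi_\ell+a_r\vpi_r$ lies in $\ker\phi$ exactly when $\phi(\sum a_i\vpi_i)\in N_0$, this gives $\pi_1(\ker\phi)=(\phi|_{\Lambda^*})^{-1}(N_0)$, a sublattice of $\Lambda^*$ which I then must show equals $\Lambda$.

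For the inclusion $\Lambda\subseteq(\phi|_{\Lambda^*})^{-1}(N_0)$ it suffices to check $\phi(\alpha_j)\in N_0$ for each simple root $\alpha_j$. Expanding $\alpha_j$ in the fundamental-weight basis, $\phi(\alpha_j)$ is the corresponding $\bZ$-combination of the vectors $p_i-p_*$. When $j$ is a node in the interior of a straight segment of the diagram drawn inside $P$, its neighbours $p_{j-1},p_j,p_{j+1}$ are collinear with $p_j$ the midpoint, so $\phi(\alpha_j)=2p_j-p_{j-1}-p_{j+1}=0$. Only finitely many simple roots remain — those at the ends of the diagram and those incident to an interior fork node — and for each I would read off from the explicit polytope vertices of Table~\ref{tab:toric-shapes}, together with the node positions in Figures~\ref{fig:A}, \ref{fig:D}, \ref{fig:E}, that $\phi(\alpha_j)$ equals $0$, $\phi(\vpi_\ell)$, or $\phi(\vpi_r)$. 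Since $\phi(\vpi_\ell)$ and $\phi(\vpi_r)$ themselves occur this way, this even gives $\phi(\Lambda)=N_0$.

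For the reverse inclusion I would equivalently show that the induced map of finite groups $\bar\phi\colon\Lambda^*/\Lambda\to\bZ^2/N_0$ is injective. The group $\Lambda^*/\Lambda$ is generated by minuscule fundamental weights — $\vpi_1$ for $A_n$; the vector weight together with a spin weight for $D_n$; $\vpi_1$ for $E_6$; $\vpi_7$ for $E_7$; and the trivial group for $E_8$ — and for each such generator I would check from the coordinates that its class in $\bZ^2/N_0$ has the same order, so that $\bar\phi$ is injective. (For the shape $D_{2n}$ one must remember that $\Lambda^*/\Lambda\cong\bZ_2\oplus\bZ_2$ is not cyclic, so two generators must be tested; in the other shapes $\Lambda^*/\Lambda$ is cyclic.) Together with the injectivity of $\pi_1$ on $\ker\phi$, this yields $\pi_1\colon\ker\phi\xrightarrow{\sim}\Lambda$.

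For the cokernel, the image of $\phi$ is $\sum_i\bZ(p_i-p_*)+N_0$. For the $A$ shapes all nodes $p_i$, and also $p_\ell,p_r$, lie on a single edge of $P$, which by Lemma~\ref{lem:toric} is at lattice distance $2$ from $p_*$; hence each of the vectors $p_i-p_*,\phi(\vpi_\ell),\phi(\vpi_r)$ lies in the index-$2$ sublattice $\{x:\langle x,u\rangle\in2\bZ\}$, with $u$ the primitive normal of that edge, and since consecutive $p_i$ differ by a primitive lattice step these vectors generate it, giving $\coker\phi\cong\bZ_2$. For the $D$ and $E$ shapes the nodes are spread over at least two edges of $P$, and the vectors $p_i-p_*$ span all of $\bZ^2$, so $\phi$ is surjective. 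I expect the main obstacle to be not any single step but the fact that the last three paragraphs amount to a finite case check across the eight families of pure \ade shapes; the structural reductions above keep each case short, but one genuinely has to carry the explicit polytopes, and the only subtle point is the non-cyclic $\Lambda^*/\Lambda$ in the $D_{2n}$ case.
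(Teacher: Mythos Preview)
Your proposal is correct and follows essentially the same approach as the paper: both check that each simple root $\alpha_j$ (adjusted by $\vpi_\ell$ or $\vpi_r$ at end nodes) maps to $0$ under $\phi$, using the collinearity relations $2p_i-p_{i-1}-p_{i+1}=0$ for interior nodes and the analogous identities at end and corner nodes. The paper is terser, dispatching the reverse inclusion with ``it is easy to see that the equality holds'' and leaving the cokernel implicit, whereas you spell these out via the injectivity of $\Lambda^*/\Lambda\to\bZ^2/N_0$ and the lattice-distance argument; this extra care is fine and does not change the underlying strategy.
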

\begin{proof}
  Any root $\alpha$ can be expressed as
  $\alpha = \sum_{\vpi} \la\alpha,\vpi\ra$ with the sum going over
  the $n$ fundamental weights $\vpi$. In particular, 
  if $p_{i-1}$, $p_i$, $p_{i+1}$ are three consecutive nodes in a chain then
  \begin{equation}
    \label{eq:root-middle}
    \alpha_i = 2\vpi_i -  \vpi_{i-1} - \vpi_{i+1}
    \xrightarrow{\phi} 2p_i -  p_{i-1} - p_{i+1} = 0
  \end{equation}
  For an end node $p_i$ next to $p_r$ one has
  \begin{equation}
    \label{eq:root-end}
    \alpha_i - \vpi_r = 2\vpi_i -  \vpi_{i-1} - \vpi_{r}
      \xrightarrow{\phi} 2p_i -  p_{i-1} - p_{r} = 0
    \end{equation}
    and similarly for the node next to $p_\ell$. For a  node
    $p_0$ occurring at a corner of the polytope, one has
  \begin{equation}
    \label{eq:root-corner}
    \alpha_0 = 2\vpi_0 - \vpi_1 - \vpi_1' - \vpi''
    \xrightarrow{\phi} 2p_0 - p_1 - p_1' - p'' + p_* = 0
  \end{equation}
  Thus, $\Lambda = \la\alpha\ra \subset \ker\phi$, and it is easy to
  see that the equality holds. 
\end{proof}

Recall that the finite group $\Lambda^*/\Lambda$ is $\bZ_{n+1}$ for
$A_n$, $\bZ_2^2$ for $D_{2n}$, $\bZ_4$ for $D_{2n-1}$, and
$\bZ_3$, $\bZ_2$,~$0$ for $E_6$, $E_7$, $E_8$ respectively.

\begin{corollary}\label{cor:H-grading}
  $\bZ^2 / \la p_\ell-p_*, p_r-p_* \ra$ is equal to $\Lambda^*/\Lambda$
  for the pure $D$ and $E$ shapes, and
  $(\Lambda^*/\Lambda)\oplus\bZ_2$ for the pure $A$ shapes.
\end{corollary}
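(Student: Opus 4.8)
The plan is to read this off directly from Lemma~\ref{lem:2dim-models} with essentially no extra work. Write $M := \Lambda^*\oplus\bZ\vpi_\ell\oplus\bZ\vpi_r$ for the extended weight lattice and $N := \bZ\vpi_\ell\oplus\bZ\vpi_r$, so that under the homomorphism $\phi$ of Lemma~\ref{lem:2dim-models} one has $\phi(N) = \la p_\ell-p_*,\ p_r-p_*\ra$, the subgroup of $\bZ^2$ occurring in the statement. First I would locate $\phi(N)$ relative to the data $\ker\phi$ and $\coker\phi$. By Lemma~\ref{lem:2dim-models} the projection $\pi_1\colon M\to\Lambda^*$ carries $\ker\phi$ isomorphically onto $\Lambda$; in particular $\ker\phi\subseteq\pi_1^{-1}(\Lambda)=\Lambda\oplus N$, and, since $\pi_1|_{\ker\phi}$ is surjective onto $\Lambda$, every element of $\Lambda\oplus N$ differs from an element of $\ker\phi$ by an element of $N$. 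Hence $\ker\phi+N=\Lambda\oplus N$ inside $M=\Lambda^*\oplus N$. Now $\phi$ induces an isomorphism $M/(\ker\phi+N)\xrightarrow{\sim}\phi(M)/\phi(N)$, and $M/(\ker\phi+N)=(\Lambda^*\oplus N)/(\Lambda\oplus N)\cong\Lambda^*/\Lambda$. Combining this with $\bZ^2/\phi(M)=\coker\phi$ yields a short exact sequence
\[
  0\longrightarrow \Lambda^*/\Lambda \longrightarrow \bZ^2/\la p_\ell-p_*,\ p_r-p_*\ra \longrightarrow \coker\phi \longrightarrow 0.
\]

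For the pure $D$ and $E$ shapes Lemma~\ref{lem:2dim-models} gives $\coker\phi=0$, so the middle term is $\Lambda^*/\Lambda$ and the claim follows at once. For the pure $A$ shapes one has $\coker\phi=\bZ_2$, and the only remaining point is that this extension is split. I would verify this directly from Table~\ref{tab:toric-shapes}: for each $A$-type polytope $p_*=(0,2)$ and the vertices $p_\ell,p_r$ lie on the line $\{y=0\}$, so $\phi(M)$ is exactly the sublattice $\bZ\times 2\bZ$ and $\coker\phi=\bZ_2$ is detected by the parity of the second coordinate. It then suffices to exhibit a vector $v\in\bZ^2$ with odd second coordinate and $2v\in\la p_\ell-p_*,\ p_r-p_*\ra$; the class of such a $v$ is an order-$2$ element of the middle group mapping onto the generator of $\coker\phi$, which splits the sequence and gives $\bZ^2/\la p_\ell-p_*,\ p_r-p_*\ra\cong(\Lambda^*/\Lambda)\oplus\bZ_2$.

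There is no real obstacle here: once the exact sequence above is in place the statement is forced for the $D$ and $E$ shapes, and for the $A$ shapes the content is reduced to the elementary splitting check, which I would carry out case by case using the explicit coordinates of $p_*$, $p_\ell$, $p_r$ recorded in Table~\ref{tab:toric-shapes}.
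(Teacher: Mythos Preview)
Your derivation of the short exact sequence
\[
  0\longrightarrow \Lambda^*/\Lambda \longrightarrow \bZ^2/\la p_\ell-p_*,\ p_r-p_*\ra \longrightarrow \coker\phi \longrightarrow 0
\]
from Lemma~\ref{lem:2dim-models} is clean and correct, and this is exactly what the paper has in mind (the corollary is stated without proof). For the $D$ and $E$ shapes your argument is complete.

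For the $A$ shapes, however, your deferred case-by-case splitting check will not go through in every case. It works for $A_{2n-1}$ and $A_{2n-2}^-$: e.g.\ for $A_{2n-1}$ one has $p_\ell-p_*=(0,-2)$, $p_r-p_*=(2n,-2)$, and $v=(n,-1)$ satisfies $2v=p_r-p_*$. But for $\mA_{2n-3}^-$ the polytope has $p_*=(0,2)$, $p_\ell=(1,0)$, $p_r=(2n-1,0)$, so the sublattice is $\la(1,-2),(2n-2,0)\ra$, whose Smith normal form is $\mathrm{diag}(1,4(n-1))$. Hence $\bZ^2/\la p_\ell-p_*,p_r-p_*\ra\cong\bZ_{4(n-1)}$ is \emph{cyclic}, while $(\Lambda^*/\Lambda)\oplus\bZ_2=\bZ_{2(n-1)}\oplus\bZ_2$ is not; already for $n=2$ (the shape $\mA_1^-$) one gets $\bZ_4\ne\bZ_2\oplus\bZ_2$. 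Concretely, your criterion asks for $v=(a,b)$ with $b$ odd and $2v=c(1,-2)+d(2n-2,0)$; then $c=-b$ is odd while $2a=c+d(2n-2)$ is forced to be odd, a contradiction.

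So the gap is not in your method but in the literal target: what your exact sequence proves for all pure $A$ shapes is that $\bZ^2/\la p_\ell-p_*,p_r-p_*\ra$ is an extension of $\bZ_2$ by $\Lambda^*/\Lambda$, split for $A_{2n-1}$ and $A_{2n-2}^-$ but nonsplit for $\mA_{2n-3}^-$. This extension statement is in fact what is used downstream (in the proof of Theorem~\ref{thm:moduli-ade-pure} the extra $\mu_2$ is the involution $y\mapsto -y$, which sits inside the automorphism group regardless of whether the sequence splits). You should state your conclusion in that form and flag the $\mA$ case explicitly.
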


\begin{lemma}\label{lem:2dim-models-primed}
  For primed \ade shapes which admit a toric description (see 
  Subsection~\ref{sec:primed-toric-shapes})
  the rule $\vpi \mapsto p-p_*$ defines a homomorphism
  \begin{math}
    \phi\colon \Lambda^*\oplus \bZ\vpi_\ell\oplus\bZ\vpi_r
    \xrightarrow{\phi} \bZ^2.
  \end{math}
  The projection
  $\pi_1\colon\Lambda^*\oplus \bZ\vpi_\ell\oplus\bZ\vpi_r \to\Lambda^*$
  identifies $\ker\phi$ with $\Lambda\subset\Lambda'\subset\Lambda^*$
  given below
  \begin{displaymath}
    \begin{array}[h]{lll}
      \text{shape} & \Lambda'/\Lambda & \text{generators} \\
      \hline
      \pA_{2n-1}, \pA_{2n-2}^-& 0 & \\
      \pA_{2n-1}' & \bZ_2 & \vpi_n\\
      D_{2n}' \text{ for } n \text{ even, resp. odd}
                   & \bZ_2 & \vpi' \text{ resp. }
                        \vpi'' 
  \end{array}
  \end{displaymath}
\end{lemma}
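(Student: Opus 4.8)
The plan is to run, case by case for the four toric primed shapes $\pA_{2n-1}$, $\pA_{2n-2}^-$, $D'_{2n}$, $\pA'_{2n-1}$, exactly the computation used in the proof of Lemma~\ref{lem:2dim-models}, now with the explicit polytopes of Table~\ref{tab:primed-toric-shapes} and the node labels drawn in Figures~\ref{fig:pA} and \ref{fig:pAp}. The map $\phi$ is automatically a well-defined homomorphism, since $\Lambda^*\oplus\bZ\vpi_\ell\oplus\bZ\vpi_r$ is free on the fundamental weights and we are only prescribing the images of those generators; so the real content is the identification of $\ker\phi$. First I would write each simple root of $\Lambda$ in the basis of fundamental weights via the Cartan matrix and compute its $\phi$-image: a node interior to a chain gives $2p_i-p_{i-1}-p_{i+1}$ as in \eqref{eq:root-middle}, a node adjacent to a side $C_s$ gives a correction by $\vpi_\ell$ or $\vpi_r$ as in \eqref{eq:root-end}, and a corner node gives \eqref{eq:root-corner}, with its internal companion placed at the midpoint $\frac{1}{2}(p_0+p_*)$ (a lattice point, since corner nodes lie at lattice distance $2$ from $p_*$). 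Because the primed polytopes are obtained by cutting corners of the pure ones so as to insert collinear lattice points in exactly the right places, all of these local identities still hold verbatim, so $\phi(\alpha)\in\bZ(p_\ell-p_*)+\bZ(p_r-p_*)$ for every root $\alpha$ (indeed $\phi(\alpha_i)=0$ for an interior root and $\phi(\alpha_i)\in\{p_\ell-p_*,\,p_r-p_*\}$ for an end root).

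Granting this, $\pi_1$ is injective on $\ker\phi$: the kernel of $\pi_1|_{\ker\phi}$ is $\{(0,a,b):a(p_\ell-p_*)+b(p_r-p_*)=0\}$, which is trivial since the two sides of $C$ meet only at $p_*$, so $p_\ell-p_*$ and $p_r-p_*$ are $\bQ$-linearly independent. Hence $\pi_1$ identifies $\ker\phi$ with
\[
  \Lambda':=\bigl\{\lambda\in\Lambda^*:\phi(\lambda)\in\bZ(p_\ell-p_*)+\bZ(p_r-p_*)\bigr\},
\]
and the previous paragraph gives $\Lambda\subseteq\Lambda'$. To pin down $\Lambda'/\Lambda$, note that $\phi$ (which carries $\Lambda$ into the span by the first paragraph) induces a map $\Lambda^*/\Lambda\to\bZ^2/\bigl(\bZ(p_\ell-p_*)+\bZ(p_r-p_*)\bigr)$ whose kernel is exactly $\Lambda'/\Lambda$, and I would read this kernel off from the coordinates. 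For $\pA_{2n-1}$ and $\pA_{2n-2}^-$ this induced map is still injective, as in the pure $A$ case, so $\Lambda'=\Lambda$. For $\pA'_{2n-1}$ the new datum is the vertex labelled $p_n$ of the pentagon: a short computation gives $\phi(\vpi_n)\in\bZ(p_\ell-p_*)+\bZ(p_r-p_*)$, so $\vpi_n\in\Lambda'$; since $\vpi_n$ has order $2$ in $A_{2n-1}^*/A_{2n-1}=\bZ_{2n}$ and no coset of larger order maps into the span, $\Lambda'/\Lambda=\langle\vpi_n\rangle\cong\bZ_2$. The shape $D'_{2n}$ is treated identically, with the surviving class being $\vpi'$ for $n$ even and $\vpi''$ for $n$ odd, as recorded in the table.

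The only real work here is combinatorial bookkeeping --- keeping the node labels straight and, in the $D'_{2n}$ case where the Dynkin diagram has a fork at $p_0$, checking that the companion node's lattice point (fixed by the leaf relation $\alpha''=2\vpi''-\vpi_0$) is simultaneously consistent with the corner relation \eqref{eq:root-corner} at $p_0$. I do not expect any genuine obstacle beyond this, since once the toric data is written down every identity reduces to a statement about midpoints of, or collinearity among, a handful of explicit lattice points; alternatively one could track the effect of the corner-cutting on the polytope and on the node labels and deduce the primed $\phi$ from the pure one, but the direct check seems cleanest.
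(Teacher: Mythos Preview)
Your proposal is correct and follows essentially the same approach as the paper: both proceed by direct inspection of the polytopes in Table~\ref{tab:primed-toric-shapes}, verifying the chain, end, and corner relations of Lemma~\ref{lem:2dim-models} (with $\vpi'$ replaced by $\vpi_\ell$ at the new corners) and then reading off the extra generator of $\Lambda'/\Lambda$ from an explicit relation among $\phi(\vpi_n)$ or $\phi(\vpi'),\phi(\vpi'')$ and the side vectors $p_\ell-p_*,\,p_r-p_*$. Your write-up is simply more systematic than the paper's two-line sketch; one small imprecision is that in the $\pA$ corner the node $p_0$ is bivalent, so it is $\alpha_0-\vpi_\ell$ (not $\alpha_0$ itself) that satisfies the corner identity, which is exactly what the paper means by ``\eqref{eq:root-corner} with $\vpi'$ replaced by $\vpi_\ell$''.
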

\begin{proof}
  For the corner node $p_0$ in $\pA_{2n-1}, \pA^-_{2n-2}$ one uses
  the corner relation~\eqref{eq:root-corner} with $\vpi'$ replaced
  by $\vpi_\ell$, and similarly for $D'_{2n}$.  Additionally: for
  $\pA'_{2n-1}$ one has
  $\vpi_n - \vpi_\ell - \vpi_r\xrightarrow{\phi} 0$, and for $D_{2n}$
  one has
  $\phi(\vpi_r + \lfloor \frac{n}2 \rfloor\vpi_\ell) = \phi(\vpi')$,
  resp. $=\phi(\vpi'')$.  See Fig.~\ref{fig:pAp} for the node
  notations.
\end{proof}

\subsection{Moduli of \ade pairs of pure shapes}
\label{sec:moduli-ade-pure}

In this subsection we prove the first part of Theorem~\ref{thm:moduli-summary}.
Recall that in Section~\ref{sec:adepairs} we associated to each \ade
pair \ycb an \ade root lattice. We use the notation introduced in
Section~\ref{sec:2dim-models}. 

\begin{definition}
  We define the tori $T_\Lambda = \Hom(\Lambda,\bC^*)$ and
  $T_{\Lambda^*} = \Hom(\Lambda^*,\bC^*)$ both isomorphic to
  $(\bC^*)^n$. We also define a finite multiplicative group
  $\mu_\Lambda = \Hom(\Lambda^*/\Lambda,\bC^*)$. Thus,
  $\mu_\Lambda = \mu_{n+1}$ for $A_n$, $\mu_2^2$ for $D_{2n}$, $\mu_4$
  for $D_{2n-1}$, and it is $\mu_3$, $\mu_2$, $1$ for $E_6$, $E_7$, $E_8$
  respectively.
\end{definition}

\begin{warning}
The theorem below is for pairs in which \emph{we distinguish the two sides
$C_1$ and $C_2$.} The moduli stack for the pairs with a single $C$ is
the $\bZ_2$-quotient for the shapes with the left / right symmetry,
and is the same for the nonsymmetric shapes.
\end{warning}

\begin{theorem}\label{thm:moduli-ade-pure}
  The moduli stack of \ade pairs %$(Y, C_1 + C_2 + \freps B)$
  of a fixed pure \ade shape is
  \begin{eqnarray*}
    &[\Hom(\Lambda^*, \bC) : \mu_\Lambda\times\mu_2] =
      [T_\Lambda : W_\Lambda \times \mu_2]    
    &\text{for $A$ shapes} \\
    &[\Hom(\Lambda^*, \bC) : \mu_\Lambda] = [T_\Lambda : W_\Lambda]    
    &\text{for $D$ and $E$ shapes.} 
  \end{eqnarray*}
\end{theorem}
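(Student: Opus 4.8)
The plan is to present the moduli stack as a quotient of (an open subset of) a linear system by $\mathrm{Aut}(Y,C)$, to normalise the equation of $B$ using the explicit toric automorphism group, and then to read off the two presentations.

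First I would reduce to a group quotient. A pure-shape surface $(Y,C)$ is rigid — it is pinned down by its shape via the explicit constructions of Section~\ref{sec:adepairs} — so an \ade pair of that shape is just the choice of a divisor $B\in|L|$, $L=-2(K_{Y}+C)$, with $\big(Y,\,C+\tfrac{1+\epsilon}2 B\big)$ log canonical. Hence the moduli stack is $\big[U:\mathrm{Aut}(Y,C)\big]$, where $U\subseteq|L|=\bP H^{0}(Y,L)$ is the open $\mathrm{Aut}(Y,C)$-invariant locus of such $B$ (nonempty by the Smooth Divisor Theorem~\ref{thm:smooth-divisor}). Every pure \ade shape is toric with $C$ a part of the toric boundary, so $H^{0}(Y,L)=\bigoplus_{m\in P\cap M}\bC\,e^{m}$ and $\mathrm{Aut}^{\circ}(Y,C)$ is the explicit solvable group generated by the subtorus of $T_{N}$ fixing $C$, the root subgroups $\mathbb{G}_{a}$ attached to the fan rays lying in $C$, and the relevant $\mathbb{G}_{m}$'s; its component group comes from the polytope symmetries fixing $p_{*}$, and since we distinguish the left and right sides the symmetry exchanging them is discarded.

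Next I would normalise $B$. By Remark~\ref{rem:BC-transversal} and Lemma~\ref{lem:pairs-and-covers}(5), $B$ meets $C$ transversally away from the corners and avoids $\mathrm{Sing}\,Y$; this forces the coefficient at $p_{*}$ and the two coefficients at the outer endpoints $p_{\ell},p_{r}$ of $C$ to be nonzero. Using the root subgroups and the $\mathbb{G}_{m}$'s, I would bring the equation $s=\sum c_{m}e^{m}$ of $B$ to a normal form in which every coefficient except those attached to the Dynkin nodes $p_{i}$ is eliminated and the two endpoint coefficients are set to $1$: in the $A$ cases this amounts to completing the square along the distinguished high-weight coordinate, so that $B\to\bP^{1}$ becomes the hyperelliptic double cover $\{z^{2}=q\}$ with $q$ a binary form of degree $v$; in the $D$ and $E$ cases the root subgroups remove the analogous off-node coefficients. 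By Lemma~\ref{lem:2dim-models} (the rule $\vpi_{i}\mapsto p_{i}-p_{*}$) the space of normalised equations is identified with the affine space $\Hom(\Lambda^{*},\bC)$, and — using that the singularities that $B$ (equivalently the branch curve) can acquire all have log canonical threshold $>\tfrac12$ — one checks that every point of $\Hom(\Lambda^{*},\bC)$ does give a log canonical \ade pair, so this identification is onto. The residual group of pairs (automorphism of $(Y,C)$, rescaling of the equation) preserving the normal form is the group of characters of $M$ trivial on $p_{\ell}-p_{*}$ and $p_{r}-p_{*}$, which by Corollary~\ref{cor:H-grading} is $\mu_{\Lambda}$ for the $D$ and $E$ shapes and $\mu_{\Lambda}\times\mu_{2}$ for the $A$ shapes, acting on the node coefficients via $\phi$ exactly as $\mu_{\Lambda}$ acts on $\Hom(\Lambda^{*},\bC)$ through the weight pairing; additionally, in the $A$ cases $\{z^{2}=q\}$ is hyperelliptic, so the involution $z\mapsto-z$ is an automorphism of every \ade pair of the shape and contributes the (gerbe) $\mu_{2}$ — which is why it is the same $\mu_{2}$ already present in $M/\langle p_{\ell}-p_{*},p_{r}-p_{*}\rangle$ — whereas for $D$ and $E$ the curve $B$ is not hyperelliptic and $\mathrm{Aut}(Y,C)$ acts on $|L|$ with trivial generic stabiliser. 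This yields the moduli stack as $\big[\Hom(\Lambda^{*},\bC):\mu_{\Lambda}\times\mu_{2}\big]$, resp. $\big[\Hom(\Lambda^{*},\bC):\mu_{\Lambda}\big]$.

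Finally, the torus presentation is the classical identification: $\Hom(\Lambda^{*},\bC)$ is the affine space $T_{\Lambda^{*}}/W_{\Lambda}$ via fundamental invariants (Chevalley/Steinberg), and $T_{\Lambda}=T_{\Lambda^{*}}/\mu_{\Lambda}$ with $\mu_{\Lambda}$ central hence $W_{\Lambda}$-invariant, so $\big[\Hom(\Lambda^{*},\bC):\mu_{\Lambda}\big]=[T_{\Lambda}:W_{\Lambda}]$ and likewise with the extra $\mu_{2}$. The main obstacle is the normalisation step: one must run the root-subgroup reduction uniformly across all the pure \ade families — the $A$, $D$, $E$ series, the exceptional $E_{6},E_{7},E_{8}$ shapes, and the genus-$1$ shapes — and verify in each that exactly the node coefficients survive and that the log canonical condition imposes nothing further on $\Hom(\Lambda^{*},\bC)$; this is where the explicit geometry of each $Y$ enters, and where the dichotomy between the $A$ case (hyperelliptic $B$, extra $\mu_{2}$) and the $D,E$ case gets established.
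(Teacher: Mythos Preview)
Your approach is essentially the same as the paper's: reduce to a quotient of $|L|$ by $\Aut(Y,C)$, normalise the equation of $B$ via the toric automorphisms (completing the square in the $A$ cases, the additional affine substitutions in the $D,E$ cases), identify the space of normal forms with $\Hom(\Lambda^*,\bC)$ using the polytope projection, compute the residual finite group via Corollary~\ref{cor:H-grading}, and finish with the Chevalley--Steinberg identification $T_{\Lambda^*}/W_\Lambda\simeq\bA^n$. The toric bookkeeping you describe (root subgroups attached to the boundary rays, etc.) is a more structural way of saying exactly what the paper does explicitly with the substitutions $y\mapsto y+a(x)$, $x\mapsto x+a$, and rescalings.

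There is, however, one genuine gap. You assert that ``every point of $\Hom(\Lambda^*,\bC)$ does give a log canonical \ade pair'' because ``the singularities that $B$ can acquire all have log canonical threshold $>\tfrac12$''. This is not something you can appeal to at this stage: the analysis of which singularities $B$ can have (Du Val of type $\Lambda_t$) is carried out only much later, in Section~\ref{sec:sings-ade-pairs}, and depends on the canonical families constructed \emph{after} this theorem. The paper handles this point by a different and much cleaner argument: once $B$ meets $C$ transversally (which is exactly the condition that the normal form exists), suppose the pair fails to be log canonical at some point away from $C$. Then already $(Y,C+\tfrac12 B)$ is not lc there, so the non-klt locus has an extra connected component in addition to the node of $C=C_1+C_2$. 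But Shokurov's connectedness theorem \cite[6.9]{shokurov1992three-dimensional} says the non-klt locus is connected (the two-component exception requires both components to be ``simple'', which $C_1,C_2$ are not). This contradiction shows $U=\Hom(\Lambda^*,\bC)$ with no further checking. You should replace your lct claim with this argument.
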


\begin{remark}\label{rem:stabilizers}
  The first presentation is convenient for finding automorphism
  groups. In particular, the maximal automorphism group that a pair
  can have is $\mu_\Lambda\times\mu_2$ for $A$ shapes
  and $\mu_\Lambda$ for $D$ and $E$ shapes.  The
  second form is convenient for compactifications, which in
  Section~\ref{sec:compactifications} are shown to be quotients of toric varieties
  by Weyl groups.
\end{remark}

\begin{proof}
  We first note that the pair \ycb is log canonical near the boundary
  $C$ iff the divisor $B$ intersects $C$ transversally. Vice versa,
  with this condition satisfied the pair \ycb for $0<\epsilon\ll 1$ is
  automatically log canonical. Otherwise, the pair $(Y, C+\frac12 B)$
  is not log canonical.  But by
  \cite[6.9]{shokurov1992three-dimensional} the non-klt locus must be
  connected, with a single exception when it may have two components,
  both of them simple, i.e. on a resolution each should give a unique
  curve with discrepancy $-1$. For an \ade surface the curve
  $C=C_1+C_2$ is connected with two irreducible components, so they
  are not simple. (We thank V.V. Shokurov for this argument.)

  Each of the \ade shapes is toric, and the polarized toric variety
  $(Y,L)$ corresponds to a lattice polytope $P$ as in Figs.~\ref{fig:A},
  \ref{fig:D}, \ref{fig:E}. However, $C=C_1+C_2$ gives
  only part of the toric boundary. Fixing the torus structure is
  equivalent to making a choice for the remainder of the torus
  boundary: one curve for the $A$ shapes and two curves for the $D,E$
  shapes. 
  With this choice made $(Y,L)$ is a polarized toric surface, and
  the equation of $B$ is
  \begin{math}
    f\in H^0(Y,L) = \oplus_{m\in \bZ^2\cap P}\, \bC e^m,
    \ \text{where } e^{(k,l)} = x^k y^l.
  \end{math}

  For the $A$ shapes the remaining toric boundary has the equation
  $y^2 \in H^0(Y,L)$. All the other choices for the toric boundary
  differ by the transformation $y\mapsto y + a(x)$. Completing the
  square we can make the coefficients of the monomials $yx^i$ in $f$ all
  zero.  By rescaling $x\mapsto \alpha x$, $y\mapsto\beta y$ we can
  put the equation $f$ in the form given in Table~\ref{tab:normal-forms}.
  In this table, $A_n^?$
   denotes either $A_n$ or $A_n^-$ depending on
  the parity of $n$, and similarly for $D,E$.

  \begin{table}[htp!]
    \centering
    \begin{displaymath}
      \renewcommand{\arraystretch}{1.2}
      \begin{array}{cll}
        \text{shape} & f_\bdry & f_\dyn \\
        \hline
        A_n^?& y^2 + 1 + x^{n+1} & c_1x + \dotsc + c_nx^n  \\
        \mA_n^?& y^2 + x(1 + x^{n+1}) & x(c_2x + \dotsc + c_{n+1}x^n)  \\
        D_{n}^?& x^2y^2 + y^2 + x^{n-2}
                               & c''xy + c'_1 y + c_0 + c_1x + \dotsc + c_{n-3} x^{n-3}\\
        \mE_n^?& x^2y^2 + y^3 + x^{n-3}
                               & c''xy + c'_1 y + c'_2y^2 + c_0 + c_1x + \dotsc + c_{n-4} x^{n-4}\\
        \hline
      \end{array}
    \end{displaymath}
    \caption{Normal forms for the equation $f=f_\bdry + f_\dyn$ of
      divisor $B$}
    \label{tab:normal-forms}
  \end{table}

  For the $D$ and $E$ shapes the remaining toric boundary has the
  equation $(xy)^2 \in H^0(Y,L)$. All other choices for the toric
  boundary differ by the transformations $x\mapsto x+a$,
  $y\mapsto y + b(x)$, with $\deg b(x) \le \frac12(n-3)$ for $D_n$ and
  $\deg b(x)\le \frac12(n-4)$ for $E_n$;
  and then rescaling $x$ and $y$. Using such transformations,
  one can put the equation $f$ in the form given in
  Table~\ref{tab:normal-forms} in an essentially
  unique way .
  
  The only remaining choice is the normalization of $f_\bdry$, which is
  unique up to the action of
  $\Hom( \bZ^2 / \la p_\ell-p_*, p_r-p_*\ra, \bC^*)$, equal to
  $\mu_\Lambda$ by Corollary~\ref{cor:H-grading}.  The end result is a
  normal form, given in Table~\ref{tab:normal-forms}, which is unique
  up to $\mu_\Lambda$. This
  gives the stack $[\bA^n : \mu_\Lambda]$. Finally, in the $A$ shapes
  every pair has an additional $\mu_2$ automorphism $y\mapsto
  -y$. This gives the first presentation of the moduli stack, as a
  $\mu_\Lambda\times\mu_2$, resp. $\mu_\Lambda$ quotient of $\bA^n$.

  It is a well known and easy to prove fact that the
  ring of invariants $\bC[\Lambda^*]^{W_\Lambda}$ is the polynomial
  ring $\bC[\chi_1,\dotsc, \chi_n]$, where $\chi_i=\chi(\vpi_i)$ are
  the characters of the fundamental weights (\cite[Ch.8, \S7, Thm.2]
  {bourbaki2005lie-groups}). In other words,
  $T_{\Lambda^*} / W_\Lambda = \bA^n$, with the coordinates $\chi_i$.
  The $\mu_\Lambda$-actions on $T_{\Lambda^*}$ and $\bA^n$ are given
  by the compatible $(\Lambda^*/\Lambda)$-gradings; thus they commute
  with the $W$-action. The $\mu_\Lambda$ action on $T_{\Lambda^*}$ is
  free, and $T_{\Lambda^*} / \mu_\Lambda = T_{\Lambda}$. Thus
  \begin{displaymath}
    [\bA^n : \mu_\Lambda] = [(T_{\Lambda^*} : W) : \mu_\Lambda] =
    [(T_{\Lambda^*} : \mu_\Lambda) : W] =
    [T_\Lambda : W], 
  \end{displaymath}
  giving the second presentation. For the $A$ shapes the additional
  $\mu_2$ action commutes with both $\mu_\Lambda$ and $W$.
\end{proof}

\subsection{Moduli of \ade pairs of toric primed shapes}
\label{sec:moduli-ade-toric-primed}

We state the theorem analogous to Theorem~\ref{thm:moduli-ade-pure}
for the primed \ade shapes which admit a toric description
 (see Section~\ref{sec:primed-toric-shapes}). It can be proved
 analogously to the theorem above, using
Lemma~\ref{lem:2dim-models-primed}, or can be seen as an
immediate consequence of
Theorem~\ref{thm:moduli-ade-primed}. 

\begin{theorem}\label{thm:moduli-ade-toric-primed}
  The moduli stack of \ade pairs
  %$(Y, C_1 + C_2 + \freps B)$
  of a fixed toric primed shape is
  \begin{displaymath}
    [\Hom(\Lambda^*, \bC) : \mu_{\Lambda'}] = [T_{\Lambda'} :
    W_\Lambda],
    \quad\text{where } T_{\Lambda'} = \Hom(\Lambda',\bC^*),
  \end{displaymath}
    $\mu_{\Lambda'} = \Hom(\Lambda^*/\Lambda', \bC^*)$,
  and the lattice $\Lambda\subset\Lambda'\subset\Lambda^*$
  is given in Lemma~\ref{lem:2dim-models-primed}. 
\end{theorem}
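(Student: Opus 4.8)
The plan is to follow the proof of Theorem~\ref{thm:moduli-ade-pure} almost verbatim, with Lemma~\ref{lem:2dim-models-primed} replacing Lemma~\ref{lem:2dim-models}; alternatively the statement is the special case $W_0 = 0$ of Theorem~\ref{thm:moduli-ade-primed} (for each of the four toric primed shapes $\pA_{2n-1}$, $\pA_{2n-2}^-$, $D'_{2n}$, $\pA_{2n-1}'$ one has $W_0 = 0$ by Theorem~\ref{thm:reconstruction}). I sketch the direct route.

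First, exactly as in the pure case: for $0 < \epsilon \ll 1$ the pair \ycb is automatically log canonical away from $C$, and near $C$ it is log canonical if and only if $B$ meets $C$ transversally --- each of the four shapes has $C = C_1 + C_2$ connected with two components (Table~\ref{tab:primed-toric-shapes}), so Shokurov's connectedness of the non-klt locus excludes anything worse. Hence the moduli problem is the choice of $B \in |L|$ meeting $C$ transversally on the polarized toric surface $(Y, L)$ of the lattice polytope $P$ of Table~\ref{tab:primed-toric-shapes}, with $C$ the part of the toric boundary through $p_*$.

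Next, I would fix a torus structure by choosing the remaining toric boundary --- which is cut out to order $2$ by $e^{p_*} \in H^0(Y, L)$ --- write the equation of $B$ as $f = \sum_{m \in P \cap \bZ^2} c_m e^m$, and use the coordinate changes preserving $C_1$ and $C_2$ (additive shifts and rescalings of the two toric coordinates, adapted to each of the four polytopes just as for the $D$ and $E$ shapes in the proof of Theorem~\ref{thm:moduli-ade-pure}) to bring $f$ into a normal form $f = f_\bdry + f_\dyn$ analogous to Table~\ref{tab:normal-forms}, with $f_\bdry$ a fixed combination of boundary monomials and $f_\dyn$ ranging over the monomials attached to the Dynkin nodes. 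This shape-by-shape computation --- checking that the available transformations are exactly enough to annihilate the remaining coefficients and to normalize $f_\bdry$ up to a finite residual ambiguity, leaving precisely as many free coefficients $c_i$ as the rank of $\Lambda$, i.e.\ the affine space $\Hom(\Lambda^*, \bC)$ --- is the bulk of the work. The residual ambiguity in $f_\bdry$ is the action of $\Hom\bigl(\bZ^2 / \langle p_\ell - p_*,\, p_r - p_*\rangle, \bC^*\bigr)$, and by Lemma~\ref{lem:2dim-models-primed} this group is $\mu_{\Lambda'} = \Hom(\Lambda^*/\Lambda', \bC^*)$. In contrast to the pure $A$ shapes --- where the analogous quotient carried an extra $\bZ_2$ summand realized by the automorphism $y \mapsto -y$ --- here $\bZ^2/\langle p_\ell - p_*, p_r - p_*\rangle = \Lambda^*/\Lambda'$ has no such summand, so no extra $\mu_2$ appears and the moduli stack is simply $[\Hom(\Lambda^*, \bC) : \mu_{\Lambda'}]$.

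Finally, the identification $[\Hom(\Lambda^*, \bC) : \mu_{\Lambda'}] = [T_{\Lambda'} : W_\Lambda]$ is formal, exactly as in the pure case: $\bC[\Lambda^*]^{W_\Lambda}$ is the polynomial ring on the fundamental characters $\chi_i = \chi(\vpi_i)$ by \cite[Ch.8,\S7,Thm.2]{bourbaki2005lie-groups}, so $T_{\Lambda^*}/W_\Lambda = \Hom(\Lambda^*, \bC)$; the $\mu_{\Lambda'}$-action given by the $(\Lambda^*/\Lambda')$-grading is free, commutes with $W_\Lambda$, and has quotient $T_{\Lambda^*}/\mu_{\Lambda'} = T_{\Lambda'}$, whence $[\Hom(\Lambda^*, \bC) : \mu_{\Lambda'}] = [(T_{\Lambda^*} : W_\Lambda) : \mu_{\Lambda'}] = [(T_{\Lambda^*} : \mu_{\Lambda'}) : W_\Lambda] = [T_{\Lambda'} : W_\Lambda]$. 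The one real obstacle I anticipate is the per-polytope normal-form computation of the third paragraph, including verifying that the residual gauge group on $f_\bdry$ is exactly $\mu_{\Lambda'}$ with no extra torsion; once that is in hand, the rest transcribes from the proof of Theorem~\ref{thm:moduli-ade-pure}.
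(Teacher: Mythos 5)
Your proposal is correct and takes exactly the paper's approach: the paper disposes of this theorem by precisely the two routes you name, namely as the $W_0=0$ special case of Theorem~\ref{thm:moduli-ade-primed}, or by repeating the proof of Theorem~\ref{thm:moduli-ade-pure} with Lemma~\ref{lem:2dim-models-primed} playing the role of Lemma~\ref{lem:2dim-models} and Corollary~\ref{cor:H-grading}. Your elaboration of the direct route (Shokurov connectedness, per-polytope normal form, identification of the residual gauge group with $\mu_{\Lambda'}$, and the formal stack manipulation) just fills in the steps the paper leaves implicit.
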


\subsection{Moduli of \ade pairs of all primed shapes}
\label{sec:moduli-ade-primed}

In this subsection we find the moduli stack for all primed shapes, including
those which do not admit a toric description, and in doing so
complete the proof of Theorem~\ref{thm:moduli-summary}.
\emph{We still mark the sides as left and right,} even if some or all
of the boundary curves are contracted. 

\begin{theorem}\label{thm:moduli-ade-primed}
  The moduli stack of pairs
  %$(\oY', \oC'_1 + \oC'_2 + \freps \oB')$
  of a fixed primed shape is 
  \begin{displaymath}
    [\Hom(\Lambda^*, \bC) : \mu_{\Lambda'} \times W_0] = [T_{\Lambda'} :
    W_\Lambda \rtimes W_0],
  \end{displaymath}
  where $\mu_{\Lambda'} = \Hom(\Lambda^*/\Lambda', \bC^*)$ and the lattice
  $\Lambda\subset\Lambda'\subset\Lambda^*$ is as follows:
  \begin{displaymath}
    \begin{array}[h]{lll}
      \text{shape} & \Lambda'/\Lambda & \text{generators} \\
      \hline
      \pA_{2n-1}, \pA_{2n-2}^-& 0 & \\
      \pA_{2n-1}' & \bZ_2 & \vpi_n\\
      D_{2n}' \text{ for } n \text{ even, resp. odd}
                   & \bZ_2 & \vpi' \text{ resp. }
                        \vpi'' \\
      \pD_{2n},
      \text{ resp. } \pD_{2n-1}^- 
                   & \bZ_2 & \vpi_{2n-3}, \text{ resp. } \vpi_{2n-4} \\
      \pD_{2n}' \text{ for } n \text{ even, resp. odd}&
      \bZ_2\times\bZ_2 & \vpi_{2n-3}, \vpi', \text{ resp. }
                \vpi_{2n-3}, \vpi'' \\
      \mE'_7 & \bZ_2 & \vpi_3
  \end{array}
  \end{displaymath}
  For shapes $\ppS$ and $S''$ the lattices $\Lambda'$ are the 
  same as for the unprimed shape $S$, and similarly for 
  $\plS$ resp. $S^+$ and the unprimed shapes $\mS$ resp. $S^-$. 
  The additional Weyl group $W_0$ is the one given in
  Theorem~\ref{thm:reconstruction}, and its action is described in
  Theorem~\ref{thm:W0-action}.
\end{theorem}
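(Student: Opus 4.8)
The plan is to reduce everything to the already-settled pure case (Theorem~\ref{thm:moduli-ade-pure}) by way of the precursor. First I would use the geometric description of priming for pairs (the remark after Definition~\ref{def:priming}): a primed pair $(\oY',\oC'+\freps\oB')$ of shape $S'$ is obtained from a pure pair $(Y,C+\freps B)$ of the precursor shape $S$ by choosing, on each side $C_s$, a $k_s$-element subset $Z_s\subset B\cap C_s$ and priming there — the ideals $I_i$ being forced to be the tangent directions of $B$, since priming of a pair disconnects $B$ from $C$ precisely at the chosen points. Conversely every primed pair arises this way, and by Theorem~\ref{thm:reconstruction} the precursor $(Y,C)$ together with the marking $(Z_s)_s$ is determined by $(\oY',\oC')$ up to the action of $W_0$ (trivial except for the $D$ and $\wD$ shapes and the genus-$1$ exceptions recorded there). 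Thus the moduli stack of primed pairs of shape $S'$ is the $W_0$-quotient of the moduli stack of pure pairs of shape $S$ decorated with a choice of marked intersection points.

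To describe that decorated stack I would fix the torus structure on the toric precursor $Y$ and put the equation of $B$ in the normal form $f=f_\bdry+f_\dyn$ of Table~\ref{tab:normal-forms}, so that the undecorated pure moduli stack is $[\bA^n:G]$ with $G=\mu_\Lambda\times\mu_2$ for $A$ and $G=\mu_\Lambda$ for $D,E$, exactly as in the proof of Theorem~\ref{thm:moduli-ade-pure}. The point is that $f_\dyn$ vanishes identically on each side $C_s$, so $B\cap C_s$ is the zero scheme of $f_\bdry|_{C_s}$: a fixed reduced set of $LC_s$ points on $C_s\cong\bP^1$, disjoint from the points where $C_s$ meets the rest of the toric boundary and independent of the moduli $(c_i)\in\bA^n$. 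Hence choosing the $Z_s$ is a choice in a finite $G$-set that does not interact with $\bA^n$, and the decorated pure moduli stack is $[\bA^n:G_0]$, where $G_0\subseteq G$ is the subgroup preserving the marking. I would then identify $G_0=\mu_{\Lambda'}$ by computing how $G$ permutes the marked points — equivalently, by tracking the $\bZ^2/\la p_\ell-p_*,p_r-p_*\ra$-grading of the normalized data (Corollary~\ref{cor:H-grading}), extending Lemma~\ref{lem:2dim-models-primed} from the toric primed shapes to all of them. Two features emerge: the $\mu_2$ factor present for $A$ shapes acts freely on every $B\cap C_s$ and is therefore killed as soon as one prime is placed; and a second prime on a long side, or any prime on a short side, enlarges nothing further — this is why the statement attaches the same $\Lambda'$ to $S$, $S''$, $\ppS$ and to $S^-$, $S^+$, $\plS$. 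Running this bookkeeping shape by shape produces exactly the list $\Lambda\subset\Lambda'\subset\Lambda^*$ in the statement, so the decorated pure stack is $[\bA^n:\mu_{\Lambda'}]$ and the moduli stack of primed pairs is $[\Hom(\Lambda^*,\bC):\mu_{\Lambda'}\times W_0]$. Its identification with $[T_{\Lambda'}:W_\Lambda\rtimes W_0]$ is then formal, just as at the end of the proof of Theorem~\ref{thm:moduli-ade-pure}: $\Hom(\Lambda^*,\bC)=T_{\Lambda^*}/W_\Lambda=\bA^n$ by Bourbaki's theorem on $W_\Lambda$-invariants \cite{bourbaki2005lie-groups}, $T_{\Lambda^*}/\mu_{\Lambda'}=T_{\Lambda'}$, the $\mu_{\Lambda'}$-action commutes with $W_\Lambda$ because it is given by a $W_\Lambda$-invariant grading, and $W_0$ normalizes $W_\Lambda$ and descends to these quotients with the action described in Theorem~\ref{thm:W0-action}.

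The step I expect to be the real obstacle is the third one — pinning down $\Lambda'$ while simultaneously controlling the $W_0$-action. For the primed shapes that are themselves toric (Lemma~\ref{lemma:primed-toric}) this can be shortcut by rerunning the normal-form argument directly on the primed polytope and reading $\Lambda'$ off Lemma~\ref{lem:2dim-models-primed}, with $W_0$ trivial. But most primed shapes — most of the $E$- and $\wE$-primings, and the $D$-primings that contract a boundary component — have no toric model, so one must argue through the precursor and verify case by case that $W_0$ acts freely on the decorated pure moduli in the \ade cases and with degree-$2$ stabilizer in the $\wD$ cases, in accordance with Theorem~\ref{thm:reconstruction}, so that the quotient presentation above really is the moduli stack.
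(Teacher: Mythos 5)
Your proposal is correct and follows essentially the same route as the paper: the paper's proof likewise reduces to the pure precursor via Theorem~\ref{thm:reconstruction}, observes that the priming centers are sections of the family over $\bA^n=\Hom(\Lambda^*,\bC)$ (so the construction works in families, the one point the paper adds being relative semiampleness of $\cL'$), notes that the only genuine choice is which of the two points of $B\cap C_s$ on a long side to prime, and absorbs the precursor ambiguity into the $W_0$-quotient. Your extra observation that the marked points are \emph{constant} in the normal form, and your identification of $\mu_{\Lambda'}$ as the stabilizer of the marking inside $\mu_\Lambda$ (or $\mu_\Lambda\times\mu_2$), is exactly the bookkeeping the paper leaves implicit (and delegates to Lemma~\ref{lem:2dim-models-primed} in the toric cases), so this is a faithful, if slightly more explicit, version of the same argument.
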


\begin{proof}
  The pair $(\oY', \oC'_1 + \oC'_2 + \freps \oB')$ is obtained from a
  pair $(Y, C_1 + C_2 + \freps B)$ of pure shape by blowing up several points
  $P_i\in B\cap C$ at the ideals $I_i$ with directions equal to the tangent
  directions of $B$, and
  then contracting by the semiample line bundle $L'$. This
  construction works for the entire family over $\bA^n =
  \Hom(\Lambda^*, \bC)$: we blow up sections and it is easy to see
  that the sheaf $\cL'$ in the family is relatively semiample.

  When priming on a short side, or priming twice on a long side, there
  are no choices for $\prod I_i$. The only 2:1 choice is when there is
  a long side $C_s$ and we prime only at one of the two points in
  $B\cap C_s$. 
  Secondly, as stated in Theorem~\ref{thm:reconstruction}, for some
  shapes of genus 1 there is more than one precursor. These choices
  define an additional quotient by $W_0$.
\end{proof}

\subsection{Definitions of the naive \ade families}
\label{sec:naive-families}

For the toric \ade shapes $A$, $\pA$, $D$ and $E$ we define explicit
modular families of \ade pairs over the torus $T_{\Lambda^*}$.  We call these
the \emph{naive families}. Blowing up the sections corresponding to
the points in $C\cap B$, we obtain the naive families for all the
primed \ade shapes.

\smallskip
 
For the $A_n^?$-shapes, where $A_n^?$ is either $A_n$ or $A_n^-$
depending upon the parity of $n$, we take
the equation of Table~\ref{tab:normal-forms} with
$c_i = \chi_i = \chi(\vpi_i)$, the characters of the fundamental
weights, and with $y^2$ rescaled to $-(\frac{y}2)^2$, which will be
convenient when we come to discuss degenerations.

We recall that the $A_n$ root lattice is
$\la e_i-e_j \ra \subset \bZ^{n+1}$ and the dual weight lattice is
$A_n^* = \la f_i \ra$, where $f_i = e_i - p$,
$p = \frac1{n+1}\sum e_i$, so that $\sum_{i=1}^{n+1} f_i = 0$.  Thus,
$\bC[\Lambda^*] = \bC[t_1^\pm, \dots, t_{n+1}^\pm] / (\prod t_k - 1)$
and $\bC[\Lambda] = \bC[t_i/t_j]$, with $t_i = e^{f_i}$.
The first torus is
$T_{\Lambda^*} = \{\prod t_i = 1 \} \subset (\bC^*)^{n+1}$, and the
second one is $T_\Lambda = (\bC^*)^{n+1}/\diag\bC^*$. One has
$T_\Lambda = T_{\Lambda^*} / \mu_{n+1}$.

The Weyl group is $S_{n+1}$, and the characters of the fundamental
weights are the symmetric polynomials $\chi_i =
\sigma_i(t_k)$. Therefore, the defining equation of the naive family
is
\begin{equation}\label{eq:A-family}
  A_n^?:\ f = -\left(\frac{y}2\right)^2 + \prod_{i=1}^{n+1} (x+t_i) =
       -\left(\frac{y}2\right)^2 + 1 + \chi_1 x + \dotsc \chi_n x^n + x^{n+1}.
\end{equation}

For $\mA_n^?$ shapes we number the nodes $2,\dotsc,n+1$
(cf. Fig.~\ref{fig:A}) and the equation is as follows, where
$\chi_k = \sigma_{i-1}(t_i)$:
\begin{equation}\label{eq:mA-family}
  \mA_n^?: f = -\left(\frac{y}2\right)^2 + x\prod_{i=1}^{n+1} (x+t_i) =
 -\left(\frac{y}2\right)^2 + x\left(1 + \chi_2 x + \dotsc \chi_{n+1} x^n + x^{n+1}\right).
\end{equation}

\smallskip

For the toric shapes with one corner, i.e. $D_n^?$, $\mE_n^?$ and
$\pA_n^?$ (here again the $?$ is either no decoration or a $-$, depending
upon the parity), we make the following change of coordinates. We begin with
the affine equation of a double cover $X\to Y$ of the form
\begin{displaymath}
  F(x,y,z) = - xyz + z^2 + c''z + p(x) + q(y) = 0.
\end{displaymath}
Introducing the variable $w = z-\frac12(xy-c'')$, the equation becomes 
\begin{displaymath}
  w^2+f(x,y) = 0, \qquad
  f(x,y)= -\left( \frac{xy-c''}2 \right)^2 + p(x) + q(y)
\end{displaymath}
with the same $p(x)$, $q(y)$.  Thus, the affine equation of the branch
curve $B$ is $f(x,y)$, which we accept as our main
equation. Explicitly, the families are:
\begin{eqnarray}
  \label{eq:A'-families}
  &\pA_n^?: & f = -\left( \frac{xy-c''}2 \right)^2 +
  y + c_0 + c_1x + \dotsb+ c_{n-2}x^{n-2} + x^{n-1}\\
  \label{eq:D-families}
  &D_n^?: & f = -\left( \frac{xy-c''}2 \right)^2 +
  y^2 + c'_1y + c_0 + c_1x + \dotsb+ c_{n-3}x^{n-3} + x^{n-2}\\
  \label{eq:E-families} 
  &\mE_n^?: & f = -\left( \frac{xy-c''}2 \right)^2 +
  y^3 + c'_2y^2 + c'_1y + c_0 + \dotsb+ c_{n-4}x^{n-4} + x^{n-3}
\end{eqnarray}
In all of these families we take the coefficients to be
$c=\chi(\vpi)$, the fundamental characters, i.e. the characters of
the fundamental weights corresponding to the $n$ nodes of the Dynkin
diagram, using our Notation~\ref{not:roots-weights}.

\subsection{Action of the extra Weyl group $W_0$}
\label{sec:W0-action}

When a pure shaped precursor is not uniquely determined, as in
Theorem~\ref{thm:reconstruction}, there is an additional Weyl group
$W_0$ acting on the pure shape moduli torus $T_{\Lambda'}$. We divide
by it in Theorem~\ref{thm:moduli-ade-primed}.

\begin{theorem}\label{thm:W0-action}
  The Weyl group $W_0$ of Theorem~\ref{thm:reconstruction} acts on
  $T_{\Lambda'}$ as follows:
  \begin{enumerate}
  \item \emph{Genus $>1$.} For $\pD_{2n}^?$ and $\pD_{2n-1}^?$
    shapes, $W_0=W(A_1)=S_2$ acts by an automorphism of the
    $D$-lattice switching the two short legs $p'$ and $p''$.
    For $\ppD_{2n}^?$ and $\ppD_{2n-1}^?$ shapes, one has
    $W_0=W(A_1^2)=S_2^2$. The first $S_2$ acts by switching
    the two short legs $p'$ and $p''$. The second $S_2$ gives an
    additional $S_2$ automorphism of the pair \ycbend.
  \item \emph{Genus 1.} For the following shapes the action is as in
    (1) under the identifications: $\pA'_3=\pD^-_3$,
    $\pA''_3=\ppD^-_3$, $D'_4=\pD_4$, $D''_4=\ppD_4$. For $\pD'_4$
    the group $W_0=W(A_2) = S_3$ acts by permuting the three legs of
    the $D_4$ diagram. For $\pD''_4$, one has $W_0=W(A_3)=S_4 = S_3
    \ltimes S_2^2$. Here, $S_3$ acts by permuting the legs and
    $S_2^2$ gives an extra automorphism group of the pair \ycbend.
\end{enumerate}
\end{theorem}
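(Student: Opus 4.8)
The plan is to upgrade the enumeration of precursors from the proof of Theorem~\ref{thm:reconstruction} into an explicit action on $T_{\Lambda'}$. Recall from Theorem~\ref{thm:moduli-ade-primed} that the moduli stack of primed pairs of the given shape is $[T_{\Lambda'}:W_\Lambda\rtimes W_0]$ and that $T_{\Lambda'}/W_\Lambda$ is the moduli of pure (or toric primed) precursors with marked left and right sides; thus $W_0$ must act on the latter by permuting, over a fixed primed pair, the possible precursors together with their priming data. By the proof of Theorem~\ref{thm:moduli-ade-primed} there are only two sources of ambiguity here: the $2{:}1$ choice of which of the two points of $B\cap C_s$ to blow up when priming once on a long side $C_s$, and the several precursor choices in genus $1$ produced by Theorem~\ref{thm:reconstruction}. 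For each shape with $W_0\neq0$ I would realize every generator of $W_0$ as an explicit birational modification relating two such choices, and then read off the induced automorphism of $T_{\Lambda'}$ from its effect on the coefficients $c=\chi(\vpi)$ of the naive family of Section~\ref{sec:naive-families}.

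For the genus $>1$ shapes $\pD_{2n}^?$, $\pD_{2n-1}^?$: here a precursor $(Y,C)$ of shape $D_{2n}$ or $D_{2n-1}^-$ ($2n\ge6$) carries the $\bP^1$-fibration $Y\to\bP^1$ given by $-(K_Y+C)=\frac12 L$, and its left side $C_1$ is a bisection meeting a general fibre $\ell$ in two points $P$ and $P^*$. Priming once at $P$ with a generic direction makes $f^{-1}(\ell)$ split, as in the proof of Theorem~\ref{thm:reconstruction}, into a strict transform $\ell'$ and an exceptional curve $F$, each of self-intersection $-\frac12$ and meeting $C'$ once; contracting $\ell'$, resp.\ $F$, recovers a $D$-shaped precursor, and these two un-primings are exchanged by the elementary transformation of the ruled surface supported on $\ell$. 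I would check that on Picard groups this elementary transformation is exactly the diagram automorphism of the $D$-lattice switching the two short legs $p'$ and $p''$ (equivalently, it moves the priming point from $P$ to $P^*$), which realizes the asserted $W_0=W(A_1)=S_2$ action. For $\ppD_{2n}^?$ and $\ppD_{2n-1}^?$, i.e.\ when the long left side is primed twice, $W_0$ is generated by the two commuting elementary transformations at the two priming points; their composite stabilizes the surface and yields the additional $S_2$-automorphism of the pair \ycb, while the diagonal $S_2$ still switches $p'$ and $p''$, giving the stated splitting of $W_0=W(A_1^2)=S_2^2$.

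For the genus $1$ shapes the minimal resolution $\wY$ of a precursor is a weak del Pezzo surface of degree $2$, $4$, $6$ or $8$; the negative definite lattice $\Lambda_0$ of Definition~\ref{def:CBperp} sits inside $\Pic\wY$, the group $W_0=W(\Lambda_0^{(2)})$ preserves $-K_{\wY}$ together with the classes of $C$ and $B$, and, as in the proof of Theorem~\ref{thm:reconstruction}, the distinct blow-downs $\wY\to Y$ onto pure shapes are in bijection with certain sub-configurations of the classically known $(-1)$- and $(-2)$-curves on $\wY$ \cite[Ch.~8]{dolgachev2012classical-algebraic}, with $W_0$ acting freely and transitively. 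I would list these configurations explicitly and match the induced permutation with the one on $T_{\Lambda'}$ via \eqref{eq:D-families}: for $\pD'_4$ the three legs of the $D_4$-diagram get permuted, so $W_0=W(A_2)=S_3$ acts as triality; for $\pD''_4$ the same permutation of legs is present but is now accompanied by a Klein four-group of automorphisms of the pair, matching $W(A_3)=S_4=S_3\ltimes S_2^2$. The remaining genus $1$ assertions then reduce to these via the identifications $\pA'_3=\pD^-_3$, $\pA''_3=\ppD^-_3$, $D'_4=\pD_4$, $D''_4=\ppD_4$ (each pair naming the same surface), together with the exceptional case $\ppA_2^-=\plA'_2$, where $W_0=0$, from Lemma~\ref{lem:non-redundancy}. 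In all cases $W_0$ permutes simple roots, hence normalizes $W_\Lambda$ among the automorphisms of $T_{\Lambda'}$, so the semidirect product $W_\Lambda\rtimes W_0$ of Theorem~\ref{thm:moduli-ade-primed} makes sense.

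I expect the genus $1$ bookkeeping for $\pD''_4$ (and the parallel $\wD$-type cases) to be the main obstacle: one must track the $(-1)/(-2)$-curve configuration through the entire priming $\wY\to Y\to\oY'$ in these low-degree weak del Pezzos, verify that the normal subgroup $S_2^2\subset W(A_3)$ really acts by genuine automorphisms of the pair \ycb and not merely by permuting precursors, and confirm that the splitting $S_4=S_3\ltimes S_2^2$ is compatible with triality on the $D_4$-lattice. Once the elementary-transformation picture in the $\bP^1$-fibred cases and the curve configurations in the del Pezzo cases are in place, the rest is a routine check against Tables~\ref{tab:primed-III} and \ref{tab:primed-II}.
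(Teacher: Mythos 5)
Your plan is essentially the paper's proof: the generators of $W_0$ are realized as explicit birational involutions $\varphi_+\colon y\mapsto \tfrac{c''+c'}{x-2}-y$ and $\varphi_-\colon y\mapsto \tfrac{c''-c'}{x+2}+y$ of the naive family \eqref{eq:D-families} --- precisely the (weighted) elementary transformations centred at the two priming points $x=\pm2$ of $B\cap C_1$ that you describe --- each of which is checked by direct substitution to swap $c'$ and $c''$ and to become regular after the corresponding priming, while the composite $\varphi_-\circ\varphi_+$ exchanges the two branches of the bisection $B$ and is a regular involution of the doubly primed pair; the genus-$1$ cases are verified by the same kind of computation on the curve configurations you cite. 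One small correction for the $\php\php D^?$ case: in $W_0=\langle\varphi_+\rangle\times\langle\varphi_-\rangle$ it is the \emph{diagonal} subgroup $\langle\varphi_+\varphi_-\rangle$ that acts trivially on $T_{\Lambda'}$ and supplies the extra automorphism, while either factor realizes the swap of $p'$ and $p''$, so your attribution of the two $S_2$'s is interchanged.
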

\begin{proof}
  (1) From the equation~\eqref{eq:D-families} of the $D$-family we see
  that the side $C_1$ is defined by $(y_0:y_1)=(0:1)$, where
  $y=\frac{y_1}{y_0}$. There are two points $x=\pm 2$ on $C_1$ at which one can
  prime. For $x=2$, consider the map
  $\varphi_+\colon y\mapsto \frac{c''+c'}{x-2}-y$, $x\mapsto x$. 
  It is easy to check
  that the equation~\eqref{eq:D-families} maps to the same equation
  but with $c'$ and $c''$ switched. The map $\varphi_+$ is a rational
  map for a surface of $D^?$ shape but it becomes regular on the
  blowup, a surface of $\pD^?$ shape. Similarly, for priming at $x=-2$
  the map $\varphi_-\colon y\mapsto \frac{c''-c'}{x+2}+y$ works the
  same way.  The composition
  $\varphi_-\circ\varphi_+\colon y\mapsto \frac{c''-c'}{x+2} +
  \frac{c''+c'}{x-2}- y$, $x\mapsto x$ exchanges the two branches of
  the curve $B$, a two-section of the $\bP^1$-fibration. For
  surfaces $Y$ of $D^?$ and $\pD^?$ shapes this is a rational
  involution. It becomes a regular involution of a surface of $\ppD^?$
  shape, where $B$ is disconnected from $C_1$.  Case (2) is checked
  similarly.
\end{proof}

\begin{definition}\label{def:W00}
  Let $W_{00}\subset W_0$ be the subgroup which acts trivially on the
  the points of $T_{\Lambda'}$, giving extra automorphisms of the pairs.
\end{definition}

\begin{corollary}
  The group $W_0/W_{00}$ acts by diagram automorphisms of the
  decorated Dynkin diagram, permuting the short legs, all of them
  white circled vertices: for
  $\pD^?_{2n}$, $\pD^?_{2n-1}$, $\ppD^?_{2n}$, $\ppD^?_{2n-1}$,
  $\pA'_3$, $\pA''_3$ it is two legs, and for $\pD_4'$, $\pD_4''$ three legs,
  cf. Fig.~\ref{fig:circled-diags}.
\end{corollary}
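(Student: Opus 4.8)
The plan is to deduce the statement case by case, directly from the explicit description of the $W_0$-action on $T_{\Lambda'}$ in Theorem~\ref{thm:W0-action}, together with Definition~\ref{def:W00}, according to which $W_{00}$ is exactly the kernel of the $W_0$-action on $T_{\Lambda'}$. Thus $W_0/W_{00}$ acts faithfully on $T_{\Lambda'}$, and what I would check is (i) that this faithful action is realized by automorphisms of the decorated Dynkin diagram, and (ii) that these automorphisms permute precisely the short legs, all of which terminate in white circled vertices.

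First I would treat the genus $>1$ shapes. For $\pD^?_{2n}$ and $\pD^?_{2n-1}$ (with $2n\ge 6$) one has $W_0=W(A_1)=S_2$ acting on $T_{\Lambda'}$ through the automorphism of the $D$-lattice that swaps the two short legs $p'$ and $p''$; concretely, by the proof of Theorem~\ref{thm:W0-action} this interchanges the coefficients $c'$ and $c''$ in~\eqref{eq:D-families}, which are independent coordinates on $T_{\Lambda'}$, so the action is nontrivial. Hence $W_{00}=1$ and $W_0/W_{00}=S_2$ is the order-$2$ diagram automorphism exchanging the two short-leg vertices. For $\ppD^?_{2n}$ and $\ppD^?_{2n-1}$ one has $W_0=W(A_1^2)=S_2^2$: the first factor acts as above, while the second is generated by the involution $\varphi_-\circ\varphi_+$, which exchanges the two branches of $B$ over the $\bP^1$-fibration but fixes the pair $(c',c'')$ and hence acts trivially on $T_{\Lambda'}$. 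So $W_{00}$ is this second factor and $W_0/W_{00}\cong S_2$ is again the short-leg swap. The genus-$1$ shapes $\pA'_3=\pD^-_3$, $\pA''_3=\ppD^-_3$, $D'_4=\pD_4$, $D''_4=\ppD_4$ are handled by the identical argument under these identifications. Finally, for $\pD'_4$ one has $W_0=W(A_2)=S_3$ permuting the three legs of the $D_4$ diagram, which is its full diagram-automorphism group, and since the three leg-characters are algebraically independent $W_{00}=1$ and $W_0/W_{00}=S_3$; for $\pD''_4$, $W_0=W(A_3)=S_4=S_3\ltimes S_2^2$, where the $S_3$ permutes the legs and the normal $S_2^2$ is a group of extra automorphisms of the pair, hence acts trivially on $T_{\Lambda'}$, so $W_{00}=S_2^2$ and $W_0/W_{00}\cong S_3$.

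It then remains to identify the permuted legs with white circled vertices. In every case (using the identifications above for the $\pA$ shapes) the precursor is of $D$-type, and priming on a long side of a $D$-type precursor circles the corresponding white node; by the decoration conventions of Sections~\ref{sec:toric-shapes} and~\ref{sec:priming} --- equivalently, the internal-node picture of Subsection~\ref{sec:primed-toric-shapes} --- these are precisely the short legs of the decorated diagram, and they all end in white circled vertices, cf. Fig.~\ref{fig:circled-diags}. Counting them yields two such vertices for $\pD^?_{2n}$, $\pD^?_{2n-1}$, $\ppD^?_{2n}$, $\ppD^?_{2n-1}$, $\pA'_3$, $\pA''_3$ and three for $\pD'_4$, $\pD''_4$.

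The step requiring the most care --- and the main obstacle --- is matching the \emph{abstract} Weyl group $W_0$ of Definition~\ref{def:CBperp} with the \emph{geometric} automorphisms furnished by the rational maps $\varphi_\pm$: one must verify that the reflections generating $W_0$ induce on $\Lambda'$, and hence on $T_{\Lambda'}$, exactly the claimed leg transpositions, and that the ``extra automorphism'' generators induce the identity on $T_{\Lambda'}$. Both assertions are contained in the explicit formulas in the proof of Theorem~\ref{thm:W0-action}, so once those are granted the corollary reduces to the decoration bookkeeping above.
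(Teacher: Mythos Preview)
Your proposal is correct and is exactly the argument the paper has in mind: the Corollary is stated without proof because it is immediate from Theorem~\ref{thm:W0-action} together with Definition~\ref{def:W00}, and your write-up simply unpacks that implication case by case. The only thing to tidy up is minor: the coefficients $c',c''$ (and the three leg coefficients in the $D_4$ case) are coordinates on $\bA^n = T_{\Lambda^*}/W_\Lambda$ rather than on $T_{\Lambda'}$ itself, but since the $W_0$-action descends there and permutes these independent coordinates nontrivially, the conclusion $W_{00}=1$ (resp.\ $W_{00}=S_2$, $S_2^2$) follows just as you say.
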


\section{Compactifications of moduli of \ade pairs}
\label{sec:compactifications}

\subsection{Stable pairs in general and stable \ade pairs}
\label{sec:stable-pairs-general}

We recall some standard definitions from the theory of moduli of
stable pairs. We note in particular a close relationship between the
contents of this subsection and work of Hacking
\cite{hacking2004compact-moduli,
  hacking2004a-compactification-of-the-space}, who studied similar
ideas in the context of moduli of plane curves.

\begin{definition}\label{def:slc}
  A pair $(X,B = \sum b_iB_i)$ consisting of a reduced variety and a
  $\bQ$-divisor is \emph{semi log canonical (slc)} if $X$ is $S_2$, has at
  worst double crossings in codimension 1, and for the normalization
  $\nu\colon X^\nu \to X$ writing
  \begin{displaymath}
    \nu^*(K_X+B) = K_{X^\nu} + B^\nu,
  \end{displaymath}
  the pair $(X^\nu, B^\nu)$ is log canonical. Here $B^\nu = D + \sum b_i
  \nu\inv(B_i)$ and $D$ is the double locus.   
\end{definition}

\begin{definition}\label{def:stable-pair}
  A pair $(X,B)$ consisting of a connected projective
  variety $X$ and a $\bQ$-divisor $B$ is \emph{stable} if
  \begin{enumerate}
  \item $(X,B)$ has slc singularities, in particular $K_X+B$ is
    $\bQ$-Cartier.
  \item The $\bQ$-divisor $K_X+B$ is ample. 
  \end{enumerate}
\end{definition}

Next we introduce the objects that we are interested in here:
We could work equivalently with the pairs \ycb or with their double
covers $(X,D+\epsilon R)$. We choose the former.

\begin{definition}
  For a fixed degree $e\in\bN$ a fixed rational number
  $0<\epsilon\le 1$, a \emph{stable del Pezzo pair} of type $(e,\epsilon)$ is
  a pair \ycb such that
  \begin{enumerate}
  \item $2(K_X + C) + B \sim 0$
  \item The divisor $B$ is an ample Cartier divisor of degree $B^2=e$.
  \item \ycb is stable in the sense of
    Definition~\ref{def:stable-pair}.
  \end{enumerate}
\end{definition}

\begin{definition}\label{def:family-stable-pairs}
  A family of stable del Pezzo pairs of type $(e,\epsilon)$ is a flat
  morphism $f\colon (\cY,\cC + \freps\cB)\to S$ such that
  $(\omega^{\otimes 2}_{\cY/S}(\cC)^{**}\simeq \cO_Y$ locally on $S$,
  the divisor $\cB$ is a 
  relative Cartier divisor, such that every fiber is a stable del
  Pezzo pair of type $(e,\epsilon)$. We will denote by
  $\cM\slc_{\rm dp}(e,\epsilon)$ its moduli stack.
\end{definition}

\begin{proposition}\label{prop:dp-moduli}
  For a fixed degree $e$ there exists an $\epsilon_0(e)>0$ such that
  for any $0<\epsilon\le \epsilon_0$ the moduli stacks
  $\cM^{\rm slc}(e,\epsilon_0)$ and $\cM^{\rm slc}(e,\epsilon)$ coincide.
  The stack $\cM\slc(e,\epsilon_0)$ is a Deligne-Mumford stack of
  finite type with a coarse moduli space $M\slc(e,\epsilon_0)$ which
  is a separated algebraic space. 
\end{proposition}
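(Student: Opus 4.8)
The plan is to obtain the statement from three standard inputs: boundedness of the family of stable del Pezzo pairs of type $(e,\epsilon)$, the resulting independence of the slc condition from $\epsilon$ once $\epsilon$ is small, and the general theory of moduli of stable pairs (Artin's criterion for algebraicity, ampleness of $K+B$ for finiteness of stabilizers, the valuative criterion for separatedness, and the Keel--Mori theorem for the coarse space).

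First I would remove $\epsilon$ from the ampleness hypothesis. Since $2(K_Y+C)+B\sim 0$, on every fibre one has
\[
  K_Y+C+\tfrac{1+\epsilon}{2}B \;\sim_{\mathbb Q}\; \tfrac{\epsilon}{2}\,B ,
\]
which is ample for all $\epsilon>0$ simultaneously because $B$ is an ample Cartier divisor; thus condition~(2) of Definition~\ref{def:stable-pair} is $\epsilon$-free, as are the conditions $\omega_{\cY/S}^{\otimes 2}(\cC)^{**}\simeq\cO_\cY$ and ``$\cB$ relative Cartier'' of Definition~\ref{def:family-stable-pairs}. The only $\epsilon$-dependent requirement is that \ycb be slc, i.e. that its normalization be lc. Fixing a component $(Y^\nu,C^\nu)$ of the normalization and a log resolution $g\colon W\to Y^\nu$, the log discrepancy of $\bigl(Y^\nu, C^\nu+\tfrac{1+\epsilon}{2}B^\nu\bigr)$ along any prime divisor $E\subset W$ equals its discrepancy for $C^\nu+\tfrac12 B^\nu$ minus $\tfrac{\epsilon}{2}\operatorname{mult}_E(g^*B^\nu)$, an affine-linear, non-increasing function of $\epsilon$. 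Hence the set of $\epsilon\in(0,1]$ for which \ycb is slc is an interval $(0,\theta]$ (with $\theta=\theta(Y,C,B)$, possibly $\theta=1$), and in particular slc for one small $\epsilon$ forces slc for all smaller ones.

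Next, boundedness. If \ycb is slc then $Y$ is slc; the normalization $\nu\colon Y^\nu\to Y$ has components $(Y_j,\bar C_j)$ that are log del Pezzo surfaces with reduced boundary of index $\le 2$ (here $\bar C_j=\nu^{-1}(C_j)+\text{double locus}$), and on each of them $L_j:=-2(K_{Y_j}+\bar C_j)$ is an ample Cartier divisor with $L_j^2\le B^2=e$. By Nakayama's classification~\cite{nakayama2007classification-of-log} (equivalently by Theorem~\ref{thm:logdP=ade}) such pairs $(Y_j,\bar C_j)$ form a bounded family, $B_j$ moves in the bounded linear system $|L_j|$, the number of components is bounded in terms of $e$, and the double curves are cut out by divisors in bounded linear systems, so the gluing is bounded as well; hence all pairs satisfying (1)--(2) together with ``\ycb slc for some $\epsilon>0$'' form a bounded family $\mathcal P$, and a fortiori $\cM\slc(e,\epsilon_0)$ is of finite type. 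On $\mathcal P$ the invariant $\theta$ above is constructible and takes finitely many positive values; setting $\epsilon_0$ to be their minimum gives $\epsilon_0>0$, and for $0<\epsilon\le\epsilon_0$ the objects of $\cM\slc(e,\epsilon)$ are exactly those of $\mathcal P$, whence $\cM\slc(e,\epsilon_0)=\cM\slc(e,\epsilon)$ for all such $\epsilon$. (Alternatively one may quote the general boundedness theorem for stable surface pairs; the point to note is that, although $\operatorname{vol}(K_Y+C+\tfrac{1+\epsilon}{2}B)=\tfrac{\epsilon^2}{4}e\to 0$, boundedness is recovered from the fixed-degree Cartier polarization $B$ and the index-$\le 2$ constraint.)

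Finally the stack statements. The moduli functor of slc pairs with relatively ample $\mathbb Q$-Cartier $K+B$ satisfies Artin's axioms, so $\cM\slc(e,\epsilon_0)$ is an algebraic stack of finite type; it is Deligne--Mumford because $K_Y+C+\tfrac{1+\epsilon}{2}B$ is ample, which makes the automorphism group scheme of each fibre finite and reduced in characteristic $0$. Separatedness is the valuative criterion: two families of stable del Pezzo pairs over a DVR agreeing over the generic point have the same special fibre, which is the uniqueness half of the KSBA stable-reduction package (existence of limits is not asserted here, and for the \ade shapes is proved later in Theorem~\ref{thm:compact-ade-moduli-custom}). A separated Deligne--Mumford stack of finite type has a coarse moduli space that is a separated algebraic space by the Keel--Mori theorem, which is $M\slc(e,\epsilon_0)$. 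The main obstacle is boundedness: one must control all degenerate reducible limits, and the naive volume bound is useless since $\operatorname{vol}(K+B)\to 0$ with $\epsilon$ --- it is the index-$\le 2$ classification, applied through the fixed-degree Cartier polarization $B$, that saves the day.
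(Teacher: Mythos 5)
Your argument is correct and follows essentially the same route as the paper: observe that ampleness of $K_Y+C+\frac{1+\epsilon}{2}B\sim_{\bQ}\frac{\epsilon}{2}B$ is automatic, that for each fixed pair the slc condition stabilizes for small $\epsilon$ (the paper phrases this via $B$ avoiding the finitely many lc centers of $(Y,C+\frac12 B)$, you via monotonicity of discrepancies in $\epsilon$ — the same fact), that boundedness from the fixed-degree ample Cartier divisor $B$ makes the threshold uniform, and then quote the general stable-pairs machinery for the Deligne–Mumford and coarse-space assertions. Your added justification of boundedness through the index-$\le 2$ classification of the normalization components is a more explicit version of the paper's one-line appeal to boundedness, but it is not a different method.
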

\begin{proof}
  For a fixed surface $Y$, there exists an $0<\epsilon_0\ll 1$ such
  that the pair $(Y,C + \frac{1+\epsilon}2 B)$ is slc iff $B$ does not contain any
  centers of log canonical singularities
  of $(Y,C + \frac12 B)$: images of the divisors with
  codiscrepancy $b_i=1$ on a log resolution of singularities
  $Z\to Y^\nu\to Y$.
  There are finitely many of such centers.  Then for any
  $\epsilon<\epsilon_0$, the pair $(Y,C + \frac{1+\epsilon_0}2 B)$ is slc iff
  \ycb is. 
  Now since $B$ is ample Cartier of a fixed degree,
  the family of the pairs is bounded, and the number
  $\epsilon_0$ with this property can be chosen universally.

  We refer to \cite{kollar1988threefolds-and-deformations,
    kollar2015book-on-moduli},
  \cite{alexeev2006higher-dimensional-analogues} for the existence and
  projectivity of the moduli space of stable pairs $(X,\sum b_iB_i)$.
  There are complications arising in the construction when some
  coefficients $b_i\le\frac12$ and when the divisor $B$ is not
  $\bQ$-Cartier, all of which are not present in this situation.
\end{proof}

\begin{definition}\label{def:moduli-stable-ade}
  For a fixed \ade shape, we denote by $M\slc_{ADE}$ the closure of
  the moduli space of \ade pairs of this shape in $M\slc_{\rm
    dP}(e,\epsilon_0)$ for $e=B^2$, with the reduced scheme structure.
\end{definition}

In this Section will show that $M\slc_{ADE}$ is proper and that in
fact the stable limits of \ade pairs are of a very special kind: they
are \emph{stable \ade pairs}. We will also show that the normalization
of $M\slc_{ADE}$ is an explicit projective toric variety for a
generalized Coxeter fan.

\begin{definition}\label{def:stable-ade-pair}
  A \emph{stable \ade pair} is a stable del Pezzo pair \ycb such that
  its normalization is a union of \ade pairs
  $\sqcup (Y^\nu_k, C^\nu_k + \freps B^\nu_k)$.
\end{definition}

\begin{theorem}\label{thm:normalize-stable-ade-pair}
  For a stable \ade pair the irreducible components are of two kinds:
  \begin{enumerate}
  \item \emph{normal}, i.e. $\nu\colon Y^\nu_k\isoto Y_k$, or
  \item \emph{folded:} the morphism $\nu\colon Y^\nu_k\to Y_k$ is an
    isomorphism outside of $C_k$, and is a double cover
    $\bP^1\to\bP^1$ on one or two sides $C^\nu_{k,s} \to C_{k,s}$,
    $s=1,2$. In this case, the side $C^\nu_{k,s}$ is necessarily a long
    side of the \ade pair.
  \end{enumerate}
\end{theorem}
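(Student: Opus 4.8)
The plan is to analyze the normalization $\nu\colon Y^\nu=\bigsqcup_k Y^\nu_k\to Y$ one component at a time, using only that $(Y,C+\freps B)$ is slc with $B$ ample Cartier, the relation $2(K_Y+C)+B\sim 0$, and the transversality of Remark~\ref{rem:BC-transversal}. Write $D\subset Y^\nu$ for the conductor, so that $\nu^*(K_Y+C+\freps B)=K_{Y^\nu}+D+\nu^{-1}_*C+\freps\,\nu^{-1}_*B$. By Definition~\ref{def:stable-ade-pair} the normalized pair is $\bigsqcup_k(Y^\nu_k,C^\nu_k+\freps B^\nu_k)$ with each factor an \ade pair, so on each component the reduced \ade boundary is $C^\nu_k=D_k+(\nu^{-1}_*C)|_{Y^\nu_k}$ with $D_k=D|_{Y^\nu_k}$, while $B^\nu_k=\nu^*B|_{Y^\nu_k}$ (this uses that $B$ is Cartier and, being transversal to $C$, contains no component of the conductor). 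Now $C^\nu_k=C^\nu_{k,1}+C^\nu_{k,2}$ has exactly two irreducible sides, each $\isoto\bP^1$; since a side is irreducible it is \emph{either} ``free'' (not contained in $D_k$, hence descends isomorphically to a boundary component of $Y$) \emph{or} ``glued'' (contained in $D_k$). As $\nu$ is an isomorphism away from the conductor, $\nu\colon Y^\nu_k\to Y_k$ is an isomorphism over the complement of the images of the glued sides --- this gives the ``isomorphism outside $C_k$'' assertion.

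Second, I would determine how the glued sides are matched by the involution $\tau$ of $D$. A glued side $C^\nu_{k,s}$ is $\tau$-paired with exactly one other side, with three a priori possibilities: (i) a side of a \emph{different} component (an isomorphism $\bP^1\isoto\bP^1$); (ii) \emph{itself}, via a non-trivial involution $\iota$ of $\bP^1$; (iii) the \emph{other} side of the \emph{same} component, via an isomorphism. If every glued side of $Y^\nu_k$ is of type~(i) then $Y_k$ is normal along its double curve and $\nu\colon Y^\nu_k\isoto Y_k$ --- the ``normal'' case. Type~(ii) is exactly the ``folded'' case, with $\nu|_{C^\nu_{k,s}}$ the degree-$2$ quotient map $\bP^1\to\bP^1$. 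The delicate point is to \emph{exclude} type~(iii): the two sides of an \ade surface meet in a single point --- in the toric models of \S\ref{sec:toric-shapes} the torus-fixed point $p_*$, which is in general a non-Du Val point of $Y^\nu_k$ --- and identifying $C^\nu_{k,1}$ with $C^\nu_{k,2}$ by an isomorphism, which is necessarily forced to fix $p_*$ and to exchange the two branches of the node $C^\nu_{k,1}\cap C^\nu_{k,2}$, is incompatible with $(Y,C+\freps B)$ being slc and $B$ being Cartier near the image of $p_*$. I would settle this by a direct local computation of the glued surface singularity at $\nu(p_*)$, together with the possible local equation of $B$ through it.

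Third, the heart of the statement --- that a folded side is long --- is a short parity argument. Let $C^\nu_{k,s}\isoto\bP^1$ be folded via $\iota$, with fixed points $f_1,f_2$ (which map to pinch points of $Y$). Since $B$ is Cartier, $B^\nu_k|_{C^\nu_{k,s}}=(\nu|_{C^\nu_{k,s}})^*\bigl(B|_{C_{k,s}}\bigr)$, so its multiplicity at each of $f_1,f_2$ is even because the degree-$2$ map ramifies there. But by Remark~\ref{rem:BC-transversal} the divisor $B^\nu_k$ meets $C^\nu_k$ transversally, i.e.\ with multiplicity $1$ at each intersection point; hence $B^\nu_k$ avoids $f_1,f_2$, so $B^\nu_k\cap C^\nu_{k,s}$ is a disjoint union of free $\iota$-orbits and has even cardinality. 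As $B^\nu_k\in|L|$ with $L=-2(K_{Y^\nu_k}+C^\nu_k)$, we obtain $L\cdot C^\nu_{k,s}=\#(B^\nu_k\cap C^\nu_{k,s})$ even, hence $=2$ (the only even value occurring on a type~III side); that is, $C^\nu_{k,s}$ is a long side.

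The main obstacle is step~(iii): ruling out the self-gluing of the two sides of a single \ade component. This requires a careful local study of slc surface singularities along a double curve at the (generically non-Du Val) point where the two boundary components meet, together with the constraint that $B$ be Cartier through its image; everything else --- locating the conductor inside the \ade boundary, the trichotomy of gluings, and the parity argument --- is essentially formal.
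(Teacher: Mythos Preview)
Your argument is correct and coincides with the paper's, which is a two-sentence version of the same idea: the normalization of an slc pair is an isomorphism off the double locus and $2{:}1$ on it, giving the dichotomy; and for the long-side claim the paper simply writes $\nu^*B_k\cdot C^\nu_{k,s}=2\,B_k\cdot C_{k,s}$, which is even and~$\ge 2$. Your $\iota$-orbit parity count is exactly this projection-formula identity, unpacked pointwise.

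The one place you go beyond the paper is your case~(iii), where $\tau$ identifies the two sides of a single component with each other. The paper's proof does not mention this possibility --- it tacitly reads ``$2{:}1$ on the double locus'' as forcing each non-normal side to fold individually. Since the theorem is only invoked to name the shapes that actually arise in the explicit compactified families (cf.\ the Priming Rules and Thm.~\ref{thm:compactified-family-primed}), where case~(iii) never appears, the omission is harmless for the paper's purposes; but you are right that, as a standalone claim about arbitrary stable \ade pairs, it is not addressed there either, and the local analysis at $\nu(p_*)$ you outline is what would be required.
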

\begin{proof}
  The normalization of a stable pair is an isomorphism outside of the
  double locus and is 2:1 on the double locus, so these are the two
  possibilities. The side must be long because $\nu^* B_k \cdot
  C^\nu_{k,s} = 2 B_k \cdot C_{k,s}$ is even and $\ge2$. 
\end{proof}

\begin{definition}\label{def:folded-shape}
  We will call the surfaces of type (2) in the above theorem the
  \emph{folded shapes}.  We denote a fold by adding the $f$
  superscript to the corresponding long side, e.g.  $A_{2n-1}^f$,
  $\foA_{2n-1}^f$, $\mA_{2n}^f$, $\pA_{2n-1}^f$.  We define the
  decorated Dynkin diagrams for these shapes by double circling the
  corresponding end (unfilled) node.  We do not draw any pictures for
  these here.
\end{definition}

Next, we extend the naive families of \ade pairs, defined
in section~\ref{sec:naive-families}, to families of stable pairs over a
projective toric variety corresponding to the Coxeter fan. We start
with the $A_n$ case.

\subsection{Compactifications of the naive families for the $A$ shapes}
\label{sec:comps-families-A}

Recall that $T_{\Lambda^*} = \Spec\bC[\Lambda^*]$. We define the
following elements of the homogeneous ring $\bC[\Lambda^*][x,y][\xi]$,
with the grading defined by $\deg\xi = 1$.

\begin{definition}
  In the $A_n^?$ shape, where the $?$ denotes either
  no decoration or a $-$ depending upon the parity of $n$, 
  for each node $p_1,\ldots,p_n$ of the Dynkin
  diagram we introduce a degree~2 element
  $u_i = e^{\vpi_i} x^i \cdot\xi^2$, where
  $e^{\vpi_i} \in \bC[\Lambda^*]$ is the monomial corresponding to
  the fundamental weight $\vpi_i\in\Lambda^*$. In addition, we
  introduce the degree~2 elements $u_0=1\cdot\xi^2$ and $u_{n+1} =
  x^{n+1}\cdot \xi^2$, corresponding to the left and right nodes 
  $p_l = p_0$ and $p_r = p_{n+1}$,  and $u_* = y^2 \cdot \xi^2$
  corresponding to the vertex $p_*$.
  Similarly, in the $\mA_n^?$ shape we define the elements $u_1,\dotsc,
  u_{n+2}$ and $u_*$. 
\end{definition}

Because even the simplest \ade surface of $A^-_0$-shape is a
weighted projective space $\bP(1,1,2)$, it is convenient to introduce
some square roots.

\begin{definition}
  For the even nodes $p_{2i}$ we introduce the degree~1 elements of the
  ring $R[\xi]$:   $v_{2i} = e^{\vpi_{2i}/2} x^i\cdot \xi$ and $v_* =
  y \cdot \xi$. 
  Thus, $v_{2i}^2 = u_{2i}$ and $v_*^2 = u_*$. 
\end{definition}

We recall that in the naive families \eqref{eq:A-family},
\eqref{eq:mA-family} we take the coefficients $c_i = \chi_i$, the fundamental
characters.  As in Section~\ref{sec:2dim-models}, let $\alpha_i$ be
the simple roots.

\begin{definition}
  Set $a_i = e^{-\alpha_i}$ for all $i$, and for odd indices
  set $b_{2i+1} = e^{-\alpha_{2i+1}/2}$.  Finally,
  define normalized coefficients $\hc_i = e^{-\vpi_i} c_i$.
\end{definition}

It is well known that for any dominant weight $\lambda\in\Lambda^*$
the character $\chi(\lambda)\in\bC[\Lambda^*]$ is a
$W_\Lambda$-invariant Laurent polynomial whose highest weight is
$\lambda$ and the other weights are of the form
$\mu=\lambda - \sum n_i\alpha_i$ for some $n_i\ge0$. Thus, $\hc_k$ are
polynomials in $a_i$'s, and $\hc_k = 1 +
(\text{higher terms in } a_i)$. 

With these notations, we consider the equation $f$ of the naive family
\eqref{eq:A-family} to be the following homogeneous degree~2 element
in $\bC[\Lambda^*][x,y][\xi]$ (similarly for $\mA_n^?$):
\begin{equation}\label{eq:A-families-uv}
  f = -\left(\frac{v_*}2\right)^2 
  + u_0 + \hc_1 u_1 + \dotsc + \hc_n u_n + u_{n+1}
  \in \bC[\Lambda^*][x,y] \cdot\xi^2
\end{equation}

For the construction of the family one might as well work with the
ring $\bC[\frac12\Lambda^*]$ but we will use the minimal choice for
clarity.

\begin{definition}\label{def:Mlattice-A}
  Let $M$ be the lattice obtained by adjoining to $\Lambda^*$ the
  vectors $\vpi_{2i}/2$ and $\alpha_{2i+1}/2$ for all $i$.  Let
  $M^+ = M\cap \sum \bR_{\ge0} (-\alpha_i)$ and $R = \bC[M^+]$. Thus,
  $\Spec R$ is a normal affine toric variety which is a
  $\mu_2^N$-cover of $\bA^n=\Spec\bC[a_i]$ for some $N$.
\end{definition}

\begin{definition}[Compactified naive families for the $A_n^?$,
  $\mA_n^?$ shapes] 
  \label{def:compactified-family-A}
  Let $S$ be the graded subring of $R[x,y][\xi]$ generated by
  $v_{2i}$, $u_{2i+1}$, and $v_*$. The \emph{compactified naive
    family} is $\cY:= \Proj S \to \Spec R$ with a relative Cartier
  divisor $\cB = (f)$, $f\in H^0(\cO(2))$.  We note that since the
  subring $S^{(2)}$ is generated in degree 1, the sheaf
  $\cO_{\Proj S}(2)$ is invertible and ample.
\end{definition}

\begin{example}
  For the $A_1$ shape, the $A_1$ root lattice has
  $\bC[\Lambda^*] = \bC[t_1^{\pm},t_2^{\pm}]/(t_1t_2 - 1) \cong \bC[t^{\pm}]$, 
  with $t = e^{\alpha_1/2}$. 
  The family is 
  $\Proj S\to \bA^1 = \Spec \bC[b_1]$, where
  $S=R[v_*, v_0,u_1,v_1] / (v_0v_2 - b_1 u_1)$.  One has
  $\chi_1 = t + t\inv = t(1+b_1^2)$, and the equation of the divisor
  $\cB$ is
  \begin{displaymath}
    f= -\left(\frac{v_*}2\right)^2 + v_0^2 + (1+b_1^2)u_1 + v_2^2.  
  \end{displaymath}
  Setting $b_1=0$ gives the degenerate fiber
  $\bP(1,1,2) \cup \bP(1,1,2)$ with the coordinates $v_*,v_0,u_1$,
  resp. $v_*,v_2,u_1$, glued along a $\bP^1$ with the coordinate
  $u_1$.  The restriction of $f$ to $\bP(v_*,v_0,u_1)$ is
  $v_*^2 + v_0^2 + u_1$, and for $\bP(v_*,v_2,u_1)$ it is
  $v_*^2 + v_2^2 + u_1$. Thus, the degenerate fiber is a union of two
  \ade pairs $A_0^-\mA_0$ glued along a short side.

  For the $\mA^-_1$ shape the family is
  $\Proj S \to \bA^1 = \Spec \bC[a_1]$, where
  $S=R[u_1,v_2,u_3] / (u_1u_3 - a_1 v_2^4)$, and the equation of the
  divisor is
  \begin{displaymath}
    f= -\left(\frac{v_*}2\right)^2 + u_1 + (1+a_2)v_2^2 + u_2.
  \end{displaymath}
  Setting $a_1=0$ gives the degenerate fiber
  $\bP(1,1,2) \cup \bP(1,1,2)$ with the coordinates $v_*,v_2,u_1$,
  resp. $v_*,v_2,u_3$, which is the union of two \ade pairs
  $\mA_0 A_0^-$ glued along a long side, a $\bP^1$ with the
  coordinate $v_2$. 
\end{example}

The general case is essentially a generalization of this simple
example.  The degenerations of pairs for the slightly more complicated
$A^-_2$ shape are illustrated in Fig.~\ref{fig:A2degens}.

\begin{figure}[htp!]
  \centering
  \includegraphics{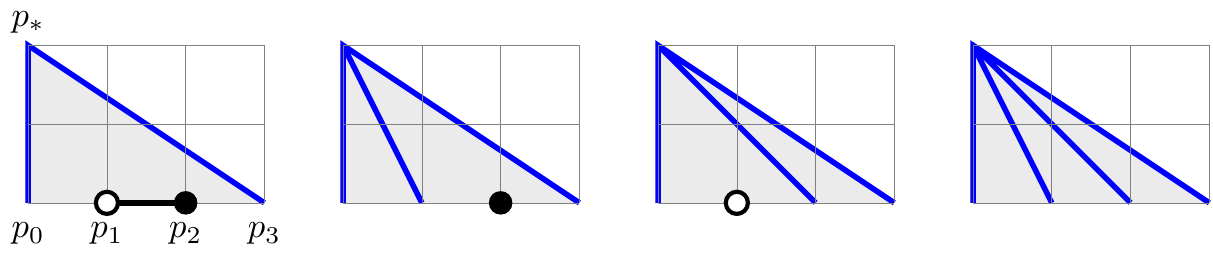}
  \caption{$A_2^-$ and its degenerations: $A_0^-\mA_1^-$, $A_1A_0^-$, 
    and $A_0^-\mA_0 A_0^-$}
  \label{fig:A2degens}
\end{figure}

\begin{definition}
  The Coxeter fan for a root lattice $\Lambda$ is the fan on
  $\Lambda_\bR = \Lambda^*_\bR$ obtained by cutting this vector space
  by the mirrors $\alpha^\perp$ to the roots $\alpha$. Its maximal
  cones are chambers, the translates of the positive chamber under the
  action of the Weyl group $W_\Lambda$.  We denote by $V\cox_M$ the
  torus embedding of $T_M=\Hom(M,\bC^*)$ for the Coxeter fan of $A_n$.
\end{definition}

\begin{lemma}\label{lem:A-relations}
  The following relations hold:
  \begin{enumerate}
  \item (Primary) $v_{2i}v_{2i+2} = b_{2i+1}u_{2i+1}$ and 
    $u_{2i-1}u_{2i+1} = a_{2i}v^4_{2i}$.
  \item (Secondary) 
    $u_{2i-1}u_{2j+1} = v_{2i}^2v_{2j}^2 \cdot A$, 
    $v_{2i-2} u_{2j+1} = v_{2i} v_{2j}^2 \cdot b_{2i-1} A$, and 
    $v_{2i-2}v_{2j+2} = v_{2i}v_{2j} \cdot b_{2i-1}b_{2j+1} A$, where
    $A=\prod_{k=2i}^{2j} a_k$. 
  \end{enumerate}
\end{lemma}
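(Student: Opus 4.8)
The plan is to observe that every one of the five identities is a monomial (binomial) relation in the ambient ring $R[x,y][\xi]$, so it suffices to match, on each side, the exponent of $x$, the exponent of $\xi$, and the weight in $\bC[M]$. Although a single generator such as $v_{2i}=e^{\vpi_{2i}/2}x^i\xi$ or $u_{2i+1}=e^{\vpi_{2i+1}}x^{2i+1}\xi^2$ involves weights outside the cone defining $R=\bC[M^+]$, the relations only assert equalities of products in a fixed degree, and the scalar ratios that appear — $b_{2i+1}$, $a_{2i}$, and $A=\prod a_k$ — genuinely lie in $R$; so there is nothing to worry about on that account. The only real input is the combinatorics of the $A_n$ Dynkin chain.

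First I would record the needed weight identities. Labelling the nodes $p_1,\dots,p_n$ as in Figure~\ref{fig:A} and setting $\vpi_0=\vpi_{n+1}=0$, relations \eqref{eq:root-middle}--\eqref{eq:root-end} give $\alpha_j=2\vpi_j-\vpi_{j-1}-\vpi_{j+1}$ for every $j$. Summing these for $j=a,\dots,b$ and telescoping yields
\[
  \textstyle\sum_{k=a}^{b}\alpha_k=\vpi_a+\vpi_b-\vpi_{a-1}-\vpi_{b+1},\qquad 1\le a\le b\le n,
\]
and halving the $j$th chain relation gives $\frac12\vpi_{j-1}+\frac12\vpi_{j+1}=\vpi_j-\frac12\alpha_j$, which is legitimate in $M$ since $\frac12\vpi_{2i}$ and $\frac12\alpha_{2i+1}$ were adjoined in Definition~\ref{def:Mlattice-A}.

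Then the primary relations are direct substitutions: $v_{2i}v_{2i+2}=e^{(\vpi_{2i}+\vpi_{2i+2})/2}x^{2i+1}\xi^2=e^{\vpi_{2i+1}-\alpha_{2i+1}/2}x^{2i+1}\xi^2=b_{2i+1}u_{2i+1}$ by the halved relation at $j=2i+1$, and $u_{2i-1}u_{2i+1}=e^{\vpi_{2i-1}+\vpi_{2i+1}}x^{4i}\xi^4=e^{2\vpi_{2i}-\alpha_{2i}}x^{4i}\xi^4=a_{2i}v_{2i}^4$ by the relation at $j=2i$ together with $v_{2i}^4=e^{2\vpi_{2i}}x^{4i}\xi^4$; the $\mA_n^?$ shape is identical after the index shift of Figure~\ref{fig:A}. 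For the secondary relations set $A=\prod_{k=2i}^{2j}a_k=e^{-\sum_{k=2i}^{2j}\alpha_k}$; the telescoped identity with $(a,b)=(2i,2j)$ gives $e^{\vpi_{2i-1}+\vpi_{2j+1}}=A\,e^{\vpi_{2i}+\vpi_{2j}}$, hence $u_{2i-1}u_{2j+1}=A\,u_{2i}u_{2j}=A\,v_{2i}^2v_{2j}^2$, and the two ``half'' versions follow by combining this with the primary relation $v_{2i-2}v_{2i}=b_{2i-1}u_{2i-1}$ (and its mirror at the other end), which trades a factor $u_{2i-1}$ for $v_{2i-2}v_{2i}/b_{2i-1}$. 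There is no conceptual obstacle; the only thing requiring care is the consistent bookkeeping of the $x$- and $\xi$-powers and of the half-integral weights, together with the degenerate endpoint cases ($p_0$, $p_{n+1}$, and $v_*$) where the formulas for the $u$'s and $v$'s specialize and must be read with the convention $\vpi_0=\vpi_{n+1}=0$.
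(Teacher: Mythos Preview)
Your proof is correct and follows exactly the approach the paper intends: the paper's own proof reads ``An easy direct check using equations~\eqref{eq:root-middle}, \eqref{eq:root-end},'' and you have simply carried out that check in full, including the telescoping identity $\sum_{k=a}^{b}\alpha_k=\vpi_a+\vpi_b-\vpi_{a-1}-\vpi_{b+1}$ that governs the secondary relations. One minor remark: the ``trading'' step where you replace $u_{2i-1}$ by $v_{2i-2}v_{2i}/b_{2i-1}$ is, strictly speaking, a Laurent-monomial manipulation (since $b_{2i-1}$ is not a unit in $R$), but since every identity in question is a monomial equality in $\bC[M][x,y][\xi]$ this is harmless --- and indeed you could equally well verify secondary relations (2) and (3) by the same direct weight-matching you used for (1).
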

\begin{proof}
  An easy direct check using equations~\eqref{eq:root-middle},
  \eqref{eq:root-end}. 
\end{proof}

\begin{theorem}\label{thm:compactified-family-A}
  The compactified family $\cY = \Proj S \to \Spec R$ of shape $A_n^?$
  or $\mA_n^?$ is flat.  The degenerate fibers are over the subsets
  given by setting some $a_i$'s to zero.  Every fiber of this family
  is a stable \ade pair which is a union of \ade pairs of shapes
  obtained by subdividing the $A_n^?$, resp. $\mA_n^?$, polytope into
  integral subpolytopes of smaller $A$ shapes by intervals from the
  vertex $p_*$ to the points $p_i$ for which one has $a_i=0$. 

  The $W_\Lambda$-translates of this family glue into a flat
  $W_\Lambda$-invariant family $(\cY, \cC + \freps\cB) \to V\cox_M$ of
  stable \ade pairs.
\end{theorem}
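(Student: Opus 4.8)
The plan is to analyze the toric morphism $\cY = \Proj S \to \Spec R$ fibrewise, using the relations of Lemma~\ref{lem:A-relations}, and then deduce flatness from constancy of the Hilbert polynomial over the integral base. Every generator of $S$ over $R$ is a monomial in $e^{\vpi}$, $x$, $y$, $\xi$, so $S$ is a semigroup ring, $\cY = \Proj S$ is a toric variety, $\cY \to \Spec R$ is a toric morphism, and $\cY$ is Cohen--Macaulay. First I would restrict to the open torus $T_M \subset \Spec R$: there every $e^{\vpi}$ is invertible, so the fibre is the polarized toric surface $(Y,L)$ of shape $A_n^?$ (resp.\ $\mA_n^?$), the boundary $\cC$ restricts to the two toric sides $C_1+C_2$, and $\cB=(f)$ restricts, after dehomogenizing $\xi$, to the naive family \eqref{eq:A-family} (resp.\ \eqref{eq:mA-family}) of Section~\ref{sec:naive-families}. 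All the remaining content is along $\Spec R\setminus T_M$, the union of the divisors $\{a_i=0\}$.

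Next I would stratify $\Spec R$ by the vanishing set $I=\{\,i:a_i(p)=0\,\}$ of a point $p$ and identify the fibre $\cY_p=\Proj(S\otimes_R\kappa(p))$. Setting $a_i=0$ for $i\in I$ in the primary relations of Lemma~\ref{lem:A-relations} turns $u_{2i-1}u_{2i+1}=a_{2i}v_{2i}^4$ into $u_{2i-1}u_{2i+1}=0$ when $2i\in I$, and $v_{2i}v_{2i+2}=b_{2i+1}u_{2i+1}$ into $v_{2i}v_{2i+2}=0$ when $2i+1\in I$; combining this with the secondary relations, one checks that $S\otimes_R\kappa(p)$ is precisely the fibre product, along the corresponding $\bP^1$'s, of the homogeneous coordinate rings of the polarized toric surfaces obtained by subdividing the $A_n^?$ polytope $P$ by the segments from the apex $p_*$ to the vertices $p_i$, $i\in I$. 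This exhibits $\cY_p$ as the reduced seminormal union of the smaller $A$-shape \ade surfaces $Y_k$, glued pairwise along the toric divisors that are the images of those segments; it is $2$-dimensional and slc, each gluing $\bP^1$ has $L$-degree equal to the lattice length of its segment (hence is a long or short side of each adjacent $Y_k$), and a direct inspection of \eqref{eq:A-family} shows that $\cB_p$ meets the toric boundary and the double locus of $\cY_p$ transversally. So each $(Y_k,C_k+\freps B_k)$ with $B_k=\cB_p|_{Y_k}$ is an \ade pair in the sense of Definition~\ref{def:ade-pairs}, and since $K+\cC_p+\freps\cB_p=\tfrac{\epsilon}{2}\cB_p$ is relatively ample ($\cO(2)=\cO(L)$), $(\cY_p,\cC_p+\freps\cB_p)$ is a stable \ade pair in the sense of Definition~\ref{def:stable-ade-pair}.

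For flatness, $\cO(2)$ is relatively very ample (its sections in degree $1$ of $S^{(2)}$ generate $S^{(2)}$), so $\cY\hookrightarrow\bP^N_{\Spec R}$, and it suffices to show $\chi(\cY_p,\cO(2x))$ is independent of $p\in\Spec R$, which is integral. Using the Mayer--Vietoris sequence for the union $\cY_p=\bigcup_k Y_k$ along the gluing $\bP^1$'s, together with the vanishing of higher cohomology of ample line bundles on these toric configurations, one gets $\chi(\cY_p,\cO(2x))=\sum_k\chi(Y_k,xL)-\sum_e\chi(\bP^1_e,xL)$, i.e.\ the alternating sum of the Ehrhart polynomials of the pieces $xP_k$ of the subdivided polytope and of their shared edges $xe$; by inclusion--exclusion on lattice points this equals the Ehrhart polynomial of $xP$, which is $\chi(Y,xL)=vx^2+(v+1-g)x+1$, the Hilbert polynomial of the generic fibre. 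Constant Hilbert polynomial over the integral base $\Spec R$ gives flatness, $\cB$ is then a relative Cartier divisor, and by the previous paragraph every fibre is a stable \ade pair of the stated combinatorial type.

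Finally, to glue the $W_\Lambda$-translates: the Coxeter fan of $A_n$ and the overlattice $M\supset\Lambda^*$ of Definition~\ref{def:Mlattice-A} are both $W_\Lambda$-stable, so $V\cox_M$ carries a $W_\Lambda$-action and is covered by the affine charts of its maximal cones (Weyl chambers), each a $W_\Lambda$-translate of the chart $\Spec R$. The construction of $(\cY,\cC+\freps\cB)$ is $W_\Lambda$-equivariant: for $A_n$ the group $S_{n+1}$ permutes the variables $t_i=e^{f_i}$, and $f=-\big(\tfrac{y}{2}\big)^2+\prod_{i=1}^{n+1}(x+t_i)$ is symmetric in them. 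On the overlap of two chamber charts, which lies over $T_M$ (or a common face, itself an \ade family for a smaller lattice), the two families restrict to the same naive family and therefore glue, producing a flat $W_\Lambda$-invariant family $(\cY,\cC+\freps\cB)\to V\cox_M$ of stable \ade pairs. The hard part will be the combinatorial step in the second paragraph: extracting from the primary and secondary relations of Lemma~\ref{lem:A-relations} the exact statement that the special fibre is the reduced union, glued along precisely those toric $\bP^1$'s and no others, of the smaller \ade surfaces; once that identification is in hand, reducedness, the slc property, stability, and the Hilbert polynomial count are all routine.
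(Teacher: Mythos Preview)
Your proof is essentially correct and follows the paper's strategy closely; the one substantive difference is how you identify the special fibre with the stable toric variety $Z=\bigcup Y_k$ attached to the polytope subdivision. You propose to verify directly that $S\otimes_R\kappa(p)$ is the coordinate ring of $Z$, and rightly flag this as the hard step. The paper avoids it with a semicontinuity trick: the relations of Lemma~\ref{lem:A-relations}, specialized at $p$, show only that $\cY_p$ is a \emph{closed subscheme} of $Z$; since $Z$ has the same Hilbert polynomial as the generic fibre (your inclusion--exclusion/Ehrhart argument, or equivalently that subdividing $P$ does not change the lattice-point count), upper semicontinuity of the Hilbert polynomial in a projective family forces $\cY_p=Z$ scheme-theoretically. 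This yields reducedness and the exact gluing combinatorics without ever presenting $S$ over $R$ by a complete set of relations, and flatness then follows from constancy of the Hilbert polynomial over the reduced base just as you say. One minor omission: the paper invokes Lemma~\ref{lem:restrict-chi} to see that $f$ restricts on each component $Y_k$ to the naive equation of the smaller shape, which is what guarantees each $(Y_k,C_k+\freps B_k)$ is genuinely an \ade pair; your ``direct inspection'' suffices for $A$-shapes because $f$ factors as $-(y/2)^2+\prod(x+t_i)$, but that lemma is what makes the argument uniform for the later $D$ and $E$ cases.
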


\begin{proof}
  Let $t\in \Spec R$ be a closed point and $\cY_t$ be a fiber over
  $t$. Suppose that some $a_k(t)=0$ or $b_k(t)=0$.  The relations of
  Lemma~\ref{lem:A-relations} imply that any two ($u$ or $v$) variables
  with indices $i<k$ and $j>k$ multiply to give zero. On the other
  hand, the product of two variables with indices $i,j$ for which
  the coordinates with $i<k<j$ satisfy $a_k(t),b_k(t)\ne0$, is a nonzero
  monomial.

  Let $P$ be the polytope corresponding to the shape $A_n^?$, resp.
  $\mA_n^?$.  The above equations define a stable toric variety
  $Z=\cup Z_s$ for the polyhedral decomposition $P=\cup P_s$ obtained
  by cutting $P$ by the intervals from the vertex $p_*$ to the points
  $p_k$ for each $k$ with $a_k=0$ or $b_k=0$,
  cf. \cite{alexeev2002complete-moduli}.  In other words, $Z$ is a
  reduced seminormal variety which is a union of projective toric
  varieties, glued along torus orbits.

  The fiber $\cY_t$ is a closed subscheme of $Z$. But the Hilbert
  polynomial of $Z$ with respect to $\cO(2)$ is the same as for a general
  fiber, a projective toric variety for the polytope~$P$. By the
  semicontinuity of Hilbert polynomials in families, 
  $\cY_t=Z$. Since the base $T_{\Lambda^*}$ is reduced, the
  constancy of the Hilbert polynomial implies that the family is
  flat.
  
  The equation $f$ restricts on each irreducible component to the naive
  equation of an \ade pair for a smaller $A$ shape by
  Lemma~\ref{lem:restrict-chi}.

  The $W_\Lambda$-translates of this family automatically glue into a 
  $W_\Lambda$-invariant family over a torus embedding of $T_M$ for
  the Coxeter fan of $A_n$ because the $u,v$ variables map to the
  corresponding variables for a different choice of positive roots,
  and the equation $f$ is $W$-invariant. Flatness is a local
  condition, so it holds.
\end{proof}

\begin{lemma}\label{lem:restrict-chi}
  Let $\Lambda$ be a an irreducible \ade root lattice with Dynkin
  diagram~$\Delta$ and Weyl group
  $W=\la w_\alpha \mid \alpha\in\Delta \ra$.  Let $\beta\in \Delta$ be
  a simple root, and $\Lambda'$ be the lattice (not necessarily
  irreducible) corresponding to $\Delta' = \Delta\setminus\beta$, with
  Weyl group $W' = \la w_\alpha \mid \alpha\ne\beta \ra$.  Let $r$
  be the natural restriction homomorphism
  \begin{displaymath}
    r\colon k[e^{-\alpha},\alpha\in\Delta]\to
    k[e^{-\alpha},\alpha\in\Delta'], \qquad 
    e^{-\beta}\mapsto 0, \quad
    e^{-\alpha} \mapsto e^{-\alpha} \text{ for } \alpha\ne\beta.
  \end{displaymath}
  Then for the normalized fundamental character
  $\hchi_\alpha = e^{-\vpi_\alpha} \chi_\alpha$ corresponding to a
  simple root $\alpha$ one has
  \begin{displaymath}
    r( \hchi_\alpha ) = 
    \begin{cases}
      1 & \text{for } \alpha = \beta\\
      \hchi_\alpha & \text{for } \alpha\ne\beta
    \end{cases}
  \end{displaymath}
\end{lemma}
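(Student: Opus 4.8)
The plan is to deduce this from the $\bZ$-grading of the simple Lie algebra attached to the node $\beta$ --- equivalently, the Levi decomposition along the subdiagram $\Delta'=\Delta\setminus\beta$ --- under which the homomorphism $r$ turns out to extract the ``top graded piece'' of the fundamental representation.

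First I would unwind what $r$ does to $\hchi_\alpha$. Write $\chi(\vpi_\alpha)=\sum_\mu(\dim V(\vpi_\alpha)_\mu)\,e^\mu$, the sum over the weights of the fundamental representation $V(\vpi_\alpha)$, and for each such weight set $\vpi_\alpha-\mu=\sum_{\gamma\in\Delta}n_\gamma(\mu)\,\gamma$ with $n_\gamma(\mu)\ge 0$. Then $\hchi_\alpha=e^{-\vpi_\alpha}\chi(\vpi_\alpha)=\sum_\mu(\dim V(\vpi_\alpha)_\mu)\prod_{\gamma\in\Delta}a_\gamma^{n_\gamma(\mu)}$ with $a_\gamma=e^{-\gamma}$, a polynomial with constant term $1$ (from $\mu=\vpi_\alpha$). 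Since $r$ sends $a_\beta\mapsto 0$ and fixes the remaining $a_\gamma$, it kills exactly the monomials with $n_\beta(\mu)>0$, so
\[
  r(\hchi_\alpha)=\sum_{\mu:\ n_\beta(\mu)=0}(\dim V(\vpi_\alpha)_\mu)\prod_{\gamma\ne\beta}a_\gamma^{n_\gamma(\mu)}.
\]
The condition $n_\beta(\mu)=0$ says precisely that the coefficient of $\beta$ in $\mu$, expanded in simple roots, is as large as possible, namely equal to the coefficient of $\beta$ in $\vpi_\alpha$.

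Next I would identify the span of those weight spaces. Grade the simple Lie algebra $\mathfrak{g}$ of type $\Delta$, and the module $V(\vpi_\alpha)$, by the coefficient of $\beta$ (with the Cartan subalgebra in degree $0$); the degree-$0$ subalgebra is the standard Levi $\mathfrak{l}$ whose semisimple part has Dynkin diagram $\Delta'$. By the previous paragraph the weights occurring in $r(\hchi_\alpha)$ are exactly those of the top graded piece of $V(\vpi_\alpha)$. Because the positive root vectors annihilate the highest weight vector $v_{\vpi_\alpha}$ while every negative-degree generator strictly lowers the grading, the Poincar\'e--Birkhoff--Witt theorem shows this top piece equals $U(\mathfrak{l})\,v_{\vpi_\alpha}$, the irreducible $\mathfrak{l}$-module $V^{\mathfrak{l}}(\vpi_\alpha)$ of highest weight $\vpi_\alpha$; hence $r(\hchi_\alpha)=e^{-\vpi_\alpha}\operatorname{ch}V^{\mathfrak{l}}(\vpi_\alpha)$. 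Now $\la\vpi_\alpha,\gamma^\vee\ra=\delta_{\alpha\gamma}$ for $\gamma\in\Delta'$. If $\alpha=\beta$ this is $0$ for every $\gamma\in\Delta'$, so $V^{\mathfrak{l}}(\vpi_\beta)$ is one-dimensional and $r(\hchi_\beta)=e^{-\vpi_\beta}e^{\vpi_\beta}=1$. If $\alpha\ne\beta$, let $\Delta'_0$ be the connected component of $\Delta'$ containing $\alpha$; then $V^{\mathfrak{l}}(\vpi_\alpha)$, restricted to the semisimple part of $\mathfrak{l}$, is the fundamental representation at node $\alpha$ for $\Delta'_0$ tensored with trivial modules on the other components, so its $\mathfrak{h}$-weights are $\vpi_\alpha$ minus non-negative combinations of the simple roots of $\Delta'_0$, with exactly the multiplicities of the weights of that fundamental representation. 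As the normalized fundamental character records only these multiplicities relative to the highest weight, $r(\hchi_\alpha)$ equals $\hchi_\alpha$ computed in the root system $\Delta'$, which is the claim.

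The computation is mostly bookkeeping once the grading is in place; the one step needing a little care is the last identification --- that replacing the $\Delta$-fundamental weight $\vpi_\alpha$ by the $\Delta'_0$-fundamental weight at $\alpha$ changes nothing --- which holds because the normalized fundamental character is intrinsic to the shape of the weight diagram of the fundamental representation, and the two representations carry the same $\Delta'_0$-module structure. The same argument works verbatim if $\chi(\lambda)$ is taken to be the orbit sum $\sum_{\mu\in W\lambda}e^\mu$ rather than the Weyl character: the surviving terms are then indexed by $W(\Delta')\vpi_\alpha$, the extreme weights of $V^{\mathfrak{l}}(\vpi_\alpha)$.
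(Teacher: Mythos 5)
Your proof is correct, and it diverges from the paper's argument at the key step. Both proofs begin the same way: expand $\hchi_\alpha$ as a sum of monomials in the $a_\gamma=e^{-\gamma}$ indexed by the weights of $V(\vpi_\alpha)$, and observe that $r$ kills exactly the weights $\mu$ for which $\beta$ occurs in $\vpi_\alpha-\mu$; the case $\alpha=\beta$ is then handled identically (only the highest weight survives). The difference is in how the surviving multiplicities are matched for $\alpha\ne\beta$. The paper first argues that the \emph{set} of surviving weights agrees with the weight diagram for $\Delta'$ (moves down not involving $\beta$), and then matches the \emph{multiplicities} by a leading-term manipulation of the Weyl character formula, isolating the terms where $(\beta,\cdot)$ is maximal in the numerator and denominator. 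You instead invoke the parabolic grading by the $\beta$-coefficient and identify the surviving weight spaces, via PBW, with $U(\mathfrak l)v_{\vpi_\alpha}$, the irreducible module over the Levi $\mathfrak l$ of type $\Delta'$ with highest weight $\vpi_\alpha$; this gives the set of weights and their multiplicities simultaneously, since that module restricts to the $\Delta'_0$-fundamental representation at node $\alpha$ tensored with trivial factors. Your route is more conceptual and packages the two checks into one branching statement; its only implicit debt is the (standard) irreducibility of the top graded piece $U(\mathfrak l)v_{\vpi_\alpha}$, which follows from complete reducibility over $\mathfrak l$ together with the one-dimensionality of the $\vpi_\alpha$-weight space, and which you could cite or spell out in a sentence. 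The paper's route avoids that fact at the cost of a separate character-formula computation. Your closing observation that the same argument applies to orbit sums is correct but not needed for the lemma.
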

\begin{proof}
  Consider a dominant weight $\mu\in\Lambda^*$. We first make an
  elementary observation about the weight diagram of the highest
  weight representation $V(\mu)$. The weight diagram is obtained by
  starting with the highest weight $\mu = \sum m_k\vpi_k$ and
  subtracting simple roots $\alpha_s$ if the corresponding coordinate
  $m_s$ of $\mu$ is positive.
  Thus, for $\mu= \vpi_\beta$ the first and only move down is to
  the weight $\mu-\beta$. This says that $\wh{\chi}(\vpi_\beta) = 1 +
  e^{-\beta}(\dotsc)$. Therefore, 
  $r(\wh{\chi}(\vpi_\beta))=1$. 

  For $\alpha\ne\beta$, the moves down in the weight diagram of
  $V(\vpi_\alpha)$ not involving $\beta$ are the same as the moves in
  the Dynkin diagram $\Delta\setminus\beta$.  So the monomials
  appearing in $r\big( \wh{\chi}(\vpi_\alpha) \big)$ for the Dynkin
  diagram $\Delta$ and the monomials appearing in
  $\wh{\chi}(\vpi_\alpha)$ for the Dynkin diagram
  $\Delta\setminus\beta$ are the same.

  We have to show that the coefficients of these monomials are also
  the same. This follows from the Weyl character formula
  \begin{displaymath}
    \chi(\lambda) = \frac
      { \sum_{w\in W} \varepsilon(w) e^{w(\lambda + \rho)} }
      { \sum_{w\in W} \varepsilon(w) e^{w(\rho)} }
      , \quad \text{where }
      \rho = \sum_{\vpi\in\Pi} \vpi. 
    \end{displaymath}
    Isolating the terms $e^\mu$ on the top and the bottom where the
    linear function $(\beta,\mu)$ takes the maximum, and setting other
    terms to zero gives the same Weyl Character formula expression for
    the Weyl group $W'$. This concludes the proof.
  \end{proof}

  \begin{remark}
    The construction of the family of curve pairs over the Losev-Manin
    space for $A_n$ follows from this by an easy simplification: the
    two-dimensional polytope is replaced by $[0,n+1]$ and there are
    only $u_i$ variables, all of degree~1.
  \end{remark}
  
\subsection{Compactifications of the naive families for the $\pA$,
  $D$, $\mE$ shapes}
\label{sec:comps-families-pADE}

Before stating the general result, we begin with an elementary example.

\begin{figure}[htp!]
  \centering
  \includegraphics{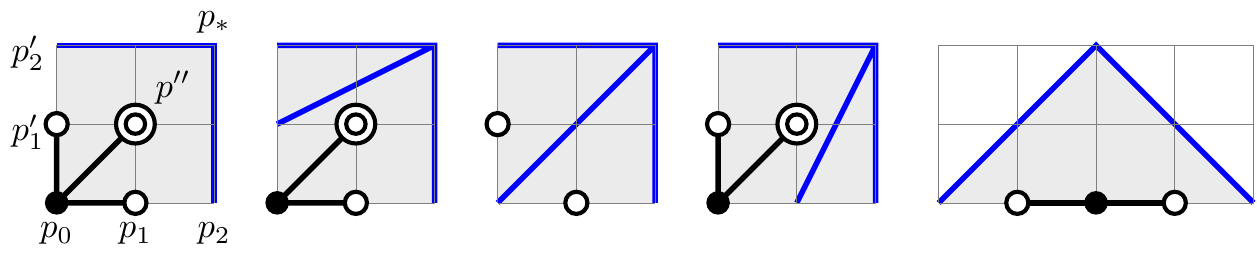}
  \caption{$D_4$ and its degenerations $A_0^-\pA_3$, $A_1A_1$,
    $A_3'\mA_0$, $A_3$}
  \label{fig:D4-degen}
\end{figure}

\begin{example}\label{ex:D4-degn}
  An \ade surface $(Y,C=C_1 + C_2)$ of shape $D_4$ is $Y=\bP^1\times\bP^1$, with
  $C_1=s$, $C_2=f$ a section and a fiber. In an \ade pair 
  $(Y,C + \frac{1+\epsilon}2 B)$, the divisor $B$ is
  in the linear system $|2s+2f|$. There are three obvious
  toric degenerations corresponding to removing the nodes $p'_1$, $p_0$, $p_1$
  in the Dynkin diagram, shown in the middle three pictures of Figure~\ref{fig:D4-degen}. 
  In the degeneration corresponding to $p'_1$
  we get a 3-dimensional family of stable \ade pairs with two
  components corresponding to $A_0^-\pA_3$. By symmetry, we get
  $A_3'\mA_0$ surfaces for the node $p_1$.

  The toric degeneration for the node $p_0$ is already somewhat
  unusual. Here $\bP^1\times\bP^1$ degenerates to $\bP^2\cup\bP^2$,
  and the stable \ade pairs of shape $A_1A_1$ form only a
  2-dimensional family, so some moduli are lost.

  Additionally, there is an obvious nontoric degeneration of
  $\bP^1\times\bP^1$ to a quadratic cone $\bP(1,1,2)$, with the limits
  of $C_1,C_2$ passing through the vertex, and $B$ a double
  section. These are pairs of shape $A_3$ forming a 3-dimensional
  family. 
\end{example}

\begin{definition}
  In the $\pA_n^?$, $D_n^?$, $\mE_n^?$ shapes, 
  where $?$ denotes either no decoration or a $-$ depending upon
  the parity of $n$,
  we introduce the following elements of the homogeneous ring
  $\bC[\Lambda^*][x,y][\xi]$:
  \begin{displaymath}
    u_i = e^{\vpi_i} x^i \cdot\xi^2, \quad
    v_{2i} = e^{\vpi_{2i}/2} x^i \cdot\xi, \quad
    u'_i = e^{\vpi'_i} y^i \cdot\xi^2, \quad
    v'_{2i} = e^{\vpi'_{2i}/2} y^i \cdot\xi.
  \end{displaymath}
  We also have the $u$ variables for the left and right sides
  $p_\ell$, $p_r$ and, when these are even, their square roots, the
  $v$ variables. 
  Additionally, we define a special non-monomial variable
  $v_* = (xy - c'')\cdot \xi$ of degree~1.
\end{definition}

As before, the coefficients $c=\chi(\vpi)$ are the characters of the
fundamental weights, and we define the normalized characters by
$\hc = e^{-\vpi}c$. With these notations, the naive
families~\eqref{eq:A'-families}, \eqref{eq:D-families},
\eqref{eq:E-families} become
\begin{eqnarray}
  \label{eq:A'-families-uv}
  &\pA_n^?: & f = -\left( \frac{v_*}2 \right)^2 +
  u'_1 + \hc_0 u_0 + \hc_1 u_1 + \dotsb+ \hc_{n-2} u_{n-2} + u_{n-1}\\
  \label{eq:D-families-uv}
  &D_n^?: & f = -\left( \frac{v_*}2 \right)^2 +
  u'_2 + \hc'_1 u'_1 + \hc_0 u_0 + \hc_1 u_1 + \dotsb+ \hc_{n-3}u_{n-3} + u_{n-2}\\
  \label{eq:E-families-uv} 
  &\mE_n^?: & f = -\left( \frac{v_*}2 \right)^2 +
  u'_3 + \hc'_2u'_2 + \hc'_1u'_1 + \hc_0u_0 + \dotsb+ \hc_{n-4}u_{n-4} + u_{n-3}
\end{eqnarray}

\begin{definition}\label{def:Mlattice-A'DE}
  Let $M$ be the lattice obtained by adjoining to $\Lambda^*$ the
  vectors $\vpi_{2i}/2$, $\vpi'_{2i}/2$, $\alpha_{2i+1}/2$ (for all $i$), and
  $\alpha''/2$.  Let $M^+ = M\cap \sum_\alpha \bR_{\ge0} (-\alpha)$ and
  $R = \bC[M^+]$. 

  We define $a_i = e^{-\alpha_i}$, resp. $a'_i = e^{-\alpha'_i}$, for
  each node $p_i$ in the Dynkin diagram, and also $a'' = e^{-\alpha''}$. For
  the odd nodes $p_{2i+1}$ we define
  $b_{2i+1} = e^{-\alpha_{2i+1}/2}$, resp.
  $b'_{2i+1} = e^{-\alpha'_{2i+1}/2}$, and also
  $b'' = e^{-\alpha''/2}$.
\end{definition}

\begin{definition}[Compactified naive families for the $\pA, D, E$ shapes]
  \label{def:compactified-family-A'DE}
  Let $S$ be the graded subring of $R[x,y][\xi]$ generated by
  $v_{2i}$, $u_{2i+1}$, $v'_{2i}$, $u'_{2i+1}$, and $v_*$. The
  \emph{compactified naive family} is $\cY:= \Proj S \to \Spec R$ with
  a relative Cartier divisor $\cB = (f)$, $f\in H^0(\cO(2))$.  We note
  that since the subring $S^{(2)}$ is generated in degree 1, the sheaf
  $\cO_{\Proj S}(2)$ is invertible and ample.
\end{definition}

\begin{lemma}\label{lem:A'DE-relations}
  The following relations hold:
  \begin{enumerate}
  \item The same monomial relations as in Lemma~\ref{lem:A-relations}
    for the variables $u_i,v_i$, with $i\ge0$,
    and for the variables $u'_i$, $v'_i$, $v_0$.
  \item A non-monomial \emph{``corner''} relation
    $u_1u'_1 = a_0 v_0^3 (\hc'' v_0 + b'' v_*)$.
  \item For each $u'_i,v'_i$ variable and each $u_i,v_i$ variable lying on
    the different sides of $v_0$, the same equations as in
    Lemma~\ref{lem:A-relations}, but with
    $A = \frac{\hc'' v_0 + b'' v_*}{v_0}\prod_{k = 2i}^{2j}a_k$.
\end{enumerate}
\end{lemma}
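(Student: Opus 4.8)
The plan is to run exactly the same monomial bookkeeping as in the proof of Lemma~\ref{lem:A-relations}, the one genuinely new feature being the non-monomial generator $v_* = (xy - c'')\xi$ of $S$. Every other generator $u_i,v_i,u'_i,v'_i$ (and the $u,v$ attached to $p_\ell,p_r$) is a Laurent monomial $e^{\lambda}x^{a}y^{b}\xi^{d}$ in $\bC[\Lambda^*][x,y][\xi]$, so to verify a relation $m_1m_2 = c\cdot m_3m_4$ in $S$ one only has to match the $x$-, $y$- and $\xi$-degrees and check that $\lambda(m_1)+\lambda(m_2)-\lambda(m_3)-\lambda(m_4)$ is a non-negative integer combination of the $-\alpha$'s, so that $c = e^{(\cdots)}$ really lies in $R = \bC[M^+]$. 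I would set this observation up once and then dispatch the three parts.

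The one computation I would carry out first, and which is really the crux, is the identity
\begin{displaymath}
  \hc'' v_0 + b'' v_* \;=\; e^{\vpi_0/2 - \vpi''}\,xy\,\xi ,
  \qquad\text{so}\qquad
  \frac{\hc'' v_0 + b'' v_*}{v_0} \;=\; e^{-\vpi''}\,xy .
\end{displaymath}
It follows at once from $c'' = e^{\vpi''}\hc''$, from $v_0 = e^{\vpi_0/2}\xi$, and from $\alpha'' = 2\vpi'' - \vpi_0$ (the internal node $p''$ is a leaf adjacent to the corner node $p_0$), which gives $b'' = e^{-\alpha''/2} = e^{-\vpi'' + \vpi_0/2}$, so that the two $c''$-contributions cancel. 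The point is that $v_*$ never enters the relations on its own, only through the combination $\hc'' v_0 + b'' v_*$, which is again a monomial; once this is noticed, the whole lemma is ``as monomial'' as Lemma~\ref{lem:A-relations}.

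With that in hand, part (1) requires nothing beyond what is already done in Lemma~\ref{lem:A-relations}: the bottom chain $p_0,p_1,\dots$ and the left chain $p_0,p'_1,p'_2,\dots$ are ordinary $A$-type chains in which $v_0$ (resp.\ $u_0$) plays the role of the endpoint variable, so the primary and secondary relations follow verbatim from the chain identities~\eqref{eq:root-middle},~\eqref{eq:root-end}; the relations at $p_\ell$ and $p_r$ are the end-node identities of~\eqref{eq:root-end}. For part (2) I would substitute the displayed identity: $u_1u'_1 = e^{\vpi_1 + \vpi'_1}xy\,\xi^4$, while $a_0 v_0^{3}(\hc''v_0 + b''v_*) = e^{-\alpha_0 + 2\vpi_0 - \vpi''}xy\,\xi^4$, and the corner relation~\eqref{eq:root-corner}, $\alpha_0 = 2\vpi_0 - \vpi_1 - \vpi'_1 - \vpi''$, says precisely that the two exponents coincide. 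For part (3) I would compose the chain relations of part (1) across the corner, using part (2) at the corner itself: the telescoping sums $\sum_{k=1}^{2i}\alpha_k = \vpi_1 + \vpi_{2i} - \vpi_0 - \vpi_{2i+1}$ and $\sum_{k=1}^{2j}\alpha'_k = \vpi'_1 + \vpi'_{2j} - \vpi_0 - \vpi'_{2j+1}$, together with $-\alpha_0 - \vpi'' = -2\vpi_0 + \vpi_1 + \vpi'_1$, assemble into exactly the weight difference between the product of the two outer ($u$ or $v$) variables and the product of the corresponding inner $v$-variables, the leftover coefficient being $A = (\hc''v_0 + b''v_*)/v_0 \cdot \prod a_k$ with the product taken along the path through the corner, as in the lemma. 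As in Lemma~\ref{lem:A-relations} there are three shapes of such relation — two $u$'s, a $u$ and a $v$, two $v$'s — and these are the same check with square roots inserted; after clearing the denominator $v_0$ they are honest polynomial identities in $S$, which is how I would state them, carrying $A$ only as an abbreviation.

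The only step where anything could go wrong, and hence the one I would be most careful about, is the first displayed identity; everything downstream is the same routine (if lengthy) verification as for Lemma~\ref{lem:A-relations}. Along the way I would also confirm that $v_0$ — equivalently $\vpi_0/2 \in M$ — is present for every $\pA$, $D$, $E$ shape in question, but this holds by the very definition of $M$ in Definition~\ref{def:Mlattice-A'DE}.
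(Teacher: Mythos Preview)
Your proposal is correct and follows essentially the same route as the paper: the paper's proof also reduces to checking that the $c''$-terms in $\hc'' v_0 + b'' v_*$ cancel (using $\alpha'' = 2\vpi'' - \vpi_0$), leaving the monomial $e^{\vpi_0/2 - \vpi''}xy\,\xi$, and then appeals to the corner identity $\alpha_0 = 2\vpi_0 - \vpi_1 - \vpi'_1 - \vpi''$ for part~(2) and to the same telescoping as in Lemma~\ref{lem:A-relations} for part~(3). Your organization, isolating the identity $\hc'' v_0 + b'' v_* = e^{\vpi_0/2 - \vpi''}xy\,\xi$ at the outset and noting that everything downstream is then ``as monomial'' as the $A$-case, is a clean way to present the same computation.
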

\begin{proof}
  We check the non-monomial relation. The LHS is
  $e^{\vpi_1+\vpi'_1}xy \cdot \xi^4$. The RHS:
  \begin{displaymath}
    e^{-\alpha_0+\frac32\vpi_0} \big( e^{-\vpi''+\frac12 \vpi_0}c'' + 
    e^{-\frac12\alpha''} xy -     e^{-\frac12\alpha''} c''
    \big) \cdot \xi^4
  \end{displaymath}
  The equality now follows from
  $-\vpi''+\frac12\vpi'' = -\frac12\alpha''$ and
  $-\alpha_0+\frac32\vpi_0 - \frac12\alpha'' = \vpi_1 + \vpi'_1$,
  which hold because $\alpha'' = 2\vpi''-\vpi_0$ and
  $\alpha_0 = 2\vpi_0 - \vpi_1 - \vpi'_1 - \vpi''$. The proof of part
  (3) is formally the same as for the secondary monomial relations,
  with each term $\hc'' v_0 + b'' v_*$ contributing an extra $xy$.
\end{proof}

\begin{theorem}\label{thm:compactified-family-A'DE}
  The compactified families $\cY = \Proj S \to \Spec R$ of $\pA_n^?$,
  $D_n^?$, $\mE_n^?$ shapes are flat. The degenerate fibers are over
  the subsets given by setting some $a$'s to zero. 
  Every fiber is a 
  stable \ade pair which is a union of \ade pairs of shapes obtained
  as follows:
  \begin{enumerate}
  \item For the degenerations $a_i=0$ and $a'_i=0$: by subdividing the
    corresponding polytope into integral subpolytopes by intervals
    from the vertex $p_*$ to the point $p_i$, resp. $p'_i$.
  \item For the degeneration $a''=0$: by ``straightening the corner'',
    i.e. to the shape obtained by removing the node $p''$ from the
    Dynkin diagram. 
  \end{enumerate}

  The $W$-translates of these families glue into flat $W_\Lambda$-invariant
  families $(\cY, \cC + \freps\cB) \to V\cox_M$ of stable \ade pairs over
  a torus embedding of $T_M=\Hom(M,\bC^*)$ for the Coxeter fan of $A_n$,
  resp. $D_n$, resp. $E_n$.
\end{theorem}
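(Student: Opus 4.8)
The plan is to mirror the proof of Theorem~\ref{thm:compactified-family-A}; the only genuinely new ingredient is the corner node $p''$ and the non-monomial corner relation of Lemma~\ref{lem:A'DE-relations}. Fix a closed point $t\in\Spec R$ and consider the fiber $\cY_t=\Proj S_t$. First I would read off from the primary and secondary relations of Lemma~\ref{lem:A'DE-relations}(1),(3) together with the corner relation (2) that, once some of the coordinates $a_i$, $a'_i$, $a''$ (equivalently $b_i$, $b'_i$, $b''$) are set to zero, any two $u$- or $v$-variables lying on opposite sides of a ``cut'' multiply to zero, while variables on the same side multiply to a nonzero monomial times a unit; when all $a$'s are nonzero the fiber is the irreducible \ade pair cut out by $f$. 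This identifies $\cY_t$ with the stable toric variety $Z=\cup Z_s$ (a reduced seminormal union of projective toric surfaces glued along torus orbits, cf.~\cite{alexeev2002complete-moduli}) for the polyhedral subdivision $P=\cup P_s$ of the defining polytope described in the statement: by intervals from $p_*$ to the nodes $p_i$, $p'_i$ with $a_i=0$, $a'_i=0$, and --- when $a''=0$, hence $b''=0$ --- by replacing the corner relation $u_1u'_1=a_0v_0^3(\hc''v_0+b''v_*)$ with the monomial relation $u_1u'_1=a_0\hc''v_0^4$, which is exactly the relation realizing the ``straightened corner'' polytope with the node $p''$ deleted.

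Next, as in the $A$ case, I would compare Hilbert polynomials: the Hilbert polynomial of $Z$ with respect to $\cO(2)$ equals that of a general fiber, a polarized toric surface for $P$, so by upper semicontinuity of Hilbert polynomials $\cY_t=Z$; since $\Spec R$ is integral, constancy of the Hilbert polynomial forces flatness. The restriction of $f$ to each component $Z_s$ is computed to be the naive equation of an \ade pair for the corresponding smaller shape using Lemma~\ref{lem:restrict-chi}; on components arising from the corner degeneration one checks that $v_*=(xy-c'')\xi$ still effects the completion of the square, so these components are again \ade pairs, possibly of one of the folded shapes of Definition~\ref{def:folded-shape} when a long side is halved. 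Finally, the $W_\Lambda$-translates glue automatically, since the $u,v$-variables transform into the corresponding variables for other systems of simple roots and $f$ is $W$-invariant; flatness is local, so the resulting $W_\Lambda$-invariant family over the Coxeter-fan toric variety $V\cox_M$ is flat as well.

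The step I expect to be the main obstacle is the bookkeeping around the corner relation: confirming that the non-monomial relation and all of its specializations cut out precisely the expected stable toric variety, with no spurious components or embedded points, and matching each resulting subdivision against the list of degenerate shapes --- in particular deciding, in each ``straighten the corner'' degeneration, whether the new component is an ordinary \ade pair or a folded one. A secondary subtlety is the genus-$1$ shapes such as $D_4$ (Example~\ref{ex:D4-degn}), where some degenerations lose moduli and the relevant semiample linear system on the blown-up surface behaves less uniformly; these cases may need a short separate verification.
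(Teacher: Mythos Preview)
Your approach is essentially the paper's: reduce to the $A$-case argument for the monomial relations, and analyze separately the corner relation of Lemma~\ref{lem:A'DE-relations}(2). Two small corrections are in order.

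First, in the corner degeneration $b''=0$ you should also invoke Lemma~\ref{lem:restrict-chi} to conclude $\hc''=1$, so the corner relation becomes the honest monomial relation $u_1u'_1=a_0v_0^4$, not merely $a_0\hc'' v_0^4$. With this, the full set of relations is literally that of the $\quA_{n-1}^?$ shape and equations~\eqref{eq:A'-families-uv}--\eqref{eq:E-families-uv} reduce to~\eqref{eq:A-families-uv}; the fiber is then the irreducible \ade surface for the straightened polytope, and the Hilbert polynomial matches the general fiber (same volume and genus), so the comparison argument goes through unchanged.

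Second, your mention of folded shapes is a red herring here. Folded components arise only for \emph{primed} shapes, in Theorem~\ref{thm:compactified-family-primed}; for the pure toric shapes $\pA_n^?$, $D_n^?$, $\mE_n^?$ every irreducible component of every fiber is a normal \ade pair. Likewise, the genus-$1$ shapes such as $D_4$ do not require a separate verification: the Hilbert-polynomial argument is uniform in $n$, and the drop in moduli count (by two rather than one when $a_0=0$, since $m+m'=n-2$) is an observation about the stratification of $V\cox_M$, not an obstruction to flatness.
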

\begin{proof}
  The proof for the toric degenerations is the same as in
  Theorem~\ref{thm:compactified-family-A}.  Gluing the family over
  $\Spec R$ to a $W$-invariant family over a projective toric variety
  for the Coxeter fan is also the same.  We do not repeat these parts.
  Instead, we concentrate on the degenerations involving the corner
  relation (2) of Lemma~\ref{lem:A'DE-relations}.

  When $a_0=0$ we get the toric relation $u_1u'_1=0$ which as before
  gives a stable \ade pair for the subdivision of our polytope into
  two polytopes obtained by cutting it from $p_*$ to $p_0$ into the
  shapes $^?A_{m'}A^?_m$. The only observation here is that $m+m' =
  n-2$, not $n-1$, so the moduli count drops by two, not one.

  When $b''=0$, we also get $\hc''=1$ by Lemma~\ref{lem:restrict-chi},
  and the corner relation becomes $u_1u'_1 = a_0v_0^4$. Thus, the new
  set of relations is equivalent to those for the $\quA_{n-1}^?$
  shape. The equations \eqref{eq:A'-families-uv},
  \eqref{eq:D-families-uv}, \eqref{eq:E-families-uv} reduce to the
  equation \eqref{eq:A-families-uv}.
\end{proof}

\subsection{Compactifications of the naive families for all primed shapes}
\label{sec:comps-families-primed}

Theorems~\ref{thm:compactified-family-A},
\ref{thm:compactified-family-A'DE} describe all stable \ade pairs that
appear as degenerations of \ade pairs of pure shapes. In particular,
irreducible components of degenerate pairs \ycb are normal, and they
are \ade pairs for smaller shapes.  For some degenerations of pairs of
primed shapes the folded shapes of Definition~\ref{def:folded-shape}
appear.

\begin{definition}\label{def:priming-rules}
  The \emph{Priming Rules} are $A'_1 \to A_0^-$, $\pA_1 \to \mA_0$;
  $\mA'_0 \to 0$, $\pA^-_0 \to 0$; and $A_0^+\to f$, $\plA_0\to f$ are
  folds applied to the neighboring surface. 
\end{definition}

\begin{theorem}\label{thm:compactified-family-primed}
  For each primed shape, there exists a flat family of stable \ade
  pairs $(\cY, \cC+\freps\cB) \to V\cox_M$ over the a torus embedding of
  $\Hom(M,\bC^*)$ for the Coxeter fan of $A_n$, resp. $D_n$,
  resp. $E_n$, where $M$ is the lattice defined in
  \eqref{def:Mlattice-A} for the $A$ shapes and
  \eqref{def:Mlattice-A'DE} for $D,E$ shapes. The fibers over the
  toric strata of $V\cox_M$ are computed by starting with the fibers of the
  family for the pure shape and then applying the Priming Rules
  \emph{one prime at a time}.
\end{theorem}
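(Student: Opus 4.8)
The plan is to perform the priming operation of Definition~\ref{def:priming} \emph{in families}, starting from the compactified naive family of the pure precursor supplied by Theorems~\ref{thm:compactified-family-A} and~\ref{thm:compactified-family-A'DE}, and then to induct on the number of primes so that at each stage only a single priming is carried out. The "one prime at a time" formulation in the statement is exactly what makes the induction go through: a double priming on a long side can, over a degenerate stratum, split its two blown-up points across two different components, so the combinatorics is transparent only if we prime once, re-identify all the fibers, and then repeat.

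First I would fix a primed shape, let $\Lambda$ be its pure precursor lattice and $M\supset\Lambda^*$ the over-lattice of Definition~\ref{def:Mlattice-A} or~\ref{def:Mlattice-A'DE}, and take the $W_\Lambda$-invariant family $(\cY,\cC+\freps\cB)\to V\cox_M$ of stable \ade pairs of the pure shape. On each fiber each long side carries two distinguished points of $\cC\cap\cB$; I choose, $W_\Lambda$-equivariantly, one such point on each side to be primed, which determines a closed subscheme of $\cC\cap\cB$ along which the tangent direction of $\cB$ defines an ideal sheaf $\cI$ as in Definition~\ref{def:priming} (the two choices on a long side differ by the extra symmetry recorded in Theorem~\ref{thm:moduli-ade-primed}, which is why the lattice $M$ does not change). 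Blowing up $\cI$ gives $f\colon\cY'\to\cY$; I set $\cC'$ to be the strict transform of $\cC$ and $\cL':=-2(K_{\cY'/S}+\cC')$, which is a relative line bundle because the weighted blowup introduces only $A_1$ singularities along a section and these have index~$2$. Lemma~\ref{lem:priming-changes} gives the fiberwise behaviour of $\cL'$.

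The technical heart is to show that $\tfrac12\cL'=-(K_{\cY'/S}+\cC')$ is relatively big, nef, and semiample, so that the contraction $g\colon\cY'\to\oY'$ over $S$ defined by $|N\cL'|$, $N\gg0$, exists and produces the primed family $(\oY',\oC'+\freps\oB')\to V\cox_M$. Bigness and nefness are checked fiber by fiber, and on each fiber componentwise: over a toric stratum of $V\cox_M$ the fiber before priming is a union of \ade pairs of smaller shapes glued along sides, and the priming section meets exactly one long side of exactly one component, so the argument of Theorem~\ref{thm:priming} applied to that component (for which $K_Y+L$ is nef exactly as in its proof) yields bigness and nefness there, while on the untouched components $\cL'$ restricts to the ample polarization; semiampleness of a relatively big and nef divisor of the form $-(K+\Delta)$ then follows from the relative base-point-free theorem, cf.\ \cite[Thm.6.1]{fujino2012minimal-model}, or alternatively from fiberwise semiampleness together with constancy of the Hilbert polynomial of $\cL'$. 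Flatness of $\oY'\to V\cox_M$ follows from constancy of that Hilbert polynomial (with $v$ the primed volume) over the reduced base $V\cox_M$, and each fiber is a stable \ade pair because the contraction of an slc pair along a $(K+\Delta)$-trivial morphism is again slc with $K+\Delta$ ample, and its normalization is a union of \ade pairs, now allowing the folded shapes of Definition~\ref{def:folded-shape} by Theorem~\ref{thm:normalize-stable-ade-pair}. The $W_\Lambda$-translates glue over $V\cox_M$ just as in the pure case, since the chosen priming data is $W_\Lambda$-equivariant.

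It remains to identify the fibers, and the \textbf{main obstacle} is precisely this bookkeeping over the degenerate strata. On a fiber $\sqcup(Y_k,C_k+\freps B_k)$ the priming section lands on a long side $C_{k,s}$ of a single component $Y_k$, and blowing up and contracting replaces $(Y_k,C_k)$ by its single priming and leaves the other components and the gluing unchanged, \emph{unless} the primed shape of $Y_k$ has volume zero or is one of the exceptional small shapes; these exceptions are exactly what the Priming Rules of Definition~\ref{def:priming-rules} record. Namely $\mA'_0\to 0$ and $\pA^-_0\to 0$ say a volume-zero component is contracted away and its two neighbours become glued directly; $A_0^+\to f$ and $\plA_0\to f$ say the component is replaced by a fold on the neighbouring surface, i.e.\ a $2{:}1$ map on the shared long side, consistent with Theorem~\ref{thm:normalize-stable-ade-pair}; and $A'_1\to A_0^-$, $\pA_1\to\mA_0$ record the effect of a forced priming of an $A_1$-component, which is harmless in a family even though priming of $A_1$ is not permitted for an isolated surface. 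A finite case check against Tables~\ref{tab:primed-III} and~\ref{tab:primed-II} confirms that one application of a single priming to the pure-shape fibers of Theorems~\ref{thm:compactified-family-A} and~\ref{thm:compactified-family-A'DE} yields exactly the fibers prescribed by one round of the Priming Rules; iterating, one prime at a time, through the allowed number of primings then gives the asserted family and its fiber description.
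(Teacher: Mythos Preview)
Your proposal is correct and follows essentially the same approach as the paper: start from the pure-shape compactified family, blow up a canonical section in $\cC\cap\cB$, observe that $\cL'=-2(K_{\cY'/S}+\cC')$ is relatively nef and semiample (via the argument of Theorem~\ref{thm:priming} and \cite[Thm.6.1]{fujino2012minimal-model}), contract, identify the fibers using the Priming Rules (which the paper packages as Lemma~\ref{lem:priming-rules}), and iterate one prime at a time. The paper's proof is terser but structurally identical; your elaboration on flatness and $W_\Lambda$-equivariance is fine, though note a small imprecision: $\cL'$ need not be big on every component of every degenerate fiber (precisely in the $\mA'_0$, $\pA_0^-$, $A_0^+$, $\plA_0$ cases it has degree zero there), so the correct hypothesis for the contraction is relative nefness and semiampleness, with bigness only on the generic fiber---this is exactly what your Priming-Rules paragraph then accounts for.
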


\begin{example}\label{ex:primed-degns}
  We list the degenerate fibers in the compactified families for the
  pure shape $A_2^-$ (see Fig.~\ref{fig:A2degens}) and the
  corresponding fibers in the families for the primed shapes
  $\pA_2^-$, $\ppA_2^-$, $A_2^+$.
  \begin{displaymath}
    \begin{array}{l|lll}
      \text{shape} & \multicolumn{3}{c}{\text{shapes of degenerations}}\\
      \hline
      A_2^- &A_0^-\mA_1^-  &A_1A_0^-  &A_0^-\mA_0 A_0^- \\
      \pA_2^- &\pA_0^-\mA_1^- \to \mA_1^- &\pA_1A_0^- \to \mA_0A_0^-
                                      &\pA_0^-\mA_0 A_0^- \to \mA_0
                                        A_0^-\\
      \ppA_2^- &\plA_1^- &\plA_0A_0^- \to \foA_0^-
                                      &\plA_0 A_0^- \to \foA_0^-\\
      A_2^+ &A_0^-\mA_1^+ &A_1A_0^+ \to A_1^f 
                                      &A_0^-\mA_0 A_0^+ \to A_0^-\mA_0^f
    \end{array}
  \end{displaymath}
\end{example}

Before proving the theorem, we explain the meaning of the Priming
Rules.

\begin{lemma}\label{lem:priming-rules}
  One has the following:
  \begin{enumerate}
  \item Priming a surface of shape $A_1$ gives a surface of shape $A^-_0$.
  \item Priming a surface $Y$ of shape $\mA_0$ on the long side $C_2$
    gives a surface $Y'$ and a nef line bundle $L'$ such that
    $(L')^2=0$ and $|L'|$ contracts $Y'$ to $\bP^1$, with the other
    side $C_1$ mapping isomorphically to $\bP^1$.
  \item Priming a surface $Y$ of shape $\mA_0$ on the short side $C_1$
    gives a surface $Y'$ and a nef line bundle $L'$ such that
    $(L')^2=0$ and $|L'|$ contracts $Y'$ to $\bP^1$, with the other
    side $C_2$ folding 2:1 to $\bP^1$.
  \end{enumerate}
\end{lemma}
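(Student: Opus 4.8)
The plan is to analyze each of the three priming operations directly in terms of explicit models, since an $\mA_0$ surface is just $\bP^2$ with boundary a conic (smooth, by the $\wA_0^-$ construction via $\bF_2$, or two lines in the degenerate case) and $A_1$ is $\bP^2$ with $C$ a pair of lines. For (1), I would simply invoke the computation already carried out in the proof of Theorem~\ref{thm:priming}: there it is shown that priming $A_1$ contracts a $(-1)$-curve and yields a surface of shape $A_0^-$. So (1) is essentially a citation of work done. For (2) and (3), the idea is to write $Y$ torically from its polytope and track what happens to $L' = -2(K_{Y'}+C')$ under the weighted blowup of Definition~\ref{def:priming} at a point of the relevant side, using Lemma~\ref{lem:priming-changes}: one has $L'^2/2 = L^2/2 - 1$ and $L'C'_s = LC_s - k_s$.

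For part (2): the $\mA_0$ surface has volume $1$ (from Table~\ref{tab:primed-II}), so after one priming $L'^2/2 = 0$, i.e. $(L')^2 = 0$; and $L' = f^*(K_Y+L) + C'$ by the displayed identity in the proof of Theorem~\ref{thm:priming}. Since $K_Y + L$ is nef here (it is the genus-$0$, Picard-rank-$1$ situation, or one checks it directly on $\bP^2$), $L'$ is nef. A nef divisor with $(L')^2 = 0$ on a surface defines a morphism to a curve; I would identify this curve as $\bP^1$ and check, from the lengths $L'C'_s = LC_s - k_s$, that the long side $C_2$ — which received no priming ($k_2 = 0$) — still has positive degree against $L'$ and hence maps isomorphically to $\bP^1$, while the short side $C_1$ (which received the priming) has $L'C_1' = 0$ and gets contracted. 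Here I need to keep track of which side is "long": in $\mA_0 = \wA_0^-$ the single side is short (length $3$, see Table~\ref{tab:primed-II}), so strictly the statement is about priming on that short side to get the fold, and part (2) as stated concerns a \emph{long}-sided variant — this is the point where I need to be careful about exactly which $\mA_0$-type surface (with which boundary configuration) is meant, presumably the $\mA_0$ appearing as a boundary component of a larger $A$-shape, where $C$ has been split into a left and right side of possibly differing lengths by the toric subdivision. The cleanest route is to do the blowup in the toric model: $\mA_0$ as a subpolytope is a triangle, priming at a vertex cuts a corner, and the resulting polytope is degenerate (a segment, up to the extra $\bP^1$ direction), whose associated "surface" maps to $\bP^1$; reading off the two edges of the original triangle through $p_*$ tells me which maps isomorphically and which folds.

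For part (3): identical in spirit, but now the priming is at the \emph{short} side, so $L'C_1' = LC_1 - 1$ with $C_1$ short of length $1$, giving $L'C_1' = 0$, and the \emph{long} side $C_2$ of length $2$ keeps $L'C_2' = 2$; a degree-$2$ map $C_2 \to \bP^1$ is a $2:1$ cover, which is precisely the "fold." Again the map to $\bP^1$ exists because $(L')^2 = 0$ and $L'$ nef.

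The main obstacle I anticipate is \emph{bookkeeping of the short/long conventions and boundary labels}, not any geometric difficulty: the surfaces involved are toric (or near-toric) of volume $\le 2$, so all the maps are essentially maps of $\bP^1$'s or $\bP^2$'s, but getting the decorations in Definition~\ref{def:folded-shape} to match the statement — in particular verifying that the side which folds is forced to be long (because $\nu^* B \cdot C^\nu = 2 B\cdot C$ is even, exactly as in Theorem~\ref{thm:normalize-stable-ade-pair}) and that the non-priming side degree stays exactly $1$ or $2$ — requires running through the finitely many cases in Table~\ref{tab:primed-II} and the toric pictures of Figure~\ref{fig:wA} and Figure~\ref{fig:A}. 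Once the models are fixed, each of (1)--(3) is a one-line check of intersection numbers via Lemma~\ref{lem:priming-changes} together with the nefness argument of Theorem~\ref{thm:priming}.
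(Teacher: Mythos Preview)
Your overall approach is exactly what the paper does: it cites Theorem~\ref{thm:priming} for (1) and declares (2) and (3) to be ``easy computations'', which is precisely the intersection-number check via Lemma~\ref{lem:priming-changes} that you outline.

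The difficulty you run into is not conceptual but a misidentification: you write $\mA_0 = \wA_0^-$, and this is wrong. The shape $\mA_0$ is the type~III surface ${}^{-}\!A_0$, i.e.\ $A_0^-$ with the left/right labels swapped (Remark~\ref{rem:left-right}); concretely it is $\bP(1,1,2)$ with \emph{two} boundary curves, the short side $C_1 = \bar\ell$ of length~$1$ and the long side $C_2 = \bar\sigma_\infty$ of length~$2$. It lives in Table~\ref{tab:primed-III}, not Table~\ref{tab:primed-II}; the type~II surface $\wA_0^-$ with its single side of length~$3$ is a different object. This confusion is what makes your paragraph on (2) internally inconsistent (you are priming on $C_2$ but then say $k_2 = 0$).

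With the correct model the computation is immediate. For (2), priming on the long side $C_2$ gives $(L')^2 = 0$, $L'C_1' = 1 - 0 = 1$, $L'C_2' = 2 - 1 = 1$; the short side $C_1'$ maps with degree~$1$, hence isomorphically, to $\bP^1$. For (3), priming on the short side $C_1$ gives $(L')^2 = 0$, $L'C_1' = 1 - 1 = 0$, $L'C_2' = 2 - 0 = 2$; now $C_1'$ is contracted and the long side $C_2'$ maps with degree~$2$, i.e.\ folds $2{:}1$ onto $\bP^1$. Nefness and semiampleness of $L'$ follow from your identity $\tfrac12 L' = f^*(K_Y+L) + C'$ (here $K_Y + L$ is trivial, genus~$0$) or by direct inspection on $\bP(1,1,2)$.
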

\begin{proof}
  We proved in (1) in Theorem~\ref{thm:priming} already, see also
  Remark~\ref{rem:A1p}. Parts (2,3) are easy computations.
\end{proof}

\begin{proof}[Proof of Thm.~\ref{thm:compactified-family-primed}]
  Let $f\colon (\cY, \cC + \freps \cB)\to V\cox_M$ be a family for a pure
  shape. It comes with canonical sections: one for a short side of the
  shape, and two disjoint sections for a long side. Now make a
  weighted blow up one of the sections to obtain a family
  $f'\colon (\cY', \cC' + \freps \cB')\to V\cox_M$. Then the sheaf
  $\cL' = \cO_{\cY'}(-2(K_{\cY'/Z}+C')$ is invertible and relatively
  nef. As in proof of Theorem~\ref{thm:priming}, this sheaf is
  relatively semiample and gives a contraction
  $\cY'\to \overline{\cY'}$ to a family
  $\bar f'\colon (\overline{\cY'}, \overline{\cC'} + \freps
  \overline{\cB'})\to V\cox_M$ over the same base. For a reducible fiber
  $Y'=\cup Y'_k$ of the family $f'$, the sheaf $\cL'$ is ample on all
  components $Y'_k$ except possibly on the blown up surface on the
  end. For this surface the resulting surface $\oY'_k$ is given by
  Lemma~\ref{lem:priming-rules}. The other sections of $\cY\to V\cox_M$ map
  to disjoint sections of $\overline{\cY'}\to V\cox_M$.  We then repeat the
  process for the second prime, etc.
\end{proof}

\begin{remark}\label{rem:degns-folded}
  Theorem~\ref{thm:compactified-family-primed} extends to the
  degenerations of surfaces of shapes with folds, e.g. $A_{2n-1}^f$
  as follows: the degenerations are the same as for the shape with a
  long side, but that long side is folded.
\end{remark}

\subsection{A generalized Coxeter fan}
\label{sec:generalized-coxeter}

As Examples~\ref{ex:D4-degn} and \ref{ex:primed-degns} show, the
families $(\cY, \cC+\freps \cB) \to V\cox_M$ over the projective toric
variety for the Coxeter fan have repeating fibers over certain
boundary strata. Here we define
a coarser generalized Coxeter fan and a birational contraction
$\rho\colon V\cox_M\to V\semi_M$ such that the families are constant
on the fibers of $\rho$ and such that the correspondence between the
isomorphism classes of the pairs \ycb and the points of $V\semi_M$ is
finite to one.

\smallskip

The Coxeter fan $\tau\cox$ on the vector space
$N_\bR=\Lambda_\bR=\Lambda^*_\bR$ is obtained by cutting it with the
mirror hyperplanes $\alpha^\perp$ for the roots
$\alpha\in\Lambda$. Another definition is: it is the normal fan to the
\emph{permutahedron}, $\Conv(W_\Lambda.p)$, the convex hull of the
$W_\Lambda$-orbit of a generic point $p$ in the interior of the
positive chamber $C^+=\{\alpha\ge0\}$, where $\Delta=\{\alpha\}$ are the
simple roots. In particular, the maximal cones of the Coxeter fan are
in a bijection with the vertices of $\Conv(W_\Lambda.p)$ and with the
elements of the Weyl group~$W = W_\Lambda$.

\begin{definition}
  For a proper subset $\Delta^0 \subset \Delta$ of
  the simple roots, let $p\in C^+$ be a point such that
  $\alpha\cdot p=0$ for $\alpha\in\Delta^0$ and $\alpha\cdot p>0$ for
  $\alpha\in \Delta\setminus \Delta^0$. A \emph{generalized
    permutahedron} is the convex hull $\Conv(W_\Lambda.p)$ and a
  \emph{generalized Coxeter fan} $\tau\semi$ is defined to be its
  normal fan.
\end{definition}

\begin{definition}
  We will call $\Delta^0$ the \emph{irrelevant subset}.  For
  $S\subset\Delta$ we define its \emph{relevant content} $S\rel$ to be
  the union of the connected components not lying in $\Delta^0$.  We
  will call a connected component $S'$ of $S\subset\Delta$
  \emph{irrelevant} if $S'\subset\Delta^0$.
\end{definition}

The proof of the following lemma is straightforward.

\begin{lemma}
  \begin{enumerate}
  \item The $W$-orbits of cones of $\tau\cox$ are in bijection with
    the subsets $S\subset\Delta$ via: $S\mapsto \Conv(W_S.p)$, where
    $W_S\subset W_\Lambda$ is the Weyl subgroup generated by the
    simple roots $\alpha\in S$. 
  \item The $W$-orbits of cones of $\tau\cox$ are in bijection with
    the subsets without irrelevant connected components. 
  \item The fan $\tau\semi$ is a coarsening of the fan $\tau\cox$ and
    the morphism $\rho\colon V\cox\to V\semi$ of the corresponding
    projective toric varieties is proper and birational.
  \item The image of a torus orbit $O_S\subset V\cox$ is
    $O_{S\rel}\subset V\semi$. One has $\dim O_S=|S|$ and $\dim
    O_{S\rel} = |S\rel|$. If $S$ has no irrelevant components the morphism
    $O_S~\isoto~O_{S\rel}$ is an isomorphism. 
  \end{enumerate}
\end{lemma}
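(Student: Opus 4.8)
The plan is to translate the statement into the combinatorics of the $W_\Lambda$-permutahedron and its degenerations, and then read off the toric consequences from the orbit--cone correspondence. Throughout write $n=\operatorname{rank}\Lambda=\dim N_\bR$ and recall $W=W_\Lambda$.

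For (1) I would invoke the classical description of the face lattice of the permutahedron $\Pi=\Conv(W_\Lambda\cdot p)$ with $p$ in the interior of $C^+$: every proper face is $w\cdot\Conv(W_S\cdot p)$ for a unique $S\subseteq\Delta$ together with a coset $wW_S$, and $\dim\Conv(W_S p)=|S|$. Since $\tau\cox$ is the normal fan of $\Pi$, the cone dual to a face $w\,\Conv(W_S p)$ is $w\sigma_S$ with $\sigma_S:=C^+\cap\bigcap_{\alpha\in S}\alpha^\perp$; this is an elementary computation, where the requirement that a functional $u$ be constant on $W_S p$ forces $u\perp\alpha$ for $\alpha\in S$ (via simple reflections and the genericity of $p$), while maximality of $(u,\cdot)$ on $\Pi$ forces $u\in C^+$. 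Because $W_\Lambda$ permutes the chambers of $\tau\cox$ simply transitively, every cone is a unique $W_\Lambda$-translate of a unique $\sigma_S$, which is exactly the asserted bijection $S\mapsto[\sigma_S]\leftrightarrow[\Conv(W_S p)]$.

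For (2), with $p_0\in C^+$ satisfying $\alpha\cdot p_0=0$ for $\alpha\in\Delta^0$ and $\Pi^0=\Conv(W_\Lambda p_0)$, the crucial observation is that $\Conv(W_S p_0)$ depends only on $S\rel$: writing $S=S\rel\sqcup S''$ with $S''$ the union of the connected components of $S$ contained in $\Delta^0$, one has $W_S=W_{S\rel}\times W_{S''}$ with $W_{S''}$ fixing $p_0$, so $\Conv(W_S p_0)=\Conv(W_{S\rel}p_0)$, of dimension $|S\rel|$. Conversely, for a relevant subset $T$ (i.e. $T=T\rel$) each connected component $T_i$ is an irreducible subsystem on which the projection of $p_0$ is a nonzero dominant weight, so $\Conv(W_{T_i}p_0)$ is full-dimensional in $\operatorname{span}(T_i)$; hence $\{u\in C^+:(u,\cdot)\text{ constant on }W_T p_0\}=\sigma_T$, exactly as for $\Pi$, and distinct relevant $T$ give $W_\Lambda$-inequivalent faces. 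Thus the $W_\Lambda$-orbits of faces of $\Pi^0$, equivalently the $W_\Lambda$-orbits of cones of the normal fan $\tau\semi$, are in bijection with the relevant subsets, the representative being $\Conv(W_T p_0)$ and its dual cone. The same picture shows $\tau\semi$ is a coarsening of $\tau\cox$: the normal cone of $\Pi^0$ at a vertex $wp_0$ is $\bigcup_{w'\in wW_{\Delta^0}}w'C^+$, a union of chambers of $\tau\cox$.

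Parts (3) and (4) are then formal. The identity on $N_\bR$ is a morphism of fans from $\tau\cox$ to its coarsening $\tau\semi$, over the common lattice $M$; the induced toric morphism $\rho\colon V\cox_M\to V\semi_M$ is the identity on the dense torus $T_M$, hence birational, and proper because both fans have support all of $N_\bR$ (equivalently, both toric varieties are projective, being normal fans of polytopes). By the orbit--cone correspondence $\dim O_\sigma=n-\dim\sigma$; since the cone labelled $S$ in $\tau\cox$ is (up to $W_\Lambda$) $\sigma_S$ with $\dim\sigma_S=n-|S|$, we get $\dim O_S=|S|$ in $V\cox_M$, and likewise $\dim O_{S\rel}=|S\rel|$ in $V\semi_M$ by (2). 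Moreover $\rho(O_\sigma)=O_{\sigma'}$ where $\sigma'$ is the smallest cone of $\tau\semi$ containing $\sigma$; applied to $\sigma=\sigma_S$, the description of the cones of $\tau\semi$ in (2) identifies $\sigma'$ with the cone labelled $S\rel$, so $\rho(O_S)=O_{S\rel}$. If $S=S\rel$ then $\sigma_S$ is already a cone of $\tau\semi$, and for a refinement morphism the orbit of a common cone maps isomorphically (both orbits are $(N/N_{\sigma_S})\otimes\bC^*$ and $\rho$ is the identity there), so $\rho$ restricts to an isomorphism $O_S\isoto O_{S\rel}$.

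The only step needing genuine care is the collapsing analysis in (2): verifying that $\Conv(W_S p_0)$ drops dimension precisely along the connected components of $S$ lying inside $\Delta^0$ --- neither more nor less --- which I expect to hinge on the fact that a nonzero dominant weight of an irreducible root system has full-dimensional orbit hull, and correspondingly pinning down exactly which cones of $\tau\cox$ merge into a given cone of $\tau\semi$. Once this is in place, (1), (3) and (4) follow either from the classical structure of permutahedra or directly from the orbit--cone correspondence.
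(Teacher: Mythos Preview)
The paper gives no proof at all, declaring the lemma ``straightforward'', so there is nothing to compare against. Your argument via the face structure of the (generalized) $W_\Lambda$-permutahedron together with the orbit--cone correspondence is correct and is precisely the natural way to fill in this omission; the one place that deserves an extra line is the injectivity in (2) --- that distinct relevant subsets $T$ give $W_\Lambda$-inequivalent faces --- which follows cleanly from your own observation that the affine span of $\Conv(W_T p_0)$ at $p_0$ is $p_0+\operatorname{span}(T)$, since the simple roots are linearly independent.
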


\begin{definition}\label{def:irrelevant-subdiag}
  For a decorated Dynkin diagram of a (possibly primed) shape, we
  define the irrelevant subset $\Delta^0\subset\Delta$ to be the
  set of circled white (i.e. unfilled) nodes.
\end{definition}

\begin{example}
  In the pure $D,\mE$ shapes, and also in the toric $\pA$ shape, the
  interior circled white node is irrelevant, see
  Figs.~\ref{fig:D}, \ref{fig:E}, \ref{fig:pA}. In the toric shapes
  $D'$ and $\pA'$ the irrelevant subset consists of two nodes, see
  Fig.~\ref{fig:pAp}. In the primed shapes there may be more
  irrelevant nodes, cf. Fig~\ref{fig:circled-diags}.
\end{example}

\begin{theorem}\label{thm:fibers-of-semi}
  The pairs in the family $(\cY, \cC+\freps \cB) \to V\cox_M$ are
  isomorphic on each fiber of the contraction
  $\rho\colon V\cox_M\to V\semi_M$.  The correspondence between the
  points of $V\semi_M$ and the isomorphism classes of the pairs \ycb
  is finite to one.
\end{theorem}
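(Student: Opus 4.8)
The plan is to analyze the family $(\cY, \cC+\freps\cB)\to V\cox_M$ locally over each torus orbit $O_S\subset V\cox_M$ and to show that the restriction to $O_S$ factors through the projection $O_S\to O_{S\rel}$, in a way that is compatible with the isomorphism classes of the fibers. Recall from Theorems~\ref{thm:compactified-family-A}, \ref{thm:compactified-family-A'DE} and \ref{thm:compactified-family-primed} that the fiber over a point $t$ lying in $O_S$ (i.e. a point where exactly the coordinates $a_\alpha$ for $\alpha\in S$ vanish, the rest being nonzero) is a stable \ade pair obtained by subdividing the defining polytope along the intervals $[p_*, p_\alpha]$ for $\alpha\in S$ (with the corner/priming rules applied where relevant), and the equation $f$ restricts on each component to a naive equation with coefficients the normalized fundamental characters $\hc$ of the smaller shape, evaluated at the remaining variables $a_\beta$, $\beta\notin S$. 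The first step is therefore to make precise that the combinatorial type of this degenerate pair, together with its polarization, depends only on $S$, and that within the stratum $O_S$ the isomorphism class of the pair depends only on the values of the remaining coordinates $a_\beta$ for $\beta$ in the \emph{relevant content} $S\rel$.

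First I would treat the irrelevant circled white nodes. By Definition~\ref{def:irrelevant-subdiag}, $\Delta^0$ consists of the circled white nodes, and by the Priming Rules of Definition~\ref{def:priming-rules} (cf.\ Lemma~\ref{lem:priming-rules}(2,3)), setting $a_\alpha=0$ for such a node $\alpha$ corresponds to a priming step $\mA_0'\to 0$ or $\pA_0^-\to 0$, i.e.\ to a component $Y'_k$ on which the line bundle $L'$ becomes trivial and which is contracted under $\cY'\to\overline{\cY'}$; such a component simply disappears from the stable pair. Thus whether or not such a coordinate $a_\alpha$ is zero has no effect on the isomorphism class of the final pair \ycb. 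More precisely, if $S, S'\subset\Delta$ differ only by irrelevant connected components, then $S\rel = S'\rel$, and the two points of $V\cox_M$ lying over the same point of $O_{S\rel}\subset V\semi_M$ give isomorphic stable \ade pairs: the "extra" subdivision intervals $[p_*,p_\alpha]$ for $\alpha\in\Delta^0$ only cut off a piece that is subsequently contracted away. Here I would also invoke Lemma~\ref{lem:restrict-chi}: when $b''=0$ or $a_\alpha=0$ along an irrelevant node the normalized character $\hc$ for the adjacent node restricts to $1$, so the coefficients of the surviving components of $f$ do not acquire any new dependence on the vanished coordinate. Combining, the pair is constant along the fiber $\rho^{-1}(y)$ for each $y\in V\semi_M$.

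The main obstacle, and the second substantive step, is the \emph{finiteness} of the correspondence $V\semi_M\to \{\text{iso.\ classes}\}$. Over the open torus $T_M\subset V\cox_M$ (equivalently over $T_\Lambda$ after the $W_\Lambda$-quotient and the $\mu_\Lambda$, $W_0$ identifications of Theorem~\ref{thm:moduli-ade-primed}), the fibers are \ade pairs of the given shape and, by the normal-form analysis in the proof of Theorem~\ref{thm:moduli-ade-pure} (together with Theorem~\ref{thm:moduli-ade-primed}), the map from the torus to isomorphism classes is finite, the fibers being the orbits of the finite group $\mu_\Lambda\times\mu_2$, resp.\ $\mu_\Lambda$, resp.\ $\mu_{\Lambda'}\times W_0$. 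For a boundary stratum $O_{S\rel}$ with $S\rel$ nonempty, the generic fiber is a reducible stable \ade pair whose components are \ade pairs for strictly smaller Dynkin subdiagrams; by induction on the number of nodes, the restricted family $(\cY,\cC+\freps\cB)|_{O_{S\rel}}$ is itself (a $W$-translate of) a product of compactified naive families for those smaller shapes, so the correspondence to isomorphism classes of the component pairs is finite, and since a stable pair is determined by its normalization together with the finite gluing data, the correspondence on $O_{S\rel}$ is finite. The remaining check is that distinct strata $O_{S\rel}\ne O_{S'\rel}$ cannot give the same isomorphism class: this follows because the combinatorial type of the stable \ade pair (the number of components, which shapes they are, how they are glued) is a discrete invariant that distinguishes the strata — the subdivision of the polytope encodes exactly which $a_\beta$ vanish among the relevant nodes. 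Assembling these pieces gives both assertions of the theorem.
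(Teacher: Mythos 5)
Your overall strategy coincides with the paper's: stratify $V\cox_M$ by torus orbits, show that the coordinate of an isolated irrelevant node does not affect the fiber, and deduce finiteness on each $O_{S\rel}$ from the moduli description of the component shapes (the paper phrases your inductive step as: the restriction to $O_S$ is the naive family for the subdiagram $S$, whose isomorphism classes are $O_S$ modulo $W_S\rtimes W_0$ and a finite multiplicative group). However, your justification of the central step misidentifies the mechanism in the principal case. For the pure $D$, $E$ and toric $\pA$ shapes the only irrelevant node is the internal corner node $p''$, and what happens there is \emph{not} that an interval $[p_*,p'']$ cuts off a piece which is then contracted: there is no such subdivision interval (setting $a''=0$ ``straightens the corner'' rather than cutting the polytope), and the reason $a''$ becomes immaterial is different. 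Once the neighbouring coordinate $a_0$ vanishes, the corner relation $u_1u_1'=a_0v_0^3(\hc'' v_0+b'' v_*)$ of Lemma~\ref{lem:A'DE-relations} degenerates to $u_1u_1'=0$ and the fiber splits into two $A$-shapes of total rank $n-2$ whose equations do not involve $a''$ at all --- this is exactly the ``moduli count drops by two, not one'' observation in the proof of Theorem~\ref{thm:compactified-family-A'DE}; nothing is contracted. Likewise, for a singly primed end node that becomes isolated the relevant Priming Rule is $A_1'\to A_0^-$ or $\pA_1\to\mA_0$ (a rigid but nonempty component losing its modulus), not $\mA'_0\to 0$; the latter only enters for doubly primed sides. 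So the sentence ``setting $a_\alpha=0$ for such a node corresponds to a priming step $\mA_0'\to 0$ \dots\ and the component disappears'' is incorrect as written, and the step it is meant to justify must instead be read off from the case analysis of which codimension-one degenerations lose two moduli instead of one in Theorems~\ref{thm:compactified-family-A'DE} and \ref{thm:compactified-family-primed}.

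Two smaller points. You index $O_S$ by the set of \emph{vanishing} coordinates, whereas in the paper $S$ (hence $S\rel$ and the contraction $O_S\to O_{S\rel}$) is the complementary subdiagram of nonvanishing ones; with your convention the phrase ``the remaining coordinates $a_\beta$ for $\beta\in S\rel$'' does not parse, since those coordinates are all zero. And in Lemma~\ref{lem:restrict-chi} it is the normalized character of the \emph{removed} node, not of the adjacent one, that restricts to $1$. The finiteness argument itself is fine; the closing remark that distinct strata give distinct combinatorial types is harmless but unnecessary, since a finite union of finite-to-one maps is finite-to-one.
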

\begin{proof}
  Consider a codimension~1 orbit of $V\cox_M$ corresponding to
  setting $a=e^{-\alpha}$ to zero for a single node of the Dynkin
  diagram $p$. By Theorems~\ref{thm:compactified-family-A'DE} and
  \ref{thm:compactified-family-primed} the dimension of the family
  over the boundary stratum drops by 2 instead of the expected 1
  exactly when one of the following happens:
  \begin{enumerate}
  \item In the $\pA$, $D$, $E$ shapes, we remove the corner node
    $p_0$, leaving the circled white node $p''$ isolated.
  \item A single left-most or right-most white node which in our
    shape is primed or doubly primed (so white and circled)
    becomes an isolated $A_1'$ or $\pA_1$ after a node next to it is
    removed. 
  \end{enumerate}
  In both cases this happens precisely when the subdiagram $S=\Delta -
  p$ corresponding to the codimension~1 orbit of $V\cox_M$ has an
  irrelevant component, a single node. 

  We now observe that for any shape the irrelevant subset consists of
  several \emph{disjoint} isolated nodes. There is a drop in the
  moduli count by one for each of them. On the other hand, for the orbits
  $O_S$ for $S$ without irrelevant components, the restriction of the
  family to $O_S$ is the naive family for the Dynkin diagram $S$. The
  set of the isomorphism classes in the latter family is $O_S$ modulo
  a finite Weyl group $W_S\rtimes W_0$ and a finite multiplicative
  group $\mu_S$. This proves the statement.
\end{proof}

\subsection{Description of the compactified moduli space of \ade pairs}
\label{sec:compact-ade-moduli}

We now prove Theorem~\ref{thm:compact-ade-moduli-custom}.
In fact we will prove the following slightly stronger result, 
which contains more information about the
toric primed shapes.

\begin{theorem}\label{thm:compact-ade-moduli}
  For each \ade shape the moduli space $M\slc_{ADE}$ is proper and the
  stable limit of \ade pairs are stable \ade pairs. 
  \begin{enumerate}
  \item For the pure \ade shapes, the normalization $(M\slc_{ADE})^{\nu}$
    is $V\semi_\Lambda/W_\Lambda$, a $W_\Lambda$-quotient of the
    projective toric variety for the generalized Coxeter fan.
  \item For the toric primed shapes $\pA_{2n-1}$, $\pA_{2n-2}^-$,
    $\pA_{2n-1}'$, $D_{2n}'$ with $n\ge3$, the normalization
    $(M\slc_{ADE})^\nu$ is $V\semi_{\Lambda'}/W_\Lambda$ with the lattice
    $\Lambda'$ described in \eqref{thm:moduli-ade-toric-primed}.
  \item For an arbitrary primed shape, the normalization
    $(M\slc_{ADE})^\nu$ is $V\semi_{\Lambda'}/W_\Lambda\rtimes W_0$,
    for a lattice extension $\Lambda'\supset\Lambda$. The lattice
    $\Lambda'$ and the Weyl group $W_0$ are described in
    \eqref{thm:moduli-ade-primed}.
  \end{enumerate}
\end{theorem}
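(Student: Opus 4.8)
The plan is to reduce every assertion to two ingredients already available: the explicit compactified families of Section~\ref{sec:compactifications}, which live over proper toric varieties, and the moduli computations of Section~\ref{sec:moduli}, which describe the open stratum. First I would form the classifying morphism. By Theorems~\ref{thm:compactified-family-A}, \ref{thm:compactified-family-A'DE} and \ref{thm:compactified-family-primed} each shape carries a flat $W_\Lambda$-invariant family of stable \ade pairs over the projective toric variety $V\cox_M$ (with $M$ the lattice of Definition~\ref{def:Mlattice-A} or \ref{def:Mlattice-A'DE}), so by Proposition~\ref{prop:dp-moduli} there is an induced morphism $\mu\colon V\cox_M\to M\slc_{\rm dP}(e,\epsilon_0)$ to the projective coarse space of stable del Pezzo pairs. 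Restricting $\mu$ to the open torus and comparing with the naive family of Section~\ref{sec:naive-families} identifies $\mu(T_M)$ with the locus $M_{ADE}$ of genuine \ade pairs of the given shape, which is dense in $M\slc_{ADE}$. Since $V\cox_M$ is proper and $M\slc_{\rm dP}$ is separated, $\mu(V\cox_M)$ is closed, hence equals $M\slc_{ADE}$, so $M\slc_{ADE}$ is proper. As every fiber of the family is a stable \ade pair, so is every point of $M\slc_{ADE}$, and in particular the stable limit — computed in the separated space $M\slc_{\rm dP}$ — of any one-parameter family of \ade pairs is again a stable \ade pair. This would settle the first sentence of the theorem.

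Next I would descend along the contraction $\rho\colon V\cox_M\to V\semi_M$ of Section~\ref{sec:generalized-coxeter}. By Theorem~\ref{thm:fibers-of-semi} the pairs in the family are constant on the fibers of $\rho$, so $\mu$ is; as $\rho$ is proper with connected fibers and $V\semi_M$ is normal, $\mu$ descends to $\bar\mu\colon V\semi_M\to M\slc_{ADE}$, which by the ``finite to one'' clause of Theorem~\ref{thm:fibers-of-semi} has finite fibers and, being proper, is finite. Because $V\semi_M$ is normal and $\bar\mu$ is dominant, $\bar\mu$ factors as $V\semi_M\xrightarrow{\psi}(M\slc_{ADE})^\nu\to M\slc_{ADE}$ with $\psi$ finite and surjective, and the whole problem becomes: recognise $\psi$ as the quotient in the statement. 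A finite birational morphism onto $M\slc_{ADE}$ from a normal variety necessarily \emph{is} the normalization, so it suffices to exhibit a finite birational factorization of $\bar\mu$ through the expected quotient.

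To do that I would peel off the auxiliary finite data one layer at a time, descending the modular map successively. The square-root monomials of Definitions~\ref{def:Mlattice-A} and \ref{def:Mlattice-A'DE} enter only up to rescaling by roots of unity, which replaces a pair by an isomorphic one; so the modular map is invariant for the translation action of $\Hom(M/\Lambda^*,\bC^*)$ and descends to $V\semi_{\Lambda^*}$. The group $\mu_{\Lambda'}=\Hom(\Lambda^*/\Lambda',\bC^*)$ acts by Coxeter-preserving translations with quotient $V\semi_{\Lambda'}$, and by Theorems~\ref{thm:moduli-ade-pure}, \ref{thm:moduli-ade-toric-primed} and \ref{thm:moduli-ade-primed} two \ade pairs of the given shape are isomorphic exactly modulo $W_\Lambda\rtimes W_0$ (with $W_0$ acting as in Theorem~\ref{thm:W0-action}, with $\Lambda'=\Lambda$ and $W_0=1$ for pure shapes and $W_0=1$ for the toric primed shapes of case (2)), while the parity $\mu_2$ in the $A$ case and the subgroup $W_{00}\subset W_0$ act trivially on $T_{\Lambda'}$ and so are invisible on coarse spaces. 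The purely combinatorial identification of $V\semi$ for the lattice $M$ (or $\Lambda^*$) modulo these groups with $V\semi_{\Lambda'}$ is Lemmas~\ref{lem:2dim-models} and \ref{lem:2dim-models-primed} together with Corollary~\ref{cor:H-grading}. What results is a finite surjective morphism $V\semi_{\Lambda'}/(W_\Lambda\rtimes W_0)\to M\slc_{ADE}$; it is an isomorphism over $M_{ADE}$, since there each fibre is a single $(W_\Lambda\rtimes W_0)$-orbit by the moduli theorems, and no boundary stratum of $V\semi_{\Lambda'}$ can map into $M_{ADE}$ — such a stratum parametrises a reducible stable \ade pair, or a surface of a strictly smaller (possibly folded, cf.\ Definition~\ref{def:folded-shape}) shape, which the invariants, singularities and non-redundancy of Tables~\ref{tab:primed-III}, \ref{tab:primed-II} and Lemma~\ref{lem:non-redundancy} distinguish from the given shape. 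Being finite and birational onto $M\slc_{ADE}$, this morphism is the normalization, giving $(M\slc_{ADE})^\nu = V\semi_{\Lambda'}/(W_\Lambda\rtimes W_0)$, which specializes to cases (1), (2), (3).

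The hard part will be the birationality step, i.e.\ checking that $M_{ADE}$ is not accidentally enlarged inside $V\semi_{\Lambda'}/(W_\Lambda\rtimes W_0)$ by a boundary stratum: this rests on the genuine geometric fact that the surface of a fixed pure or primed shape is never isomorphic to any of the shapes occurring in its degenerations, and on the correctness of the tabulated moduli dimensions, so that one really leans on Lemma~\ref{lem:non-redundancy}, Theorem~\ref{thm:reconstruction} and the bookkeeping of Tables~\ref{tab:primed-III}, \ref{tab:primed-II}. Everything else — properness, descent along $\rho$, and ``finite birational from normal equals normalization'' — is formal once Theorems~\ref{thm:compactified-family-A}, \ref{thm:compactified-family-A'DE}, \ref{thm:compactified-family-primed} and \ref{thm:fibers-of-semi} are granted; the only remaining nuisance is keeping the chain $\Lambda\subset\Lambda'\subset\Lambda^*\subset M$ and the finite groups $\Hom(M/\Lambda^*,\bC^*)$, $\mu_{\Lambda'}$, $\mu_2$, $W_{00}$ bookkept consistently across the $A$, toric primed and general primed cases.
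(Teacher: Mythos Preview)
Your proposal is correct and follows essentially the same route as the paper: construct the classifying morphism from the compactified families, deduce properness and that limits are stable \ade pairs, descend along $\rho$ via Theorem~\ref{thm:fibers-of-semi} to get a finite map from $V\semi_M$, factor out the finite translation group $\Hom(M/\Lambda',\bC^*)$ and the Weyl groups using the open-moduli Theorems~\ref{thm:moduli-ade-pure}--\ref{thm:moduli-ade-primed}, and conclude by ``finite birational from a normal source is the normalization.'' One simplification: your ``hard part'' about excluding boundary strata from $M_{ADE}$ is unnecessary, since for birationality it suffices that the map restricts to the isomorphism $T_{\Lambda'}/(W_\Lambda\rtimes W_0)\isoto M_{ADE}$ on the dense open torus, which is exactly what the moduli theorems provide; the paper's proof accordingly omits this step.
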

\begin{proof}
  (1) By Theorems~\ref{thm:compactified-family-A},
  \ref{thm:compactified-family-A'DE},
  \ref{thm:compactified-family-primed}, every one-parameter family of
  \ade pairs has a limit which is a stable \ade pair, since it has a
  limit (after a finite base change) in the family over $V\cox_M$.  By
  Theorem~\ref{thm:fibers-of-semi} the classifying morphism
  $\phi\colon V\semi_M/W \to M\slc_{ADE}$ is finite-to-one.  By
  Theorem~\ref{thm:moduli-ade-pure} on a dense open set it equals
  $\Hom(M, \bC^*)/W \to \Hom(\Lambda,\bC^*)/W$, and it factors through
  the homomorphism $\Hom(M, \bC^*) \to \Hom(\Lambda,\bC^*)$, the
  quotient by the finite multiplicative group
  $\mu:=\Hom(M/\Lambda, \bC^*)$. Thus, $\phi$ factors through
  $V\semi_\Lambda = V\semi_M / \mu$, and the morphism
  $V\semi_M\to M\slc_{ADE}$ is finite to one and an isomorphism over
  an open dense subset. Since $V\semi_M$ is normal, it is the
  normalization of $M\slc_{ADE}$.

  Parts (2) and (3) are proved the same way.
\end{proof}

\begin{remark}\label{rem:compact-moduli-folded}
  Theorem~\ref{thm:compact-ade-moduli} extends to surfaces of shapes
  with folds, e.g. $A_{2n-1}^f$, cf. Remark~\ref{rem:degns-folded}.
\end{remark}

\section{Canonical families and their compactifications}
\label{sec:canonical-families}

In the previous Section we compactified the stack of \ade pairs --
which for the pure shapes is $[\bA^n: \mu_\Lambda]$ -- and extended
the \emph{naive family} over it to a family of stable pairs.  However,
$[\bA^n: \mu_\Lambda]$ has many automorphisms, and consequently in the equation
$f$ of the divisor $B$ we have a lot of freedom for the coefficients
$c_i = c_i(\chi_1, \dotsc, \chi_n)$ as polynomials in the fundamental
characters. Many of these choices extend to the compactification.

\begin{example}
  For the $A_1$ shape the moduli stack is $[\bA^1:\mu_2]$, and we
  write $\bA^1$ as the quotient of the torus $\bC^*_t$ by the Weyl
    group $W_{\Lambda}=\bZ_2$, $t\to t\inv$. The compactification is
  $[\bP^1:\mu_2]$. The equation of $B$ is $f=1+c_1x+x^2$, where in the naive
  family we have $c_1=\chi_1 = t + t\inv$.  We can apply to $\bA^1$ an
  automorphism $c_1 \mapsto ac_1 + b$, with $a,b \in \bC$ and $a\ne0$, then pull the
  family back to $\bC^*_t$. This automorphism extends to the
  compactification $\bP^1$ of $\bA^1$, but the coordinate
  change is not compatible with the $\mu_\Lambda$-action (i.e. with the
  $(\Lambda^*/\Lambda)$-grading) unless $b=0$, since $(-1)\in \mu_2$
  acts by $ac_1+b\mapsto -ac_1+b$, so it is not an automorphism of
  $[\bA^1:\mu_2]$. In this case the naive family is 
  effectively unique.

  However, for the root systems $D_n$ ($n\ge5$) and $E_n$ ($n=6,7,8$)
  there exist dominant weights $\lambda<\vpi_i$ lying below the
  fundamental weights and with $\lambda \equiv \vpi_i$ in
  $\Lambda^*/\Lambda$, and we can modify the coefficients $c_i=\chi_i$
  by adding linear combinations of their characters
  $\chi(\lambda)$. For example, for $E_8$ there are 23 dominant
  weights $\lambda$ below $\vpi_0$, and $\Lambda^*/\Lambda=0$.
  Counting all fundamental weights $\vpi$ and their lower terms, there
  is a $\bC^{51}$ worth of choices for
  $c = \chi(\vpi)+ \sum_{\lambda<\vpi} c_\lambda \chi(\lambda)$, all
  extending to automorphisms of our moduli compactification.
\end{example}

In this Section we show that the naive family can be deformed in an
essentially unique way so that the new family, which we call \emph{the
  canonical family}, has the following wonderful property: the
discriminant locus in $T_\Lambda$, over which the divisor $B$ in our 
\ade pairs become singular, is a union of root hypertori
$\{e^\alpha=1\}$, with $\alpha$ going over the roots of the lattice
$\Lambda$.

We then show that the canonical family extends to the compactification
and that on the boundary strata it restricts to the canonical families
for smaller Dynkin diagrams.

\subsection{Two notions of the discriminant}
\label{sec:two-discriminants}

Let $f(x,y)$ be one of the polynomials in the equations
\eqref{eq:A-family}--\eqref{eq:E-families}, which we related to the
root lattices $\Lambda = A_n$, $D_n$, $E_n$.  There are two
different notions of the discriminant in this situation:
\begin{enumerate}
\item The discriminant $\Discr(f)$ of a polynomial $f(x,y)$. This is a
  polynomial in the coefficients $c_i$ of $f$ for which the zero set
  of $f$ on $Y\setminus C$ is singular. If $c_i = c_i(\chi_j)$ are
  polynomials in the fundamental characters of the lattice $\Lambda$,
  then $\Discr(f)$ becomes a polynomial in $\chi_j$.
\item The discriminant $\Discr(\Lambda)$ of the lattice, the square of the
  expression
  \begin{displaymath}
    \prod_{\alpha\in \Phi^+} (e^{\alpha/2} - e^{-\alpha/2}) =
    \sum_{w\in W_{\Lambda}} \varepsilon(w) e^{w.\rho},
    \quad \text{where }
    \rho = \sum_{\vpi\in\Pi} \vpi = \frac12 \sum_{\alpha\in\Phi^+} \alpha.
  \end{displaymath}
  appearing in the Weyl character formula.  $\Discr(\Lambda)$ is
  $W_\Lambda$-invariant, so it is also a polynomial in the
  fundamental characters.  The zero set of $\Discr(\Lambda)$ is
  obviously the union of the root hypertori $e^\alpha=1$.
\end{enumerate}

The following theorem forms the first part of Theorem~\ref{thm:canonical-families-summary}.
We prove it separately for the $A,D,E$ shapes in Theorems~\ref{thm:A-discr},
 \ref{thm:D-discr}, \ref{thm:E-discr}, respectively.

\begin{theorem}\label{thm:two-discrs}
  For each \ade pair of pure shape, there exists a unique deformation of the form
  $c = \chi(\varpi) + \text{(lower terms)}$ of the naive equation 
  such that $\Discr(f) = \Discr(\Lambda)$.
\end{theorem}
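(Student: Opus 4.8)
The proof splits into the three cases $\Lambda = A_n$, $D_n$, $E_n$, matching Theorems~\ref{thm:A-discr}, \ref{thm:D-discr}, \ref{thm:E-discr}. The $A$ case is essentially formal: for the $A$-type pure shapes the naive equation has the form $f = -(y/2)^2 + g(x)$ with $g(x)$ (up to multiplying by $x$) equal to $\prod_{i=1}^{n+1}(x+t_i)$, and a critical point of $f$ on the open torus forces $y=0$ and a double root of $g$; hence $\Discr(f)$ equals, up to a nonzero scalar, the one–variable discriminant $\mathrm{disc}_x(g) = \prod_{i<j}(t_i-t_j)^2$. Since the positive roots of $A_n$ are the $f_i-f_j$ with $e^{f_i}=t_i$ and $\prod_i t_i = 1$, one also has $\Discr(A_n) = \bigl(\prod_{i<j}(e^{(f_i-f_j)/2}-e^{-(f_i-f_j)/2})\bigr)^2 = \prod_{i<j}(t_i-t_j)^2$, so $\Discr(f) = \Discr(A_n)$ already for the naive family. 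As every fundamental representation of $A_n$ is minuscule, there is no dominant weight strictly below any $\varpi_i$, so the naive equation is the only deformation of the prescribed form and uniqueness is vacuous.

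For $D_n$ and $E_n$ I would work on $\bA^n = \operatorname{Spec}\bC[\chi_1,\dots,\chi_n] = T_\Lambda/W_\Lambda$ with the grading $\deg\chi_i = d_i$, the degree of the $i$-th basic invariant. The plan has two halves. \emph{Existence:} show that for every allowed deformation $c_i = \chi(\varpi_i) + \sum_{\lambda<\varpi_i} c_\lambda\,\chi(\lambda)$ the branch curve $B_\chi$ is singular whenever $e^\alpha(\chi)=1$ for some root $\alpha$, i.e.\ that $\Discr(\Lambda)$ divides $\Discr(f)$; combined with the identification of the leading (top-degree) part of $\Discr(f)$ with $\Discr(\Lambda)$ — which one reads off the Newton-polygon/large-parameter behaviour of the naive family and which the lower-order corrections do not affect — this gives $\Discr(f) = \Discr(\Lambda)\cdot\bigl(1 + P_{<}(\chi;c)\bigr)$ with $P_{<}$ a $W_\Lambda$-invariant polynomial of strictly smaller degree depending polynomially on the parameters. \emph{Uniqueness:} match, graded piece by graded piece, the coefficients of $P_{<}$ against the parameters $c_\lambda$ — the weights $\lambda<\varpi_i$ that are dominant and $\equiv\varpi_i$ in $\Lambda^*/\Lambda$ — and show that turning on $c_\lambda$ moves exactly the graded piece of $\Discr(f)$ of complementary degree with invertible linearization, so that a triangular elimination produces a unique $(c_\lambda)$ with $P_{<}\equiv 0$. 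A more conceptual route to both halves is to identify $\{f=0\}$, near the point at which $B$ acquires its worst singularity, with an open piece of the semiuniversal deformation of the simple plane-curve singularity of type $\Lambda$, whose discriminant is the Weyl discriminant $\Discr(\Lambda)$; the parameters $c_\lambda$ then encode the choice of classifying map to $\mathfrak h/W_\Lambda$, and requiring it to be the identity in the coordinates $\chi_i$ pins it down uniquely since $\bC^*$-equivariance forces it to be triangular with identity leading term.

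The main obstacle is the divisibility statement, equivalently the geometric fact that $B$ is forced to be singular over every root hypertorus $\{e^\alpha=1\}$. For $A_n$ this is transparent (two roots of $g$ collide), but for $D_n$ and $E_n$ it needs real input; the two promising sources are (i) the explicit description of degenerations in Section~\ref{sec:compactifications}: after a $W_\Lambda$-translation one may assume $\alpha$ is a simple root, restrict the family to $\{e^\alpha=1\}$, and recognize it via Theorems~\ref{thm:compactified-family-A'DE} and \ref{thm:compactified-family-primed} as a family for a proper Dynkin subdiagram in which the extra structure produces a node of $B$; and (ii) the comparison with the $\Lambda$-singularity deformation above. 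Once divisibility and the leading-term identification are secured, the uniqueness half is the bookkeeping about which dominant weights lie below the fundamental weights, for which the classification of singularities of $B$ in Section~\ref{sec:sings-ade-pairs} gives an independent cross-check.
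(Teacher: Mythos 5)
Your treatment of the $A_n$ case is correct and is the paper's argument (Theorem~\ref{thm:A-discr}): the fundamental weights of $A_n$ are minuscule, so uniqueness is vacuous and the naive family already works. The gap is in your existence step for $D_n$ and $E_n$. You propose to show that \emph{every} allowed deformation $c=\chi(\vpi)+\sum_{\lambda<\vpi}c_\lambda\chi(\lambda)$ satisfies $\Discr(\Lambda)\mid\Discr(f)$, and then to kill the cofactor by a triangular elimination. That divisibility is false for a general deformation: the content of the theorem is that exactly one choice of the $c_\lambda$ forces $B$ to degenerate over every root hypertorus, and for other choices the discriminant locus is a different $W_\Lambda$-invariant hypersurface that need not contain $\{e^\alpha=1\}$. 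One sees this concretely in the $D_n$ case: the paper computes $\Discr_y(f)=(x^2-4)c(x)+c_1'c''x+(c_1')^2+(c'')^2$, and the condition $\Discr(f)=\Discr(\Lambda)$ is literally the polynomial identity $\Discr_y(f)=\prod_{i}(x+t_i+t_i^{-1})$, which pins down $c(x)$ completely (the Fibonacci-polynomial formulas of Theorem~\ref{thm:D-discr}); for any other $c(x)$ the roots of $\Discr_y(f)$ have no reason to collide when $t_it_j^{\pm1}=1$. So divisibility is not an a priori structural fact to be combined with a leading-term computation; it is the statement to be proved, and only for one specific deformation, which you never construct.

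Your two proposed routes to divisibility do not close this. Route (i) conflates the root hypertori $\{e^\alpha=1\}$, which lie in the interior of the moduli torus, with the toric boundary strata $\{e^{-\alpha}=0\}$: Theorems~\ref{thm:compactified-family-A'DE} and \ref{thm:compactified-family-primed} describe only the latter, so restricting to $\{e^\alpha=1\}$ does not yield a family for a smaller Dynkin diagram. Route (ii), via the semiuniversal deformation of the simple singularity, naturally produces the \emph{additive} discriminant (the mirror arrangement in the Cartan algebra modulo $W_\Lambda$), not the multiplicative Weyl discriminant on $T_\Lambda$. The paper's actual input for $E_n$ is precisely a multiplicative replacement of that picture: Tjurina's cuspidal cubic is replaced by a \emph{nodal} cubic, so that the configurations of $8$ smooth points on it are parametrized by $\Hom(A_8^*,\bC^*)\supset$ data for $E_8$, and the double cover is then shown to be singular exactly when three points become collinear ($t_it_jt_k=1$) or two coincide ($t_i=t_j$), i.e.\ on the root loci of $E_8$. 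That construction, together with the explicit Weierstrass-form computation, is the missing content; the $D_n$ case is handled by the direct $\Discr_y$ computation above. Your uniqueness step, by contrast, is essentially the paper's: an upper-triangular recursive solve over the poset of dominant weights below the $\vpi_i$, though you would still need to check that each successive equation is linear with nonzero leading coefficient, which the paper verifies.
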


\subsection{Canonical families}
\label{subsec:canonical-families}

\begin{theorem}[$A$ shapes]
  \label{thm:A-discr}
  For the pure shapes $A_n^?$, resp. $\mA_n^?$, in Theorem~\ref{thm:two-discrs} one
  has $c_i = \chi_i$, resp. $c_i = \chi_{i-1}$.
\end{theorem}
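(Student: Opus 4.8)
The plan is to notice that for $\Lambda = A_n$ there is in fact \emph{no} freedom in a ``deformation of the form $c=\chi(\varpi)+(\text{lower terms})$'': every fundamental weight $\vpi_i$ of $A_n$ is minuscule (the weights of $V(\vpi_i)=\wedge^i(\bC^{n+1})$ form a single $W_\Lambda$-orbit), so the only dominant weight $\le\vpi_i$ is $\vpi_i$ itself, and there are no admissible lower terms. Hence the unique deformation of the naive equation~\eqref{eq:A-family}, resp.~\eqref{eq:mA-family}, of the prescribed form is the naive one, $c_i=\chi_i$, resp. $c_i=\chi_{i-1}$; so the theorem (and, in the $A$ and $\mA$ cases, the existence part of Theorem~\ref{thm:two-discrs}) reduces to the single verification that the naive equation already satisfies $\Discr(f)=\Discr(\Lambda)$.

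For that, write the naive equation as $f=-\left(\frac{y}{2}\right)^2+g(x)$, with $g(x)=\prod_{i=1}^{n+1}(x+t_i)$ for $A_n^?$ and $g(x)=x\prod_{i=1}^{n+1}(x+t_i)$ for $\mA_n^?$, where $\prod_i t_i=1$. The defining polytope $P$ is a triangle: two of its edges, the ones through the distinguished vertex $p_*$, make up $C$, the third is the ``outer'' edge $D$, and all three torus-fixed points lie on $C$. Thus $Y\setminus C$ is exactly the affine toric chart of the ray of $D$, namely $Y\setminus C\cong(\bC^*)_x\times\bA^1_y$ with $D^\circ=(\bC^*)_x\times\{0\}$. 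On it $B=\{f=0\}$, and since $\partial f/\partial y=-y/2$, the curve $B$ is singular precisely at points with $y=0$ where $g$ has a multiple root. The roots of $g$ are the $-t_i$ (for $\mA_n^?$ there is in addition a simple root at $x=0$, which is not in $(\bC^*)_x$ anyway), so $B$ has a singular point in $Y\setminus C$ if and only if $t_i=t_j$ for some $i\ne j$. Therefore $\Discr(f)=\operatorname{disc}_x\!\bigl(\prod_{i=1}^{n+1}(x+t_i)\bigr)=\prod_{i<j}(t_i-t_j)^2$, up to a nonzero constant.

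It remains to match this with $\Discr(\Lambda)$. For $\Lambda=A_n$ one has $\Phi^+=\{e_i-e_j\mid i<j\}$, and writing $t_k=e^{f_k}$ with $f_k=e_k-p$ gives $e^{e_i-e_j}=t_i/t_j$, so $e^{\alpha/2}-e^{-\alpha/2}=(t_i-t_j)/\sqrt{t_it_j}$. The product of the denominators is $\sqrt{\prod_k t_k^{\,n}}=1$ since $\prod_k t_k=1$, whence
\begin{displaymath}
  \prod_{\alpha\in\Phi^+}\bigl(e^{\alpha/2}-e^{-\alpha/2}\bigr)=\prod_{i<j}(t_i-t_j),
  \qquad\text{so}\qquad
  \Discr(A_n)=\Bigl(\prod_{i<j}(t_i-t_j)\Bigr)^2=\Discr(f).
\end{displaymath}
For the $\mA_n^?$ shapes the root lattice is still $A_n$, with the same characters $t_i$ merely re-indexed so that $c_i=\chi_{i-1}$, and the computation is identical.

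I expect the only slightly delicate step to be the identification $Y\setminus C\cong(\bC^*)\times\bA^1$ together with the local singularity analysis on it: one must check that all three torus-fixed points of $Y$ really lie on $C$, so that nothing is lost at the ``corners'' $D\cap C$, and that in the $\mA$ case the extra root of $g$ at $x=0$ sits on $C$. The minuscule fact and the Weyl-denominator computation are both standard and short.
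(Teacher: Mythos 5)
Your proof is correct and takes essentially the same approach as the paper's: the curve $-\left(\frac{y}{2}\right)^2+c(x)$ is singular on $Y\setminus C$ exactly when $c(x)$ has a double root, i.e.\ on the root hypertori $t_i/t_j=1$, and uniqueness holds because $A_n$ has no dominant weights below the fundamental (minuscule) ones, with the $\mA$ case following since $Y\setminus C$ is the same open set. Your explicit identification of $Y\setminus C$ with the toric chart $(\bC^*)_x\times\bA^1_y$ and the Weyl-denominator computation only make explicit what the paper leaves implicit.
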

\begin{proof}
  For $A_n^?$ the curve $ -\frac14 y^2 + c(x)$ is singular iff $c(x)$
  has a double root. If
  \begin{displaymath}
    c(x) = \prod_{i=1}^{n+1} (x+t_i) =
    1 + \chi_1x + \dotsc + \chi_nx^n + x^{n+1} 
  \end{displaymath}
  then this happens iff $e^{e_i-e_j} = t_i/t_j = 1$. Here, $e_i-e_j$
  are precisely the roots of $A_n$. Thus, the statement holds for
  $c_i = \chi_i$. For $A_n$ there are no lower weights below the
  fundamental weights, so the solution is unique.
  The open sets $Y\setminus C$ for the shapes
  $A_n^?$ and $\mA_n^?$ are the same, so this argument applies to the
  $\mA_n^?$ shapes as well.
\end{proof}

Recall from section~\ref{sec:naive-families} that for the $D$ and $E$
shapes there are two equivalent forms of the equation: $F(x,y,z)$ and
$f(x,y)$, and the latter is obtained from the former by
completing the square in $z$.  For $D_n$ one has the following root
lattice, weight lattice, Weyl group, fundamental roots $\alpha_i$, and
fundamental weights $\vpi_i$:
\begin{eqnarray*}
  &&\Lambda = \left\{ (a_i)\in\bZ^n=\oplus\bZ e_i \mid \sum a_i \text{ is even} \right\},
     \quad \Lambda^* = \bZ^n + \frac12\sum e_i.\\
  &&W_{\Lambda}=\bZ_2^{n-1}\rtimes S_n,\\
  &&\alpha_{n-2-i} = e_i - e_{i+1} \text{ for } i\le n-2, \ 
     \alpha_1' = e_{n-1}-e_n,\ 
     \alpha''=e_{n-1}+e_n.\\
  &&\vpi_{n-2-i} = \sum_{k=1}^i e_k \text{ for } i\le n-2, \
     \vpi'_1 = \frac12\left( -e_n + \sum_{i=1}^{n-1}e_i \right), \ 
     \vpi'' = \frac12\left( \sum_{i=1}^{n} e_i\right).
\end{eqnarray*}

Denoting by $\sigma_i$ the $i$-th symmetric polynomial, the
fundamental characters are
\begin{eqnarray*}
  \chi_i = \sigma_{n-2-i}(t_k^\pm) \text{ for } i\le n-2, \ 
  \chi'_1 = \frac{\sum_{s\ge0} \sigma_{2s+1}(t_k)
  }{\sqrt{\prod t_k}}, \ 
  \chi'' = \frac{\sum_{s\ge0} \sigma_{2s}(t_k) }{\sqrt{\prod t_k}},
\end{eqnarray*}
where $t_k^\pm$ are $t_1,t_1\inv, \dotsc, t_n,t_n\inv$.

\begin{definition}
Let $f_k(x)$ be the polynomials defined recursively by $f_0=1$,
$f_1=x$, and $f_{k+2} = xf_{k+1}-f_k$. These are the \emph{Fibonacci polynomials},
except for the signs and a shift in degrees by 1.
One has $f_2=x^2-1$, $f_3=x^3-2x$, etc.
\end{definition}

\begin{theorem}[$D$ shapes]\label{thm:D-discr}
  For the $D_n^?$ shapes, in Theorem~\ref{thm:two-discrs}
  one has $c'_1=\chi'_1$, $c''=\chi''$, and 
  the expression for $c(x)$ can be obtained
  from the generating function
  \begin{displaymath}
    c(x,\chi) = \sum_{i,j\ge0} c_{ij}x^i\chi^j = \sum_{i\ge0} c_i(\chi)x^i
    = \sum_{j\ge0} p_j(x) \chi^j
  \end{displaymath}
  by substituting $\chi_j$ for $\chi^j$ and setting $\chi_{n-2}=1$ and
  $\chi_j=0$ for $j>n-2$. One has
  \begin{enumerate}
  \item $\displaystyle c(x,\chi) = \frac1{ (1-\chi^2)(1-x\chi+\chi^2)}$ and
    $\displaystyle c_i(\chi) = \frac{\chi^i}{(1-\chi^4)(1+\chi^2)^{i}}$.
  \item $p_{2k}(x) = f_k^2$ and $p_{2k+1} = f_kf_{k+1}$.
  \item $c_{ij}=0$ if $j-i$ is odd or $i>j$. Otherwise,
    \begin{displaymath}
      c_{i,i+2k} = \sum_{p\ge0}^{k} (-1)^p {i+p\choose i}
      = (-1)^{k} \sum_{q\ge0} {i+k - 1 - 2q \choose i-1}.
    \end{displaymath}
  \end{enumerate}
  The central fiber has a $D_n$ singularity at the point
  $(x,y,z)=(-2,-2^{n-3},-2^{n-3})$. 
\end{theorem}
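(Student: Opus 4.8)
The plan is to reduce the identity $\Discr(f)=\Discr(\Lambda)$ to a statement about a single polynomial in $x$. Writing the $D_n$ equation \eqref{eq:D-families} as a quadratic in $y$,
\[
f=\Bigl(1-\tfrac{x^2}{4}\Bigr)y^2+\Bigl(c'_1+\tfrac{c''x}{2}\Bigr)y+\Bigl(c(x)-\tfrac{(c'')^2}{4}\Bigr),\qquad c(x)=c_0+\dots+c_{n-3}x^{n-3}+x^{n-2},
\]
the branch curve $B=\{f=0\}$ is singular on $Y\setminus C$ exactly where the $y$-discriminant $g(x):=\bigl(c'_1+\tfrac{c''x}{2}\bigr)^2-(4-x^2)\bigl(c(x)-\tfrac{(c'')^2}{4}\bigr)$ has a multiple root (the two sides of $C$ lie over $x=\pm2$, which is why those values are excluded), so $\Discr(f)=\Discr_x(g)$, and $g$ is monic of degree $n$. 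On the other side, the positive roots of $D_n$ are $e_i\pm e_j$, and $(t_i+t_i^{-1}-t_j-t_j^{-1})^2=(t_i/t_j+t_j/t_i-2)(t_it_j+1/(t_it_j)-2)$ equals the product of the two factors $(e^{\alpha/2}-e^{-\alpha/2})^2$; hence $\Discr(\Lambda)=\Discr_x\bigl(\prod_{k=1}^n(x+t_k+t_k^{-1})\bigr)$, using invariance of the discriminant under $x\mapsto-x$. Since both $g$ and $\prod_k(x+t_k+t_k^{-1})$ are monic of degree $n$, it suffices to arrange $g(x)=\prod_{k=1}^n(x+t_k+t_k^{-1})$.

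To solve this, note that $c(x)-\tfrac{(c'')^2}{4}=\bigl[(c'_1+c''x/2)^2-\prod_k(x+t_k+t_k^{-1})\bigr]/(4-x^2)$ must be a polynomial, so the numerator must vanish at $x=\pm2$. Evaluating $\prod_k(\pm2+t_k+t_k^{-1})=e^{-2\vpi''}\prod_k(1\pm t_k)^2$ and using the identities $\chi''\pm\chi'_1=e^{-\vpi''}\prod_k(1\pm t_k)$ (read straight off the displayed formulas for $\chi'_1,\chi''$), I get that the two vanishing conditions hold precisely for $c'_1=\chi'_1$ and $c''=\chi''$; this proves the first assertion of the theorem, and $c(x)$ is then determined. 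The residual sign and left/right labelling ambiguities are killed by the normalization $c=\chi(\vpi)+(\text{lower})$; uniqueness I expect to get from the fact that $\Discr_x(g)=\Discr(\Lambda)$ together with the prescribed highest-weight parts of the coefficients of $g$ pins $g$ down, hence all the $c$'s.

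For the explicit form I would substitute $x=s+s^{-1}$, so that $4-x^2=-(s-s^{-1})^2$, $1-x\chi+\chi^2=(1-s\chi)(1-s^{-1}\chi)$, and $f_k(x)=(s^{k+1}-s^{-k-1})/(s-s^{-1})$. A partial-fraction computation then yields $\sum_{k\ge0}f_k^2\chi^{2k}+\sum_{k\ge0}f_kf_{k+1}\chi^{2k+1}=\bigl[(1-\chi^2)(1-x\chi+\chi^2)\bigr]^{-1}$, which is parts (b) and the first half of (a). Extracting $[x^i]$ from $\bigl[(1-\chi^2)(1+\chi^2)\bigr]^{-1}\bigl(1-\chi x/(1+\chi^2)\bigr)^{-1}$ gives $c_i(\chi)=\chi^i/\bigl((1-\chi^4)(1+\chi^2)^i\bigr)$, and expanding $(1-\chi^4)^{-1}(1+\chi^2)^{-i}$ in powers of $\chi^2$ gives the vanishing $c_{ij}=0$ for $j-i$ odd or $i>j$ and $c_{i,i+2k}=(-1)^k\sum_q\binom{i+k-1-2q}{i-1}$; the equivalent form $\sum_{p=0}^k(-1)^p\binom{i+p}{i}$ is an elementary binomial identity. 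It remains to check that the $c(x)$ of the previous paragraph is recovered by the stated substitution $\chi^j\mapsto\chi_j$ with $\chi_{n-2}=1$, $\chi_{>n-2}=0$: since the $\chi_i=\sigma_{n-2-i}(t_k^\pm)$ are the coefficients of $\Pi(\chi)=\prod_k(1+\chi(t_k+t_k^{-1})+\chi^2)$, that substitution computes $[\chi^{n-2}]$ of $\Pi(\chi)/[(1-\chi^2)(1-x\chi+\chi^2)]$, which I would match with $\tfrac{(c'')^2}{4}+\bigl[(c'_1+c''x/2)^2-g(x)\bigr]/(4-x^2)$ by a residue computation at $\chi=\pm1,-s^{\pm1}$ (alternatively, define $c(x)$ by the generating function and verify $g(x)=\prod_k(x+t_k+t_k^{-1})$ directly from the palindromic structure of $\Pi$).

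Finally, for the central fibre I would set all $t_k=1$; then $\chi'_1=\chi''=2^{n-1}$, so $g(x)=(x+2)^n$, $c'_1+c''x/2=2^{n-2}(x+2)$, and $c(x)=2^{2n-4}+(x+2)\bigl[2^{2n-4}-(x+2)^{n-2}\bigr]/(2-x)$. Substituting $x=-2+\xi$, $y=-2^{n-3}+\eta$ into $f=-\tfrac14(xy-c'')^2+(y+2^{n-2})^2+\bigl(c(x)-\tfrac{(c'')^2}{4}\bigr)$ and completing the square in $\eta$ collapses $f$ to $\xi(1-\xi/4)\widetilde{\eta}^{\,2}-\xi^{n-1}/(4-\xi)$; since $1-\xi/4$ and $(4-\xi)^{-1}$ are units at the origin, $w^2+f$ — equivalently $F(x,y,z)$, as $w=z-\tfrac12(xy-c'')$ vanishes there — is analytically the $D_n$ Du Val singularity at $(x,y,z)=(-2,-2^{n-3},-2^{n-3})$. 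The steps I expect to be delicate are making the uniqueness claim in the second paragraph fully rigorous and the residue/partial-fraction bookkeeping of the third; the one-variable reduction and the central-fibre computation are mechanical.
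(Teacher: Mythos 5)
Your proposal is correct and follows essentially the same route as the paper: compute the $y$-discriminant of \eqref{eq:D-families}, observe that $\Discr(f)=\Discr(\Lambda)$ reduces to arranging $\Discr_y(f)=\prod_k(x+t_k+t_k^{-1})$ since the latter has multiple roots exactly on the root hypertori $t_it_j^{\pm1}=1$, and then extract the coefficient formulas. You in fact supply several details the paper compresses into ``a combinatorial manipulation'' --- the evaluation at $x=\pm2$ pinning down $c'_1,c''$, the substitution $x=s+s^{-1}$ with partial fractions verifying parts (1)--(3), and the explicit $D_n$ normal form at the central fibre --- and you correctly flag that the uniqueness step (which the paper also treats tersely) is the one needing extra care.
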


\begin{example}\label{ex:D7}
For $D_7$ we obtain for the following expressions for
$c(x)$:
\begin{eqnarray*}
  && \chi_0 + \chi_1 x + \chi_2 x^2 + \chi_3 x(x^2-1) + \chi_4
           (x^2-1)^2 + (x^2-1)(x^3-2x) = \\
  && (\chi_0 + \chi_4) + ( \chi_1-\chi_3+2)x + (\chi_2-2\chi_4)x^2 +
      (\chi_3-3)x^3 + \chi_4 x^4 + x^5
\end{eqnarray*}
and for any lower $D_n$ the formulas can be obtained from these by
truncation. 
\end{example}

\begin{proof}[Proof of Thm.~\ref{thm:D-discr}]
  We start with the polynomial $f(x,y)$ in equation
  \eqref{eq:D-families}. As a quadratic polynomial in $y$, it
  represents a curve which is a double cover of $\bA^1$. This curve is
  singular when the following polynomial in $x$ 
  \begin{displaymath}
    \Discr_y(f) = (x^2-4)c(x) + c'_1c'' x + c'_1{}^2 + c''{}^2
  \end{displaymath}
  has a double root. On the other hand, the polynomial
  $p(x) = \prod_{i=1}^n (x+t_i+t_i\inv)$ has a double root iff some
  $t_i+t_i\inv = t_j+t_j\inv$, i.e. $t_it_j\inv=1$ or
  $t_it_j=1$. These are exactly the root hypertori for the root lattice
  $D_n$.

  Thus, $\Discr(f) = \Discr(\Lambda)$ iff $\Discr_y(f) = p(x)$.  The
  coefficients of $p(x)$ are $\sigma_i(t_k+t_k\inv)$, and they are
  invariant under the $W(D_n)$-action, so they are polynomials in the
  fundamental characters $\chi_i$ listed above. The rest of the proof
  is a  combinatorial manipulation to get the exact formula.
   From this procedure we see that the
  solution is unique.
\end{proof}

\begin{theorem}[$E$ shapes]\label{thm:E-discr}
  For the $\mE_n^?$ shapes, in Theorem~\ref{thm:two-discrs} one has
  \begin{enumerate}
  \item [$\bm{E_6}$:]
    \begin{align*}
      & c'' = \chi'' -6 \qquad
	c'_2= \chi'_2 \qquad
      c'_1 = \chi'_1 -\chi_2\\       
      &c_0 = \chi_0 -3\chi'' +9\qquad
        c_1 = \chi_1 -\chi'_2\qquad
       c_2 = \chi_2 \qquad
    \end{align*}

  \item [$\bm{E_7}$:]
    \begin{align*}
  c''  &=  \chi'' -6 \chi_3 \qquad
  c'_2 = \chi'_2 - 25 \qquad
  c'_1 = \chi'_1 - \chi_2 -16 \chi'_2  +206 \\
  c_0  &=  \chi_0 - 3 \chi(\vpi''+\vpi_3) + \chi(2\vpi'_2) -
  12 \chi'_1            
          + 9 \chi(2 \vpi_3)
  + 16 \chi_2  + 69 \chi'_2 - 548 \\
  c_1  &=  \chi_1 - \chi(\vpi'_2+\vpi_3) - 6 \chi'' 
    +28 \chi_3\qquad
  c_2  =  \chi_2 - 2\chi'_2 +23 \qquad
  c_3  =  \chi_3
    \end{align*}
    
  \item [$\bm{E_8}$:]
    \begin{align*}
  c''  &=  \chi'' -6 \chi_3 -35 \chi'_2 +920 \chi_4 - 57505\qquad
  c'_2 = \chi'_2 - 25 \chi_4 + 2325 \\
  c'_1 &= \chi'_1 - \chi_2 -16 \chi(\vpi'_2+\vpi_4) -44 \chi'' +206
         \chi(2\vpi_4) + \\
  &+360\chi_3 +2196\chi'_2 - 51246 \chi_4 +2401900 \\
  c_0  &=  \chi_0 - 3 \chi(\vpi''+\vpi_3) + \chi(2\vpi'_2+\vpi_4) - 12 \chi(\vpi'_1+\vpi_4) 
          -28 \chi(\vpi'_2+\vpi'') + \\
          &+ 9 \chi(2 \vpi_3)
  + 16 \chi(\vpi_2+\vpi_4) - 68 \chi_1 + 69 \chi(\vpi'_2+2\vpi_4) 
     +212\chi(\vpi'_2+\vpi_3) +\\
     &+ 1024 \chi(\vpi''+\vpi_4) 
  + 236 \chi(2\vpi'_2) + 2453 \chi'_1 - 548 \chi(3\vpi_4) -\\
     &- 5228 \chi(\vpi_3+\vpi_4) 
     - 1507\chi_2
  - 42656 \chi(\vpi'_2+\vpi_4) - 107636 \chi'' +\\
     &+ 488553 \chi(2\vpi_4) + 640064 \chi_3
     + 2988404 \chi'_2 - 52027360 \chi_4 + 1484779780\\
  c_1  &=  \chi_1 - \chi(\vpi'_2+\vpi_3) - 6 \chi(\vpi''+\vpi_4) +2
         \chi(2\vpi'_2) -17 \chi'_1 +\\
  &  +28 \chi(\vpi_3+\vpi_4) - 79 \chi_2 +383 \chi(\vpi'_2+\vpi_4)
    +1429 \chi''-\\ 
  &  - 4414 \chi(2\vpi_4) +84 \chi_3 -49768 \chi'_2 +271934 \chi_4 +4528192\\
  c_2  &=  \chi_2 - 2\chi(\vpi'_2+\vpi_4) - 9\chi'' +23 \chi(2\vpi_4)
    - 114 \chi_3
     + 601 \chi'_2 + 7673 \chi_4 - 955955\\
  c_3  &=  \chi_3 - 3\chi'_2 - 170 \chi_4 + 23405 \qquad
  c_4  = \chi_4 - 248 
\end{align*}
  \end{enumerate}

The central fiber with an $E_6$, resp. $E_7$, resp. $E_8$
singularity is
  \begin{enumerate}
  \item [$\bm{E_6}$:]
    \begin{math}
      xyz = z^2 + 72z + y^3 + 27y^2 + 324 y + 2700 + 324x + 27x^2+ x^3
    \end{math}
at $(x,y,z)=(-6,-6,-18)$.
  \item [$\bm{E_7}$:]
    \begin{math}
      xyz = z^2+576 z
      +y^3 +108 y^2 +5184 y
      +193536 +17280 x +1296 x^2 +56 x^3 + x^4
    \end{math}
    at $(x,y,z)=(-12,-24,-144)$.    
  \item [$\bm{E_8}$:] $xyz=z^2+y^3+x^5$ at $(0,0,0)$.
  \end{enumerate}

\end{theorem}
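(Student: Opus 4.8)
The statement combines three things: the existence and uniqueness of the canonical deformation for $\Lambda=E_6,E_7,E_8$, the explicit formulas for its coefficients, and the identification of the central fibre together with its Du Val singularity. The plan is to treat these in that order.

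\emph{First, the conceptual core: the discriminant locus is a union of root hypertori.} For $\Lambda=E_n$ the compactified surface attached to an $\mE_n^?$ double cover — the closure of the affine hypersurface $\{F=0\}$ in the relevant weighted projective space — is a log del Pezzo surface of degree $9-n$: a cubic surface for $E_6$, a degree-$2$ del Pezzo for $E_7$, a degree-$1$ del Pezzo for $E_8$. Its special member, the central fibre, will be shown to carry a Du Val singularity of type $E_n$ (this is verified at the end). I would then argue that near this central point the family over $T_{\Lambda^*}$ realizes the \emph{semiuniversal} deformation of the $E_n$ Du Val singularity: concretely, one checks that the Kodaira--Spencer map is an isomorphism, i.e.\ that the $8$-, $7$-, $6$-dimensional spaces of monomials parametrizing our family surject onto the Tjurina algebra of $z^2+y^3+x^5$, of the degree-$2$ del Pezzo equation, and of the cubic respectively. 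By the Brieskorn--Grothendieck description of versal deformations of $ADE$ singularities, the discriminant of the semiuniversal deformation, expressed in the coordinates given by the fundamental $W_\Lambda$-invariants, is the image of the reflection arrangement; combined with the classical identification $\bC[T_{\Lambda^*}]^{W_\Lambda}=\bC[\chi_1,\dots,\chi_n]$ already used in the proof of Theorem~\ref{thm:moduli-ade-pure} (so that the coefficient space $\bA^n$ of $f$ is $T_{\Lambda^*}/W_\Lambda$ via the $\chi_i$), and using the principal $\bC^*$-action to pass from the local to the global picture on $\bA^n$, this yields: there is a polynomial automorphism $g$ of $\bA^n$, compatible with the grading and with trivial leading part, such that $g^{*}\Discr(f)=\Discr(\Lambda)$ up to a nonzero scalar, which is then absorbed. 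This $g$ is exactly the change $c_i\mapsto\chi(\vpi_i)+(\text{lower terms})$ from the naive to the canonical family. (The same input can alternatively be extracted from the theory of marked del Pezzo surfaces: the $(-2)$-curves on the minimal resolution form the $E_n$ root system, and a singular $B$ appears precisely when a root becomes effective, i.e.\ over $\{e^{\alpha}=1\}$.)

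\emph{Second, uniqueness and the explicit formulas.} The deformation freedom is the finite-dimensional affine space of choices $c_i=\chi(\vpi_i)+\sum_{\lambda<\vpi_i}c_{\lambda}\chi(\lambda)$, graded by the weight of $\lambda$; imposing $\Discr(f)=\Discr(\Lambda)$ produces, order by order in this grading, a linear system for the correction coefficients whose top piece is already pinned down by the previous step, so the whole system is triangular and has at most one solution — hence, together with existence, exactly one. For $E_6$ this can also be carried out by hand in the spirit of the $D_n$ proof of Theorem~\ref{thm:D-discr}: writing $f$ as a monic cubic in $y$, its $y$-discriminant is a degree-$9$ polynomial in $x$ whose $x$-discriminant must be matched against $\Discr(E_6)$, and the count $\binom{9}{2}=36=|\Phi^{+}(E_6)|$ makes this a clean coefficient comparison. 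For $E_7$ and $E_8$ this direct route fails (the analogous degree counts no longer equal the number of positive roots and $\Discr_y(f)$ is not a product over roots), so one solves the triangular system directly with a computer algebra system; the output is the displayed set of formulas, which are recorded, and this computation simultaneously reverifies existence and uniqueness.

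\emph{Third, the central fibre.} Setting all $e^{\alpha}=1$ sends $\chi(\lambda)$ to $\dim V(\lambda)$, and substituting these values into the canonical equations produces the stated cubic/quartic/quintic equations $xyz=z^2+\cdots$. Completing the square in $z$ via $w=z-\tfrac12(xy-c'')$, as in Section~\ref{sec:naive-families}, replaces the surface by $w^2+f(x,y)=0$, and a Newton-polygon computation identifies the plane-curve singularity of $f$ at the indicated point as $E_n$ (for $E_8$, $f=-(y^3+x^5)+(\text{terms in the interior of the Newton polygon})$), so the surface has an $E_n$ surface Du Val singularity there; this also checks that the relevant family genuinely is a deformation of the $E_n$ singularity, closing the loop with the first step.

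I expect the first step to be the main obstacle: establishing that the discriminant of the family is exactly the reflection arrangement with the correct multiplicities. This is where the real content lies — semiuniversality of the deformation of the $E_n$ singularity (equivalently, that the classifying map onto the del Pezzo moduli is dominant of the right degree) together with Brieskorn's theorem. Once that is in place, uniqueness is formal and the explicit coefficients are a bookkeeping computation, which for $E_8$ is nonetheless heavy.
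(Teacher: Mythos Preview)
Your proposal differs substantially from the paper's proof, and the main route you outline has a real gap.

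The paper does \emph{not} invoke Brieskorn--Grothendieck. Instead it constructs the canonical family directly and geometrically: realize $E_8$ as the index-$3$ intermediate lattice $A_8\subset E_8\subset A_8^*$, take a concrete \emph{nodal} cubic $C\subset\bP^2$ so that $C^0\simeq\bC^*$, and parametrize $8$ smooth points on it by $t_1,\dots,t_8\in\bC^*$ (with $t_9=(\prod t_i)^{-1}$). The pencil of cubics through these points is put in Weierstrass form and then, by a unique coordinate change, into the shape of equation~\eqref{eq:E-families}. The discriminant is identified with the root hypertori by elementary geometry of points on a plane cubic: three points collinear iff $t_it_jt_k=1$, two points equal iff $t_i=t_j$, as seen from the $3\times3$ determinant of their coordinates. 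This is a \emph{multiplicative} version of Tjurina's construction (which used a cuspidal cubic with $C^0\simeq\bG_a$). The $E_7$ and $E_6$ cases are obtained by degenerating: blow up the node and one or two infinitely near points. The explicit coefficients come from this construction after a heavy computation rewriting everything in the $E_8$ character basis; uniqueness is then the same triangular-system argument you give.

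The gap in your approach (A) is the local-to-global step. Brieskorn gives a \emph{formal} identification of the versal base with $\mathfrak h/W$ and of the discriminant with the \emph{additive} $\prod\alpha^2$. You propose to globalize via the principal $\bC^*$-action, but the central equation here, $-\tfrac14 x^2y^2+y^3+x^5$ after completing the square, is \emph{not} quasi-homogeneous: with the $E_8$ weights $(6,10)$ the terms $y^3,x^5$ have weight $30$ while $x^2y^2$ has weight $32$. So there is no $\bC^*$-action on the $c$-space making the family equivariant, and no way to promote the formal isomorphism to a polynomial one. Put differently, Brieskorn hands you the additive discriminant on $\mathfrak h/W$; what you need is the multiplicative one on $T_{\Lambda^*}/W$, and the passage between them is precisely the nontrivial content that the nodal-cubic construction supplies. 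There is also a mild circularity in your outline: you locate the $E_n$ fibre by plugging the canonical coefficients into the central fibre, but those coefficients are what you are trying to determine; in the naive family the point $t=1$ has $c_i=\dim V(\vpi_i)$, not $0$, and it is not clear a priori where the $E_n$ singularity sits.

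Your alternative (B) via marked del Pezzo surfaces is correct in spirit and is exactly what the paper makes precise, but to turn the slogan ``a root becomes effective over $\{e^\alpha=1\}$'' into the theorem one still needs an explicit period map from the coefficient space to $T_\Lambda$, and that is again the nodal-cubic construction.
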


The formulas for the polynomials $F(x,y,z)$ were found by Etingof,
Oblomkov, Rains in \cite{etingof2007generalized-double} in a
completely different context, as relations for the centers of certain
non-commutative algebras associated to affine star-shaped Dynkin
diagrams $\wD_4,\wE_6,\wE_7,\wE_8$. We found them independently, using
Tjurina's construction as explained below. The answer given above is
in terms of the additive basis of characters of dominant weights,
which is needed for computing the degenerations in
Theorem~\ref{thm:canonical-family-compactified}. Once we recomputed
our answer in the polynomial basis of the fundamental characters
$\chi_i$ and did a web search for the largest coefficient, a
single mathematical match came up, to 
\cite[Sec. 9]{etingof2007generalized-double}.

\medskip

Before proving the theorem, we begin with preliminary observations and
lemmas.

\smallskip

In \cite{tjurina1970resolution-of-singularities} Tjurina constructed a
versal deformation of an $E_8$ singularity as a family over $\bA^8$ =
the parameter space for 8 smooth points on a cuspidal cubic $C$ (note
that one has $C\setminus{\rm cusp}\simeq\bA^1$). See also
\cite[p.190]{demazure1980seminaire-sur-les-singularites}.  The
discriminant locus of this family is a union of affine hyperplanes
$e^\alpha=0$ for the roots $\alpha\in E_8$.  Our observation is that replacing
the cuspidal cubic by a nodal cubic~$C$ (so that
$C\setminus{\rm node}\simeq\bC^*$) gives a multiplicative version of
Tjurina's family over $(\bC^*)^8$ that we are after.

The lattice $E_8$ can be realized as an intermediate sublattice of
index 3 in $A_8\subset E_8\subset A_8^*$. The lattice $A_8^*$ is
generated by $e_i-p$, where $1 \leq i \leq 9$ and $p=\frac19\sum_{i=1}^9 e_i$. 
The lattice
$A_8$ is generated by $e_i-e_j$, and the intermediate lattice $E_8$ is
obtained by adding $\ell - e_1-e_2-e_3$, where $\ell=3p$.

Now let $C$ be an irreducible curve of genus 1, so $C$ is either
smooth, or has a node, or a cusp. Let $G=\Pic^0 C$, so either an
elliptic curve (with a choice of 0), or $\bC^*\ni 1$, or $\bG_a\ni
0$. The nonsingular locus $C^0$ is a $G$-torsor.

\begin{lemma}\label{lem:root-tensor-group}
  Let $A_n, E_8$ be the standard root lattices, and $A_n^*$ the dual
  lattice. Then:
  \begin{enumerate}
  \item $\Hom(A_n,G) = A_n^*\otimes G = G^{n+1}/\diag G = (C^0)^{n+1}/ G$
    parametrizes $(n+1)$ nonsingular points $P_i$ on $C$ modulo
    translations by $G$.
  \item
    $\Hom(A_n^*, G) = A_n\otimes G = \{(g_1,\dotsc, g_{n+1}) \mid \sum
    g_i = 0\}$ parametrizes the choice of an origin $P_0\in C^0$ plus
    $(n+1)$ nonsingular points $P_i\in C^0$ such that
    $(n+1)P_0 \sim \sum P_i$.
  \item $\Hom(E_8,G) = E_8\otimes G$ parametrizes embeddings
    $C\subset\bP^2$ as a cubic curve plus 8 points $P_i\in C^0$, or
    equivalently embeddings $C\subset\bP^2$ plus 9 points $P_i\in C^0$
    such that $\prod_{i=1}^9 P_i=1$ in the group law of $C^0$. Thus,
    $P_9$ is the 9th base point of the pencil $|C|$ of cubic curves on
    $\bP^2$ through $P_1,\dotsc, P_8$.
  \end{enumerate}
\end{lemma}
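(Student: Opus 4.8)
The plan is to separate the purely lattice-theoretic identities from the geometric interpretation, and to obtain the latter from the torsor structure of $\Pic^d(C)$ under $G$ recalled above.

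\emph{Lattice step.} For any lattice $M$ (finitely generated free $\bZ$-module) and any abelian group $G$, a choice of basis gives a natural isomorphism $\Hom_\bZ(M,G)\cong\Hom_\bZ(M,\bZ)\otimes_\bZ G$. The root-lattice form identifies $\Hom_\bZ(A_n,\bZ)\cong A_n^*$, $\Hom_\bZ(A_n^*,\bZ)\cong A_n$, and, by unimodularity of $E_8$, $\Hom_\bZ(E_8,\bZ)\cong E_8$; this gives the first equalities in (1)--(3). The coordinate descriptions come from tensoring short exact sequences with $G$: both $0\to\bZ\cdot(1,\dots,1)\to\bZ^{n+1}\to A_n^*\to0$ and $0\to A_n\to\bZ^{n+1}\to\bZ\to0$ split (the quotients being free), so $A_n^*\otimes G=G^{n+1}/\diag G$ and $A_n\otimes G=\{(g_i)\in G^{n+1}:\sum g_i=0\}$; and the realization $A_8\subset E_8\subset A_8^*$ of the text exhibits $\Hom(E_8,G)=E_8\otimes G$ as an intermediate subgroup between $A_8\otimes G$ and $A_8^*\otimes G$.

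\emph{Geometric step.} Recall that $G=\Pic^0(C)$ acts simply transitively on $\Pic^d(C)$ for every $d$, and that $P\mapsto[\cO_C(P)]$ identifies $C^0$ with $\Pic^1(C)$ as a $G$-torsor. For (1), a base point $O\in C^0$ trivializes $C^0\cong G$ compatibly with the diagonal translation action, so $(C^0)^{n+1}/G\cong G^{n+1}/\diag G$ independently of $O$, and a point of $\Hom(A_n,G)$ is exactly an $(n+1)$-tuple of points of $C^0$ modulo simultaneous translation. For (2), fixing an origin $P_0\in C^0$ and identifying $C^0\cong G$ via $P\mapsto[\cO_C(P-P_0)]$ turns the relation $\sum g_i=0$ into $\sum_{i=1}^{n+1}P_i\sim(n+1)P_0$. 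For (3), regard $C$ as a plane cubic with an inflection point $O$ taken as group-law origin, so that $\cO_C(1)\sim 3O$ and every plane cubic cuts out on $C$ a divisor in $|9O|$; hence nine points $P_1,\dots,P_9\in C^0$ lie on a plane cubic iff $\sum P_i\sim 9O$, i.e.\ $\prod P_i=1$, and for eight general such points the ninth is the remaining base point of the pencil of cubics through them. One then matches the element of $A_8\otimes G\subset E_8\otimes G\subset A_8^*\otimes G$ attached to the nine points --- the generator $\ell-e_1-e_2-e_3$ of $E_8$ over $A_8$ recording the plane-cubic datum, $\ell=3p$ being the class of $\cO_C(1)$ --- with the stated data.

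\emph{Main obstacle.} The delicate point is the bookkeeping in part (3): one must check that the generators of $E_8$ over $A_8$ correspond to the plane-cubic structure itself and not to some twist, and keep track of the finite groups $\bZ/3=E_8/A_8$ and $A_8^*/E_8$ against the remaining ambiguities --- the choice of origin $O$ and the passage between ``nine points with $\sum P_i\sim 9O$'' and ``nine points up to translation''. The exactness and splitting verifications in the lattice step are routine.
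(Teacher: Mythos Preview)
Your proposal is correct and follows essentially the same route as the paper: both use the explicit realizations $A_n=\{(a_i)\in\bZ^{n+1}:\sum a_i=0\}$ and $A_n^*=\bZ^{n+1}/\diag\bZ$ for (1) and (2), and both exploit the chain $A_8\subset E_8\subset A_8^*$ for (3).

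The one place where the paper is crisper is part (3). Rather than ``matching generators'' as you suggest, the paper makes the following direct observation: by (2), a point of $\Hom(A_8^*,G)=A_8\otimes G$ is an origin $P_0\in C^0$ together with nine points summing to $9P_0$; but choosing $P_0$ is exactly the same as choosing a cubic embedding (via $|3P_0|$) \emph{together with} a flex of that embedding. Forgetting the flex is then precisely dividing by $G[3]$, and the index-$3$ inclusion $A_8\subset E_8$ (with $G$ divisible in all three cases of interest) gives $E_8\otimes G=\Hom(A_8^*,G)/G[3]$. This resolves your ``main obstacle'' in one line and makes the finite-group bookkeeping transparent.
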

\begin{proof}
  We have $A_n^*=\bZ^{n+1}/\diag\bZ$ and
  $A_n=\{(a_1,\dotsc,a_{n+1})\in\bZ^{n+1} \mid \sum a_i=0\}$, so (1)
  and (2) follow. Hence, $\Hom(A_8^*, G)$ parametrizes embeddings
  $C\subset\bP^2$ with 8 points and a choice of a flex, and
  $E_8\otimes G = \Hom(A_8^*,G)/G[3]$ forgets the flex.
\end{proof}

Thus, the torus $T_{A_8^*}$ parametrizes 8 smooth points
$P_1,\dotsc,P_8$ on a nodal cubic curve $C\subset\bP^2$ with a chosen
flex, and the torus $T_{E_8}$ the same, but forgetting the flex. 
We now take a 
concrete rational nodal cubic $C\subset\bP^2$ given by the equation
$g_0 = -uvw+v^3+w^3$ with a rational parametrization
$(u:v:w)=(t^3-1 : t : -t^2)$, so that the singular point of $C$ is
$(1:0:0)$ corresponding to $t=0$ or $\infty$.
Now consider a family over $\bA^8$ of cubics
\begin{displaymath}
  g_1 = \sum_{i,j\ge0, \ i+j\le3} a_{ij} u^{3-i-j} v^i w^j, 
  \quad a_{00} = 1, \ a_{11} = 0
\end{displaymath}
Then any pencil of cubic curves, parametrized by $x$, with a smooth generic fiber which has
$C$ at $x=\infty$ has a unique representation by a polynomial
$g(x;u,v,w) = x g_0+ g_1$. It is a simple exercise to put this pencil
into the Weierstrass form $\phi^2=y^3+ A(x)y + B(x)$ using Nagell's
algorithm or simply by using the \cite{sagemath} function
WeierstrassForm.  The polynomials $A(x)$, $B(x)$ have degrees 4 and 5
(not 6 since $C$ is singular).  The following is an easy explicit
computation:

\begin{lemma}\label{lem:change-coords}
  There is a unique change of coordinates of the form $x\mapsto x+d$,
  $y\mapsto y + ax^2 + bx + c$ which leaves the fiber $C$ at
  $x=\infty$ in the pencil intact and takes the polynomial
  $f(x,y) = y^3 +A(x)y + B(x)$ into the form of the equation
  \eqref{eq:E-families} for $E_8$.
\end{lemma}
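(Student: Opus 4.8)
\emph{Proof proposal.} The plan is to treat Lemma~\ref{lem:change-coords} as a coefficient-matching problem and solve it in cascade. First I would record what Nagell's algorithm produces: the pencil $xg_0+g_1$ is brought to a Weierstrass equation $\phi^2=y^3+A(x)y+B(x)$ in which $A$ and $B$ are polynomials in $x$ of controlled degree whose leading behaviour is fixed once and for all by the fibre $C=\{g_0=0\}$ over $x=\infty$ (it does not depend on the parameters $a_{ij}$). The key observation is that the $E_8$ normal form on the right of \eqref{eq:E-families}, written out as a polynomial in $x$ and $y$, uses only the monomials $y^3$, $x^2y^2$, $xy$, $y^2$, $y$, and $1,x,x^2,x^3,x^4,x^5$; so matching $f$ to it is the same as killing every other monomial of the transformed polynomial and then reading off $c''$, $c'_2$, $c'_1$, $c_0,\dots,c_4$ from what remains.

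Next I would substitute $x\mapsto x+d$, $y\mapsto y+\ell(x)$ with $\ell(x)=ax^2+bx+c$ into $y^3+A(x)y+B(x)$ and expand $(y+\ell)^3+A(x+d)(y+\ell)+B(x+d)$, organising by powers of $y$. The $y^3$-coefficient is $1$, as required. The $y^2$-coefficient is $3\ell(x)=3ax^2+3bx+3c$, which must have the shape $-\tfrac14x^2+(\text{const})$; this forces $a=-\tfrac1{12}$ and $b=0$ and \emph{defines} $c'_2:=3c$. The $y^1$-coefficient is $3\ell(x)^2+A(x+d)$, which must be linear in $x$: its top ($x^4$) coefficient vanishes on account of the fixed leading coefficient of $A$; its $x^3$-coefficient is affine-linear in $d$ with nonzero slope (again from the fixed top of $A$), so it determines $d$; its $x^2$-coefficient is then affine-linear in $c$ with nonzero slope, so it determines $c$ (hence $c'_2$); and its $x^1$- and $x^0$-coefficients \emph{define} $c''$ and $c'_1$. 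At this point $a,b,c,d$ are completely pinned down, so the $y^0$-coefficient $\ell^3+A(x+d)\ell+B(x+d)$ is a definite polynomial in $x$; its coefficients of $x^0,\dots,x^5$ \emph{define} $c_0,\dots,c_4$ (absorbing the $-\tfrac14(c'')^2$ term into $c_0$ and checking the $x^5$-coefficient is $1$). Since the parameters were forced at every step, the change of coordinates of the prescribed shape is unique.

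The one thing that is not a mere definition — and the main obstacle — is the consistency of the \emph{over-determined tail}: once $a,b,c,d$ are fixed one must verify that the high-degree behaviour of the $y^1$- and $y^0$-parts really collapses into the allowed shape (the $x^{\ge 4}$-part of the $y^1$-coefficient must vanish, the top coefficients of the $y^0$-coefficient must vanish down to $x^5$, and that last one must equal $1$). I would discharge this in one of two ways. Directly: compute the leading coefficients of $A$ and $B$ coming out of Nagell's algorithm applied to the fixed cubic $g_0=-uvw+v^3+w^3$; since these only see the fibre at $x=\infty$, they are forced to precisely the values that make the tail collapse (this is what the surrounding text means by ``an easy explicit computation''). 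Conceptually: the Weierstrass family over $\bA^8$ and the $E_8$ normal-form family present the same object — the multiplicative (nodal) version of Tjurina's versal deformation of the $E_8$ Du Val singularity, with the \emph{same} distinguished fibre $C$ over $x=\infty$ — so a transformation of the allowed shape identifying them must exist. Finally, ``leaves the fibre $C$ at $x=\infty$ intact'' requires only the remark that the base coordinate $x$ is altered merely by a translation and that $\ell(x)$ is a polynomial, so the change of coordinates extends across $x=\infty$ in the natural model over $\bP^1_x$ and does nothing to the member $C$ of the pencil; evaluating at the special parameter then recovers the stated central fibre $F=xyz-z^2-y^3-x^5$ at $(0,0,0)$.
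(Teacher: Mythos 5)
Your proposal is correct and is essentially the paper's argument: the paper dismisses this lemma as ``an easy explicit computation,'' and your cascade (matching the $y^2$-coefficient to force $a=-\tfrac1{12}$, $b=0$; the $x^3$- and $x^2$-coefficients of the $y$-part to force $d$ and $c$; reading off the $c$'s from what remains; and verifying that the leading coefficients of $A$ and $B$, which are governed by the nodal fibre at $x=\infty$, make the over-determined tail collapse) is exactly that computation, organized so that uniqueness is visible at each step.
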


We will use this to build a family over $(\bC^*)^8$ with the required properties. We
pick $t_1,\dotsc, t_8 $ in $\bC^*$ arbitrarily and then also $t_9$ so
that $\prod_{i=1}^9 t_i=1$. Using the rational parametrization of the
nodal cubic $C$, this gives 9 smooth points
$P_1,\dotsc, P_9 \in C$.

\begin{lemma}\label{lem:pass-thru-pts}
  The pencil $g(x;u,v,w)$ passes through the points $P_1,\dotsc, P_9$ iff
  \begin{align*}
    & a_{10} = \sigma_{8}\quad
      a_{01} = \sigma_{1}\quad  
      a_{21} = -\sigma_{2} + \sigma_{5} - \sigma_{8}\quad
      a_{12} = -\sigma_1 + \sigma_4 - \sigma_7  \\
    & a_{30} = -3 + \sigma_{6}\quad  
      a_{03} = -3 + \sigma_3\quad 
      a_{20} = -\sigma_{1} + \sigma_{7}\quad
      a_{02} = \sigma_2 - \sigma_8,
  \end{align*}
  where $\sigma_i$ are the elementary symmetric polynomials in
  $t_1,\dotsc, t_9$.
\end{lemma}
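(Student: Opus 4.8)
\textbf{Proof proposal for Lemma~\ref{lem:pass-thru-pts}.}

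The plan is to reduce the nine incidence conditions to a single polynomial identity in one variable. First I would observe that $g_0 = -uvw + v^3 + w^3$ is precisely the defining equation of the nodal cubic $C$, so that $g_0(P_i) = 0$ for every $i$; consequently $g(x; P_i) = x\,g_0(P_i) + g_1(P_i) = g_1(P_i)$ independently of $x$, and the pencil $g(x;u,v,w)$ has $P_1,\dots,P_9$ in its base locus if and only if the cubic $g_1$ vanishes at all of them. Since each $P_i$ is the image of $t_i\in\bC^*$ under the rational parametrization $t\mapsto(t^3-1 : t : -t^2)$, I would substitute this parametrization into $g_1$ to obtain a one-variable polynomial $\tilde g_1(t):=g_1(t^3-1,\,t,\,-t^2)$.

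Next I would record that $\tilde g_1(t)$ has degree $9$ and is monic: the monomial $u^{3-i-j}v^iw^j$ contributes $(-1)^j(t^3-1)^{3-i-j}\,t^{i+2j}$, of degree $9-3i-3j+i+2j = 9-2i-j\le 9$, with equality only for $(i,j)=(0,0)$, i.e. for $u^3$, whose coefficient is normalized to $a_{00}=1$. Hence the requirement $\tilde g_1(t_i)=0$ for all nine indices — nine roots of a monic degree-$9$ polynomial — is equivalent to the identity $\tilde g_1(t) = \prod_{i=1}^9(t-t_i)=\sum_{k=0}^9(-1)^k\sigma_k\,t^{9-k}$, where $\sigma_k$ is the $k$-th elementary symmetric polynomial in $t_1,\dots,t_9$. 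Using the hypothesis $\prod t_i=1$, one has $\sigma_9=1$, so the constant-term comparison reads $-a_{00}=-\sigma_9=-1$, which is automatic; this matches the fact that $a_{00}$ is fixed rather than free.

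Finally I would carry out the bookkeeping: expand each of the ten monomials $u^{3-i-j}v^iw^j$ into a polynomial in $t$ via $(t^3-1)^a=\sum_k\binom{a}{k}(-1)^{a-k}t^{3k}$, collect the coefficient of each power $t^1,\dots,t^8$ of $\tilde g_1(t)$, and equate it to the corresponding $(-1)^k\sigma_k$. The resulting linear system in the eight free coefficients $a_{ij}$ (recall $a_{00}=1$, $a_{11}=0$) is essentially triangular: the coefficients of $t^1$ and $t^8$ give $a_{10}=\sigma_8$ and $a_{01}=\sigma_1$ directly, and the remaining equations then pin down $a_{20},a_{02},a_{30},a_{03},a_{12},a_{21}$ one at a time, producing exactly the eight formulas asserted. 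There is no genuine obstacle here; the only point meriting a moment's care is that the normalization $a_{11}=0$ is precisely what makes the system determined — the monomial $uvw$ feeds only into the coefficients of $t^6$ and $t^3$, which are otherwise governed by $a_{03}$ and $a_{30}$, so omitting it would leave a one-parameter ambiguity. The substantive part of the argument is the reduction in the first paragraph; the rest is a short, if slightly tedious, computation.
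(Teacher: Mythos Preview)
Your proposal is correct and follows exactly the same route as the paper: substitute the parametrization into $g_1$, obtain a monic degree-$9$ polynomial in $t$ with constant term $-1$, identify it with $\prod_{i=1}^9(t-t_i)$, and solve the resulting linear system for the $a_{ij}$. You have simply filled in more detail than the paper (the reduction from $g$ to $g_1$, the degree count, the automaticity of the $t^0$ and $t^9$ coefficients, and the role of the normalization $a_{11}=0$), all of which is accurate.
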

\begin{proof}
  We plug the rational parametrization
  $(u:v:w)=(t^3-1 : t : -t^2)$ into $g_1(u,v,w)$ to obtain a monic
  polynomial of degree 9 with constant coefficient $-1$ which we set
  equal to $\prod_{k=1}^9 (x-t_k) = \sum_{n=0}^9 (-1)^{n+1}\sigma_i x^i$. Then
  we solve the resulting linear equations for~$a_{ij}$.
\end{proof}

\begin{proof}[Proof of Thm.~\ref{thm:E-discr}]
  Define the pencil $g(x;u,v,w)$ as in Lemma~\ref{lem:pass-thru-pts},
  convert into Weierstrass form $\phi^2=y^3+ A(x)y + B(x)$,
  then apply  Lemma~\ref{lem:change-coords} to obtain 
   a polynomial $f(x,y)$ in the form of 
  equation~\eqref{eq:E-families}. The coefficients $c_i$ in the resulting
  expression for $f(x,y)$ satisfy $c_i\in \bC[A_8^*]^{S_9}$, so
  we obtain a family parametrized by the torus
  $T(A_8^*)=\Hom(A_8^*,\bC^*)$.
  % Unfortunately,
  % while quick and simple to compute, this expression is also 
  % quite enormous; its coefficients $c_i$
  % are given explicitly in terms of $\sigma_i(t_k)$, 
  % the fundamental characters of~$A_8$. 
  The final very computationally intensive step, accomplished
  using \cite{sagemath}, is to
  rewrite it in terms of the characters of $E_8$.
  % For this, using
  % \cite{sagemath} we computed the restrictions to $A_8$ of the
  % fundamental characters $\chi_i$ of $E_8$ and of their ``lower
  % terms'', the dominant weights lying below $\vpi_i$.
  % We then computed $c_i$ recursively as
  % their linear combinations; the result is given in the statement.

  We now prove that the discriminant $\Discr(f)$ of this family of
  polynomials coincides with the discriminant $\Discr(E_8)$.
  We have a trivial family $\wt\cX^0=\bP^2\times T_{A_8^*}\to T_{A_8^*}$ with 9
  sections, call them $s_1,\dotsc, s_9$, corresponding to the points
  $P_i\in C^0$. Let $\wt\cX^n$, $1\le n\le 9$, be the family obtained
  by performing a smooth blowup of $\wt\cX^{n-1}$ along the strict
  preimage of $s_n$.

  On each fiber the points $P_1,\dotsc,P_8\in\bP^2$ are in an
  \emph{almost general position} because they lie on an irreducible
  cubic (see
  \cite[p.39]{demazure1980seminaire-sur-les-singularites}). This
  means that $-K_{\cX^8}$ is relatively nef and semiample, and
  defines a contraction to a family $\cX^8\to T(A_8^*)$ of del Pezzo
  surfaces with relatively ample $-K_{X^8}$ and with Du Val
  singularities.

  On the other hand, $\wt\cX^9$ is a family of Jacobian elliptic surfaces
  with a section $s_9$ corresponding to the last point $P_9$. 
  The linear system $|Ns_9|$, $N\gg0$ gives a contraction
  $\wt\cX^9\to\cX^9$ to a family  of surfaces with \ade singularities. 
  Let
  $\wt\iota_9$ be an elliptic involution $w\mapsto -w$ for this choice
  of a zero section. It descends to an involution $\wt\iota_8$ of
  $\wt\cX^8$ which in turn descends to an involution $\iota_8$ of
  $\cX^8$. It is easy to see that the quotients are families of the
  surfaces $X^9/\iota_9 = \bF_2$ and
  $Y^8=X^8/\iota_8 = \bF_2^0 = \bP(1,1,2)$. The families of the
  polynomials $f(x,y)$ written above are just the equations of the
  branch curves. On each fiber, the ramification curve passes through
  the singular point of the nodal cubic $C$. Blowing up the image of
  this point on $Y^8$ finally gives the toric $E_8$-surface $Y$ as in
  Fig.~\ref{fig:E} corresponding to the Newton polytope of $f(x,y)$.

  The branch curve $f=0$ is singular iff the double cover $X^8$
  is singular. This happens precisely when the points $P_1,\dotsc,
  P_8$ are not in general position:
  \begin{enumerate}
  \item some 3 out of 9 points $P_{i},P_{j},P_{k}$ lie on a line
    $\iff$ the complementary 6 points lie on a conic $\iff$
    $t_{i}t_{j}t_{k}=1$.
  \item some 2 out of 9 points $P_i=P_j$ ($i>j$) coincide $\iff$ the
    complementary 7 points lie on a cubic which also has a node at
    $P_j$ $\iff t_i=t_j$.
  \end{enumerate}
  These are precisely the root loci for the roots of $E_8$ in terms of
  the lattice $A_8^*$, with $t_i = e^{e_i - p}$. For our explicit parametrization of
  the nodal cubic $C$ this can be seen from 
    \begin{displaymath}
    \det
    \begin{vmatrix}
      t_i^3-1 & t_i & -t_i^2 \\
      t_j^3-1 & t_j & -t_j^2 \\
      t_k^3-1 & t_k & -t_k^2 
    \end{vmatrix}
    = (t_it_jt_k-1) (t_i-t_j)(t_i-t_k)(t_j-t_k).
  \end{displaymath}
  This shows that $\Discr(f)$ is a product of the equations $(e^\alpha-1)$
  of the root loci, and it is easy to see that they appear with
  multiplicity 1. Thus, $\Discr(f)=\pm \Discr(E_8)$. 

  This completes the proof in the $E_8$ case.  The $E_7$ and $E_6$
  cases are obtained as degenerations of this construction. In the
  $E_7$ we blow up 7 smooth points of the cubic $C$ and the node
  $P_8$. Then there exists a unique point $P_9$ which is infinitely
  near to $P_8$ such that all the cubics in the pencil
  $|C-P_1-\dotsb P_8|$ pass through $P_9$. In other words, $P_9$ is a
  point on the exceptional divisor $E_8$ of the blowup at $P_8$
  corresponding to a direction $t_9\ne0,\infty$ at $P_8$ for which we
  can write an explicit equation. Blowing up at $P_9$ gives an
  elliptic surface $\wt\cX^9\to\bP^1$ with a zero section and an elliptic
  involution.  The preimage of $C$ on $\wt\cX^9$ is an $I_2$ Kodaira
  fiber, instead of an $I_1$ fiber in the $E_8$ case.  In the same way
  as above, the discriminant locus is a union 
  of root loci for the roots of~$E_7$.

  The $E_6$ case is a further degeneration. We pick 6 smooth points on
  $C$ plus the node $P_7$ plus an infinitely near point $P_8\to P_7$
  corresponding to one of the directions at the node. Then there
  exists a unique infinitely near point $P_9\to P_8$ such that all the
  cubics in the pencil $|C-P_1-\dotsb P_8|$ pass through
  $P_9$. Blowing up at $P_9$ gives an elliptic surface $\wt\cX^9\to\bP^1$
  with a zero section and an elliptic involution.  The preimage of $C$
  on $\wt\cX^9$ is an $I_3$ Kodaira fiber.  In the same way as above,
  the discriminant locus is a union of root loci for the roots of
  $E_6$.

  For the uniqueness, write
  $c_i = \chi_i + \sum_{\lambda<\vpi_i} c_{i,\lambda} \chi(\lambda)$.
  The weights $\lambda<\vpi_i$ all lie below $\vpi_0$, there are 23 of
  them, and the partial order on them is described in
  Remark~\ref{rem:weights-below-fundamental}. Equating
  $\Discr(f) = \Discr(\Lambda)$ gives a system of polynomial
  equations in $c_{i,\lambda}$ which is upper triangular: 
  There is a linear equation 
  for the highest coefficient $c_{i,\lambda}$ with no other
  coefficients present, so with a unique solution. Then the equation
  for the next coefficient $c_{j,\lambda'}$ is linear with a unique
  solution once the higher coefficients are known, etc. The solutions
  are obtained recursively, in a unique way at every step.
\end{proof}

\subsection{Compactifications of the canonical families}
\label{sec:degn-canonical-families}

In this subsection we prove the remaining portion of 
Theorem~\ref{thm:canonical-families-summary}.

\begin{theorem}\label{thm:canonical-family-compactified}
  The canonical family extends to the compactifications $V_M\cox$ of
  Theorems~\ref{thm:compactified-family-A},
  \ref{thm:compactified-family-A'DE},
  \ref{thm:compactified-family-primed}. The restriction of the
  compactified canonical family to a boundary stratum is the canonical
  family for a smaller Dynkin diagram.
\end{theorem}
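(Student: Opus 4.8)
The plan is to prove the two assertions of the theorem separately: first that the canonical equation $f$ feeds into exactly the construction that produced the compactified naive families, and second that restriction of the resulting family to a boundary stratum is governed by the uniqueness characterization of Theorem~\ref{thm:two-discrs}.

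\emph{Extension to $V\cox_M$.} The only properties of the equation used in Theorems~\ref{thm:compactified-family-A}, \ref{thm:compactified-family-A'DE}, \ref{thm:compactified-family-primed} are that it is a homogeneous degree-$2$ element of the subring $S\subset R[x,y][\xi]$, that it is $W_\Lambda$-invariant, that its monomials in $x,y$ fill out the polytope $P$, and that its boundary part $f_\bdry$ (the coefficients of the vertices $p_*,p_\ell,p_r$ and of the unprimed end monomials) is the standard one. The canonical $f$, obtained from the naive equation by replacing each interior coefficient $c_i=\chi(\vpi_i)$ by $c_i=\chi(\vpi_i)+\sum_{\lambda<\vpi_i}c_{i,\lambda}\,\chi(\lambda)$, has all of these: since $c_ix^i\xi^2=\hc_iu_i$ with $u_i\in S$, the monomials in $x,y$ and the part $f_\bdry$ are literally unchanged, and $W_\Lambda$-invariance holds because each $c_i$ is a combination of characters. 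The one point to check is that $\hc_i=e^{-\vpi_i}c_i$ still lies in $R=\bC[M^+]$, and this follows from the observation that if $\lambda$ is dominant with $\lambda<\vpi_i$ and $\lambda\equiv\vpi_i$ in $\Lambda^*/\Lambda$, then every weight $\mu$ of $V(\lambda)$ satisfies $\vpi_i-\mu=(\vpi_i-\lambda)+(\lambda-\mu)\in\sum_j\bZ_{\ge0}\alpha_j$, so $e^{\mu-\vpi_i}$ is a monomial in the $a_j=e^{-\alpha_j}$ with non-negative exponents, being the constant monomial only if $\mu=\vpi_i$, which is impossible. Hence $e^{-\vpi_i}\chi(\lambda)$ lies in $\bC[a_j]$ with zero constant term, so $\hc_i=1+(\text{positive-degree polynomial in the }a_j)$, exactly as for the naive family. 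Therefore the constructions of the three theorems go through verbatim with $f$ in place of the naive equation, producing a flat $W_\Lambda$-invariant family of stable \ade pairs over $V\cox_M$; for primed shapes one blows up the canonical sections and applies the Priming Rules of Definition~\ref{def:priming-rules} just as before. This proves the first sentence.

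\emph{Restriction of the equation to a stratum.} It suffices to treat a codimension-one stratum $O_{\Delta\setminus\beta}$, obtained by setting $a_\beta=0$, and to iterate, together with the corner stratum $b''=0$ in the $D$ and $E$ cases. Over $O_{\Delta\setminus\beta}$ the degenerate surface is the reducible \ade surface described in Theorems~\ref{thm:compactified-family-A}, \ref{thm:compactified-family-A'DE}, \ref{thm:compactified-family-primed} (whose proofs used only the features of $f$ recalled above, plus the relations of Lemmas~\ref{lem:A-relations}, \ref{lem:A'DE-relations}), with irreducible components $Y_{S'}$ labelled by Dynkin sub-diagrams $S'$. Running the Weyl-character-formula argument from the proof of Lemma~\ref{lem:restrict-chi} for the restriction homomorphism $r\colon e^{-\beta}\mapsto 0$ — iterated once for each simple root outside $S'$ — shows that the leading coefficient of each $c_i$ restricts to the fundamental character of the root lattice $\Lambda_{S'}$ of $S'$, while each lower-weight character $\chi(\lambda)$ restricts to a $\bZ$-linear combination of characters of $\Lambda_{S'}$ of strictly lower dominant weights. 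Hence, after the change of variables appropriate to the chart around $Y_{S'}$, the equation $f|_{Y_{S'}}$ is a deformation of the naive equation for $S'$ of the form ``fundamental character $+$ lower terms'', i.e.\ a candidate canonical equation for $S'$. In the corner case $b''=0$ one additionally uses $r(\hc'')=1$ to reduce the corner relation of Lemma~\ref{lem:A'DE-relations} to a type-$A$ relation, so that $Y_{S'}$ is the straightened $\quA$-surface, for which the canonical equation coincides with the naive one by Theorem~\ref{thm:A-discr}.

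\emph{Pinning down the coefficients via the discriminant.} Let $\Theta\subset V\cox_M$ be the locus where the divisor $\cB$ is singular. By Theorems~\ref{thm:A-discr}, \ref{thm:D-discr}, \ref{thm:E-discr}, over the open torus one has $\Theta=\bigcup_{\alpha\in\Phi^+}\{e^\alpha=1\}$, with each root hypertorus of multiplicity $2$ since $\Discr(\Lambda)$ is the square of the Weyl denominator. The closure of $\{e^\alpha=1\}=\{1-e^{-\alpha}=0\}$ meets the open stratum $O_{\Delta\setminus\beta}$ if and only if $\alpha$ is a root of $\Lambda_{\Delta\setminus\beta}$ — otherwise $1-e^{-\alpha}$ is a regular function identically equal to $1$ along that stratum — and then it meets it in the corresponding root hypertorus. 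So the restriction of $\Theta$ to $O_{\Delta\setminus\beta}$, and hence to each component $Y_{S'}$, is exactly the $\Discr(\Lambda_{S'})$ locus with multiplicity $2$; once the normalization of $f_\bdry$ on $Y_{S'}$ fixes the overall constant, this means $\Discr\!\big(f|_{Y_{S'}}\big)=\Discr(\Lambda_{S'})$. Combined with the previous paragraph, $f|_{Y_{S'}}$ is a candidate canonical equation for $S'$ whose discriminant is $\Discr(\Lambda_{S'})$, so by the uniqueness assertions of Theorems~\ref{thm:two-discrs}, \ref{thm:A-discr}, \ref{thm:D-discr}, \ref{thm:E-discr} it \emph{is} the canonical equation for $S'$. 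The folded shapes arising through the Priming Rules are handled identically, since a fold changes only the normalization $C^\nu\to C$ and neither the equation nor its discriminant.

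\emph{Expected main obstacle.} The delicate step is the bookkeeping underlying the last two paragraphs: correctly tracking the monomial factors produced when one restricts $\Discr(\Lambda)$ — equivalently, the Weyl denominator — to a toric stratum, and matching them against the coordinate rescalings on $x,y$ used to normalize $f_\bdry$ on each component $Y_{S'}$, uniformly across the interior-node, end-node and corner degenerations and across the primed and folded shapes. Everything else reduces to a routine, if lengthy, verification built on Lemma~\ref{lem:restrict-chi} and the explicit relations of Lemmas~\ref{lem:A-relations}, \ref{lem:A'DE-relations}.
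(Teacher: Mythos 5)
Your first paragraph (extension to $V\cox_M$) is essentially the paper's argument: the authors likewise observe that the compactification construction only used that the leading monomial of each coefficient is $e^{\vpi}$ and that the remaining monomials are $e^{\vpi-\alpha-\sum n_\beta\beta}$, conditions preserved when one adds characters of lower dominant weights. Your second and third paragraphs, however, take the route that the paper explicitly names and then declines: the authors write that ``the defining property of the canonical family is automatically satisfied for the restrictions'' is ``a sketch of a possible proof, which can be made precise,'' and instead they \emph{check the equations directly} --- computing the poset of dominant weights below the fundamental weights (Tables~\ref{tab:E8-weights}, \ref{tab:E8-survive} for $E_8$), determining exactly which lower-weight characters survive each degeneration $a=0$, and verifying term-by-term that the restricted equation coincides with the canonical equation of the smaller diagram (e.g.\ that setting $a_2'=0$ in the $E_8$ equation reproduces the $D_7$ equation of Example~\ref{ex:D7}). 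What the direct check buys is that it never needs to restrict a discriminant across a boundary stratum; what your route would buy, if completed, is independence from the explicit $E_8$ coefficient tables.

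The gap in your version sits exactly where you flag the ``main obstacle,'' and it is more than bookkeeping. To invoke the uniqueness of Theorem~\ref{thm:two-discrs} you need the polynomial identity $\Discr\bigl(f|_{Y_{S'}}\bigr)=\Discr(\Lambda_{S'})$, but your argument only controls the \emph{closure} of the singular locus over the open torus. Singularity of a fiber is a closed condition, so semicontinuity gives you containment in the wrong direction: the boundary fibers could be singular on a strictly larger locus than the limit of the root hypertori (extra components of $\Theta$ supported entirely over the boundary are not excluded by observing that $1-e^{-\alpha}$ restricts to $1$ when $\alpha\notin\Lambda_{S'}$). Even granting set-theoretic equality of the loci, $\Discr(\Lambda_{S'})$ is a square, so you cannot recover the polynomial from its reduced zero set; you would need to compare multiplicities of $\Discr\bigl(f|_{Y_{S'}}\bigr)$ along each hypertorus with those of the restriction of $\Discr(f)$, and the degree of the discriminant of the restricted equation is not a priori the degree of the restriction of the discriminant. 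Without closing these two points the appeal to uniqueness does not go through, which is presumably why the paper opts for the explicit verification.
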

\begin{proof}
  For the compactification we use exactly the same formulas as in
  Theorems~\ref{thm:compactified-family-A},
  \ref{thm:compactified-family-A'DE},
  \ref{thm:compactified-family-primed}, and the proofs go through
  unchanged. Indeed, the only fact we used was that the leading
  monomial in each coefficient $c$ is $e^{\vpi}$, and that the other
  monomials are of the form $e^w$ for some weights of the form
  $w=\vpi - \alpha- \sum_{\beta} n_\beta \beta$. These are
  automatically satisfied if we modify $\chi(\vpi)$ only by adding
  characters of some lower weights $\lambda<\vpi$.

  For the fact that a canonical family restricts to canonical families
  on the boundary strata, a sketch of a possible proof, which can be
  made precise, is that the defining property of the canonical family
  is automatically satisfied for the restrictions.  Instead, we
  check the equations directly.

  For $A_n$ the check is immediate: the coefficients $\hchi_i$ of
  equation~\eqref{eq:A-families-uv} restrict to $\hchi_i$ by
  Lemma~\ref{lem:restrict-chi}, so \eqref{eq:A-families-uv} restricts
  to the $A_m$ family for a smaller $A_m$ diagram.

  For $D_n$ there are no dominant weights below $\vpi'$, $\vpi''$, and
  the dominant weights below $\vpi_i$ are $\vpi_{i+2}$, $\vpi_{i+4}$,
  etc., with the relations
  \begin{equation}\label{eq:D-relation}
    \vpi_i - \vpi_{i+2} = \alpha' + \alpha'' + 2\alpha_0 + \dotsb
    +2\alpha_i + \alpha_{i+1}.
  \end{equation}
  By Lemma~\ref{lem:restrict-chi}, the character
  $\chi(\lambda) = e^{\lambda} \hchi(\lambda)$ 
  under the degeneration $a_i=e^{-\alpha_i} \to 0$ goes to:
  \begin{enumerate}
  \item $0$ if in the expression $\lambda = \vpi - \sum
    n_\alpha\alpha$ one has $n_{\alpha_i} > 0$, or to 
  \item $\hchi(p(\lambda))$ if $n_{\alpha_i}=0$, where $p(\vpi_i) = 0$
    and $p(\vpi_j) = \vpi_j$ for $j\ne i$. 
  \end{enumerate}
  Thus, under the degenerations $a'=0$, resp. $a''=0$ \emph{all} the
  lower weights disappear, and we are left with an equation for the
  $\pA_{n-1}$, resp. $A_{n-1}$ family. Under the degeneration $a_i=0$,
  the limit surface has two components, and on the left, resp. right,
  surface the equation becomes the $D_{i+2}$, resp. $A_{n-i-3}$ family
  if $i>0$. For $i=0$ we get the equations of $A_1$ and $A_{n-3}$.

  \ifshortversion
  \else
\begin{table}[ht!]
  \setlength{\tabcolsep}{3pt}
  \centering
  \begin{tabular}{|lc|l|cccccccc|l|}
    \hline
    higher
    & &lower
    &$\alpha''$&$\alpha'_2$&$\alpha'_1$&$\alpha_0$&$\alpha_1$&$\alpha_2$&$\alpha_3$&$\alpha_4$$ $&\\
    \hline
    \rowcolor{\graycol}
    $\vpi_0$     &  &$\vpi'_2+\vpi_2$  &1 &  &1 &2 &1 &  &  &   &$D_4$ \\
    \multirow{2}{*}{$\vpi'_2+\vpi_2$} &\multirow{2}{*}{\Big\{}
    &$2\vpi'_2+\vpi_4$ &1 &  &1 &2 &2 &2 &1 &   &$D_6$\\
          && $\vpi''+\vpi_3$  &  &1 &1 &1 &1 &1 &  &   &$A_5$\\
    $2\vpi'_2+\vpi_4$&  &$\vpi'_1+\vpi_4$  &  &1 &  &  &  &  &  &   &$A_1$\\
    \multirow{2}{*}{$\vpi''+\vpi_3$} &\multirow{2}{*}{\Big\{}
    &$\vpi'_1+\vpi_4$  &1 &  &  &1 &1 &1 &1 &   &$A_5$\\
          && $2\vpi_3$     &2 &1 &2 &3 &2 &1 &  &   &$E_6$\\
    \multirow{2}{*}{$\vpi'_1+\vpi_4$} &\multirow{2}{*}{\Big\{}
      &$\vpi_2+\vpi_4$  &1 &1 &2 &2 &1 &  &  &   &$D_5$\\
    & &$\vpi''+\vpi'_2$  &  &  &1 &1 &1 &1 &1 &1  &$A_6$\\
    $2\vpi_3$    &  &$\vpi_2+\vpi_4$  &  &  &  &  &  &  &1 &   &$A_1$\\
    \multirow{2}{*}{$\vpi_2+\vpi_4$} &\multirow{2}{*}{\Big\{}
      &$\vpi'_2+2\vpi_4$ &1 &  &1 &2 &2 &2 &1 &   &$D_6$\\
    & &$\vpi_1$      &  &  &  &  &  &1 &1 &1  &$A_3$\\
    \multirow{2}{*}{$\vpi'_2+2\vpi_4$}&\multirow{2}{*}{\Big\{}
      &$3\vpi_4$     &2 &2 &3 &4 &3 &2 &1 &   &$E_7$\\
    & &$\vpi'_2+\vpi_3$  &  &  &  &  &  &  &  &1  &$A_1$\\
    $3\vpi_4$    &  &$\vpi_3+\vpi_4$  &  &  &  &  &  &  &  &1  &$A_1$\\
    $\vpi''+\vpi'_2$ &  &$\vpi_1$  &1 &1 &1 &1 &  &  &  &   &$A_4$\\
    \rowcolor{\graycol}
    $\vpi_1$     &  &$\vpi'_2+\vpi_3$  &1 &  &1 &2 &2 &1 &  &   &$D_5$\\
    \multirow{2}{*}{$\vpi'_2+\vpi_3$} &\multirow{2}{*}{\Big\{}
      &$\vpi''+\vpi_4$  &  &1 &1 &1 &1 &1 &1 &   &$A_6$\\
   & &$2\vpi'_2$     &1 &  &1 &2 &2 &2 &2 &1  &$D_7$\\
    \multirow{2}{*}{$\vpi''+\vpi_4$} &\multirow{2}{*}{\Big\{}
      &$\vpi_3+\vpi_4$  &2 &1 &2 &3 &2 &1 &  &   &$E_6$\\
    & &$\vpi'_1$      &1 &  &  &1 &1 &1 &1 &1  &$A_6$\\
    $\vpi_3+\vpi_4$ &  &$\vpi_2$      &  &  &  &  &  &  &1 &1  &$A_2$\\
    $2\vpi'_2$    &  &$\vpi'_1$      &  &1 &  &  &  &  &  &   &$A_1$\\
    \rowcolor{\graycol}
    $\vpi'_1$     &  &$\vpi_2$      &1 &1 &2 &2 &1 &  &  &   &$D_5$\\
    \rowcolor{\graycol}
    $\vpi_2$     &  &$\vpi'_2+\vpi_4$  &1 &  &1 &2 &2 &2 &1 &   &$D_6$\\
    \multirow{2}{*}{$\vpi'_2+\vpi_4$} &\multirow{2}{*}{\Big\{}
      &$2\vpi_4$     &2 &2 &3 &4 &3 &2 &1 &   &$E_7$\\
   & &$\vpi''$      &  &1 &1 &1 &1 &1 &1 &1  &$A_7$\\
    $2\vpi_4$    &  &$\vpi_3$      &  &  &  &  &  &  &  &1  &$A_1$\\
    \rowcolor{\graycol}
    $\vpi''$     &  &$\vpi_3$      &2 &1 &2 &3 &2 &1 &  &   &$E_6$\\
    \rowcolor{\graycol}
    $\vpi_3$     &  &$\vpi'_2$      &1 &  &1 &2 &2 &2 &2 &1  &$D_7$\\
    \rowcolor{\graycol}
    $\vpi'_2$     &  &$\vpi_4$      &2 &2 &3 &4 &3 &2 &1 &   &$E_7$\\
    \rowcolor{\graycol}
    $\vpi_4$     &  &$0$        &3 &2 &4 &6 &5 &4 &3 &2  &$E_8$\\
    \hline
  \end{tabular}
  \medskip
  \caption{Partial order on dominant weights of $E_8$ below $\wpi_0$}
  \label{tab:E8-weights}
\end{table}
\fi

\begin{table}[ht!]
  \setlength{\tabcolsep}{3pt}
  \centering
  \begin{displaymath}
  \begin{array}{|c|c|c|c|cc|}
    \hline
    &a'_2=0&a_2=0&a_3=0&\multicolumn{2}{c|}{a_4=0} \\
    \hline

c'' & & & \vpi_3 & \vpi_3 &\\
\hline
c_2' & & & & \vpi_4 &\\
\hline
c_1' & & \vpi_2 & \vpi_2 & \vpi_2 & 2\vpi_4 \\
 & & & & \vpi_2'+\vpi_4 &\\
\hline
c_0  & \vpi_2'+\vpi_2 & \vpi_2'+\vpi_2 & \vpi_2'+\vpi_2 &
                                                             \vpi_2'+\vpi_2 & 2\vpi_3 \\
 & 2\vpi_2'+\vpi_4 & & \vpi''+\vpi_3 & 2\vpi_2'+\vpi_4 & \vpi_2+\vpi_4 \\
 & & & 2\vpi_3 & \vpi''+\vpi_3 & \vpi_2'+2\vpi_4 \\
 & & & & \vpi_1'+\vpi_4 & 3\vpi_4 \\
\hline
c_1  & \vpi_2'+\vpi_3 & & \vpi_2'+\vpi_3 & \vpi_2'+\vpi_3 & \vpi_3+\vpi_4 \\
 & 2\vpi_2' & & & \vpi''+\vpi_4 &\\
\hline
c_2  & \vpi_2'+\vpi_4 & & & \vpi_2'+\vpi_4 & 2\vpi_4 \\
\hline
c_3  & \vpi_2' & & & &\\
\hline    
  \end{array}
  \end{displaymath}

  \medskip
  \caption{For $E_8$, the dominant weights $\lambda<\vpi$ in $c$ 
    which survive degenerations}
  \label{tab:E8-survive}
\end{table}

The $E_8$ case is the hardest to analyze.
\ifshortversion
We computed the weights that
do survive under the degenerations in Table~\ref{tab:E8-survive}.
\else
We computed the poset of dominant weights below $\vpi_0$ in
Table~\ref{tab:E8-weights}.  Every line is a ``cover'', a minimal step
in the partial order, and we write the difference as a positive
combination of simple roots. The difference in a cover is known to be
equal to the highest root of some connected Dynkin subdiagram, see
e.g. \cite[Thm.2.6]{stembridge1998partial-order}. We give this diagram
in the last column. The corollary of that table is
Table~\ref{tab:E8-survive} showing the weights that do survive under
degenerations. 
\fi
All
other lower weights under these and all other degenerations
vanish. From this table we immediately see for example that when
either of the coordinates $a''$, $a'_1$, $a_0$, $a_1$ is zero, then
\emph{all} the lower weights vanish and we are left with the equations
of the $A$ or $\pA$ shapes.

  In the degeneration $a'_2=0$ the $E_8$ equation of
  Theorem~\ref{thm:E-discr} reduces to
  $c(x) = (\chi_0 + \chi_4) + ( \chi_1-\chi_3+2)x +
  (\chi_2-2\chi_4)x^2 + (\chi_3-3)x^3 + \chi_4 x^4 + x^5$, which is
  precisely the equation of the canonical $D_7$ family from
  Example~\ref{ex:D7}.

  For the degeneration $a_4=0$ one can check that the $E_8$ equation
  reduces to the canonical $E_7$ equation of
  Theorem~\ref{thm:E-discr}, and for $a_3=0$ it reduces to the
  $E_6$ equation.  The other cases are checked similarly. The
  $E_7$ and $E_6$ cases now follow.
\end{proof}

\begin{remark}\label{rem:weights-below-fundamental}
  As we see, the poset of the dominant weights below the 8 fundamental
  weights of $E_8$ is very complicated. We make the following
  interesting observation. Associate to the 8 nodes of the Dynkin
  diagram the following points in $\bZ^3$: $p_i = (i,0,0)$,
  $p'_j = (0,j,0)$, $p''_k = (0,0,k)$, and choose the special point
  $p_* =(1,1,1)$. Consider the projection
  $\psi\colon E_8\to \bZ\oplus\bZ^3$ by the rule
  $\psi(\vpi) = (1,p - p_*)$.  Then for a fundamental weight $\varpi$,
   a dominant weight $\lambda$
  satisfies $\lambda<\vpi$ iff $\psi(\vpi-\lambda)$ is a non-negative
  combination of the 8 vectors $\psi(\vpi_i)$ and the vector
  $(-1,0,0,0)$.

  The same procedure works for $D_n$, $E_6$, $E_7$. In the $D_n$ case
  this becomes an especially easy way to see the 
  relation~\eqref{eq:D-relation}. Our two-dimensional projection of
  section~\ref{sec:2dim-models} is a further projection from $\bZ^3$
  to $\bZ^2$ obtained by ``completing the square in the $z$
  variable''.
\end{remark}

\subsection{Singularities of divisors in \ade pairs}
\label{sec:sings-ade-pairs} 

By Theorem~\ref{thm:two-discrs}, the singularities of
$B\cap (Y\setminus C)$ in the canonical families occur on the fibers
$Y_t$ for $t \in \cup_\alpha \{e^\alpha=1\}$, the union of root
hypertori. Generically, these are $A_1$ singularities.  On the
intersections of several hypertori some worse singularities occur.
Below we describe them explicitly. For each of the lattices
$\Lambda = A_n, D_n, E_n$ the singularity over the point
$1\in T_{\Lambda^*}$ is that same $A_n, D_n, E_n$.  However, there are
zero-dimensional strata of the hypertori arrangement different from
$1$. Some other maximal rank singularities occur on the fibers over
those points.

\begin{definition}
  Let $\Lambda$ be an \ade lattice with a root system $\Phi$ and
  Dynkin diagram $\Delta$, and let $G$ be some abelian group which we
  will write multiplicatively. Let $t\in\Hom(\Lambda,G)$ be a
  homomorphism. Define the sublattice
  \begin{displaymath}
    \Lambda_t = \la \alpha \mid t(\alpha) = 1 \ra \subset \Lambda
    \ \text{generated by the roots } \alpha\in\Phi\cap\ker(t).
  \end{displaymath}
\end{definition}

It is well known that a sublattice of an \ade lattice generated by
some of the roots is a direct sum of root lattices corresponding to
smaller \ade Dynkin diagrams. All such root sublattices can be
obtained by the Dynkin-Borel-de Siebenthal (DBS) algorithm, see
\cite[Thms. 5.2, 5.3]{dynkin1952semisimple-subalgebras}, as
follows. Make several of the steps (DBS1): replace a connected
component of the Dynkin diagram by an extended Dynkin diagram and then
remove a node; and then several of the steps (DBS2): remove a
node. Below, we determine which of these lattices are realizable as
$\Lambda_t$. 

All root sublattices are listed in \cite[Tables
9--11]{dynkin1952semisimple-subalgebras}. The
answer is as follows.  Recall that the lattice
$A_n\subset \bZ^{n+1}$ is generated by the roots $e_i-e_j$.  All root
sublattices of $A_n$ are of the form
$A_{|I_1|-1} \oplus \dotsb \oplus A_{|I_s|-1}$, where
$I_1\sqcup \dotsb \sqcup I_s = \{1,\dotsc, n+1\}$ is a partition,
$|I_i|\ge 1$. Here, $A_{|I_i|-1}=0$ if $|I_i|=1$.

The lattice $D_n\subset \bZ^n$ is generated by the roots $e_i\pm
e_j$. All root sublattices of $D_n$ are of the form
$A_{|I_1|-1} \oplus \dotsb \oplus A_{|I_s|-1} \oplus D_{|J_1|} \oplus
\dotsb \oplus D_{|J_r|}$, where
$I_1\sqcup \dotsb \sqcup I_s \sqcup J_1\sqcup \dotsb \sqcup J_r =
\{1,\dotsc, n\}$ is a partition, $|I_i|\ge1$ and $|J_j|\ge 2$. $D_2$
and $D_3$ are a special case. They are isomorphic to $2A_1$ and $A_3$
respectively as abstract lattices, but they are different 
as sublattices of $D_n$. 

The sublattices of $E_6,E_7,E_8$ are listed in \cite[Table
11]{dynkin1952semisimple-subalgebras} but note the typos: in the $E_8$
table one of the two $A_7+A_1$ is $E_7+A_1$, and $A_6+A_2$ should be
$E_6+A_2$.

\begin{definition}
  Let ${M}\subset\Lambda$ be two \ade lattices. Let
  $\Tors(\Lambda/{M})$ be the torsion subgroup of $\Lambda/{M}$ and
  $\im(\Phi\cap{M}_\bR)\subset \Tors(\Lambda/{M})$ be the image of the
  set of roots $\alpha\in\Phi\cap{M}_\bR$.  We define the closure
  $\overline{\im}(\Phi\cap{M}_\bR)$ to be the subset of
  $\Tors(\Lambda/{M})$ consisting of the elements $x\ne0$ such that
  $0\ne nx\in \im(\Phi\cap{M}_\bR)$ for some $n\in\bN$; plus $x=0$.
  Both $\im(\Phi\cap{M}_\bR)$ and $\overline{\im}(\Phi\cap{M}_\bR)$
  are finite sets, and a priori neither of them has to be a group.
\end{definition}

\begin{lemma}
  Let ${M}\subset\Lambda$ be two \ade lattices. Let $G$ be an
  abelian group containing $\bZ^r$, where
  $r=\rk\Lambda - \rk{M}$. Then ${M}=\Lambda_t$ for some
  $t\in\Hom(\Lambda, G)$ iff there exists a homomorphism
  $\phi\colon\Tors(\Lambda/{M}) \to G$ such that for any
  $0\ne x\in \overline{\im}(\Phi\cap{M}_\bR)$ one has $\phi(x)\ne0$.

\end{lemma}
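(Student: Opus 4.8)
The plan is to prove the two implications of the biconditional separately; the forward direction is essentially formal, while the converse requires a small genericity argument.

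\textbf{Forward direction.} Suppose $M=\Lambda_t$ for some $t\in\Hom(\Lambda,G)$. Since $\Lambda_t$ is generated by roots lying in $\ker t$, we have $M\subseteq\ker t$, so $t$ descends to $\bar t\colon \Lambda/M\to G$; put $\phi:=\bar t|_{\Tors(\Lambda/M)}$. To verify the nonvanishing property, take $0\ne x\in\overline{\im}(\Phi\cap M_{\bR})$. If $x=\bar\alpha$ for a root $\alpha\in\Phi\cap M_{\bR}$, then $x\ne 0$ forces $\alpha\notin M=\Lambda_t$, hence $t(\alpha)\ne 0$ and $\phi(x)=\bar t(\bar\alpha)=t(\alpha)\ne 0$. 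In general $nx=\bar\alpha$ for some $n\ge 1$ and some root $\alpha\in\Phi\cap M_{\bR}$ with $\bar\alpha\ne 0$, and then $n\phi(x)=t(\alpha)\ne 0$, so again $\phi(x)\ne 0$; this is exactly where the closure $\overline{\im}$, rather than $\im$, appears naturally. This direction uses nothing about $G$ beyond its being abelian.

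\textbf{Converse.} Assume $\phi\colon\Tors(\Lambda/M)\to G$ has the stated property, and set $r=\rk\Lambda-\rk M$. The structural facts I would use are: (i) an \ade lattice $M$ is generated by its norm-$2$ vectors, which are precisely $\Phi\cap M$, so $M=\langle\Phi\cap M\rangle$; and (ii) the roots of $\Lambda$ contained in $M_{\bR}$ are exactly those contained in the saturation $\overline M=M_{\bR}\cap\Lambda$. Fix a splitting $\Lambda/M\cong\bZ^r\oplus\Tors(\Lambda/M)$. I will construct $t\colon\Lambda\to G$ that (a) vanishes on $M$, and (b) is nonzero on every root $\alpha\in\Phi\setminus M$; granting (a) and (b), the roots annihilated by $t$ are exactly those of $\Phi\cap M$, so $\Lambda_t=\langle\Phi\cap M\rangle=M$ by (i). For the construction, take $t$ to be $\Lambda\to\Lambda/M\xrightarrow{\bar t}G$, where $\bar t$ restricts to $\phi$ on the torsion summand and to a homomorphism $\bZ^r\to\bZ^r\subseteq G$ still to be chosen; then (a) is automatic. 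A root $\alpha\in\Phi\setminus M$ is of one of two kinds. If $\alpha\in M_{\bR}$, then $\bar\alpha$ is a nonzero torsion class lying in $\im(\Phi\cap M_{\bR})\setminus\{0\}\subseteq\overline{\im}(\Phi\cap M_{\bR})\setminus\{0\}$, so $t(\alpha)=\phi(\bar\alpha)\ne 0$ by hypothesis. If $\alpha\notin M_{\bR}$, then $\bar\alpha$ has nonzero free part, and $t(\alpha)\ne 0$ is a nonvacuous condition on the chosen homomorphism $\bZ^r\to\bZ^r\subseteq G$; since only finitely many roots are involved and each excludes only a proper coset of $\Hom(\bZ^r,\bZ^r)$ (a coset of a subgroup of infinite index, so the finite union cannot be everything, by a Neumann-type covering argument), a generic choice satisfies all of them. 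This yields (b). When $r=0$ the second kind of root does not occur and $G$ is unconstrained, matching the hypothesis ``$G$ contains $\bZ^0=0$''.

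\textbf{Main obstacle.} The only genuine content beyond bookkeeping lies in the converse: splitting $\Lambda/M$ into its free and torsion parts, sending the torsion part to $\phi$, and making a sufficiently generic choice on the free part so that no root outside $M_{\bR}$ is accidentally killed. The keystone is fact (i), $M=\langle\Phi\cap M\rangle$, which holds because $M$ is an \ade lattice; without it the construction would only give $\Lambda_t\subseteq M$, with possibly strict inclusion, and the characterization would fail.
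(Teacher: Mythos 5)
Your proof is correct and follows essentially the same route as the paper's: reduce to homomorphisms out of $\Lambda/M \cong \bZ^r\oplus\Tors(\Lambda/M)$, choose the map on the free summand generically so that roots with nonzero free part survive, and observe that the only remaining constraint is the nonvanishing of $\phi$ on the (closure of the) torsion images of roots. Your write-up is somewhat more careful than the paper's one-paragraph argument, in particular in invoking that $M$ is generated by its roots to get $\Lambda_t=M$ rather than merely $\Lambda_t\supseteq M$, and in justifying the genericity via a coset-covering argument.
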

\begin{proof}
  Of course one must have ${M}\subset\ker(t)$, so the question is
  whether there exists a homomorphism $\Lambda/{M}\to G$ which
  does not map any roots not lying in $M$ to zero. 
  We have $\Lambda/{M} = \bZ^r \oplus
  \Tors(\Lambda/{M})$. An embedding $\bZ^r\to G$ can always be
  adjusted by an element of $\GL(r,\bZ)$ so that the images
  of roots not in $\Tors(\Lambda/{M})$ do not map to zero. So the
  only condition is on $\im(\Phi\cap{M}_\bR)$ in $\Tors(\Lambda/{M})$ or,
  equivalently, on its closure.
\end{proof}

\begin{corollary}\label{cor:allowed-homs}
  Let ${M}\subset\Lambda$ be two \ade lattices and let $k$ be an
  algebraically closed field of characteristic zero. If the group
  $\Tors(\Lambda/{M})$ is cyclic then ${M}=\Lambda_t$ for
  some $t\in\Hom(\Lambda,\bC^*)$. In the opposite direction, if
  $\overline{\im}(\Phi\cap{M}_\bR)$ contains a non-cyclic subgroup then
  ${M}\ne \Lambda_t$ for any $t\in\Hom(\Lambda,\bC^*)$.
\end{corollary}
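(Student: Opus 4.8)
The plan is to obtain the statement as a direct corollary of the preceding Lemma, specialized to the group $G=\bC^*$. The first step is to verify that $\bC^*$ meets that Lemma's standing hypothesis: it must contain a copy of $\bZ^r$ with $r=\rk\Lambda-\rk M$. This is immediate, since as an abstract abelian group $\bC^*$ is the direct sum of $\bQ/\bZ$ with a $\bQ$-vector space of infinite rank, so it contains free abelian subgroups of every finite rank (concretely, the primes $2,3,5,\dots$ are multiplicatively independent). With this checked, the Lemma says precisely that there exists $t\in\Hom(\Lambda,\bC^*)$ with $\Lambda_t=M$ if and only if there is a homomorphism $\phi\colon\Tors(\Lambda/M)\to\bC^*$ that is nontrivial on every nonzero element of the finite set $\overline{\im}(\Phi\cap M_\bR)\subseteq\Tors(\Lambda/M)$.

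For the first assertion, I would argue as follows. Suppose $\Tors(\Lambda/M)$ is cyclic, say of order $n$. Take for $\phi$ an injective homomorphism $\bZ/n\hookrightarrow\bC^*$, sending a generator to a primitive $n$-th root of unity; such $\phi$ exists because $\bC$ contains roots of unity of every order. Being injective, $\phi$ is nontrivial on all nonzero elements of $\Tors(\Lambda/M)$, hence a fortiori on all nonzero elements of $\overline{\im}(\Phi\cap M_\bR)$, so the Lemma yields the desired $t$.

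For the converse, suppose $\overline{\im}(\Phi\cap M_\bR)$ contains a noncyclic subgroup $H$; note $H$ is automatically finite, being contained in the finite group $\Tors(\Lambda/M)$. If some $t\in\Hom(\Lambda,\bC^*)$ had $\Lambda_t=M$, then the Lemma would produce a homomorphism $\phi\colon\Tors(\Lambda/M)\to\bC^*$ nontrivial on every nonzero element of $\overline{\im}(\Phi\cap M_\bR)$, in particular on every nonzero element of $H$. Then $\phi|_H$ has trivial kernel, so it embeds $H$ into $\bC^*$; but every finite subgroup of the multiplicative group of a field is cyclic, forcing $H$ to be cyclic, a contradiction. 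Hence no such $t$ exists.

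The essentially only nonroutine ingredient, and the one powering the converse direction, is the classical fact that finite subgroups of $k^*$ are cyclic; everything else is bookkeeping translating the Lemma. One point to keep straight, in line with the paper's convention of writing the target group multiplicatively, is that ``$\phi(x)\ne 0$'' in the Lemma means ``$\phi(x)\ne 1$'' in $\bC^*$, so that ``nonvanishing on nonzero elements'' and ``injective'' coincide for subgroups, as used above.
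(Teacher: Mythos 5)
Your proposal is correct and matches the paper's own (much terser) proof, which likewise deduces the corollary from the preceding lemma using the two facts that any finite cyclic group embeds into $\bC^*$ and that $\bC^*$ has no non-cyclic finite subgroups. Your extra bookkeeping (checking the $\bZ^r$ hypothesis, spelling out that injectivity of $\phi$ on the cyclic torsion group suffices) is sound but not a different route.
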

\begin{proof}
  This follows from the fact that any finite cyclic group can be
  embedded into $\bC^*$, and there are no non-cyclic finite
  subgroups in  $\bC^*$.
\end{proof}

\begin{theorem}\label{thm:allowed-sublattices}
  Let $\Lambda$ be an irreducible \ade lattice and ${M}$ be an
  \ade root sublattice.  Assume that the field $k$ is algebraically
  closed of characteristic zero.  Then ${M} = \Lambda_t$ for some
  $t\in \Hom(\Lambda, \bC^*)$ iff any of the following
  equivalent conditions holds:
  \begin{enumerate}
  \item $\Tors(\Lambda/{M})$ is cyclic.
  \item ${M}$ is obtained from $\Lambda$ by a single DBS1 step
    and then some DBS2 steps.
  \item ${M}$ corresponds to a proper subdiagram of the extended
    Dynkin diagram $\wt\Delta$. 
  \item ${M}$ corresponds to a subdiagram $\Delta$ of the
    following Dynkin diagrams:
    \begin{enumerate}
    \item[$A_n$:] $A_n$; \qquad
    $D_n$: $D_n$ or $D_aD_b\subset\wD_n$ with $a+b=n$, $a,b\ge2$. 
    \item[$E_6$:] $E_6$, $A_5A_1$, $3A_2$; \qquad
      $E_7$: $E_7$, $D_6A_1$, $A_7$, $A_5A_2$, $2A_3A_1$;
    \item[$E_8$:] $E_8$, $E_7A_1$, $E_6A_2$,
      $D_8$, $D_5A_3$, $A_8$, $A_7A_1$, $2A_4$, $A_5A_2A_1$. 
    \end{enumerate}
  \item ${M}$ is not one of the following forbidden sublattices:
    \begin{enumerate}
    \item[$D_n$:] a sublattice with $\ge3$ $D$-blocks;
      \qquad
      $E_7$: $D_4\,3A_1$, $7A_1$, $6A_1$;
    \item[$E_8$:] $4A_2$, $2D_4$, $D_62A_1$, $D_44A_1$,
      $2A_32A_1$, $8A_1$,$D_43A_1$,$7A_1$,$A_34A_1$,$6A_1$.
    \end{enumerate}
  \end{enumerate}
\end{theorem}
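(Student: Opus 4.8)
The plan is to prove Theorem~\ref{thm:allowed-sublattices} by first establishing the equivalence of conditions (1)--(5) as a purely lattice-theoretic statement, and then invoking Corollary~\ref{cor:allowed-homs} to connect condition (1) to the realizability ${M} = \Lambda_t$. The heart of the matter is (1): whether $\Tors(\Lambda/{M})$ is cyclic. First I would recall that, by Corollary~\ref{cor:allowed-homs}, if $\Tors(\Lambda/{M})$ is cyclic then ${M} = \Lambda_t$ is realizable over $\bC^*$; conversely, if $\overline{\im}(\Phi\cap {M}_\bR)$ contains a non-cyclic subgroup then it is not. So the first real task is to show that when $\Tors(\Lambda/{M})$ is \emph{not} cyclic, the closure $\overline{\im}(\Phi \cap {M}_\bR)$ actually \emph{does} contain a non-cyclic subgroup. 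This is where one must go block by block through the root sublattice classification of \cite[Tables 9--11]{dynkin1952semisimple-subalgebras}: for a root sublattice ${M} = \bigoplus {M}_i$, the roots in ${M}_\bR$ are exactly the roots of the ${M}_i$, so $\overline{\im}(\Phi\cap {M}_\bR)$ is generated by the images of the glue vectors coming from each $A$- or $D$-block. A short computation shows that as soon as $\Tors(\Lambda/{M})$ fails to be cyclic, two of these block-images are independent in the torsion quotient, generating a $\bZ_p \times \bZ_p$ or $\bZ_2\times\bZ_2$.

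Next I would handle the equivalences (1) $\Leftrightarrow$ (2) $\Leftrightarrow$ (3). Condition (3), ``${M}$ is a proper subdiagram of the affine diagram $\wt\Delta$'', is equivalent to (2) by the very definition of the DBS algorithm: a single DBS1 step replaces $\Delta$ by $\wt\Delta$ and deletes a node, producing a sublattice of full rank corresponding to a proper subdiagram of $\wt\Delta$; subsequent DBS2 steps then delete further nodes, always yielding subdiagrams of $\wt\Delta$. Conversely every proper subdiagram of $\wt\Delta$ arises this way. For (2) $\Leftrightarrow$ (1), the key classical fact is that a single DBS1 step followed by DBS2 steps yields a sublattice ${M}$ with $\Lambda/{M}$ having cyclic torsion --- this follows because a proper subdiagram of the affine diagram $\wt\Delta$ determines a parabolic-type subsystem whose coweight-lattice quotient is controlled by the (cyclic) fundamental group data; conversely two or more DBS1 steps multiply independent cyclic factors. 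The cleanest way to argue this rigorously for the exceptional cases is just to read off $\Tors(\Lambda/{M})$ for each sublattice in Dynkin's tables: this is a finite check, and the ``single DBS1 step'' sublattices are precisely those listed in (4).

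Then (3) $\Leftrightarrow$ (4) is a matter of literally enumerating the proper subdiagrams of each $\wt\Delta$ up to the obvious symmetries, which gives exactly the lists in (4) --- for $A_n$ and $D_n$ one reads them off the affine diagrams directly (for $D_n$, deleting a branch node of $\wD_n$ splits it into $D_a D_b$, while deleting elsewhere just gives back $D_n$); for $E_6, E_7, E_8$ this reproduces the standard maximal-rank subsystem lists. Finally (4) $\Leftrightarrow$ (5) is the complementary bookkeeping: I would go through the full Dynkin list of root sublattices of $E_7$ and $E_8$ (and the $\ge 3$-$D$-block sublattices of $D_n$), and check that each one not in list (4) coincides with one of the forbidden lattices in (5) and, using the block-image computation from the first paragraph, verify $\overline{\im}(\Phi\cap {M}_\bR)$ is non-cyclic in each forbidden case --- e.g.\ for $8A_1 \subset E_8$ the eight $A_1$-glue images span a $\bZ_2^2$ (indeed more). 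This last cross-check, reconciling lists (4) and (5) with Dynkin's tables while tracking the torsion, is the main obstacle: it is not deep but it is error-prone, and the subtlety that $D_2 \cong 2A_1$ and $D_3 \cong A_3$ as abstract lattices but \emph{not} as sublattices means one cannot blindly apply the abstract classification --- the embedding matters for computing $\Phi \cap {M}_\bR$. I would therefore organize the forbidden-lattice check as a table rather than prose, which keeps the case analysis honest.
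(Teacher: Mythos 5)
Your overall strategy coincides with the paper's: prove the equivalence of (1)--(5) by combining the affine-diagram identity $\sum_{\alpha\in\wt\Delta} m_\alpha\alpha = 0$ (which makes the cotorsion after one DBS1 step cyclic of order $m_\alpha$) with a finite check against Dynkin's and Persson's tables, and then apply Corollary~\ref{cor:allowed-homs} in both directions, the substantive work being the computation of $\overline{\im}(\Phi\cap {M}_\bR)$ for each forbidden sublattice.

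There is, however, one concrete error at the heart of your argument: you assert that ``the roots in ${M}_\bR$ are exactly the roots of the ${M}_i$.'' If that were true, every root in $\Phi\cap{M}_\bR$ would lie in ${M}$ and hence map to $0$ in $\Lambda/{M}$, so $\overline{\im}(\Phi\cap{M}_\bR)$ would be $\{0\}$ and Corollary~\ref{cor:allowed-homs} could never produce an obstruction. The point is precisely that a root sublattice ${M}$ is in general not saturated in $\Lambda$, so $\Phi\cap{M}_\bR$ contains roots of $\Lambda$ that are \emph{not} roots of ${M}$, and it is the images of these extra roots in $\Tors(\Lambda/{M})\subset{M}^*/{M}$ that must be computed (for $8A_1\subset E_8$ the span is all of $\Lambda_\bR$, so $\Phi\cap{M}_\bR$ consists of all $240$ roots, only $16$ of which lie in ${M}$). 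Relatedly, your claim that non-cyclicity of $\Tors(\Lambda/{M})$ automatically yields two independent ``block-images'' generating $\bZ_p\times\bZ_p$ is not automatic: the image of the roots can be a \emph{proper} subset of $\Tors(\Lambda/{M})$ --- this happens for exactly $8A_1$, $2A_32A_1$ and $4A_2$ in $E_8$, where the image has $15$, $7$ and $8$ elements respectively --- and one must pass to the closure $\overline{\im}$ (which is why that operation appears in the definition preceding Corollary~\ref{cor:allowed-homs}) before exhibiting a non-cyclic subgroup. Your instinct to organize the forbidden cases as a table is sound, but the table must record images of roots of $\Lambda$ lying in ${M}_\bR\setminus{M}$, not abstract glue vectors of the decomposition $\bigoplus {M}_i$; with that correction the argument closes and agrees with the paper's.
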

\begin{proof}
  We first prove the equivalence of the conditions (1-5).  For one
  direction, the identity
  $\sum_{\alpha\in\wt\Delta} m_\alpha \alpha=0$ implies that if the
  Dynkin diagram $\Delta(M)$ is obtained from $\wt\Delta$ by removing
  one node (i.e. by a single DBS1 step) then the cotorsion group is
  cyclic of the order equal to the multiplicity $m_\alpha$ of the
  removed node in the highest root of $\Delta$. Any sublattice of
  these lattices obtained by DBS2 steps also has cyclic
cotorsion. The lists in (4) are simply the lattices obtained by one DBS1
  step.  To complete the equivalence of (1-5) for $E_n$ we use
  Dynkin's lists of sublattices together with \cite[Table
  1]{persson1990configurations-of-kodaira} which gives the torsion
  groups, and check the finitely many cases. The $D_n$ case is easy.

  Now let ${M}$ be a sublattice as in (1). Then
  ${M}=\Lambda_t$ for some $t\in\Hom(\Lambda,\bC^*)$ by
  Cor.~\ref{cor:allowed-homs}.  Vice versa, let ${M}$ be one of
  the sublattices with a non-cyclic $\Tors(\Lambda/{M})$,
  which are listed
  in (5). If $\Lambda=D_n$ and ${M}$ has $r\ge3$
  $D$-blocks then $\Tors(\Lambda/{M})=\bZ_2^{r-1}$ and we easily
  calculate $\im(\Phi\cap{M}_\bR)$ to be $\{0,e_i, e_i+e_j \mid 1\le i,j \le
  r-1\}$. This set contains a non-cyclic subgroup
  $\bZ_2^2 = \{0, e_1,e_2, e_1+e_2\}$, so ${M}\ne\Lambda_t$ by
  Cor.~\ref{cor:allowed-homs}.

  For each sublattice of $E_7$ and $E_8$ listed in (5) we explicitly compute
  $\overline{\im}(\Phi\cap{M}_\bR)$.  We have
  $(\Lambda\cap{M}_\bR)/{M} \subset {M}^*/{M}$, so
  we find the images of the roots
  $\alpha \in \Phi\cap {M}_\bR$ in ${M}^*/{M}$.  The
  result is as follows. For $8A_1$ the set $\im(\Phi\cap{M}_\bR)$
  has 15 elements and contains $\bZ_2^3$; for $2A_3\,2A_1$ it has 7
  elements and its closure is $\bZ_4\oplus\bZ_2$; for
  $4A_2$ it has 8 elements and its closure is $\bZ_3^2$.
  In all the
  other cases, one has $\im(\Phi\cap{M}_\bR) =
  \Tors(\Lambda/{M})$. We conclude that ${M}\ne\Lambda_t$ by
  Cor.~\ref{cor:allowed-homs}.
\end{proof}

\begin{theorem}\label{thm:ade-on-root-tori}
  Consider a canonical family of \ade pairs of
  Theorems~\ref{thm:A-discr}, \ref{thm:D-discr}, \ref{thm:E-discr}.
  Then for a point $t\in T$, the singularities of the curve
  $B_t \cap (Y_t \setminus C_t)$ and of the double cover
  $X_t\setminus D_t$ near $R_t$ are Du Val of the type corresponding to the
  lattice~$\Lambda_t$.  In particular, a curve is singular iff $t$
  lies in a union of root hypertori $\{e^\alpha=1\}$, and for $t=1$ there
  is a unique singularity of the same Du Val type as the root lattice.

\end{theorem}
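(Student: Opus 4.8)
The plan is to reduce everything to the discriminant identity $\Discr(f)=\Discr(\Lambda)$ established in Theorem~\ref{thm:two-discrs} (via Theorems~\ref{thm:A-discr}, \ref{thm:D-discr}, \ref{thm:E-discr}) together with the parametrizations used in their proofs. For $\Lambda=A_n$ the curve $B_t\cap(Y_t\setminus C_t)$ is $-\tfrac14 y^2+\prod(x+t_i)$, so away from $C_t$ it acquires exactly an $A_{k-1}$ singularity for each cluster of $k$ equal $t_i$'s, and these clusters record precisely which roots $e_i-e_j$ lie in $\ker t$, i.e. a direct sum $\bigoplus A_{|I_j|-1}$ which is exactly $\Lambda_t$ by the description of root sublattices of $A_n$ recalled just before the theorem. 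For $D_n$ one uses the expression $\Discr_y(f)=p(x)=\prod(x+t_i+t_i^{-1})$ from the proof of Theorem~\ref{thm:D-discr}: a coincidence $t_it_j^{\pm1}=1$ produces a node of $p(x)$, hence an $A_1$ of the double cover, and clusters of coincidences assemble into the $A$- and $D$-blocks of $\Lambda_t$, again matching the sublattice description; the two branches over $x=-2$ account for the $D$-type blocks. For $E_n$ one uses the geometric model from the proof of Theorem~\ref{thm:E-discr}: $Y_t$ is (a blowup of) $\bP(1,1,2)$, its double cover $X_t$ is the del Pezzo surface obtained by blowing up $8$ (or $7$, $6$) points $P_i$ on the nodal cubic $C\subset\bP^2$, and the singularities of $X_t$ away from the strict transform of $C$ are the classical Du Val configurations of a degenerate del Pezzo, controlled by which point-configuration conditions degenerate — which the proof already identifies with the roots of $E_n$ vanishing on $t$.

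First I would set up the local statement precisely: fix $t\in T_{\Lambda^*}$ (the analysis descends to $T_\Lambda$), let $\Phi_t=\Phi\cap\ker t$ and $\Lambda_t=\langle\Phi_t\rangle$, and recall from the classification recalled before the theorem that $\Lambda_t$ is a direct sum of irreducible $A$-, $D$-, $E$-root lattices. Then I would treat the three families separately. In each case the key computation is: the reduced singular locus of $B_t$ inside $Y_t\setminus C_t$ is a finite set of points, at each of which the branch curve has an $ADE$ singularity whose type is read off from the local multiplicities of the relevant one-variable discriminant ($p(x)$ for $A$, $\Discr_y(f)$ for $D$), or from the incidence pattern of points $P_i$ on the cubic (for $E$); and this local type, summed over all singular points, is exactly the direct-sum decomposition of $\Lambda_t$. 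The translation between "multiple roots of a degree-$m$ polynomial clustering" and "direct sum of $A$-lattices" is the same bookkeeping as in the description of root sublattices of $A_n$ and $D_n$; for $E_n$ it is the standard correspondence between degenerate blow-up configurations of $\bP^2$ in almost-general position and $ADE$ root sublattices of $E_n$, which is exactly what the proof of Theorem~\ref{thm:E-discr} exploits (and which is recorded in the DBS analysis of Theorem~\ref{thm:allowed-sublattices}).

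For the double cover statement: away from $D_t$ the map $\pi\colon X_t\to Y_t$ is the index-one double cover branched along $B_t$, and by Lemma~\ref{lem:pairs-and-covers}(5) the branch points of $B_t$ lie in the smooth locus of $Y_t$ and outside $C_t$, so near $R_t$ the surface $X_t$ has exactly the double cover of the plane curve singularity of $B_t$. Since the double cover of an $A_n$ plane-curve singularity (given by $y^2=x^{n+1}$, which is what we see, up to the rescaling $y\mapsto -(y/2)$) is again a surface $A_n$ Du Val point, and $D_n$, $E_n$ plane-curve singularities lift to $D_n$, $E_n$ surface Du Val points, the singularity type of $X_t\setminus D_t$ near $R_t$ matches that of $B_t$, hence equals the type of $\Lambda_t$. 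Finally, the two "in particular" clauses: a curve is singular iff $\Lambda_t\ne 0$ iff some root lies in $\ker t$ iff $t$ lies in a root hypertorus $\{e^\alpha=1\}$ — immediate from $\Discr(f)=\Discr(\Lambda)$ and the fact that the zero set of $\Discr(\Lambda)$ is $\bigcup_\alpha\{e^\alpha=1\}$; and for $t=1$ all roots vanish, so $\Lambda_1=\Lambda$ and the unique singular point is of Du Val type $\Lambda$, matching the explicit central fibers listed in Theorems~\ref{thm:D-discr} and \ref{thm:E-discr}.

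The main obstacle I anticipate is the $E_n$ case: one must verify that \emph{every} $ADE$ root sublattice $\Lambda_t$ realizable by some $t\in\Hom(\Lambda,\bC^*)$ (those enumerated in Theorem~\ref{thm:allowed-sublattices}) actually occurs as the singularity configuration of the corresponding degenerate del Pezzo double cover, and conversely that no \emph{worse} singularity (of non-$ADE$ type, or a higher-rank lattice) can appear — i.e. that $Y_t$ and $X_t$ remain genuine $ADE$ pairs and del Pezzo surfaces with only Du Val singularities even at the deepest strata. This is handled by the "almost general position" argument already sketched in the proof of Theorem~\ref{thm:E-discr}: the points $P_1,\dots,P_8$ always lie on an irreducible cubic, so by the classical theory $-K_{X_t^8}$ stays nef and big and the contracted surface has only Du Val singularities, and the bijection between these configurations and $ADE$ sublattices of $E_n$ is exactly Dynkin's, which is precisely what Theorem~\ref{thm:allowed-sublattices} makes explicit. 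So the proof will consist mainly of carefully invoking these already-established facts rather than new work; the bookkeeping for the exact local-to-global matching of singularity types is the only place requiring care.
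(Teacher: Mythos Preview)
Your proposal is correct and follows essentially the same route as the paper: case-by-case analysis via the explicit parametrizations already set up in Theorems~\ref{thm:A-discr}, \ref{thm:D-discr}, \ref{thm:E-discr}, reading off the singularity type of $B_t$ from the multiplicity pattern of the one-variable discriminant in the $A$ and $D$ cases, and from the $(-2)$-curve configuration on the blown-up $\bP^2$ in the $E$ cases. One small correction in your $D_n$ sketch: the $D$-type blocks arise at \emph{both} $x=2$ and $x=-2$ (corresponding to $t_i=-1$ and $t_i=1$ respectively), not only at $x=-2$; a root of $\Discr_y(f)$ of multiplicity $m$ at $x=\pm 2$ gives a $D_m$ singularity (including the degenerate readings $D_3=A_3$, $D_2=2A_1$, $D_1=\text{smooth}$), and this accounts for the possibility of up to two $D$-summands in $\Lambda_t$.
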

\begin{proof}
  The $A_n$ case is obvious: the curve curve $-y^2/4+c(x)$,
  $c(x)=\prod (x+t_i)$ has singularities
  $A_{m_1-1}, \dotsc, A_{m_s-1}$, each occurring when some $m_{k}$ of
  the $t_i$'s coincide, i.e. when several of the monomials
  $e^{t_i-t_j}$ vanish at the same time.

  \smallskip Let $\Discr_y(f)= \prod_{i=1}^n (x+t_i+t_i\inv)$ as in
  the proof of Thm.~\ref{thm:D-discr}. It is easy to see that for
  every root $x\ne\pm2$ of $\Discr_y$ of multiplicity $m$, the curve
  $f=0$ has an $A_{m-1}$-singularity, and if $x=\pm2$ is a root of
  $\Discr_y$ of multiplicity $m$ then $f$ has a
  $D_m$-singularity. This includes $D_3=A_3$, $D_2=2A_1$, and
  $D_1={\rm smooth}$. On the other hand, the root tori are of the form
  $\{t_it_j^{\pm1}=1\}$. The irreducible components of $\Lambda_t$
  correspond to the disjoint subsets $I\subset\{1,\dotsc,n\}$ of
  indices for which $t_i=t_j^{\pm1}$ for $i,j\in I$. If $t_i\ne \pm1$,
  i.e. $t_i+t_i\inv\ne\pm2$, then the component is of the
  $A_{|I|-1}$-type; otherwise it is of the $D_{|I|}$-type.

  \smallskip In the $E_n$ cases the singularities are Du Val by
  construction in the proof of \ref{thm:E-discr}.  Using notation as
  in the proof, let us fix a linear function $\varphi$ on
  $E_8\subset A_8^*$ such that
  $\varphi(p) > \varphi(e_1) > \dotsb > \varphi(e_8)$, and let the
  positive roots $\alpha$ be those with $\varphi(\alpha)>0$.  Then for
  any subroot system of $E_8$ the simple roots are exactly the roots
  that are realizable by irreducible $(-2)$-curves on $\wX^8$:
  $e_i-e_j$ for $i>j$ (preimages of the exceptional divisors $E_i$ of
  blowups at $P_i$), $\ell-e_i-e_j-e_k$ (preimages of lines passing
  through 3 points $P_i,P_j,P_k$), $2\ell-\sum_{k=1}^6 e_{i_k}$
  (preimages of conics through 6 points), and
  $3\ell-2e_j-\sum_{k=1}^7 e_{i_k}$ (preimages of nodal cubics through
  8 points). So for every $t\in \Hom(E_8,\bC^*)$, the simple roots in
  the lattice $\Lambda_t$ are realized by $(-2)$-curves on $\wX_8$
  which contract to a configuration of singularities on $X^8$ with the
  same Dynkin diagram as $\Lambda_t$. The $E_7$ and $E_6$ cases are
  done similarly.
\end{proof}

\begin{remark}
  By the proof of Theorem~\ref{thm:E-discr}, the surfaces in the
  $E_6,E_7,E_8$ families correspond to rational elliptic fibrations
  with an $I_3,I_2,I_1$ fiber respectively. The singularity type of
  the double cover $X_t\setminus D_t$ is obtained from the Kodaira type of the elliptic
  fibration by dropping one $I_3,I_2,I_1$ fiber respectively (it gives
  a singularity of $X_t$ lying in the boundary $D_t$; of type $A_2,A_1$, or none resp.)
  and converting the other Kodaira fibers into the \ade singularities.

  As a check, we note that the list of maximal sublattices in
  Theorem~\ref{thm:allowed-sublattices}(4) is equivalent to the list
  of the rational extremal non-isotrivial elliptic fibrations in
  \cite[Thm.~4.1]{miranda1986on-extremal-rational}, and that the full
  list of sublattices in Theorem \ref{thm:allowed-sublattices} is consistent
  with the full list of Kodaira fibers of rational elliptic fibrations
  in \cite{persson1990configurations-of-kodaira}.  Persson's list
  contains 6 surfaces with an $I_m$ fiber for which the corresponding
  sublattice of $E_8$ has non-cyclic cotorsion: $I^*_2\, 2I_2$
  ($D_6\,2A_1$), $I^*_0\,3I_2$ ($D_4\, 3A_1$), $2I_4\,2I_2$
  ($2A_3\,2A_1$), $I_4\,4I_2$ ($A_3\,4A_1$), $4I_3$ ($4A_2$), $6I_2$
  ($6A_1$). But $D_6A_1$, $D_4\,2A_1$, $2A_3\,A_1$, $A_3\,3A_1$ and
  $5A_1$ are sublattices of $E_7$ and $3A_2$ is a sublattice of $E_6$,
  all with cyclic cotorsion.
\end{remark}

\section{Applications and connections with other works}
\label{sec:connections}

\subsection{Toric compact moduli of rational elliptic surfaces}
\label{sec:elliptic}

Let $M_\rmel$ be the moduli space of smooth rational elliptic
relatively minimal surfaces $S\to\bP^1$ with a section~$E$. Let
$M_\rmel(I_1)$ be the moduli space of such surfaces $(S,E,F)$ together
with a fixed $I_1$ Kodaira fiber $F$ (i.e. a rational nodal curve). This
is a $12:1$ cover of a dense open subset of $M_\rmel$ since a generic
rational elliptic surface has 12 $I_1$ fibers.

\begin{theorem}\label{thm:ell-surfaces}
  There exists a moduli compactification of $M_\rmel(I_1)$ by stable
  slc pairs whose normalization is the quotient $V\semi_\Lambda/W_\Lambda$ 
  of the projective toric variety $V_{\Lambda}^{\mathrm{semi}}$
   for the generalized Coxeter fan by the Weyl group $W_{\Lambda}$,
   where $\Lambda$ is the root lattice $E_8$.
\end{theorem}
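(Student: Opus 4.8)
The plan is to identify $M_\rmel(I_1)$ with the moduli space of \ade pairs of pure $E_8$ shape constructed in Section~\ref{sec:moduli}, and then to read off the statement from Theorems~\ref{thm:compact-ade-moduli} and \ref{thm:E-discr}. Recall from the proof of Theorem~\ref{thm:E-discr} that the torus $T_{E_8}$ parametrizes an embedded rational nodal cubic $C\subset\bP^2$ together with $8$ points $P_1,\dots,P_8\in C^0$ in almost general position --- equivalently $9$ points $P_1,\dots,P_9\in C^0$ with $\prod P_i=1$ in the group law, $P_9$ being the ninth base point of the pencil of cubics through $P_1,\dots,P_8$. Blowing up these nine base points produces a relatively minimal rational elliptic surface $\wt\cX^9\to\bP^1$ with a section $s_9$, and the strict transform $F$ of $C$ is a distinguished fibre of type $I_1$. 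Conversely, every $(S,E,F)\in M_\rmel(I_1)$ arises this way: realize $S$ as the blow-up of $\bP^2$ at the nine base points of the pencil $|{-K_S}|$, with $C$ the image of $F$, an irreducible nodal anticanonical cubic. The remaining choices (which blow-down structure $S\to\bP^2$) are permuted by $W_{E_8}$, so $M_\rmel(I_1)=T_{E_8}/W_{E_8}$; and by Theorem~\ref{thm:moduli-ade-pure}, since $\mu_{E_8}=\Hom(E_8^*/E_8,\bC^*)=1$, this is precisely the moduli space $M$ of \ade pairs of pure $E_8$ shape.

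First I would upgrade this to an equivalence of moduli problems by matching the geometric objects in both directions. Given $(S,E,F)$: the pair $(S,F)$ has $K_S+F\sim 0$ ($-K_S$ is the fibre class), the elliptic involution $\iota$ (inversion along $E$) preserves $F$ and is non-symplectic, and contracting $E$ together with the $(-2)$-curves of the reducible fibres by $|NE|$, $N\gg0$, yields a surface $X$ with \ade singularities carrying an induced involution; then $(X,F,\iota)$ is a $(K+D)$-trivial polarized involution pair of type~$\III$ in the sense of Definition~\ref{def:invo-pair} and Lemma~\ref{lem:types-of-pairs} (the boundary $F$ is an irreducible rational nodal curve, a degenerate ``cycle of $\bP^1$s''), and $R$ is the fixed locus, which is ample Cartier on $X$ because the contracted curves are exactly those collapsed upstairs by $|{-2(K_Y+C)}|$. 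Its quotient $(Y,C)=(X,F)/\iota$ is the toric \ade surface of pure $E_8$ shape, via the contractions and blow-ups relating $\bF_2$, $\bP(1,1,2)$ and $Y$ identified in the proof of Theorem~\ref{thm:E-discr}, and $B\in|{-2(K_Y+C)}|$ is the Weierstrass discriminant, whose equation is the $E_8$ canonical family of Theorem~\ref{thm:E-discr}. Conversely, from an \ade pair \ycb of $E_8$ shape one forms the double cover $\pi\colon X\to Y$ of Lemma~\ref{lem:pairs-and-covers}, passes to the minimal resolution, and reverses the contractions to recover $(S,E,F)$. These two constructions are mutually inverse, and $S$ is smooth precisely over the complement of $\cup_\alpha\{e^\alpha=1\}$ by Theorem~\ref{thm:ade-on-root-tori} and the Remark following it; hence $M_\rmel(I_1)\cong M$.

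With this identification, the compactification is forced. I would define $\oM_\rmel(I_1):=M\slc_{ADE}$ for the pure $E_8$ shape (Definition~\ref{def:moduli-stable-ade}), i.e.\ the closure of $M\cong M_\rmel(I_1)$ in the moduli space $M\slc_{\rm dP}(e,\epsilon_0)$ of stable del Pezzo pairs of type $(e,\epsilon_0)$ with $e=B^2$, which is a separated algebraic space of finite type by Proposition~\ref{prop:dp-moduli}. Its points are stable slc pairs \ycb, equivalently --- via Lemma~\ref{lem:pairs-and-covers} and the identity $K_X+D+\epsilon R=\pi^*(K_Y+C+\freps B)$ --- stable slc pairs \xdr with $D$ the limit of the $I_1$ fibre and $R$ the fixed locus of the elliptic involution; thus $\oM_\rmel(I_1)$ is a compactification of $M_\rmel(I_1)$ by stable slc pairs, proper by Theorems~\ref{thm:compactified-family-A'DE} and \ref{thm:compact-ade-moduli}. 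Finally, since $E_8$ is a pure \ade shape, Theorem~\ref{thm:compact-ade-moduli}(1) identifies the normalization $(M\slc_{ADE})^\nu$ with $V\semi_{E_8}/W_{E_8}$, the $W_{E_8}$-quotient of the projective toric variety for the generalized Coxeter fan of the $E_8$ root lattice --- which is the assertion.

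The one substantive step is the equivalence of moduli problems in the second paragraph: essentially a bookkeeping verification that the Weierstrass-model construction already carried out in the proof of Theorem~\ref{thm:E-discr} is an isomorphism of moduli functors (it suffices to treat the coarse spaces). The two points to watch are that $(X,F,\iota)$ genuinely satisfies all of Definition~\ref{def:invo-pair} --- in particular the ampleness and Cartier conditions on $R$ after the contractions, which hold because the contracted curves match those collapsed by $|{-2(K_Y+C)}|$ downstairs --- and the mild stack-versus-coarse bookkeeping that the generic $\bZ/2$-automorphism $\iota$ of $(S,E,F)$ corresponds to the triviality of $\mu_{E_8}$ on the \ade side, i.e.\ that we are compactifying by the geometric pairs \xdr, which remember $\iota$ through the double cover, rather than by bare isomorphism classes. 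Everything downstream --- properness, the stratification of the boundary, and the toric structure of the normalization --- is then inherited verbatim from Theorems~\ref{thm:compact-ade-moduli}, \ref{thm:compactified-family-A'DE}, and \ref{thm:fibers-of-semi}.
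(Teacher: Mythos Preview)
Your proof is correct and follows essentially the same route as the paper: identify $M_\rmel(I_1)$ with the moduli of \ade pairs (equivalently double covers) of pure $E_8$ shape via the elliptic-involution/Weierstrass construction, then invoke Theorem~\ref{thm:compact-ade-moduli}(1). One small slip: in your second paragraph you say ``$S$ is smooth precisely over the complement of $\cup_\alpha\{e^\alpha=1\}$'', but $S$ is always smooth (it is a relatively minimal rational elliptic surface); what happens over the root hypertori is that $B$, equivalently the del Pezzo $X$, acquires Du Val singularities and $S$ picks up extra reducible fibers---this does not affect the identification $M_\rmel(I_1)\cong M$, which holds over the whole torus.
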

\begin{proof}
Let $j\colon S\to S$ be the elliptic involution with respect to the section $E$
and $E\sqcup R$ be the fixed locus of $j$. Contracting the
$(-2)$-curves in the fibers which are disjoint from the section $E$
and then $E$ itself gives a pair $(X,D+\epsilon R)$ which is an \ade
double cover of shape $E_8$. Vice versa, any pair $(X,D+\epsilon R)$ of $E_8$
shape is a del Pezzo surface of degree 1 with Du Val
singularities. Blowing up the unique base point of $|-K_X|$ and
resolving the singularities gives a rational elliptic fibration
$S\to\bP^1$ and the strict preimage of $D$ is an $I_1$ fiber of this
fibration. This theorem is now the $E_8$ case of
Theorem~\ref{thm:compact-ade-moduli}. 
\end{proof}

Similarly, the $E_7$ compactified family gives a moduli
compactification $\oM_\rmel(I_2)$ of the moduli space $M_\rmel(I_2)$ of rational
elliptic surfaces with an $I_2$ Kodaira fiber;
the $E_6$ family gives $\oM_\rmel(I_3)$; 
the $D_5^-$ family gives $\oM_\rmel(I_4)$; 
and the $\php A_4^-$ family gives $\oM_\rmel(I_5)$.

\subsection{Moduli of Looijenga pairs after Gross-Hacking-Keel}
\label{sec:GHK} 

A Looijenga pair is a smooth rational surface $(\wX, \wD)$ such that
$K_\wX + \wD \sim 0$ and $\wD$ is a cycle of rational curves.  In
\cite{gross2015moduli-of-surfaces}, Gross-Hacking-Keel construct
moduli of Looijenga pairs of a fixed type, given by the configuration
of the rational curves $\wD$. The result is as follows. First, one
defines the lattice $\Delta\subset\Pic\wX$ as the orthogonal to the
irreducible components of $\wD$, and the torus
$T_{\Delta} = \Hom(\Delta,\bC^*)$.  One glues several copies of this
moduli torus along dense open subsets into a nonseparated scheme $U$
and divides it by a group $\mathrm{Adm}$ of admissible monodromies,
including reflections in the $(-2)$-curves appearing on some
deformations of $(\wX, \wD)$.  The non-separatedness is expected since
$\wX$ in this setup are smooth surfaces without a polarization.  The
separated quotient of $[U/\mathrm{Adm}]$ is
$[T_{\Delta}/\mathrm{Adm}]$.

\smallskip

For an \ade double cover $(X,D + \epsilon R)$, the minimal resolution
of singularities $(\wX, \wD)$ is a Looijenga pair.  In
Theorems~\ref{thm:moduli-ade-pure}, \ref{thm:moduli-ade-toric-primed},
\ref{thm:moduli-ade-primed} we proved that the moduli space of \ade
pairs and of their double covers is a torus
$T_{\Lambda'} = \Hom(\Lambda', \bC^*)$ modulo a certain Weyl group
$W_\Lambda\rtimes W_0$. The lattices $\Lambda$, $\Lambda'$ and 
the Weyl groups $W_\Lambda$, $W_0$ were 
introduced in Section~\ref{sec:moduli}.
We now relate them to the lattices naturally
associated to Looijenga pairs with a nonsymplectic involution.

\begin{definition}
  Let $(\wX, \wD)$ be a Looijenga pair with an involution.  Let
  $\Delta = \wD^\perp$ be the sublattice of $\Pic\wX$ which is
  orthogonal to the curves in the boundary. Assume that there is an
  involution $\iota\colon \wX\to \wX$ with $\iota(\wD)=\wD$. We define
  $\dplus$ and $\dminus$ as the $(\pm1)$-eigensublattices of the
  induced involution $\iota^*\colon \Delta\to\Delta$.  Denote by
  $\dminustwo$ the set of $(-2)$-vectors in $\dminus$, and by $\wtwo$
  the group generated by reflections in them.
\end{definition}

\begin{theorem}\label{thm:ghk-connection}
  Let \ycb be an \ade pair and \xdr be its double cover, with the
  minimal resolution $(\wX,\wD)$. Then one has $\Lambda = \dminus$ and
  $W_\Lambda=\wtwo$. Further,
  $\Lambda' \subset \Delta/\dplus$, with equality if and only if the
  shape has no doubly primed sides. For a doubly primed shape $S''$
  (resp. $\ppS$), $\Delta/\dplus$ is the same as for the shape $S'$
  (resp. $\pS$); it thus contains $\Lambda'$ as a sublattice of index
  $2^N$, where $N$ is the number of sides on which the shape has a
  double prime.
\end{theorem}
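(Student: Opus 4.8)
The plan is to identify the lattices $\dminus$ and $\Delta/\dplus$ on the minimal resolution $(\wX,\wD)$ directly in terms of the combinatorial data we already control, namely the $\cperp$ and $\Lambda_0$ lattices of Definition~\ref{def:CBperp}, together with the explicit descriptions of the \ade surfaces from Section~\ref{sec:adepairs} and the normal forms of the naive/canonical families from Sections~\ref{sec:moduli} and \ref{sec:canonical-families}. First I would set up notation: for an \ade pair $(Y,C+\freps B)$ of a fixed pure shape, the double cover $\pi\colon X\to Y$ restricts to a double cover $\wpi\colon\wX\to\wY$ of the minimal resolutions, $\wD = \wpi\inv(\wC)$ is the boundary cycle, and $\iota$ is the deck involution. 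The curves in $\wD$ are the components $\wC_s$ of the boundary (pulled back, possibly splitting) together with the exceptional curves over the non--Du Val points of $Y$ lying in $C$; these are exactly the curves that move under $\iota^*$ paired up, or are fixed. The key observation is that $\Pic\wX$ decomposes under $\iota^*$ and that $\wpi^*\colon\Pic\wY\to(\Pic\wX)^{+}$ is an isomorphism after tensoring with $\bQ$, while the $(-1)$-eigenlattice $(\Pic\wX)^{-}$ is (up to finite index and the boundary contributions) the lattice spanned by the ramification data, which is governed by the linear system $|{-}2(K_\wY+\wC)|$ — this is where the root lattice $\Lambda$ enters, via Lemma~\ref{lem:2dim-models} and Corollary~\ref{cor:H-grading}.

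The main steps, in order: (1) Show $\dminus = \Lambda$. I would do this shape by shape for the pure shapes, using that $\dminus = \Delta\cap(\Pic\wX)^{-}$ and that $\Delta = \wD^\perp$; the components of $\wD$ are $\iota$-stable or $\iota$-swapped in a way that, combined with the toric polytope description of $(\wY,\wC)$ in Figs.~\ref{fig:A}, \ref{fig:D}, \ref{fig:E}, lets one write down an explicit basis of $\Delta$ and compute the $(-1)$-eigenlattice. The identification with the root lattice $\Lambda$ should come out of the same computation that produced Lemma~\ref{lem:2dim-models}: the classes of the $(-2)$-curves appearing on generic deformations, which by Theorem~\ref{thm:ade-on-root-tori} are in bijection with the roots of $\Lambda_t$, span $\dminustwo$, so $\dminustwo = \Delta^{(2)}_{-}$ is the root system of type $\Lambda$ and $W(\dminustwo) = W_\Lambda$. (2) Deduce $W_\Lambda = \wtwo$ immediately, since $\wtwo$ is by definition the Weyl group of $\dminustwo$. (3) For the unprimed part of the statement, identify $\Delta/\dplus$ with $\Lambda'$. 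Since $\dplus$ and $\dminus$ are the eigenlattices, $\dplus\oplus\dminus\subset\Delta$ is a finite-index sublattice, and $\Delta/(\dplus\oplus\dminus)$ is 2-torsion; quotienting by $\dplus$ gives $\Delta/\dplus$ as a finite extension of $\dminus = \Lambda$ inside $\Lambda^* = \Lambda\otimes\bQ/\dots$, and matching this extension against the tables in Theorem~\ref{thm:moduli-ade-primed} (which lists $\Lambda'/\Lambda$) finishes it. For pure $D$ and $E$ shapes $\Lambda' = \Lambda$ and one checks $\dplus$ already saturates; for pure $A$ shapes one checks the extra $\bZ_2$ matches the $\mu_2$-automorphism, etc. (4) For primed shapes, recall that priming on the double cover (per the Remark after Definition~\ref{def:priming}) is a sequence of ordinary blowups at points $Q_i\in D\cap R$ followed by a contraction by $|NR'|$. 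A single prime on a long side adds one blowup, which enlarges $\Pic\wX$ by one $\iota$-stable class and one new component of $\wD$ in a compatible way, preserving $\Delta/\dplus$ but changing $\Lambda'$; a double prime on a side contracts the whole side, which kills a class that had been contributing to $\Delta/\dplus$ but not to the saturation, producing the index-$2^N$ discrepancy. I would verify this by comparing the blow-up/contraction bookkeeping against the $\Lambda'$-tables in Theorems~\ref{thm:moduli-ade-toric-primed} and \ref{thm:moduli-ade-primed}, and against $\Lambda_0$ and $W_0$ from Theorem~\ref{thm:reconstruction}.

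The hard part will be step (4), specifically pinning down exactly why $\Delta/\dplus$ for a doubly-primed shape $S''$ agrees with that of the singly-primed $S'$ rather than being a further extension, and why the index jump is precisely $2^N$. The subtlety is that the doubly-primed construction both blows up and then contracts the entire long side $C'_s$, and the contracted curve is one of the two $\bP^1$'s over $C_s$ on the double cover, so on $\wX$ one loses a $(+1)$-eigenclass that lies in $\Delta$ but outside the saturation of $\dplus\oplus\dminus$; tracking how this interacts with the 2-torsion of $\Delta/(\dplus\oplus\dminus)$ and with the self-intersection data ($\Lambda_0 = A_1^2\oplus\la-4\ra$ type computations from the proof of Theorem~\ref{thm:reconstruction}) requires care. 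I expect the cleanest route is to argue uniformly that $\Delta/\dplus$ is a deformation invariant — it only depends on the topological type of $(\wX,\wD,\iota)$, hence only on the shape with single-prime decorations (double-priming being a codimension-shift that doesn't change the ambient $\Delta$ or $\dplus$ once we remember the marked sides) — and then the $2^N$ index is forced by comparing volumes/ranks via Lemma~\ref{lem:priming-changes}. With that invariance in hand, the whole theorem reduces to a finite check on the pure shapes plus single-prime modifications, which is routine given Lemma~\ref{lem:2dim-models} and the tables.
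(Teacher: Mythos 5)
Your overall strategy --- compute the $\iota^*$-eigenlattices of $\Delta\subset\Pic\wX$ explicitly, shape by shape, and identify $\Delta/\dplus$ inside $\frac12\dminus$ by orthogonal projection, matching against the $\Lambda'$-tables --- is the same as the paper's, which carries it out for representative $D$-shapes using concrete models ($X$ a blowup of $\bP^1\times\bP^1$ at $2n$ points on a curve in $|2s+f|$, plus the priming blowups), writing the involution on $\Pic\wX$ in the basis $s,f,e_i,g_j$ and reading off $\dplus$, $\dminus$, and the images $p(\beta)$ of the extra roots as fundamental weights. However, two steps of your plan do not hold up as written. In step (1), knowing that the $(-2)$-classes realized on deformations are exactly the roots of $\Lambda$ identifies $\dminustwo$ with the root system, but the theorem asserts the lattice equality $\Lambda=\dminus$, for which you must also show $\dminus$ is generated by its $(-2)$-vectors. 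The paper gets this for free by checking that $\iota^*$ acts as $-1$ on all of $\Delta$ for the pure $D$ shape (so $\dminus=\Delta$, visibly the $D_{2n}$ root lattice) and that priming leaves $\dminus$ unchanged.

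The more serious gap is step (4). Your proposed ``deformation invariance'' rests on the claim that double priming ``doesn't change the ambient $\Delta$ or $\dplus$.'' That is false: the second blowup on a side adds the class $g_1-g_2$ (difference of the two exceptional divisors), which lies in $\wD^\perp$ and is fixed by $\iota^*$, so \emph{both} $\Delta$ and $\dplus$ grow by a rank-one summand; in particular $(\wX,\wD,\iota)$ for $S''$ has larger Picard rank than for $S'$, so no deformation-invariance argument can reduce one to the other. The correct statement --- which is the paper's one-line argument --- is that they grow by the \emph{same} summand, $\Delta(S'')=\Delta(S')\oplus\bZ(g_1-g_2)$ and $\dplus(S'')=\dplus(S')\oplus\bZ(g_1-g_2)$, so the quotient is unchanged. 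Relatedly, the index $2^N$ cannot be ``forced by comparing volumes/ranks via Lemma~\ref{lem:priming-changes}'': $\Lambda'$ and $\Delta/\dplus$ have the same rank, so the index is invisible to rank counts. It comes from the lattice identifications themselves: $\Delta/\dplus(S'')=\Delta/\dplus(S')=\Lambda'(S')=\Lambda+\bZ\vpi$, whereas $\Lambda'(S'')=\Lambda'(S)$ drops the weight $\vpi$ for each doubly-primed side (Theorem~\ref{thm:moduli-ade-primed}), contributing a factor of $2$ per such side. With these two points repaired, the rest of your plan is the paper's proof.
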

\begin{proof}
  We prove the statement in representative $D$ cases, with the other
  cases done by similar computations.

  \smallskip
  
  ($D_{2n}$) The easiest model for a generic surface $X=\wX$ of this
  shape is as a blowup of $\bP^1\times\bP^1$ with a section $s$ and a
  fiber $f$ at $2n$ points lying on a curve in $|2s+f|$. Using $e_i$
  for the exceptional divisors in $\Pic X$, the boundary curves are
  $D_1 \sim 2s+f-\sum_{i=1}^{2n}e_i$, and $D_2\sim f$. Then $\Delta$
  is generated by the  roots $e_i - e_{i+1}$,
  $1\le i\le 2n-1$ and $f-e_1-e_2$ forming a $D_{2n}$ Dynkin
  diagram. The involution acts by $f\mapsto f$,
  $s\mapsto s + nf -\sum_{i=1}^{2n}e_i$, $e_i\mapsto f-e_i$. Thus, it acts as
  $(-1)$ on $\Delta$ and $\dminus$ is the root lattice $\Lambda$ of
  type $D_{2n}$. In this case $\dplus=0$ and $\Delta/\dplus = \dminus
  = \Lambda=\Lambda'$.

  \smallskip
  
  ($D_{2n}'$) The surface $X$ is obtained from the one for $D_{2n}$ by
  a blowup at one of the two points in $R\cap D_2$. Denoting by $g$
  the exceptional divisor, one has $D_1 \sim 2s+f-\sum_{i=1}^{2n}e_i$,
  and $D_2\sim f-g$. The lattice $\Delta$ is generated by the $2n$
  roots above and an additional root $\beta=s-e_1-g$. This forms a
  Dynkin diagram obtained by attaching an additional node $\beta$ to
  one of the short legs of $D_{2n}$, $\alpha'$ or $\alpha''$.
   Without loss of
  generality, let us say $\beta \alpha'=1$.  The involution $\iota$ acts on the
  vectors $s,f,e_i$ the same way as above, and $\iota^*g =g$. Thus,
  $\dplus$ is spanned by the vector $t=\beta+\iota^*\beta$ and
  $\dminus$ is the same $D_{2n}$ root lattice as before. 
  We have an orthogonal projection $p\colon \Delta \to \frac12\dminus$
  identifying $\Delta/\dplus$ with a sublattice of $\frac12\dminus$
  generated by $\dminus$ and the image $p(\beta)$. For a root
  $\alpha\in\dminus$ one has $p(\beta)\alpha = \beta \alpha$, so $\beta \alpha'=1$ and
  $\beta r=0$ for the other roots $\alpha$. Thus, $p(\beta)=\vpi'$, the
  fundamental weight $\vpi'$ for the root $\alpha'$, and
  $\Delta/\dplus = \Lambda + \vpi'$ is our $\Lambda'$.

  \smallskip
  
  ($\pD_{2n}$) The surface $X$ is obtained from the one for $D_{2n}$
  by a blowup at one of the two points in $R\cap D_1$. Denoting by $g$
  the exceptional divisor again, one has
  $D_1 \sim 2s+f-\sum_{i=1}^{2n}e_i - g$, and $D_2\sim f$. The lattice
  $\Delta$ is generated by the $2n$ roots above and an additional root
  $\beta=e_{2n}-g$. This forms a Dynkin diagram obtained by attaching
  an additional node $\beta$ to the long leg of $D_{2n}$, i.e. to
  $\alpha_{2n-3}$ in our notation. The $(-1)$-eigenspace $\dminus$ is
  again the $D_{2n}$ root lattice generated by the first $2n$ roots.
  The space $\dplus$ is generated by $t=\beta+\iota^*\beta = f-2g$.
  The orthogonal projection $p$ identifies $\Delta/\dplus$ with
  $\dminus + p(\alpha)$. And since one has $\beta \alpha_{2n-3}=~1$
  and $\beta$ is orthogonal to the other $2n-1$ roots,
  $p(\beta) = \vpi_{2n-3}$. So one has
  $\Delta/\dplus = \Lambda + \vpi_{2n-3} = \Lambda'$, as claimed.

  \smallskip

  ($\pD'_{2n}$) Similarly, in $\Delta$ one has two extra roots
  $\beta_1 = s-e_1-g_1$ and $\beta_2=e_{2n}-g_2$ whose images in
  $\frac12\dminus$ are $\vpi'$ or $\vpi''$ depending on the parity of
  $n$,  and $\vpi_{2n-3}'$, so $\Delta/\dplus = \Lambda'$ again.

  \smallskip
  
  When priming a surface of shape $S$ twice on the same side (say on
  the right), there are two exceptional divisors $g_1,g_2$. Then
  $\Delta(S'') = \Delta(S') \oplus \bZ(g_1-g_2)$,
  $\dplus(S'') = \dplus(S') \oplus \bZ(g_1-g_2)$,
  $\dminus(S'')=\dminus(S')$. Therefore, $\Delta/\dplus(S'') =
  \Delta/\dplus(S')$. This applies to $D''_{2n}$, $\ppD_{2n}$ and all
  the other doubly primed shapes.
\end{proof}

Next, we define an action of the Weyl group $W_0$ of the lattice
$\Lambda_0 = C^\perp \cap B^\perp$ introduced in Def.~\ref{def:CBperp}.

\begin{definition}
  Let $\pi\colon X\to Y$ be an double cover of a \ade pair with a
  branch divisor $B$. 
  Let $\tilde\pi\colon\wX\to \wY$ be a
  double cover of its resolution of singularities.  Let
  $e\in\Lambda_0^{(2)}$ be a cycle, so
  $e\in C^\perp\cap B^\perp$ and $e^2=-2$.  Then $\pi^*e = e_1 + e_2$
  with $\iota^*e_1=e_2$, $e_1^2=e_2^2=-2$ and $e_1e_2=0$. 

  We define $v_+= \pi^*(e) = e_1+e_2 \in \dplus$ and
  $v_-=e_1-e_2\in \dminus$. The composition
  of two reflections $w_{e_1}\circ w_{e_2} = w_{e_2}\circ w_{e_1}$
  acts on $\dminus$ as a reflection $w_{v_-}$ in the $(-4)$-vector
  $v_-$, and on $\dplus$ as a reflection $w_{v_+}$ in the $(-4)$-vector
  $v_+$.
\end{definition}

\begin{lemma}
  Given $e\in\Lambda_0$, $w_{e_1}\circ w_{e_2}$ is well defined up to
  a conjugation by $\wtwo$. 
\end{lemma}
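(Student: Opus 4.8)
The approach is to localize the indeterminacy in the decomposition $\pi^* e = e_1 + e_2$ and then to show that different valid decompositions differ by a $\wtwo$--conjugation. Fix $e\in\Delta^{(2)}_0$, so $e^2=-2$. The class $v_+:=\pi^* e\in\dplus$ is canonically attached to $e$ and satisfies $v_+^2=-4$. An admissible decomposition $\pi^* e=e_1+e_2$ (that is, with $e_i^2=-2$, $e_1e_2=0$, $\iota^* e_1=e_2$) is the same datum as a vector $v_-:=e_1-e_2\in\dminus$ with $v_-^2=-4$ and $\tfrac12(v_++v_-)\in\Pic\wX$. Using $\iota^*|_{\dminus}=-1$ and $\iota^*|_{\dplus}=1$ one checks, exactly as in the paragraph preceding the lemma, that $w_{e_1}\circ w_{e_2}$ preserves $\dplus$ and $\dminus$, restricts on $\dplus$ to the reflection $w_{v_+}$ and on $\dminus$ to the reflection $w_{v_-}$. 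Thus the $\dplus$--part of $w_{e_1}\circ w_{e_2}$ is independent of all choices, and the entire ambiguity is the choice of the $(-4)$--vector $v_-\in\dminus$.

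I would next bound that ambiguity. If $\{e_1,e_2\}$ and $\{e_1',e_2'\}$ are two admissible decompositions of the same $v_+$, then $(1+\iota^*)(e_1-e_1')=v_+-v_+=0$, so $e_1-e_1'\in\dminus$, and hence $v_--v_-'=(1-\iota^*)(e_1-e_1')=2(e_1-e_1')\in 2\dminus$. Therefore all admissible $v_-$ lie in a single coset of $2\dminus$ inside the (finite) set of $(-4)$--vectors of $\dminus$. Given this, the lemma follows once one knows that any two of them are $\wtwo$--conjugate up to sign: then $v_-'=\pm\sigma(v_-)$ for some $\sigma\in\wtwo$, so $w_{v_-'}=\sigma\,w_{v_-}\,\sigma^{-1}$ on $\dminus$, while $\sigma$ acts trivially on $\dplus$ (the group $\wtwo$ is generated by reflections in vectors of $\dminus$, which is orthogonal to $\dplus$); since $w_{e_1}\circ w_{e_2}$ and $w_{e_1'}\circ w_{e_2'}$ already agree on $\dplus$, they are then conjugate by $\sigma$ on all of $\Delta$.

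Finally I would prove the transitivity of $\wtwo$ on these $(-4)$--vectors. The natural argument is geometric: every admissible pair $\{e_1,e_2\}$ is cut out by an actual pair of disjoint $(-2)$--curves on the minimal resolution of some \ade double cover of the given shape — one on which the cover carries the extra Du Val singularity indexed by $e$, which by Theorem~\ref{thm:reconstruction} does occur since $e$ is a root of $\Lambda_0^{(2)}$. Monodromy in the family of such double covers over the moduli space of \ade pairs of the shape commutes with $\iota^*$, hence preserves $\dplus$ and $\dminus$; on $\dplus$ it fixes $v_+$, and on $\dminus$ it is generated by Picard--Lefschetz reflections in the vanishing $(-2)$--curves, so it lands in $\wtwo$. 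Transporting one realization of $e$ to another along a loop then produces exactly the desired element of $\wtwo$ carrying $v_-$ to $\pm v_-'$. Since the root systems $\Lambda_0^{(2)}$ that actually arise are very small (the lattices $A_1^k$ together with the short genus--$1$ list in Theorem~\ref{thm:reconstruction}), one could instead verify the orbit statement by direct inspection of each case. I expect this transitivity step — in particular, giving a uniform justification rather than a case check — to be the only real obstacle; the reductions in the first two paragraphs are elementary.
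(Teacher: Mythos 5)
There is a genuine gap: your argument hinges on the claim that any two admissible vectors $v_-,v_-'\in\dminus$ are $\wtwo$-conjugate up to sign, and you do not prove this. The monodromy justification you sketch would need several nontrivial inputs to become a proof — that every admissible decomposition is realized by an actual pair of $(-2)$-curves on some member of the family, that any two such realizations are connected by a path in the moduli space, and that the resulting monodromy lies in $\wtwo$ and acts transitively on the relevant realizations — none of which is established, and a case-by-case check is a retreat from proof. So as written the argument reduces the lemma to an unproven orbit statement.

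The point you are missing is that no transitivity is needed: the conjugating element can be produced directly. Given two decompositions $v_+=e_1+e_2=e_1'+e_2'$, write $e_1=\tfrac12(v_++v_-)$ and $e_1'=\tfrac12(v_++v_-')$, so $e_1e_1'=-1+\tfrac14 v_-v_-'$. Since $\dminus\subset R^\perp$ and $R^2>0$, the lattice $\dminus$ is negative definite, so $|v_-v_-'|\le 4$ with equality only when $v_-'=\pm v_-$ (in which case $\{e_1',e_2'\}=\{e_1,e_2\}$ and the two compositions coincide, as $w_{e_1}$ and $w_{e_2}$ commute). Otherwise integrality of $e_1e_1'$ forces $v_-v_-'=0$, i.e.\ $e_1e_1'=-1$, hence $e_1-e_1'$ lies in $\dminus$ (it is killed by $1+\iota^*$) and satisfies $(e_1-e_1')^2=-2$. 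The reflection $w_{e_1-e_1'}=w_{e_2-e_2'}\in\wtwo$ swaps $e_1$ with $e_1'$ and $e_2$ with $e_2'$, and therefore conjugates $w_{e_1}\circ w_{e_2}$ to $w_{e_1'}\circ w_{e_2'}$. Your preliminary reductions (that the $\dplus$-part is canonical and that the ambiguity sits in the choice of $v_-$) are correct but unnecessary once this computation is in hand.
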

\begin{proof}
  Suppose we have another decomposition $v_+=e_1+e_2 = e'_1 + e'_2$.  One
  has $e_1 = \frac12 (v_++v_-)$ and $e'_1 = \frac12 (v_++v'_-)$.
  Then $e_1e'_1 = -1 + \frac14 v_-v'_-$.  Since
  $\dminus \subset R^\perp$ and $R^2>0$, $\dminus$ is negative
  definite. Thus, $|v_-v'_-|<4$, and we conclude that
  $e_1e'_1=-1$. The elements $w_{e_1}\circ w_{e_2}$ and
  $w_{e'_1}\circ w_{e'_2}$ are conjugate by the reflection
  $w_{e_1-e'_1} = w_{e_2-e'_2}$. Finally, $e_1-e'_1 \in\dminus$ and
  $(e_1-e'_1)^2 = -2$, so $w_{e_1-e'_1} \in \wtwo$.
\end{proof}

\begin{definition}
  We define the Weyl group $\wtwofour$ as the group of reflections of
  $\Delta$ generated by $\wtwo$ and the elements $w_{e_1}\circ
  w_{e_2}$ for $e\in\Lambda_0$. By the above, it preserves both
  $\dminus$ and $\dplus$, with $\wtwo$ acting trivially on
  $\dplus$. Thus, we have the induced actions of $\wtwofour$ on
  $\dminus$ and of $\wtwofour/\wtwo$ on $\dplus$.
\end{definition}

\begin{theorem}\label{thm:ghk-connection2}
  One has $\wtwofour/\wtwo = W_0$. The subgroup $W_{00}$ from
  Definition~\ref{def:W00} is the
  subgroup of $W_0$ which acts trivially on $\dminus$.  
\end{theorem}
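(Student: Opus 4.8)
The plan is to identify both $\wtwofour/\wtwo$ and $W_0$ with the reflection group of the $(-4)$-vectors $v_+=\pi^*e$, $e$ a root of $\Lambda_0^{(2)}$, acting on $\dplus$. First I would record how $\pi^*\colon\Pic\wY\to\Pic\wX$ behaves on $\Lambda_0=C^\perp\cap B^\perp$: for $e\in\Lambda_0$ the class $\pi^*e$ is orthogonal to the whole boundary $\wD$ (this is exactly what the orthogonality conditions defining $\Lambda_0$ are for) and is visibly $\iota^*$-fixed, so $\pi^*(\Lambda_0)\subset\dplus$; moreover $\pi^*$ is injective with $\pi^*a\cdot\pi^*b=2(a\cdot b)$, so it carries the root system $\Lambda_0^{(2)}$ bijectively onto the set of $(-4)$-vectors $\{v_+=\pi^*e\}$ of $\dplus$, an isometry up to the overall scaling $2$. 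Since a reflection is determined by the line spanned by its vector, the subgroup of $\mathrm O(\dplus)$ generated by the $w_{v_+}$ is canonically isomorphic to $W(\Lambda_0^{(2)})=W_0$; I would also check that this identification is compatible with Theorem~\ref{thm:reconstruction} (the $w_{v_+}$ permute the short legs of the relevant $D$-type diagram, the remaining generators giving the extra involutions).

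Next comes the comparison with $\wtwofour/\wtwo$. By definition $\wtwofour$ is generated by $\wtwo=W_\Lambda$ and the elements $w_{e_1}\circ w_{e_2}$, which by the discussion preceding the theorem act on $\Delta$ as $w_{v_+}w_{v_-}$ with $v_-=e_1-e_2$ a $(-4)$-vector of $\dminus$; since $\dplus$ and $\dminus$ are rationally orthogonal, $w_{v_+}$ is a reflection on $\dplus$ and the identity on $\dminus$, and $w_{v_-}$ the reverse. Hence $\wtwo$ acts trivially on $\dplus$ and the restriction $\rho_+\colon\wtwofour\to\mathrm O(\dplus)$ has image the reflection group of the $v_+$, i.e.\ $W_0$. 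A short check shows $\wtwo\triangleleft\wtwofour$: conjugating $w_\alpha$, $\alpha\in\dminustwo$, by $w_{e_1}\circ w_{e_2}$ gives $w_{w_{v_-}(\alpha)}$, and $w_{v_-}(\alpha)$ is again a $(-2)$-vector of the root lattice $\dminus=\Lambda$, hence a root. So $\rho_+$ descends to a surjection $\overline\rho_+\colon\wtwofour/\wtwo\twoheadrightarrow W_0$.

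The remaining --- and main --- step is the injectivity of $\overline\rho_+$, equivalently $\ker\rho_+=\wtwo$. One inclusion is clear; for the other I would use that $\wtwofour$ preserves both eigenlattices (which span a finite-index sublattice of $\Delta$), hence embeds in $\mathrm O(\dplus)\times\mathrm O(\dminus)$; writing $\rho_-$ for the $\dminus$-component, its image is generated by $W_\Lambda$ together with the $(-4)$-reflections $w_{v_-}$. The crux is to show that the $w_{v_-}$ attached to the simple roots of $W_0$ generate in $\mathrm O(\dminus)$ a subgroup isomorphic to $W_0$ and complementary to $W_\Lambda$, i.e.\ $\wtwofour=W_\Lambda\rtimes W_0$ with $W_0$ realized simultaneously on $\dplus$ (by the $w_{v_+}$) and on $\dminus$ (by the $w_{v_-}$); then a word in the $w_{v_+}w_{v_-}$ and $\wtwo$ that is trivial on $\dplus$ is already trivial on $\dminus$ modulo $W_\Lambda$, forcing the element into $\wtwo$. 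I expect this to be the real obstacle, to be handled by going through the short list of nontrivial $W_0$ in Theorem~\ref{thm:reconstruction} --- namely $W(A_1^k)$ and the genus-$1$ exceptions with $\Lambda_0^{(2)}\in\{A_1,A_1^2,A_2,A_3,D_4\}$ --- and computing $v_-$ explicitly in each of the models of $\wX$ from the proof of Theorem~\ref{thm:ghk-connection}.

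Finally, the statement about $W_{00}$ falls out. By the previous step, $W_0\cong\wtwofour/\wtwo$ acts on $\dminus=\Lambda$ through the reflections $w_{v_-}$, and by Theorem~\ref{thm:ghk-connection} this is exactly the action of $W_0$ on the sublattice $\Lambda\subset\Lambda'\subset\Delta/\dplus$; it coincides with the action in Theorem~\ref{thm:W0-action}. As $\Lambda$ has finite index in $\Lambda'$, an element of $W_0$ acts trivially on $T_{\Lambda'}=\Hom(\Lambda',\bC^*)$ iff it acts trivially on $\Lambda'$ iff it acts trivially on $\dminus$. Comparing with Definition~\ref{def:W00} gives $W_{00}=\ker(W_0\to\mathrm O(\dminus))$, matching Theorem~\ref{thm:W0-action}, where the ``extra automorphism'' factors of $W_0$ are precisely those acting trivially on the $D$-lattice.
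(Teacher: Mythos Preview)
Your proposal is correct and takes essentially the same approach as the paper: both reduce the identification $\wtwofour/\wtwo = W_0$ and the description of $W_{00}$ to explicit case-by-case computation of the vectors $v_-$ and their reflections on $\dminus$ in the representative $D$ shapes (and the genus-$1$ exceptions). Your additional scaffolding --- the restriction maps $\rho_\pm$, the normality of $\wtwo$ (valid since in an \ade root lattice every $(-2)$-vector is a root), and the reading of $W_{00}$ as $\ker(W_0\to\mathrm O(\dminus))$ --- is sound and clarifies what must be checked, but the paper simply performs the computations directly without stating this framework.
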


\begin{proof}
  We compute the action of
  $W_0$ in the representative $D$ cases using the same notation as in
  the proof of Theorem~\ref{thm:ghk-connection}.
  The lattice $\dplus\cap R^\perp$ can be identified with $\pi^*(\Lambda_0)$
  and the $(-4)$-vectors $v_+$ in $\dplus\cap R^\perp$ with the vectors
  $\pi^*(e)$ for $e\in\Lambda_0^{(2)}$.

  ($D_{2n}$) $\Lambda_0=0$ and $\dplus=0$; there is nothing to check.

  ($D_{2n}'$) One has $t^2= 2n-8$. This equals $-4$ only for $n=2$ and
  then $D'_4=\pD_4$.

  ($\pD_{2n}$) One has $\beta\cdot \iota^*\beta =0$. So for the
  generator $t=\beta+\iota^*\beta = f-2g$ of $\dplus$ one has
  $t^2=-4$. Indeed, $t=v_+ = \pi^*e$ for the generator $e$ of
  $\Lambda_0$.  Then $v_-=\beta-\iota^*\beta = 2e_{2n}-f$. Reflection
  $w_{v_-}$ in this vector fixes all roots of the $D_{2n}$ diagram
  except for $w_{v_-}(\alpha_{n-3}) =
  \alpha:=e_{2n-1}+e_{2n}-f$. Together with the other $2n$ roots,
  $\alpha$ forms the $\wD_{2n}$ diagram in which $\alpha_{n-3}$,
  $\alpha$ are two short legs. Thus, $w_{v_-}$ acts as an outer
  automorphism of $\Lambda(D_{2n})$ swapping two short legs. This is
  the same action for $W_0=S_2$ which we computed in
  subsection~\ref{sec:W0-action}.

  ($\ppD_{2n}$) One has $\dplus = \la f-2g_1, g_2-g_1\ra$. The only
  vectors $v_+$ of square $-4$ in $\dplus$ are $f-2g_1$ and $f-2g_2$,
  which are the pullbacks of the two vectors in $\Lambda_0^{(2)}$. For
  both of them we get the same vector $v_- = 2e_{2n}-f$. Thus,
  $w_{e_1}^{(1)}\circ w_{e_2}^{(1)}$ and
  $w_{e_1}^{(2)}\circ w_{e_2}^{(2)}$ for these two vectors act in the
  same way on $\dminus$ but differently on $\dplus$. We conclude that
  they generate $S_2\times S_2$ and their difference acts trivially on
  $\dminus$. This is the same description of $W_0=S_2\times S_2$ and
  $W_{00}=S_2$ as in~\ref{sec:W0-action}.

  ($\pD'_4$) $\pi^*\Lambda_0$ is generated by
  $v^1_+ = \beta_1 + \iota^*\beta_1$ and
  $v^2_+ = \beta_2 + \iota^*\beta_2$, $\beta_1=s-e_1-g_1$ and
  $\beta_2=e_4-g_2$. Then $v^1_- = -f-e_1+e_2+e_3+e_4$ and $v^2_- = -f
  + 2e_4$. Denote by $-\alpha$ the highest root, so that together with
  the other 4 roots it forms the $\wD_4$ diagram. Then
  $w_{v^1_-}$ swaps $\alpha'$ and $\alpha$, and 
  $w_{v^2_-}$ swaps $\alpha_1$ and $\alpha$. Thus, $W_0$ acts as the
  group $S_3$ of outer automorphisms of $\Lambda(D_4)$, the same as
  in~\ref{sec:W0-action}. 
\end{proof}

We now describe, without proof, how our moduli stack of \ade pairs
(equivalently, up to the $\mu_2$-cover, the stack of \ade double
covers with involution), which by Theorem~\ref{thm:moduli-ade-primed}
equals $[T_{\Lambda'} : W_\Lambda \rtimes W_0]$, is related to the
moduli of Looijenga pairs.  In the moduli torus $T_\Delta$ of
Looijenga pairs the subtorus $T_{\Delta/\dplus}$ corresponds to the
pairs admitting a nonsymplectic involution.  The moduli stack is the
quotient of it by the group of admissible monodromies of $T_\Delta$
leaving $T_{\Lambda'}$ invariant. A part of this group is obvious:
reflections $\wtwo$ in the vectors in $\Delta_-^{(2)}$. Also, for each
side which has a double prime there is a root $g_1-g_2$ which gives a
quotient by $\mu_2$ that forgets the ordering of the two primed
points.  This accounts for the fact that $\Lambda'$ is a sublattice of
$\Delta/\Delta_+$ for shapes with doubly primed sides.  Less
obviously, for each $e\in\Lambda_0^{(2)}$, with $\pi^*(e) = e_1+e_2$,
while the reflections $w_{e_1}$ and $w_{e_2}$ by themselves do not fix
$\dminus$, their composition $w_{e_1}\circ w_{e_2}$ does.

One thus takes a quotient of $T_{\Lambda'}$ by $\wtwo=W_\Lambda$ followed
by a quotient by $\wtwofour/\wtwo = W_0$.  The subgroup
$W_{00} \subset \wtwofour/\wtwo$ acts trivially on the coarse moduli
space $T_{\Lambda'}$ but nontrivially on the stack, giving extra
automorphisms of the pairs.  

\subsection{Involutions in the Cremona group} 
\label{sec:cremona}

Classically, the involutions in the Cremona group $\mathrm{Cr}(\bP^2)$, the group of
birational automorphisms of $\bP^2$, are of three types: De
Jonqui\`eres, Geiser, and Bertini. For a nice modern treatment that uses equivalent MMP, see
\cite{bayle2000birational-involutions}. For
a $(K+D)$-trivial polarized involution pair $(X, D, \iota)$, if $X$ is rational then $\iota$ is
an involution in $\mathrm{Cr}(\bP^2)$. 

\begin{theorem}
  Let $(X,D,\iota)$ be a $(K+D)$-trivial polarized involution pair with rational surface $X$
  and a smooth ramification curve $R$. Then
  \begin{enumerate}
  \item If $(X,D,\iota)$ is of shape $\wD$, $D$, or $A$ (pure or
    primed) then $\iota$ is De Jonqui\`eres.
  \item If it is of shape $\wE_7$, $E_7$, or $E_6$ (pure or primed)
    then $\iota$ is Geiser.
  \item If it is of shape $\wE_8$ or $E_8$ (pure or primed) then
    $\iota$ is Bertini.
  \end{enumerate}
\end{theorem}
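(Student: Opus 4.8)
The plan is to identify the classical trichotomy of Cremona involutions via the standard invariant attached to a birational involution $\iota$ of $\bP^2$: after resolving, one obtains a conic bundle or a del Pezzo fibration structure on the quotient, and the three types are distinguished by the self-intersection of the canonical class of the minimal model of the quotient surface $Y$ (equivalently, by whether $Y$ admits a pencil of rational curves — De Jonqui\`eres — or is a degree 2 del Pezzo — Geiser — or a degree 1 del Pezzo — Bertini). See \cite{bayle2000birational-involutions}; the cleanest modern formulation is that $\iota$ is De Jonqui\`eres iff the minimal resolution $(\wX,\iota)$ has quotient birational to a conic bundle (so $Y$ is ruled), Geiser iff the quotient is birational to a degree 2 del Pezzo surface, and Bertini iff it is birational to a degree 1 del Pezzo surface. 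So the whole statement reduces to computing, for each shape, the birational type of the quotient surface $Y = X/\iota$, which by Lemma~\ref{lem:pairs-and-covers} is our \ade surface, and these birational types are essentially read off from the tables.

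First I would reduce to the pure shapes: priming replaces $(Y,C)$ by a weighted blowup followed by a contraction, hence does not change the birational class of $Y$, and by hypothesis $X$ is rational so $Y$ is rational; thus it suffices to treat the pure shapes. Next I would go through the three cases using the description of $Y$ already given in Section~\ref{sec:adepairs} and the nef/MMP analysis in the proof of Theorem~\ref{thm:priming}. For case (1): for the $A$, $\mA$ shapes $Y$ is a toric surface fibered over $\bP^1$ (the pencil $|{-}(K_Y+C)|$ restricted appropriately, or directly the toric fibration), and the generic fiber of $X\to Y\to\bP^1$ is a rational curve preserved by $\iota$ with $\iota$ acting as a degree 2 map, which is exactly the conic bundle structure of a De Jonqui\`eres involution; for $D$ and $\wD$ shapes the proof of Theorem~\ref{thm:priming} shows $K_Y+L$ gives a $\bP^1$-fibration, so again $Y$ is ruled and $\iota$ is De Jonqui\`eres. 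For case (2): for $\wE_7$, $E_7$, $E_6$ the surface $Y$ has $-K_Y$ (resp. $-(K_Y+C)$ after contracting $C$) big and nef of degree $K_Y^2 = 2$ — indeed $\wA_1^\exo$-type reasoning and the toric polytopes in Table~\ref{tab:toric-shapes} give the degree-$2$ weak del Pezzo — and the double cover $X\to Y$ is exactly the anticanonical degree $2$ double cover, i.e. the Geiser involution. For case (3): for $\wE_8$, $E_8$ the surface $Y$ (after contracting $C$) is a degree $1$ del Pezzo, and $X\to Y$ is the canonical double cover of a degree $1$ del Pezzo branched over a smooth curve in $|{-}2K_Y|$, which is precisely the Bertini involution; this is also the content of Theorem~\ref{thm:ell-surfaces}, where the $E_8$ pair is a del Pezzo of degree $1$ and $X\to Y$ is the elliptic-involution quotient.

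Concretely, the key steps in order are: (i) record the dictionary between the three Cremona types and the invariant $K_Y^2$ of (a minimal model of) the quotient, citing \cite{bayle2000birational-involutions}; (ii) reduce to pure shapes by invariance of the birational class of $Y$ under priming; (iii) for each of the three families of shapes, identify $Y$ up to birational equivalence using Table~\ref{tab:toric-shapes} and the MMP computations in the proof of Theorem~\ref{thm:priming} (ruled surface for $\wD,D,A$; degree $2$ del Pezzo for $\wE_7,E_7,E_6$; degree $1$ del Pezzo for $\wE_8,E_8$); (iv) observe that $\pi\colon X\to Y$ is, birationally, the double cover realizing the corresponding classical involution, using the Hurwitz relation $K_X+D\equiv\pi^*(K_Y+C+\tfrac12 B)$ and smoothness of $R$ to control the branch behavior. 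The main obstacle I expect is step (iv): one must check that the \emph{birational} model of the involution really is in the classical normal form, i.e. that no exceptional behavior of $\iota$ on the boundary $D$ or at singular points of $Y$ spoils the identification — in particular, for the degree-$1$ and degree-$2$ cases one should verify that after contracting all $(-2)$-curves and the boundary components the branch divisor lands in $|{-}2K_Y|$ with $Y$ an honest (Gorenstein, canonical) del Pezzo of the claimed degree, so that the classification of \cite{bayle2000birational-involutions} applies verbatim; this is where the smoothness hypothesis on $R$ and the non-klt structure of $(Y,C)$ (so that $C$ is exactly the part to be contracted) are used.
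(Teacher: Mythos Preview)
Your dictionary in step (i) is stated backwards, and the error propagates through (iii) and (iv). In the Bayle--Beauville classification it is the $\iota$-equivariant minimal model of \emph{$X$} that is a del Pezzo surface of degree $2$ (Geiser) or $1$ (Bertini); the \emph{quotient} $Y=X/\iota$ is then $\bP^2$, respectively the quadric cone $\bF_2^0$. For instance, in the $\wE_7$ shape one has $Y=\bP^2$ with $K_Y^2=9$; it is the double cover $X$ that has $K_X^2=2$. Since $Y$ is rational in every case, its birational type alone carries no information, so the condition ``$Y$ is birational to a degree $d$ del Pezzo'' cannot distinguish anything. Your argument for case (1) could be salvaged --- a ruling on $Y$ with $B$ a $2$-section pulls back to an $\iota$-invariant conic bundle on $X$ --- but for (2) and (3) you must argue on $X$, and you would still owe a check that the equivariant MMP on $\wX$ terminates at the expected del Pezzo rather than at a conic bundle.

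The paper avoids the equivariant MMP entirely by using a different invariant. By \cite[Prop.~2.7]{bayle2000birational-involutions} the type is determined by the normalization of the ramification curve: De Jonqui\`eres iff hyperelliptic, Geiser iff non-hyperelliptic of genus $3$, Bertini iff non-hyperelliptic of genus $4$. Since $R$ is assumed smooth and $R\simeq B$, one just reads off the description of $B$ on $Y$: in the $\wD$--$D$--$A$ cases $B$ is a $2$-section of a ruling, hence hyperelliptic; in the $\wE_7$--$E_7$--$E_6$ cases $B$ is a smooth plane quartic, hence non-hyperelliptic of genus $3$; in the $\wE_8$--$E_8$ cases $B$ is a smooth section of $\cO(1)$ on the quadric cone $\bF_2^0$, hence non-hyperelliptic of genus $4$.
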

\begin{proof}
  By \cite[Prop. 2.7]{bayle2000birational-involutions}, the type of
  the involution is uniquely determined by the normalization $\widetilde{R}$ of the
  ramification curve $R$: for De Jonqui\`eres $\wR$ is
  hyperelliptic, for Geiser it is non-hyperelliptic of genus 3, and
  for Bertini it is non-hyperelliptic of genus 4. In the $\wD$-$D$-$A$
  cases the branch curve $B\simeq R$ is a two-section of a ruling, so
  it is hyperelliptic. In the $\wE_7$-$E_7$-$E_6$ cases $R$ is a
  quartic curve in $\bP^2$, so a non-hyperelliptic curve of genus 3,
  and in the $\wE_8$-$E_8$ cases it is a section of $\cO(1)$ on the
  quadratic cone $\bF_2^0$, so a non-hyperelliptic curve of genus 4.
\end{proof}

\begin{remark}
  When $R$ has nodes, the involution may easily be of a different
  type. When it has $\ge2$ nodes, the involution is always De
  Jonqui\`eres. 
\end{remark}

We can give an alternative proof for the classification of the double
covers $(X,D)\to (Y,C)$ of log canonical non-klt surfaces using 
\cite{bayle2000birational-involutions} in some cases:

\begin{theorem}\label{thm:generic-pairs}
  Let $(X,D,\iota)$ be a $(K+D)$-trivial polarized involution pair
  with rational $X$. Suppose that $X$ is smooth outside of the
  boundary $D$, and in particular that the ramification curve $R$ is
  smooth.  Then the quotient $(Y,C)$ of this pair is an \ade or \wade
  surface defined in Section~\ref{sec:adepairs}.
\end{theorem}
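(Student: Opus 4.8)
The plan is to combine the Bayle--Beauville classification of birational involutions of $\bP^2$ \cite{bayle2000birational-involutions} with a case analysis of the quotient surface. As a preliminary remark, the statement also follows directly from Theorem~\ref{thm:logdP=ade}: by Lemma~\ref{lem:pairs-and-covers} the quotient $(Y,C)=(X,D)/\iota$ is a log del Pezzo surface with reduced boundary and $2(K_Y+C)$ Cartier, and $B=\pi(R)$ is smooth --- since $R$ lies in the smooth locus of $X$ (by \eqref{rem:BC-transversal}) and $\pi$ restricts to an isomorphism $R\xrightarrow{\sim}B$ --- so $(Y,C)$ is one of the \ade or \wade surfaces by that theorem. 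The point of the argument below is to reach the conclusion independently of Nakayama's classification, which is the reason for restricting to the case that $R$ is smooth.

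First I would reduce to a statement about $\mathrm{Cr}(\bP^2)$. Since $X$ is rational, Lemma~\ref{lem:types-of-pairs} puts us in type II or III, and the minimal resolution $\wX\to X$, which is an isomorphism away from $D$ by hypothesis, carries the lifted involution $\tilde\iota$; here $\wX$ is a smooth rational surface and $\tilde\iota\ne\mathrm{id}$ because $R\ne0$. Choosing a birational map $\wX\dashrightarrow\bP^2$ transports $\tilde\iota$ to an involution in $\mathrm{Cr}(\bP^2)$, which by \cite{bayle2000birational-involutions} is conjugate to a de Jonqui\`eres, Geiser, or Bertini involution. By \cite[Prop.~2.7]{bayle2000birational-involutions} the type is detected by the normalization $\wR$ of the ramification curve, which is $R$ itself since $R$ is smooth: $R$ hyperelliptic (including the rational and elliptic cases) corresponds to de Jonqui\`eres, $R$ non-hyperelliptic of genus $3$ to Geiser, and $R$ non-hyperelliptic of genus $4$ to Bertini. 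Since $R\simeq B\in|-2(K_Y+C)|$, in these three cases $B$ is, respectively, a bisection of a pencil of rational curves on $Y$, a smooth plane quartic, or a section of $\cO(1)$ on the quadric cone $\bF_2^0=\bP(1,1,2)$.

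It then remains to match the quotient data $(Y,C+\freps B)$ with a model of Section~\ref{sec:adepairs} in each case.
\begin{enumerate}
\item \emph{De Jonqui\`eres.} Then $\tilde\iota$ preserves a $\bP^1$-fibration, so $Y$ acquires a fibration with rational generic fibre over $\bP^1$ (type III) or over an elliptic curve (type II), with $B$ a bisection. Running the minimal model program on $K_Y+C+\freps B$, i.e.\ contracting the $(K_Y+C)$-negative curves disjoint from $B$ --- which is the reverse of the priming operation of Definition~\ref{def:priming} --- one reaches a relatively minimal model that is a Hirzebruch surface, a cone over an elliptic curve, or $\bP^2$. The curve $C=\pi(D)$ is the image of the components of $D$ (sections, a fibre, or a cycle of $\bP^1$s), and one recognizes the pure $A$, $D$, $\wD$ shapes of subsections~\ref{sec:toric-shapes} and~\ref{sec:wA-shapes}, the original $(Y,C)$ being recovered by the allowed primings of Theorem~\ref{thm:priming}.
\item \emph{Geiser.} The quotient of a degree-$2$ del Pezzo surface by its Geiser involution is $\bP^2$ with a plane quartic branch curve; unwinding the conjugating maps and the contraction given by $-2(K_Y+C)$ exhibits $(Y,C)$ as an $E_6$, $E_7$, or $\wE_7$ shape, possibly primed, the distinction coming from the number of components of $C$ and the tangency of $B$ along them.
\item \emph{Bertini.} The quotient of a degree-$1$ del Pezzo surface by its Bertini involution is $\bF_2^0=\bP(1,1,2)$ with branch curve cut out by $\cO(1)$; this yields the $E_8$ or $\wE_8$ shape, possibly primed.
\end{enumerate}

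The main obstacle is this last step. Bayle--Beauville classify involutions only up to conjugacy in $\mathrm{Cr}(\bP^2)$, which forgets the polarization $L=-2(K_Y+C)$ and which invariant curve plays the role of $D$; to pin down the exact shape one must follow the conjugating birational maps, verify that they are compositions of the elementary weighted blowups and blowdowns of priming with standard birational (Sarkisov) links, and then compare the invariants $(v,g,LC_s)$ of $(Y,C)$ against Tables~\ref{tab:primed-III} and~\ref{tab:primed-II}. The hypothesis that $R$ be smooth is precisely what keeps the bookkeeping tame: it forces $B$ smooth, so the log canonical centres of $(Y,C+\frac12 B)$ are as simple as possible and no combinatorial degeneration occurs. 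Absent this control, one falls back on the direct route via Theorem~\ref{thm:logdP=ade} indicated above.
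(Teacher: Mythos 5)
There is a genuine gap, and it lies exactly where you flag your ``main obstacle.'' You invoke \cite[Prop.~2.7]{bayle2000birational-involutions}, which only identifies the class of $\iota$ up to \emph{conjugacy in} $\mathrm{Cr}(\bP^2)$ --- a purely birational invariant. Knowing that $\iota$ is de Jonqui\`eres, Geiser, or Bertini cannot by itself recover the biregular pair $(X,D,\iota)$, hence cannot determine the quotient $(Y,C)$: the conjugating map $\wX\dashrightarrow\bP^2$ is an arbitrary birational map that respects neither the polarization $L=-2(K_Y+C)$ nor the boundary $D$, and many non-isomorphic surfaces with involution yield conjugate elements of the Cremona group. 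Your plan to ``follow the conjugating birational maps'' and show they factor into primings and Sarkisov links is not carried out and is not a workable substitute, because nothing constrains those maps. (A symptom of the problem: in case (1) you assert that $\tilde\iota$ preserves a $\bP^1$-fibration, which holds only on a suitable equivariant model, not on $\wX$ itself; and a fibration over an elliptic curve is impossible here since $X$ is rational.)

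The paper's argument uses a different, stronger input from the same reference: \cite[Thm.~1.4]{bayle2000birational-involutions}, which classifies the \emph{minimal pairs} $(\wX,\tilde\iota)$ up to equivariant isomorphism (six cases), not merely the conjugacy classes of involutions. One then observes that the minimal resolution $\wX$ of $X$ is obtained from such a minimal pair by a sequence of equivariant blowups (one invariant point or two points swapped by $\tilde\iota$), and that the hypotheses --- $R$ ample and no $(-2)$-curves disjoint from $R$ --- force these blowups to occur at points of $D\cap R$. Running this equivariant MMP case by case through the six minimal models is what produces the list of \ade and \wade shapes. Your correct observation that the statement follows from Theorem~\ref{thm:logdP=ade} is fine as a remark, but, as you note yourself, defeats the purpose of giving an independent proof; so the argument as written does not establish the theorem.
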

\begin{proof}[Sketch of the proof]
  Let $\wX$ be the minimal resolution of $X$, it comes with an induced
  involution $\tilde\iota$.
  \cite[Thm. 1.4]{bayle2000birational-involutions} gives six
  possibilities for the pair $(\wX,\tilde\iota)$
  when it is minimal, i.e. there does not exist one or two
  $(-1)$-curves that can be equivariantly contracted to another smooth
  surface with an involution. In our case, $\wX$ is obtained from such
  a minimal surface by a sequence of single or double blowups which
  satisfy two conditions: they have to be involution-invariant, and
  there are no $(-2)$-curves disjoint from $B$. 

  It follows that $\wX$ is obtained by blowups at the points $B\cap
  R$, either one involution-invariant point or two points exchanged by
  the involution. We analyze them directly. The different cases of
  \cite[Thm. 1.4]{bayle2000birational-involutions} then lead to the
  following:
  \begin{enumerate}
    \renewcommand{\theenumi}{\roman{enumi}}
  \item impossible, i.e. does not lead to a $(K+D)$-trivial polarized
    involution pair with ample~$R$.
  \item $(ii)_{\rm sm}$ is impossible, and $(ii)_g$ gives the
    $\wD$-$D$-$A$ shapes.
  \item $A_0^-$ and $\wA_0^-$. 
  \item $\wA_1^\exo$, 
    % impossible: $\wA_1^{\exo\p}$, 
    $A_1$.
    % impossible: , $A_1^{\p}$.
  \item $\wE_7$, $\phmi E_7$, $\phmi E_6^-$ and the primed shapes. 
  \item $\wE_8^-$, $\phmi E_8^-$ and the primed shapes. 
  \end{enumerate}
\end{proof} 

One could try to extend the results of this section to classify
\emph{families} of log del Pezzo pairs, in which the surface $Y$ may
acquire singularities away from the boundary. This would give an
alternative proof of Theorem~\ref{thm:logdP=ade}.  For this, we would
first need to know that the branch divisor $B$ can be smoothed. This
is known, see \cite[Cor.3.20]{nakayama2007classification-of-log}.
Secondly, we would also need to know that the singular points of the
surface $Y$ away from the boundary can be smoothed. For surfaces
without the boundary, this is
\cite[Prop. 3.1]{hacking2010smoothable-del-pezzo}. For the pairs
$(Y,C)$ with boundary this does not seem to be easy to prove
directly. This follows \emph{a posteriori} from the classification of
\emph{all} log del Pezzo surfaces with boundary given in
Sections~\ref{sec:adepairs} and \ref{sec:nakayama}.

% \testminusalignment
% \testplusalignment

\bibliographystyle{amsalpha}
%\bibliography{va}

\def\cprime{$'$}
\providecommand{\bysame}{\leavevmode\hbox to3em{\hrulefill}\thinspace}
\providecommand{\MR}{\relax\ifhmode\unskip\space\fi MR }
% \MRhref is called by the amsart/book/proc definition of \MR.
\providecommand{\MRhref}[2]{%
  \href{http://www.ams.org/mathscinet-getitem?mr=#1}{#2}
}
\providecommand{\href}[2]{#2}

\ifshortversion
\else
\listoftables
\listoffigures
\fi

\end{document}